\definecolor{rouge}{rgb}{0.85,0.1,.4}
\definecolor{bleu}{rgb}{0.1,0.2,0.9}
\definecolor{violet}{rgb}{0.7,0,0.8}
\DeclareMathAlphabet{\mathpzc}{OT1}{pzc}{m}{it}
\theoremstyle{plain}
\newtheorem{theorem}{Theorem}[section]
\newtheorem{lemma}[theorem]{Lemma}
\newtheorem{coro}[theorem]{Corollary}
\newtheorem{prop}[theorem]{Proposition}
\newtheorem{theo}[theorem]{Theorem}
\theoremstyle{definition}
\newtheorem{defi}[theorem]{Definition}
\theoremstyle{remark}
\newtheorem{claim}[theorem]{Claim}
\def\quest_alpha{\Alph{quest_alpha}}
\def\k{{\Bbbk}}
\def\rk#1{{\mathrm {r}}_{#1}}
\def\rg{\ell}               
\def\r{{\rm reg}}
\def\loc{{\mathrm {sm}}}
\def\geq{\geqslant}
\def\leq{\leqslant}
\def\poie#1#2#3#4#5#6#7#8#9{\def\un{#5#6#7#8#9}\def\deux{#6#7#8#9}\def\trois{#2#4#8#9}
\def\quatre{#8#9}\def\cinq{#5#6#7}\def\six{#6#7}\def\sept{#2#4}
\ifx\un\empty {#1}_{#2}{#3 \hskip 0.15em}{#1}_{#4} \else \ifx\deux\empty 
{#5}(#1_{#2}){#3 \hskip 0.15em}{#5}(#1_{#4})
\else \ifx\trois\empty {#5}_{#6}(#1){#3 \hskip 0.15em}{#5}_{#7}(#1) 
\else \ifx\quatre\empty {#5}_{#6}(#1_#2){#3 \hskip 0.15em}{#5}_{#7}(#1_#4) 
\else \ifx\cinq\empty {#1}_{#2}^{#8}{#3 \hskip 0.15em}#1_#4^{#9} 
\else \ifx\six\empty {#5}(#1_{#2}^{#8}){#3 \hskip 0.15em}{#5}(#1_{#4}^{#9}) 
\else \ifx\sept\empty {#5}_{#6}^{#8}(#1){#3 \hskip 0.15em}{#5}_{#7}^{#9}(#1) \else
{#5}_{#6}(#1_{#2}^{#8})^{#9}{#3 \hskip 0.15em}{#5}_{#7}(#1_{#4}^{#8})^{#9} 
\fi \fi \fi \fi \fi \fi \fi}
\def\poi#1#2#3#4#5#6#7{\def\un{#5#6#7}\def\deux{#6#7}
\def\trois{#2#4} \def\cinq{#3#4#5}
\ifx\un\empty {#1}_{#2}{#3 \hskip 0.15em}{#1}_{#4} \else
\ifx\deux\empty {#5}(#1_{#2}){#3 \hskip 0.15em}{#5}(#1_{#4}) \else
\ifx\trois\empty {#5}_{#6}(#1){#3 \hskip 0.15em}{#5}_{#7}(#1) \else
{#5_{#6}}(#1_{#2}){#3 \hskip 0.15em}{#5_{#7}}(#1_{#4}) \fi \fi \fi}
\def\rond{\raisebox{.3mm}{\scriptsize$\circ$}}
\def\mul{\raisebox{.3mm}{\scriptsize\hskip 0.15em$\times$\hskip 0.15em}}
\def\tk#1#2{{#2}\otimes _{#1}}
\def\ec#1#2#3#4#5{\def\un{#3#4#5}\def\deux{#3#5}\def\trois{#3}
\def\four{#2#4#5}\def\five{#2#5}\def\six{#2}\def\seven{#3#4}
\def\eight{#2#4} \def\nine{#2#3#4}
\ifx\nine\empty {\rm #1}_{#5} \else
\ifx\un\empty {\rm #1}({\goth #2}) \else
\ifx\deux\empty {\rm #1}({\goth #2}_{#4}) \else
\ifx\trois\empty {\rm #1}_{#5}({\goth #2}_{#4}) \else
\ifx\four\empty {\rm #1}(#3) \else
\ifx\five\empty {\rm #1}(#3_{#4}) \else
\ifx\six\empty {\rm #1}_{#5}(#3_{#4}) \else
\ifx\seven\empty {\rm #1}_{#5} ({\goth#2})\else
\ifx\eight\empty {\rm #1}_{#5}({#3})
\fi \fi \fi \fi \fi \fi \fi \fi \fi}
\def\hec#1#2#3#4#5{\def\un{#3#4#5}\def\deux{#3#5}\def\trois{#3}
\def\four{#2#4#5}\def\five{#2#5}\def\six{#2}\def\seven{#3#4}
\def\eight{#2#4} \def\nine{#2#3#4}
\ifx\nine\empty \hat{{\rm #1}}_{#5} \else
\ifx\un\empty \hat{{\rm #1}}({\goth #2}) \else
\ifx\deux\empty \hat{{\rm #1}}({\goth #2}_{#4}) \else
\ifx\trois\empty \hat{{\rm #1}}_{#5}({\goth #2}_{#4}) \else
\ifx\four\empty \hat{{\rm #1}}(#3) \else
\ifx\five\empty \hat{{\rm #1}}(#3_{#4}) \else
\ifx\six\empty \hat{{\rm #1}}_{#5}(#3_{#4}) \else
\ifx\seven\empty \hat{{\rm #1}}_{#5} ({\goth#2})  \else
\ifx\eight\empty \hat{{\rm #1}}_{#5}({#3})
\fi \fi \fi \fi \fi \fi \fi \fi \fi}
\def\es#1#2{\ec {#1}{}{#2}{}{}}
\def\ai#1#2#3{\def\deux{#2#3} \def\trois{#3} \def\quatre{#2} 
\ifx\deux\empty \es S{{\goth #1}}^{{\goth #1}} \else
\ifx\trois\empty \es S{{\goth #1}^{#2}}^{{\goth #1}^{#2}} \else
\ifx\quatre\empty \es S{{\goth #1}_{#3}}^{{\goth #1}_{#3}} \else
\es S{{\goth #1}_{#3}^{#2}}^{{\goth #1}_{#3}^{#2}} \fi \fi \fi}
\def\aii#1#2#3#4{\def\deux{#2#3} \def\trois{#3} \def\quatre{#2} 
\ifx\deux\empty \sy {#4}{{\goth #1}}^{{\goth #1}} \else
\ifx\trois\empty \sy {#4}{{\goth #1}^{#2}}^{{\goth #1}^{#2}} \else
\ifx\quatre\empty \sy {#4}{{\goth #1}_{#3}}^{{\goth #1}_{#3}} \else
\sy {#4}{{\goth #1}_{#3}^{#2}}^{{\goth #1}_{#3}^{#2}} \fi \fi \fi}
\def\hhom{\mathscr {H}\hskip -.15em om}
\def\Bbb{\mathbb}
\def\goth{\mathfrak}
\def\cal{\mathcal}
\def\gi#1#2#3#4{\def\trois{#3#4} \def\quatre{#4}\def\cinq{#3}\ifx\trois\empty {\rm i}_{#1,{\goth #2}}
\else \ifx\quatre\empty {\rm i}_{#1_{#3},{\goth #2}} \else\ifx\cinq\empty {\rm i}_{#1,{\goth #2}_{#4}} \else {\rm i}_{#1_{#3},{\goth #2}_{#4}} \fi \fi \fi}
\def\j#1#2{\def\deux{#2} \ifx\deux\empty {\rm rk}\hskip .125em{{\goth #1}} \else {\rm rk}\hskip .125em{{\goth #1}_{#2}} \fi}
\def\aj#1#2{\def\deux{#2} \ifx\deux\empty {\rm j}_{{\goth #1}} \else {\rm j}_{{\goth #1}_{#2}} \fi}
\def\an#1#2{\def\deux{#2} \ifx\deux\empty {\cal O}_{#1} \else {\cal O}_{#1,#2} \fi }
\def\han#1#2{\def\deux{#2} \ifx\deux\empty {\hat{{\cal O}}}_{#1} \else {\hat{{\cal O}}}_{#1,#2} \fi }
\def\dim{{\rm dim}\hskip .125em}
\def\ad{{\rm ad}\hskip .1em}
\def\Ad{{\rm Ad}\hskip .1em}   
\def\det{{\rm det}\hskip .125em}
\def\pr#1{{\rm pr}_{#1}}
\def\n{{\rm n}}
\def\s{{\rm s}}
\def\sy#1#2{{\rm S}^{#1}(#2)}
\def\mycom#1#2{\genfrac{}{}{0pt}{}{#1}{#2}}
\begin{document}

\title
[Commuting variety]
{On a variety related to the commuting variety of a reductive Lie algebra.}

\author[Jean-Yves Charbonnel]{Jean-Yves Charbonnel}
\address{Jean-Yves Charbonnel, Universit\'e Paris Diderot - CNRS \\
Institut de Math\'ematiques de Jussieu - Paris Rive Gauche\\
UMR 7586 \\ Groupes, repr\'esentations et g\'eom\'etrie \\
B\^atiment Sophie Germain \\ Case 7012 \\ 
75205 Paris Cedex 13, France}
\email{jean-yves.charbonnel@imj-prg.fr}

\subjclass
{14A10, 14L17, 22E20, 22E46 }

\keywords
{polynomial algebra, complex, commuting variety, desingularization, Gorenstein, 
Cohen-Macaulay, rational singularities, cohomology}

\date\today

\begin{abstract}
For a reductive Lie algbera over an algbraically closed field of charasteristic zero, 
we consider a Borel subgroup $B$ of its adjoint group, a Cartan subalgebra contained in 
the Lie algebra of $B$ and the closure $X$ of its orbit under $B$ in the Grassmannian.
The variety $X$ plays an important role in the study of the commuting variety. In this
note, we prove that $X$ is Gorenstein with rational singularities.  
\end{abstract}

\maketitle

\tableofcontents

\section{Introduction} \label{int}
In this note, the base field $\k$ is algebraically closed of characteristic $0$, 
${\goth g}$ is a reductive Lie algebra of finite dimension, $\rg$ is its rank,
$\dim {\goth g}=\rg + 2n$ and $G$ is its adjoint group. As usual, ${\goth b}$ denotes a 
Borel subalgebra of ${\goth g}$, ${\goth h}$ a Cartan subalgebra of ${\goth g}$, 
contained in ${\goth b}$, and $B$ the normalizer of ${\goth b}$ in $G$.

\subsection{Main results.} \label{int1}
Let $X$ be the closure in $\ec {Gr}g{}{}{\rg}$ of the orbit of ${\goth h}$ under the 
action of $B$. By a well known result, $G.X$ is the closure in $\ec {Gr}g{}{}{\rg}$
of the orbit of ${\goth h}$ under the action of $G$. By~\cite{Ric}, the commuting 
variety of ${\goth g}$ is the image by the canonical projection of the restriction to
$G.X$ of the canonical vector bundle of rank $2\rg$ over $\ec {Gr}g{}{}{\rg}$. So
$X$ and $G.X$ play an important role in the study of the commuting variety. As it is 
explained in \cite{CZ}, $X$ and $G.X$ play the same role for the so called generalized 
commuting varieties and the so called generalized isospectral commuting varieties. The 
main result of this note is the following theorem:

\begin{theo}\label{tint}
The variety $X$ is Gorenstein with rational singulatrities.
\end{theo}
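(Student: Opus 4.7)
My strategy is to exhibit a $B$-equivariant proper birational morphism $\pi\colon \tilde X \to X$ from a smooth variety $\tilde X$ and to deduce both assertions from this desingularization: rational singularities via Grauert--Riemenschneider vanishing, and the Gorenstein property via an explicit computation of $\pi_*\omega_{\tilde X}$.

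To construct $\tilde X$, I would first observe that the open orbit $B\cdot\mathfrak h$ is isomorphic to $B/T\cong U\cong\mathbb A^n$ and is contained in the closed sub-Grassmannian $\mathrm{Gr}_\ell(\mathfrak b)\subset\mathrm{Gr}_\ell(\mathfrak g)$, so $X$ is an $n$-dimensional $B$-stable subvariety of $\mathrm{Gr}_\ell(\mathfrak b)$. A natural candidate for $\tilde X$ is then built by tracking the auxiliary geometric data that degenerates at the boundary: for instance, as a variety of pairs $(V,\mathcal F)$ with $V\in X$ and $\mathcal F$ an appropriate flag (in $V$, in $\mathfrak b$, or in $\mathfrak g$), arranged so that $\tilde X$ becomes an iterated Grassmannian/projective bundle over the flag variety $G/B$ and the forgetful map $\pi$ is an isomorphism over $B\cdot\mathfrak h$. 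With such a $\tilde X$ in hand, smoothness is automatic, the vanishing $R^i\pi_*\mathcal O_{\tilde X}=0$ for $i>0$ follows from Grauert--Riemenschneider combined with a Borel--Weil--Bott analysis along the fibers of $\tilde X\to G/B$, and $\pi_*\mathcal O_{\tilde X}=\mathcal O_X$ then yields rational singularities (and in particular normality and Cohen--Macaulayness).

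For the Gorenstein conclusion I would combine the previous step with the identification $\omega_X=\pi_*\omega_{\tilde X}$, compute $\omega_{\tilde X}$ explicitly as a tensor of tautological line bundles coming from the bundle structure of $\tilde X$, and check that the pushforward is locally free of rank one. The chief obstacle, in my estimation, is the construction of the \emph{correct} resolution: $X$ is neither a Schubert variety nor a visibly homogeneous bundle, so one must design $\tilde X$ with enough symmetry to simultaneously ensure smoothness, birationality, the vanishing of higher direct images, and a tractable form for $\omega_{\tilde X}$. A secondary but nontrivial obstacle is the verification that $\pi_*\omega_{\tilde X}$ is invertible on the whole of $X$, and not merely on the smooth locus; this typically requires a local analysis at the deepest boundary strata, or an explicit Koszul-type resolution of $\mathcal O_X$ consistent with the ``polynomial algebra'' and ``complex'' keywords advertised in the abstract.
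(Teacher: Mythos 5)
Your proposed strategy---find a smooth $B$-equivariant resolution $\pi\colon\tilde X\to X$, deduce rational singularities from Grauert--Riemenschneider, then identify $\pi_*\omega_{\tilde X}$ as invertible---is structurally reasonable and indeed mirrors the standard template for results of this kind. But as written it has a genuine gap at exactly the point you flag: the resolution $\tilde X$ is never produced, and the specific candidate you sketch cannot work as stated. You propose $\tilde X$ as an iterated Grassmannian or projective bundle over $G/B$; however $\dim X=n=\dim G/B$, so any positive-dimensional bundle over $G/B$ already has dimension strictly larger than $\dim X$ and $\pi$ cannot be birational, while a zero-dimensional bundle would force $\tilde X\cong G/B$, and there is no natural $B$-equivariant birational map $G/B\dashrightarrow X$ (the open piece of $X$ is $B/T\cong U$, not a dense open of $G/B$, and $X$ is only $B$-stable, not $G$-stable). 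The rest of the argument---Borel--Weil--Bott along the fibers, the computation of $\omega_{\tilde X}$---is contingent on this missing construction, so nothing downstream is actually established.

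The paper takes a different route precisely to avoid having to exhibit an explicit resolution. It embeds the problem into a category $\mathcal{C}_{\mathfrak t}$ of nilpotent algebras with torus action, and proves by induction on $\dim\mathfrak a$ that the orbit closure $X_R$ is normal and Cohen--Macaulay, using fixed-point analysis, regularity in codimension one (Corollary~\ref{csav3}), and an explicit commutative-algebra lemma (Proposition~\ref{pns2}) about regular sequences in quotients of polynomial rings---this is where the ``polynomial algebra, complex'' content lives. For Gorensteinness and rational singularities it works with the auxiliary varieties $\mathcal E^{(k)}\to\mathfrak X_{R,k}$ and shows, using Kostant's regular sections (Condition~(4) defining $\mathcal C_{\mathfrak t,*}$) and a careful analysis of the gluing cocycle (Proposition~\ref{prss1}), that $\Omega_{{\mathcal E^{(k)}}_{\mathrm{sm}}}$ has a nowhere-vanishing global section. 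This, combined with normality, lets one invoke an \emph{abstract} Hironaka resolution in the appendix (Proposition~\ref{pars}, Corollary~\ref{cars}) without ever specifying it. The lesson is that the explicit desingularization you would need is hard to design because $X$ is not Schubert or homogeneous, and the paper circumvents it by transferring the Gorenstein criterion to the vector bundle $\mathcal E^{(k)}$ where the canonical form can be written down directly on a big smooth open set.
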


An induction is used to prove this theorem. So we introduce the categories 
${\cal C}'_{{\goth t}}$ and ${\cal C}_{{\goth t}}$ with ${\goth t}$ a commutative Lie 
algebra of finite dimension. Their objects are nilpotent Lie algebras of finite 
dimension, normalized by ${\goth t}$ with additional conditions analogous to those of the
action of ${\goth h}$ in ${\goth u}$. In particular the minimal dimension of the objects 
in ${\cal C}_{{\goth t}}$ is the dimension of ${\goth t}$ and an object of dimension 
$\dim {\goth t}$ is a commutative algebra. The category ${\cal C}_{{\goth t}}$ is a 
full subcategory of ${\cal C}'_{{\goth t}}$. For ${\goth a}$ in ${\cal C}'_{{\goth t}}$, 
we consider the solvable Lie algebra ${\goth r} := {\goth t}+{\goth a}$ and $R$ the 
adjoint group of ${\goth r}$. Denoting by $X_{R}$ the closure in 
$\ec {Gr}r{}{}{\dim {\goth t}}$ of the orbit of ${\goth t}$ under $R$, we prove by 
induction on $\dim {\goth a}$ the following theorem:

\begin{theo}\label{t2int}
The variety $X_{R}$ is normal and Cohen-Macaulay.
\end{theo}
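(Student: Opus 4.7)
The strategy is to proceed by induction on $\dim\mathfrak{a}$. In the base case, $\mathfrak{a}$ has minimal dimension in $\mathcal{C}'_{\mathfrak{t}}$, the orbit closure $X_R$ is an explicit and small projective variety (essentially the closure of a family of graphs of $\mathfrak{t}$-equivariant linear maps $\mathfrak{t}\to\mathfrak{a}$), and normality and Cohen--Macaulayness can be established directly from this description.

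For the inductive step, the key reduction is to exhibit a codimension-one $\mathfrak{t}$-stable ideal $\mathfrak{a}'\subset\mathfrak{a}$ with $[\mathfrak{a},\mathfrak{a}]\subset\mathfrak{a}'$; such an ideal exists because $\mathfrak{a}/[\mathfrak{a},\mathfrak{a}]$ is a nonzero semisimple $\mathfrak{t}$-module, from which one can pick any codimension-one $\mathfrak{t}$-submodule and pull it back. After verifying that $\mathfrak{a}'$ remains in $\mathcal{C}'_{\mathfrak{t}}$, the induction hypothesis applied to $\mathfrak{r}':=\mathfrak{t}+\mathfrak{a}'$ yields that $X_{R'}\subset\mathrm{Gr}(\mathfrak{r}',\dim\mathfrak{t})$ is normal and Cohen--Macaulay.

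The geometric heart of the argument is then to relate $X_R$ and $X_{R'}$ through the projection $\pi\colon\mathfrak{r}\to\mathfrak{r}'$ along a $\mathfrak{t}$-weight line $\Bbbk x\subset\mathfrak{a}$ complementary to $\mathfrak{a}'$. Stratifying a subspace $V\in\mathrm{Gr}(\mathfrak{r},\dim\mathfrak{t})$ according to whether $V\cap\Bbbk x=0$ (generic, with $\pi(V)$ of dimension $\dim\mathfrak{t}$) or $x\in V$ (special, with $\pi(V)$ of codimension one in $\mathfrak{r}'$), the rational map $V\mapsto\pi(V)$ can be resolved by the obvious incidence variety and, I expect, shown to exhibit $X_R$ as a $\mathbb{P}^1$-bundle (or a similar Grassmann-bundle-type structure) over $X_{R'}$. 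Standard descent of Cohen--Macaulayness through flat morphisms with Cohen--Macaulay fibers, together with Serre's criterion $R_1+S_2$ for normality, should then transfer the required properties from $X_{R'}$ to $X_R$.

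The principal obstacle will be making this geometric picture rigorous on the closed stratum: one must identify the exceptional fibers of $\pi$ restricted to $X_R$, check that the exceptional locus has codimension at least two in $X_R$, and verify smoothness of $X_R$ in codimension one (so that $R_1$ holds). A secondary, more bookkeeping, difficulty lies in confirming that the candidate $\mathfrak{a}'$ constructed above genuinely satisfies the axioms of the category $\mathcal{C}'_{\mathfrak{t}}$, so that the induction hypothesis actually applies.
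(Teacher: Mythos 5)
Your overall outline (induction on $\dim\mathfrak{a}$, base case $\dim\mathfrak{a}=\dim\mathfrak{t}$ where $X_R$ is smooth, Serre's criterion $R_1+S_2$, a codimension-one $\mathfrak{t}$-stable ideal $\mathfrak{a}'$ coming from Lie's theorem) matches the skeleton of the paper, and the secondary point you flag — that $\mathfrak{a}'$ stays in the category — is indeed handled (Lemma~\ref{lsa2}, Corollary~\ref{c2sa6}(ii)). But the geometric heart of your inductive step, namely that $X_R$ should be a $\mathbb{P}^1$-bundle (or Grassmann-bundle) over $X_{R'}$, is a genuine gap: no such bundle structure exists in general, and the paper does not claim one. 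The relation between $X_R$ and $X_{R'}$ is captured by the correspondence $\Gamma\subset\mathrm{Gr}_{d-1}(\mathfrak{r})\times\mathrm{Gr}_{d}(\mathfrak{r})\times\mathrm{Gr}_{d}(\mathfrak{r})\times\mathrm{Gr}_{d+1}(\mathfrak{r})$ of Subsection~\ref{sa5}; $\Gamma$ dominates both $X_{R'}$ and $X_R$ but is not flat over $X_{R'}$, and the induced rational map $X_R\dashrightarrow X_{R'}$ degenerates over the special stratum (where $x\in V$) in a way that is controlled only via the delicate analysis of Sections~\ref{sav} and~\ref{ns}, not by a bundle argument. Flat descent therefore does not apply.

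What the paper actually does for Cohen--Macaulayness is quite different from, and more elementary than, a bundle argument. Since the Cohen--Macaulay locus is open and every $R$-orbit closure in $X_R$ contains an $R$-fixed point, it suffices to check $S_2$ at those finitely many fixed points. At a fixed point $V_0$, the affine chart $\Omega$ is analyzed by induction on $\mathrm{codim}_{\mathfrak{a}}V_0$: Lemma~\ref{l2ns3}(ii) produces relations $u_{i,\gamma_m}a_{j,m}-u_{j,\gamma_m}a_{i,m}=0$, Lemma~\ref{lns2} and Proposition~\ref{pns2} show that the ideal generated by such $2\times 2$-minor-type relations is prime and cuts out a Cohen--Macaulay ring, and Proposition~\ref{pns3} carries the regular sequence through the induction. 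The $R_1$ half of Serre's criterion is itself nontrivial: Corollary~\ref{csav3} rests on the dimension bound $\dim X_{R,\mathrm{n}}\le n-d$ (Proposition~\ref{psav3}), proved again by induction via the correspondence $\Gamma$, not by any bundle fibration. So the two ideas you would need to make your plan work — flatness of $X_R\to X_{R'}$ and a codimension-two exceptional locus — are both false in general and are precisely what the paper's machinery is built to circumvent.
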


The result for the category ${\cal C}'_{{\goth t}}$ is easily deduced from the 
result for the category ${\cal C}_{{\goth t}}$ by Corollary~\ref{csa1}.
One of the key argument in the proof is the consideration of the fixed points under the 
action of $R$ in $X_{R}$. As a matter of fact, since the closure of all orbit under $R$
in $X_{R}$ contains a fixed point, $X_{R}$ is Cohen-Macaulay if so are the fixed points 
by openness of the set of Cohen-Macaulay points. Then, by Serre's normality criterion, it
suffices to prove that $X_{R}$ is smooth in codimension $1$. For that purpose the 
consideration of the restriction to $X_{R}$ of the tautological vector bundle of rank 
$\dim {\goth t}$ over $\ec {Gr}r{}{}{\dim {\goth t}}$ is very useful.

For the study of the fixed points, we introduce Property $({\bf P})$ and Property
$({\bf P}_{1})$ for the objects of ${\cal C}'_{{\goth t}}$: 

\begin{itemize}
\item Property $({\bf P})$ for ${\goth a}$ in ${\cal C}'_{{\goth t}}$ says that for $V$ 
in $X_{R}$, contained in the centralizer ${\goth r}^{s}$ of an element $s$ of 
${\goth t}$, $V$ is in the closure of the orbit of ${\goth t}$ under the centralizer 
$R^{s}$ of $s$ in $R$,
\item Property $({\bf P}_{1})$ for ${\goth a}$ in ${\cal C}'_{{\goth t}}$ says that 
for $V$ in $X_{R}$ normalized by ${\goth t}$ and such that $V\cap {\goth t}$ is the
center of ${\goth r}$, then the non zero weights of ${\goth t}$ in $V$ are linearly 
independent.
\end{itemize}

Property $({\bf P}_{1})$ for ${\goth a}$ results from Property $({\bf P})$ for 
${\goth a}$ and Property $({\bf P})$ for ${\goth a}$ results from Property $({\bf P}_{1})$
for ${\goth a}$ and Property $({\bf P})$ for the objects of ${\cal C}'_{{\goth t}}$ of 
dimension smaller than $\dim {\goth a}$. So, the main result for the objects of 
${\cal C}'_{{\goth t}}$ is the following proposition:

\begin{prop}\label{pint}
The objects of ${\cal C}'_{{\goth t}}$ have Property $({\bf P})$.
\end{prop}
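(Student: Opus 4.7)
I would argue by strong induction on $\dim {\goth a}$. The base case $\dim {\goth a}=\dim {\goth t}$ is immediate: the object ${\goth a}$ is commutative, hence ${\goth r}={\goth t}+{\goth a}$ is commutative, the $R$-action on the Grassmannian of $\dim {\goth t}$-dimensional subspaces of ${\goth r}$ is trivial, $X_R$ reduces to the single point $\{{\goth t}\}$, and Property $({\bf P})$ holds tautologically.

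For the inductive step, assume Property $({\bf P})$ for every object of ${\cal C}'_{{\goth t}}$ of dimension strictly less than $\dim {\goth a}$. By the second implication recalled in the introduction, Property $({\bf P})$ for ${\goth a}$ will follow once Property $({\bf P}_{1})$ for ${\goth a}$ is established, so the heart of the inductive step is the proof of $({\bf P}_{1})({\goth a})$ from the inductive hypothesis alone.

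For $({\bf P}_{1})({\goth a})$, I would reason by contradiction. Let $V\in X_R$ be normalized by ${\goth t}$, with $V\cap {\goth t}$ equal to the center of ${\goth r}$, and write $V=(V\cap {\goth t})\oplus \bigoplus_{i=1}^{k} V_{\chi_{i}}$ for the ${\goth t}$-weight decomposition, where $\chi_{1},\ldots,\chi_{k}$ are the nonzero weights. A nontrivial linear relation among $\chi_{1},\ldots,\chi_{k}$ produces a nonzero $s\in {\goth t}$, not in the center of ${\goth r}$, satisfying $\chi_{i}(s)=0$ for all $i$; hence $V\subset {\goth r}^{s}={\goth t}+{\goth a}^{s}$ with ${\goth a}^{s}$ a proper object of ${\cal C}'_{{\goth t}}$. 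The crucial step is then the upgrade from ``$V\in X_{R}$ with $V\subset {\goth r}^{s}$'' to ``$V\in X_{R^{s}}$''; once granted, the inductive Property $({\bf P})$ applied to ${\goth a}^{s}$ places $V$ in the closure of $R^{s}\cdot {\goth t}$ inside the Grassmannian of ${\goth r}^{s}$, and comparison with the hypothesis that $V\cap {\goth t}$ is the center of ${\goth r}$ (and not merely of ${\goth r}^{s}$) yields the required contradiction.

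The main obstacle is precisely this upgrade, since the $R$-orbit of ${\goth t}$ generically leaves ${\goth r}^{s}$. I would attack it using the tools highlighted in the introduction: every orbit closure in $X_{R}$ contains an $R$-fixed point, so the ${\rm G}_{\rm m}$-action coming from $s$ lets one degenerate $V$ to a fixed point while staying inside the Grassmannian of ${\goth r}^{s}$, and the restriction to $X_{R}$ of the tautological rank-$\dim {\goth t}$ bundle controls how subspaces containing $V\cap {\goth t}$ propagate in families; the degenerate fibre then falls within the reach of the inductive hypothesis, and the path can be lifted back to an $R^{s}$-orbit. Carrying out this degeneration rigorously, and checking at each stage that ${\goth a}^{s}$ genuinely belongs to ${\cal C}'_{{\goth t}}$ so that the inductive hypothesis applies, is the technical core of the argument.
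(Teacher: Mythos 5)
Your overall skeleton (induction on $\dim\mathfrak{a}$, reduce $(\mathbf{P})$ to $(\mathbf{P}_1)$, then derive $(\mathbf{P}_1)$ by contradiction from a nontrivial linear relation among the nonzero weights of a $\mathbf{T}$-fixed $V$) matches the paper's organisation, but both the base case and the crucial step of the inductive argument are wrong as stated.

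\emph{Base case.} From $\dim\mathfrak{a}=\dim\mathfrak{t}$ (more precisely $\dim\mathfrak{a}=d^{\#}$) one does get that $\mathfrak{a}$ is commutative, but this does \emph{not} make $\mathfrak{r}=\mathfrak{t}+\mathfrak{a}$ commutative: by Condition (1), $0$ is not a weight, so $[\mathfrak{t},\mathfrak{a}]\neq\{0\}$. Consequently $R$ acts nontrivially and $X_R$ is far from a single point — by Lemma~\ref{l2sa1}(ii) it is the union of the strata $X_{R,I}$ over $I\subset\{1,\dots,d^{\#}\}$. The paper's base case (Lemma~\ref{lsa3}) is a genuine computation using this stratification; your tautological argument does not establish it.

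\emph{Inductive step.} The ``upgrade'' you isolate — passing from ``$V\in X_R$ with $V\subset\mathfrak{r}^s$'' to ``$V\in\overline{R^s.\mathfrak{t}}$'' — \emph{is} precisely Property $(\mathbf{P})$ for $\mathfrak{a}$ itself applied to the pair $(V,s)$; it is not a consequence of $(\mathbf{P})$ for $\mathfrak{a}^s$, since that statement concerns elements already known to lie in $\overline{R^s.\mathfrak{t}}$. The paper's implication $(\mathbf{P}_1)\Rightarrow(\mathbf{P})$ (Corollary~\ref{csa6}) depends essentially on $(\mathbf{P}_1)$ via Lemma~\ref{lsa6}(iv), so using the upgrade on the way to proving $(\mathbf{P}_1)$ is circular. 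Moreover the degeneration you propose is vacuous in this situation: $V$ is already $\mathbf{T}$-fixed, so the $\mathrm{G}_\mathrm{m}\subset\mathbf{T}$ action generated by $s$ does nothing to it. The paper avoids this by not trying to place $V$ in $\overline{R^s.\mathfrak{t}}$ at all; instead it lifts the $\mathbf{T}$-fixed $V$ to a $\mathbf{T}$-fixed quadruple $(V_1,V',V,W)$ in the auxiliary variety $\Gamma$ of Subsection~\ref{sa5}, applies the already-established $(\mathbf{P}_1)$ for the codimension-one ideal $\mathfrak{a}'$ to force $V'\neq V$, and then invokes Corollary~\ref{c2sa5} — itself the outcome of the curve-degeneration machinery of Lemmas~\ref{l2sa5}--\ref{l4sa5} — to conclude $V'=V$, a contradiction. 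Nothing in your sketch substitutes for this machinery.
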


From this proposition, we deduce some structure property for the points of $X_{R}$.

The second part of Theorem~\ref{tint}, that is Gorensteinness property and Rational 
singularities, is obtained by considering a subcategory ${\cal C}_{{\goth t},*}$ of 
${\cal C}_{{\goth t}}$. This category is defined by an additional condition on 
the objects. The main point for ${\goth a}$ in ${\cal C}_{{\goth t},*}$ is the following 
result:

\begin{prop}\label{p2int}
Let $k\geq 2$ be an integer. Denote by ${\cal E}^{(k)}$ the $R$-equivariant vector 
subbundle of $X_{R}\times {\goth r}^{k}$ whose fiber at ${\goth t}$ is ${\goth t}^{k}$.
Then there exists on the smooth locus of ${\cal E}^{(k)}$ a regular differential form of 
top degree without zero. 
\end{prop}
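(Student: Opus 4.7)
The strategy is to reduce the problem to the existence of a nowhere-zero global section of an $R$-equivariant line bundle on $X_R^{\loc}$, construct a rational candidate using the $R$-action on the open orbit, and then control its zeros and poles along the boundary by induction on $\dim {\goth a}$.

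Let $\pi : {\cal E}^{(k)} \to X_R$ be the bundle projection. Because ${\cal E}^{(k)}$ is locally trivial over $X_R^{\loc}$ and singular above the singular locus of $X_R$, the smooth locus of ${\cal E}^{(k)}$ equals $\pi^{-1}(X_R^{\loc})$. The relative cotangent sequence
$$
0 \longrightarrow \pi^{*}\Omega^{1}_{X_R}\longrightarrow \Omega^{1}_{{\cal E}^{(k)}}\longrightarrow \pi^{*}({\cal E}^{(k)})^{\vee} \longrightarrow 0
$$
gives, on the smooth locus, the canonical identification $\omega_{{\cal E}^{(k)}} \simeq \pi^{*}\bigl(\omega_{X_R}\otimes \det({\cal E})^{-k}\bigr)$. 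Hence it suffices to exhibit a nowhere-zero section of the $R$-equivariant line bundle ${\cal L}_k := \omega_{X_R}\otimes \det({\cal E})^{-k}$ on $X_R^{\loc}$.

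To produce such a section I would work first on the open orbit $R\cdot {\goth t}$. The stabilizer $R_{{\goth t}}$ has Lie algebra ${\goth r}_{{\goth t}} = {\goth t}+{\goth a}^{{\goth t}}$ and acts trivially on ${\goth t}$, so its character on $\det({\cal E})^{-k}_{{\goth t}} = \det({\goth t})^{-k}$ is trivial, while its character on $\omega_{X_R,{\goth t}} = \det({\goth a}/{\goth a}^{{\goth t}})^{*}$ is minus the sum of the non-zero ${\goth t}$-weights on ${\goth a}$. The extra condition defining ${\cal C}_{{\goth t},*}$ should be precisely designed to ensure that this character extends to a character of $R$, which then yields an $R$-semi-invariant rational section $s_k$ of ${\cal L}_k$ that is regular and nowhere zero on $R\cdot {\goth t}$.

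The hard part will be to show that $s_k$ remains regular and nowhere zero along every prime divisor $D\subset X_R$ meeting $X_R^{\loc}$ and not contained in $R\cdot {\goth t}$. Here I would appeal to Proposition~\ref{pint}: by Property $({\bf P})$, the generic point $V$ of such a $D$ is contained in the centralizer ${\goth r}^{s}$ of some $s\in {\goth t}$ and lies in the closure of the $R^{s}$-orbit of ${\goth t}$, so that $D$ is dominated by the analogous variety $X_{R^{s}}$ attached to the strictly smaller object ${\goth a}^{s}\subset {\goth a}$. An induction on $\dim {\goth a}$ would then reduce the order of vanishing of $s_k$ along $D$ to the same proposition applied to ${\goth a}^{s}$, the base case $\dim {\goth a} = \dim {\goth t}$ (${\goth a}$ commutative) being a direct check. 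The key subtlety, and the most delicate point I expect, is to verify that membership in ${\cal C}_{{\goth t},*}$ is inherited by centralizers ${\goth a}^{s}$ for $s\in {\goth t}$, so that the inductive hypothesis genuinely applies at every boundary divisor.
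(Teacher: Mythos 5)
Your opening reduction is correct and clean: since ${\cal E}^{(k)}\to X_R$ is a vector bundle, the smooth locus of ${\cal E}^{(k)}$ is $\pi^{-1}(X_{R,\loc})$ and $\omega_{{\cal E}^{(k)}}\simeq\pi^*(\omega_{X_R}\otimes(\det\mathcal{E})^{-k})$, so one is asking whether that line bundle is trivial on $X_{R,\loc}$. But from that point on your plan diverges from the paper entirely, and the parts you flag as ``designed for'' or ``the key subtlety'' rest on misreadings. First, Condition (4) defining ${\cal C}_{{\goth t},*}$ is not a character-extension condition at all -- and indeed there is no obstruction there: since ${\goth a}\in{\cal C}_{{\goth t}}$ forces ${\goth z}=0$, the group $R$ is a semidirect product ${\bf T}\ltimes A$ with $A$ unipotent, so \emph{every} character of ${\bf T}$ extends trivially to $R$. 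Condition (4) is a Kostant-section hypothesis: the existence of polynomial maps $\varepsilon_1,\dots,\varepsilon_d:{\goth r}\to{\goth r}$ whose values form a basis of ${\goth r}^x$ for all regular $x$. The paper uses it to build an explicit isomorphism $\theta:{\goth r}_{\r}\times{\rm M}_{k-1,d}(\k)\xrightarrow{\sim}V_1^{(k)}$ inside the affine birational model ${\goth X}_{R,k}\subset{\goth r}^k$ (Lemma~\ref{lrss1}), not to extend a character.

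Second, your inductive scheme via Property $({\bf P})$ and centralizers ${\goth a}^s$ bears no relation to the paper's argument. The paper never inducts on $\dim{\goth a}$ at this step. Instead it shows that $\tau_k^{-1}(V^{(k)})$, with $V^{(k)}=V_1^{(k)}\cup V_2^{(k)}$, is a big smooth open subset of ${\cal E}^{(k)}$ (Lemma~\ref{lrss2}), constructs the form there by gluing the two explicit forms coming from $\theta$, and controls the gluing cocycle $\psi$ on $V_{1,2}^{(k)}$ through a delicate analysis (Lemma~\ref{l2rss1}, Corollary~\ref{crss1}, Lemma~\ref{l3rss1}, Proposition~\ref{prss1}) that extracts its weight exponents $m_\alpha$ on a toric-like slice and shows they vanish; the extension to all of $({\cal E}^{(k)})_{\loc}$ then follows from normality (Lemma~\ref{lars}). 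In your plan, the crucial step -- showing that the $R$-semi-invariant rational section of $\omega_{X_R}\otimes(\det\mathcal{E})^{-k}$ has order of vanishing zero along each boundary divisor $X_{R,\alpha}$ -- is entirely unaddressed. Property $({\bf P})$ tells you the generic point of a boundary divisor is conjugate into some centralizer ${\goth r}^s$, but it does not by itself control a vanishing order, and you offer no mechanism (a local computation near the divisor, an adjunction identity, a comparison of local triviality data) to translate the inductive hypothesis on ${\goth a}^s$ into the statement about $X_R$. Moreover the closure of ${\cal C}_{{\goth t},*}$ under passage to centralizers ${\goth a}^s$ -- which you correctly flag as a worry -- is not established by Lemma~\ref{lrss} (which handles only certain \emph{ideals}), so the induction is doubly unsupported. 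In short: your reduction to a line-bundle statement is valid and would be a reasonable alternative starting point, but the rest of the plan misidentifies what the *-condition is for and replaces the paper's explicit cocycle computation by a speculative induction with essential steps missing.
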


From Proposition~\ref{p2int} and Theorem~\ref{t2int}, we deduce that ${\cal E}^{(k)}$ 
and $X_{R}$ are Gorenstein with rational singularities.

This note is organized as follows. In Section~\ref{sa}, categories ${\cal C}'_{{\goth t}}$
and ${\cal C}_{{\goth t}}$ are introduced for some space ${\goth t}$. In particular, 
${\goth u}$ is an object of ${\cal C}_{{\goth h}}$. In Subsection~\ref{sa3}, we define 
Property $({\bf P})$ for the objects of ${\cal C}'_{{\goth t}}$ and we deduce some
result on the structure of points of $X_{R}$. In Subsection~\ref{sa4}, we define
Property $({\bf P}_{1})$ for the objects of ${\cal C}'_{{\goth t}}$ and we prove
that Property $({\bf P})_{1}$ is a consequence of Property $({\bf P})$. In 
Subsection~\ref{sa5}, we give some geometric constructions to prove 
Property $({\bf P})$ by induction on the dimension of ${\goth a}$. At last, in
Subsection~\ref{sa6}, we prove Proposition~\ref{pint}. In particular, the proof of 
\cite[Lemma 4.4,(i)]{CZ} is completed. In Section~\ref{sav}, we are interested 
in the singular locus of $X_{R}$. In Subsection~\ref{sav3}, regularity in codimension $1$
is proved with some additional properties analogous to those of~\cite[Section 3]{CZ}.
Moreover, the constructions of Subsection~\ref{sa5} are used to prove the results by 
induction on the dimension of ${\goth a}$. In Section~\ref{ns}, Cohen-Macaulayness 
property is proved by induction. In Section~\ref{rss}, the category 
${\cal C}_{{\goth t},*}$ is introduced and Proposition~\ref{p2int} is proved. Then with 
some results given in the appendix, we finish the proof of Theorem~\ref{tint}.

\subsection{Notations} \label{int2}
$\bullet$ An algebraic variety is a reduced scheme over $\k$ of finite type. For $X$ an
algebraic variety, its smooth locus is denoted by $X_{\loc}$.

$\bullet$ Set $\k^{*} := \k \setminus \{0\}$. For $V$ a vector space, its dual is 
denoted by $V^{*}$. 

$\bullet$
All topological terms refer to the Zariski topology. If $Y$ is a subset of a topological
space $X$, denote by $\overline{Y}$ the closure of $Y$ in $X$. For $Y$ an open subset
of the algebraic variety $X$, $Y$ is called {\it a big open subset} if the codimension
of $X\setminus Y$ in $X$ is at least $2$. For $Y$ a closed subset of an algebraic 
variety $X$, its dimension is the biggest dimension of its irreducible components and its
codimension in $X$ is the smallest codimension in $X$ of its irreducible components. For 
$X$ an algebraic variety, $\an X{}$ is its structural sheaf, $\k[X]$ is the algebra of 
regular functions on $X$, $\k(X)$ is the field of rational functions on $X$ when $X$ 
is irreducible and $\Omega _{X}$ is the sheaf of regular differential forms of top
degree on $X$ when $X$ is smooth and irreducible.

$\bullet$
If $E$ is a subset of a vector space $V$, denote by span($E$) the vector subspace of
$V$ generated by $E$. The grassmannian of all $d$-dimensional subspaces of $V$ is denoted
by Gr$_d(V)$. 

$\bullet$ For ${\goth a}$ a Lie algebra,$V$ a subspace of ${\goth a}$ and $x$ in 
${\goth a}$, $V^{x}$ denotes the centralizer of $x$ in $V$. For $A$ a subgroup of 
the group of automorphisms of ${\goth a}$, $A^{x}$ denotes the centralizer of $x$ in 
$A$. An element $x$ of ${\goth g}$ is regular if ${\goth g}^{x}$
has dimension $\rg$ and the set of regular elements of ${\goth g}$ is denoted by 
${\goth g}_{\r}$.

$\bullet$ The Lie algebra of an algberaic torus is also called a torus. In this note, 
a torus denoted by a gothic letter means the Lie algebra of an algebraic torus. 

$\bullet$ For ${\goth a}$ a Lie algebra, the Lie algebra of derivations of 
${\goth a}$ is denoted by ${\mathrm {Der}}({\goth a})$. By definition 
${\mathrm {Der}({\goth a})}$ is the Lie algebra of the group ${\mathrm {Aut}({\goth a})}$
of the automorphisms of ${\goth a}$.

$\bullet$
Let ${\goth b}$ be a Borel subalgebra of ${\goth g}$, ${\goth h}$ a Cartan 
subalgebra of ${\goth g}$ contained in ${\goth b}$ and ${\goth u}$ the nilpotent 
radical of ${\goth b}$.  

\section{On solvable algebras} \label{sa}
Let ${\goth t}$ be a vector space of positive dimension $d$. Denote by 
$\tilde{{\cal C}}_{{\goth t}}$ the subcategory of the category of finite dimensional
Lie algebras whose objects are finite dimensional nilpotent Lie algebras 
${\goth a}$ such that there exists a morphism 
$$\xymatrix{ {\goth t} \ar[rr]^{\varphi _{{\goth a}}} && {\mathrm {Der}({\goth a})}}$$
whose image is the Lie algebra of a subtorus of ${\mathrm {Aut}}({\goth a})$. For 
${\goth a}$ and ${\goth a}'$ in $\tilde{{\cal C}}_{{\goth t}}$, a
morphism $\psi $ from ${\goth a}$ to ${\goth a}'$ is a morphism of Lie algebras such 
that $\psi \rond \varphi _{{\goth a}}(t)= \varphi _{{\goth a}'}(t)\rond \psi $ for all
$t$ in ${\goth t}$. For $x$ in ${\goth t}$, $x$ is a semisimple derivation of 
${\goth a}$. Denote by ${\cal R}_{{\goth t},{\goth a}}$ the set of weights of ${\goth t}$
in ${\goth a}$. Let ${\cal C}'_{{\goth t}}$ be the full subcategory of objects 
${\goth a}$ of $\tilde{{\cal C}}_{{\goth t}}$ verifying the following conditions:
\begin{itemize}
\item [{\rm (1)}] $0$ is not in ${\cal R}_{{\goth t},{\goth a}}$,
\item [{\rm (2)}] for $\alpha $ in ${\cal R}_{{\goth t},{\goth a}}$, the weight space of 
weight $\alpha $ has dimension $1$,
\item [{\rm (3)}] for $\alpha $ in ${\cal R}_{{\goth t},{\goth a}}$, 
$\k \alpha \cap ({\cal R}_{{\goth t},{\goth a}}\setminus \{\alpha \})$ is empty.
\end{itemize}
For ${\goth a}$ in ${\cal C}'_{{\goth t}}$ and ${\goth a}'$ a subalgebra of 
${\goth a}$, invariant under the adjoint action of ${\goth t}$, ${\goth a}'$ is in 
${\cal C}'_{{\goth t}}$. Denote by ${\cal C}_{{\goth t}}$ the full subcategory of objects
${\goth a}$ of ${\cal C}'_{{\goth t}}$ such that $\varphi _{{\goth a}}$ is an embedding.
For example ${\goth u}$ is in ${\cal C}_{{\goth h}}$. 

For ${\goth a}$ in $\tilde{{\cal C}}_{{\goth t}}$, denote by 
${\goth r}_{{\goth t},{\goth a}}$ the solvable algebra ${\goth t}+{\goth a}$, 
$\pi _{{\goth t},{\goth a}}$ the quotient morphism from ${\goth r}_{{\goth t},{\goth a}}$
to ${\goth t}$, $R_{{\goth t},{\goth a}}$ the adjoint group of 
${\goth r}_{{\goth t},{\goth a}}$, $A_{{\goth t},{\goth a}}$ the connected 
closed subgroup of $R_{{\goth t},{\goth a}}$ whose Lie algebra is $\ad {\goth a}$, 
$X_{R_{{\goth t},{\goth a}}}$ the closure in $\ec {Gr}r{}{{\goth t},{\goth a}}{d}$ of the
orbit of ${\goth t}$ under $R_{{\goth t},{\goth a}}$ and ${\cal E}_{{\goth t},{\goth a}}$
the restriction to $X_{R_{{\goth t},{\goth a}}}$ of the tautological vector bundle over 
$\ec {Gr}r{}{{\goth t},{\goth a}}d$. The variety $X_{R_{{\goth t},{\goth a}}}$ is called
{\it the main variety related to ${\goth r}_{{\goth t},{\goth a}}$}. For $\alpha $ in 
${\cal R}_{{\goth t},{\goth a}}$, let ${\goth a}^{\alpha }$ be the weight space of weight
$\alpha $ under the action of ${\goth t}$ in ${\goth a}$. 

In the following subsections, a vector space ${\goth t}$ of positive dimension $d$ and an 
object ${\goth a}$ of ${\cal C}'_{{\goth t}}$ are fixed. We set:
$$ {\cal R} := {\cal R}_{{\goth t},{\goth a}}, \qquad 
{\goth r} := {\goth r}_{{\goth t},{\goth a}} \qquad \pi := \pi _{{\goth t},{\goth a}},
\qquad R := R_{{\goth t},{\goth a}}, \qquad A := A_{{\goth t},{\goth a}}, \qquad 
n := \dim {\goth a}.$$
Let ${\goth z}$ be the orthogonal complement of ${\cal R}$ in ${\goth t}$ and $d^{\#}$ 
its codimension in ${\goth t}$. Then $n\geq d^{\#}$.

\subsection{General remarks on ${\cal C}'_{{\goth t}}$} \label{sa1}
For $x$ in ${\goth r}$, we say that $x$ is semisimple if so is $\ad x$ and $x$ is 
nilpotent if so is $\ad x$. For ${\goth s}$ a commutative subalgebra 
of ${\goth r}$, we say that ${\goth s}$ is a torus if $\ad {\goth s}$ is the Lie algebra 
of a subtorus of ${\mathrm {GL}}({\goth r})$.  

\begin{lemma}\label{lsa1}
Let $x$ be in ${\goth r}$ and ${\goth s}$ a commutative subalgebra of ${\goth r}$.

{\rm (i)} The center of ${\goth r}$ is equal to ${\goth z}$.  

{\rm (ii)} The element $x$ is semisimple if and only if $R.x\cap {\goth t}$ is not empty.

{\rm (iii)} The element $x$ is nilpotent if and only if $x$ is in ${\goth z}+{\goth a}$.

{\rm (iv)} The algebra ${\goth a}$ is in ${\cal C}_{{\goth t}}$ if and only if 
${\goth z}=\{0\}$. In this case, $x$ has a unique decomposition $x=x_{\s}+x_{\n}$ with 
$[x_{\s},x_{\n}]=0$, $x_{\s}$ semisimple and $x_{\n}$ nilpotent.

{\rm (v)} The algebra ${\goth s}$ is a torus if and only if 
${\goth s}\cap {\goth a} = \{0\}$ and $\pi ({\goth s})$ is a subtorus of ${\goth t}$. In 
this case, ${\goth s}$ and $\pi ({\goth s})$ are conjugate under $R$.
\end{lemma}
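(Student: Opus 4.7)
The plan is to address the five parts in an order that exploits their interdependencies: (i), (iii) and the first assertion of (iv) are elementary and come from the weight decomposition of ${\goth a}$; (ii) is proved by induction on $n$ through a central ideal; the Jordan decomposition in (iv) then follows; and (v) is obtained as a simultaneous version of (ii) exploiting the commutativity of ${\goth s}$.

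For (i), write a central element $x$ as $t+a$ with $t\in{\goth t}$, $a\in{\goth a}$: bracketing with every $t'\in{\goth t}$ gives $\varphi_{\goth a}(t')(a)=0$, and since $0\notin{\cal R}$ this forces $a=0$; then bracketing $t$ with a weight vector of weight $\alpha$ yields $\alpha(t)=0$, so $t\in{\goth z}$. The converse is immediate. For (iii), the ideal ${\goth a}$ is $\ad x$-stable and $\ad x$ is zero on ${\goth r}/{\goth a}\simeq{\goth t}$, while its matrix in a weight basis of ${\goth a}$ ordered compatibly with the descending central series is upper triangular with diagonal $(\alpha(t))_{\alpha\in{\cal R}}$; nilpotency of $\ad x$ is therefore equivalent to $t\in{\goth z}$. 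The equivalence ${\goth a}\in{\cal C}_{{\goth t}}\iff{\goth z}=\{0\}$ in (iv) is just the identification $\ker\varphi_{\goth a}={\goth z}$.

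For (ii), the direction ($\Leftarrow$) is clear, since any element of ${\goth t}$ acts semisimply via $\varphi_{\goth a}$. For ($\Rightarrow$) I induct on $n$: the center of the nonzero nilpotent algebra ${\goth a}$ is nonzero and ${\goth t}$-stable, so by condition~(2) it contains a one-dimensional weight subspace ${\goth c}$ of some weight $\alpha$, which is an ideal of ${\goth r}$. The induction hypothesis applied to ${\goth r}/{\goth c}$ (still an object of ${\cal C}'_{\goth t}$) produces $b\in{\goth a}$ with $\exp(\ad b)(x)=t+c$, $c\in{\goth c}$, where $t=\pi(x)$. Since conjugation preserves semisimplicity, $t+c$ is semisimple; a direct block computation of $\ad(t+c)$ on ${\goth r}={\goth t}\oplus{\goth a}$ (using that ${\goth c}$ is central in ${\goth a}$) shows this forces either $c=0$ or $\alpha(t)\neq 0$, and in the latter subcase choosing $b_0\in{\goth c}$ with $\varphi_{\goth a}(t)(b_0)=c$ gives $\exp(\ad b_0)(t+c)=t$, all higher terms of the exponential vanishing by centrality of ${\goth c}$.

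For the Jordan decomposition in (iv), the assumption ${\goth z}=\{0\}$ gives $Z({\goth r})=\{0\}$ by (i), so $\ad$ is injective; the semisimple and nilpotent parts of $\ad x\in\mathrm{End}({\goth r})$ are inner derivations because ${\goth r}$ is the Lie algebra of the algebraic group $R$, and descending them through $\ad$ yields $x=x_{\s}+x_{\n}$ with the asserted properties (semisimplicity from (ii), nilpotency from (iii), uniqueness from the uniqueness of the Jordan decomposition of $\ad x$). For (v), the inclusion ${\goth s}\cap{\goth a}=\{0\}$ follows because an element of ${\goth s}\cap{\goth a}$ is simultaneously semisimple and nilpotent, hence has $\ad=0$, hence lies in ${\goth z}\cap{\goth a}=\{0\}$. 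The remaining assertions—that $\pi({\goth s})$ is a subtorus and that ${\goth s}$ is $R$-conjugate to $\pi({\goth s})$—are obtained by upgrading the induction of (ii) to the commutative family ${\goth s}$: after reduction modulo a well-chosen central weight ideal ${\goth c}$ of weight $\alpha$ (chosen, when possible, so that $\alpha$ is nontrivial on $\pi({\goth s})$), one describes ${\goth s}$ as a graph $s\mapsto\pi(s)+c(s)$, and the commutativity relation $[s,s']=0$ yields $\alpha(\pi(s))c(s')=\alpha(\pi(s'))c(s)$—precisely the proportionality needed to choose a single $b_0\in{\goth c}$ whose conjugation cancels all $c(s)$ at once. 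I expect the choice of ${\goth c}$ and the handling of the degenerate case where no central weight of ${\goth a}$ is active on $\pi({\goth s})$ to be the main technical step.
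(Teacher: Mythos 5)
Your parts (i), (iii), and (iv) are correct and close to the paper's argument; for (iii) you replace the paper's remark that $\ad{\goth a}$ is the nilpotent part of the algebraic solvable algebra $\ad{\goth r}$ with an explicit triangularization, which is fine.

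Part (ii) is a genuinely different route. The paper simply invokes the classical conjugacy theorem for solvable algebraic Lie algebras (cited from Humphreys, Ch.~VII): a semisimple element of $\ad{\goth r}$ is conjugate under $R$ into the maximal torus $\ad{\goth t}$. You replace that citation with a self-contained induction through a central weight line ${\goth c}\subset Z({\goth a})$, together with an explicit Jordan-block analysis of $\ad(t+c)$ that forces either $c=0$ or $\alpha(t)\neq 0$, in which case a single $\exp(\ad b_0)$ with $b_0\in{\goth c}$ removes $c$. This works and is more elementary, at the cost of length and of having to redo the reduction for (v); the paper's approach gets (ii) and the forward implication of (v) from the same one-line citation.

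Part (v) is where your proposal is incomplete. First, you announce that the ``degenerate case'' (all central weights trivial on $\pi({\goth s})$) is ``the main technical step'' but leave it unresolved; in fact it is immediate from the same Jordan-block computation used in (ii) --- if $\alpha(\pi(s))=0$ and $c(s)\neq 0$, then $\ad(\pi(s)+c(s))$ has a nontrivial Jordan block on ${\goth t}\oplus{\goth c}$, contradicting semisimplicity, so $c(s)=0$ automatically and no conjugation is even needed --- so you should spell this out rather than defer it. Second, and more substantially, you never address the converse implication of (v): that ${\goth s}\cap{\goth a}=\{0\}$ together with $\pi({\goth s})$ being a subtorus implies ${\goth s}$ is a torus. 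The paper handles this direction by the same Humphreys reference used in the forward direction; having discarded that reference, you need to supply an argument, and none is given. As it stands (v) proves only one implication and leaves the other as a genuine gap.
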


\begin{proof}
By definition $\ad {\goth r}_{{\goth t},{\goth a}}$ is an algebraic solvable 
subalgebra of ${\goth {gl}}({\goth r}_{{\goth t},{\goth a}})$ and $\ad {\goth t}$ is 
a maximal subtorus of $\ad {\goth r}_{{\goth t},{\goth a}}$.

(i) Let ${\goth z}'$ be the center of ${\goth r}$. As $[{\goth t},{\goth z}']=\{0\}$, 
$$ {\goth z}' = {\goth z}'\cap {\goth t} \oplus 
\bigoplus _{\alpha \in {\cal R}} {\goth z}'\cap {\goth a}^{\alpha }. $$
So, by Condition (1), ${\goth z}'$ is contained in ${\goth t}$. For $t$ in ${\goth t}$, 
$t$ is in ${\goth z}'$ if and only if $\alpha (t)=0$ for all $\alpha $ in 
${\cal R}_{{\goth t},{\goth a}}$, whence ${\goth z}'={\goth z}$.

(ii) As the elements of ${\goth t}$ are semisimple by defintion, the condition is 
sufficient since the set of semisimple elements of ${\goth r}$ is invariant under the 
adjoint action of $R$. Suppose that $x$ is semisimple. By ~\cite[Ch. VII]{Hu}, for some 
$g$ in $R$, $\Ad g(x)$ is in $\ad {\goth t}$, whence $g(x)$ is in ${\goth t}$ by (i).

(iii) As $\ad {\goth a}$ is the set of nilpotent elements of 
$\ad {\goth r}$, $x$ is in ${\goth z}+{\goth a}$ if and only if it is nilpotent by (i).

(iv) By definition, ${\goth z}$ is the kernel of $\varphi _{{\goth a}}$. Hence 
${\goth z}=\{0\}$ if and only if ${\goth a}$ is in ${\cal C}_{{\goth t}}$. As 
$\ad {\goth r}$ is an algebraic subalgebra of ${\mathrm {gl}}({\goth r})$, it contains 
the components of the Jordan decomposition of $\ad x$. As a result, when ${\goth a}$ is 
in ${\cal C}_{{\goth t}}$, $x$ has a unique decomposition $x=x_{\s}+x_{\n}$ with 
$[x_{\s},x_{\n}]=0$, $x_{\s}$ semisimple and $x_{\n}$ nilpotent. 

(v) Suppose that ${\goth s}$ is a torus. By (i), ${\goth s}\cap {\goth a}=\{0\}$ and 
by~\cite[Ch. VII]{Hu}, for some $g$ in $R$, $\ad g({\goth s})$
is contained in $\ad {\goth t}$ since $\ad {\goth t}$ is a maximal torus of 
$\ad {\goth r}$. Then, by (i), $g({\goth s})$ is a subtorus of ${\goth t}$. Moreover, 
$g({\goth s})=\pi ({\goth s})$ since $g(y)-y$ is in ${\goth a}$ for all $y$ in 
${\goth r}$. Conversely, if ${\goth s}\cap {\goth a} = \{0\}$ and $\pi ({\goth s})$ is a 
subtorus of ${\goth t}$, $\ad {\goth s}$ is conjugate to the subtorus  
$\ad \pi ({\goth s})$ of $\ad {\goth t}$ by~\cite[Ch. VII]{Hu} so that ${\goth s}$ and 
$\pi ({\goth s})$ are conjugate under $R$.
\end{proof}

Denoting by ${\goth t}^{\#}$ a complement to ${\goth z}$ in ${\goth t}$, ${\goth a}$ is 
an object of ${\cal C}_{{\goth t}^{\#}}$ since 
$\varphi _{{\goth a}}({\goth t})=\varphi _{{\goth a}}({\goth t}^{\#})$ and the 
restriction of $\varphi _{{\goth a}}$ to ${\goth t}^{\#}$ is injective. Set
${\goth r}^{\#} := {\goth t}^{\#}+{\goth a}$ and denote by $R^{\#}$ the adjoint group
of ${\goth r}^{\#}$. Let $X_{R^{\#}}$ be the closure in 
$\ec {Gr}{}{{{\goth r}^{\#}}}{}{{d^{\#}}}$ of the orbit of ${\goth t}^{\#}$ under
$R^{\#}$.

\begin{coro}\label{csa1}
All element of $X_{R}$ is a commutative algebra containing ${\goth z}$. Moreover, 
the map 
$$ \xymatrix{ X_{R^{\#}} \ar[rr] && X_{R}}, \qquad V \longmapsto V \oplus {\goth z}$$
is an isomorphism.
\end{coro}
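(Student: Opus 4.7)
The plan is to realise $X_R$ and $X_{R^{\#}}$ as the two sides of a single closed immersion of Grassmannians, with ${\goth z}$ playing the role of the ``fixed part'' of every point of $X_{R}$.

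\textbf{Step 1.} I would introduce the subset
$$ Y := \{V\in \ec {Gr}r{}{}{d}\,:\, {\goth z}\subset V \text{ and }[V,V]=0\} $$
and argue that $Y$ is closed in $\ec {Gr}r{}{}{d}$: the incidence ${\goth z}\subset V$ is a Schubert condition, while $[V,V]=0$ is cut out by the vanishing of the morphism of locally free sheaves $\bigwedge ^{2}{\cal E}\rightarrow {\cal O}\otimes {\goth r}$ induced by the bracket on the tautological bundle ${\cal E}$. Every $g\in R$ fixes ${\goth z}$ pointwise, since ${\goth z}$ is the centre of ${\goth r}$ by Lemma~\ref{lsa1}(i), and carries the commutative algebra ${\goth t}$ to a commutative subalgebra, so $R\cdot {\goth t}\subset Y$. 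Consequently $X_{R}\subset Y$, which is already the first statement of the corollary.

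\textbf{Step 2.} Since ${\goth z}\cap {\goth r}^{\#}=0$, the assignment
$$ \phi :\ec {Gr}{}{{{\goth r}^{\#}}}{}{{d^{\#}}}\longrightarrow \ec {Gr}r{}{}{d}, \qquad V\longmapsto V\oplus {\goth z}, $$
is a well-defined closed immersion with image $Y_{0}:=\{V:{\goth z}\subset V\}$, with inverse $\psi :V\mapsto V\cap {\goth r}^{\#}$ on $Y_{0}$: indeed, if ${\goth z}\subset V$, any $v\in V$ written $v=v^{\#}+z$ with $v^{\#}\in {\goth r}^{\#}$, $z\in {\goth z}$ satisfies $z\in V$, hence $v^{\#}\in V\cap {\goth r}^{\#}$, so $V=(V\cap {\goth r}^{\#})\oplus {\goth z}$. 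Next, I would check that $\phi $ is equivariant for the natural identification $R\simeq R^{\#}$: writing $x=x^{\#}+z\in {\goth r}$ with $x^{\#}\in {\goth r}^{\#}$ and $z\in {\goth z}$, one has $\ad x=\ad x^{\#}$, an operator that stabilises the subalgebra ${\goth r}^{\#}$ and annihilates ${\goth z}$. Hence every $g\in R$ decomposes as $g^{\#}\oplus {\rm id}_{{\goth z}}$ along ${\goth r}={\goth r}^{\#}\oplus {\goth z}$, and $g\mapsto g^{\#}$ is the expected group isomorphism $R\simeq R^{\#}$.

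\textbf{Step 3.} Combining everything, $\phi ({\goth t}^{\#})={\goth t}$ and equivariance give $\phi (R^{\#}\cdot {\goth t}^{\#})=R\cdot {\goth t}$; taking closures, $\phi (X_{R^{\#}})\subset X_{R}$. Conversely, Step~1 places $X_{R}$ inside the domain $Y_{0}$ of $\psi $, and continuity gives $\psi (X_{R})\subset \overline{\psi (R\cdot {\goth t})}=X_{R^{\#}}$. Hence $\phi $ restricts to the isomorphism $X_{R^{\#}}\simeq X_{R}$ claimed by the corollary.

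The only point requiring genuine care is the compatibility of the $R$- and $R^{\#}$-actions at the end of Step~2, which must be argued through the factorisation of $\ad $ along ${\goth r}/{\goth z}\simeq {\goth r}^{\#}$ together with the fact that ${\goth r}^{\#}$ is a subalgebra of ${\goth r}$; everything else is routine bookkeeping with incidence conditions on the Grassmannian.
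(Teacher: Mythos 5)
Your proposal is correct and follows essentially the same route as the paper's own proof: both arguments reduce everything to the observation that every $V\in X_{R}$ is commutative and contains ${\goth z}$ (via Lemma~\ref{lsa1}(i) and closedness and $R$-invariance of that incidence locus), exhibit $V\mapsto V\oplus {\goth z}$ as an isomorphism of sub-Grassmannians with inverse $V\mapsto V\cap {\goth r}^{\#}$, and carry the orbits $R^{\#}.{\goth t}^{\#}$ and $R.{\goth t}$ to one another using that every $g\in R$ acts trivially on ${\goth z}$ and preserves ${\goth r}^{\#}$. You are merely more explicit than the paper about the closed-immersion structure (Schubert condition, bracket map on the tautological bundle) and about the identification $R\simeq R^{\#}$, which the paper leaves implicit in the phrase ``restriction morphism.''
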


\begin{proof}
As the set of commutative subalgebras of dimension $d$ of ${\goth r}$ is a closed subset
of $\ec {Gr}r{}{}d$ containing ${\goth t}$ and invariant under $R$, all element of 
$X_{R}$ is a commutative algebra. According to Lemma~\ref{lsa1}(i), all element of 
$R.{\goth t}$ contains ${\goth z}$ and so does all element of $X_{R}$. For $g$ in 
$R$, denote by $\overline{g}$ the image of $g$ in $R^{\#}$ by the restriction morphism.
Then 
$$g({\goth t}) = \overline{g}({\goth t}^{\#})+{\goth z} \quad  \text{and} \quad 
\overline{g}({\goth t}^{\#}) = g({\goth t}) \cap {\goth r}^{\#} .$$ 
Hence the map
$$ \xymatrix{ X_{R^{\#}} \ar[rr] && X_{R}}, \qquad V \longmapsto V \oplus {\goth z}$$
is an isomorphism whose inverse is the map $V\mapsto V\cap {\goth r}^{\#}$.
\end{proof}

For ${\goth a}$ of dimension $d^{\#}$, 
${\cal R} := \{\poi {\beta }1{,\ldots,}{{d^{\#}}}{}{}{}\}$, and for $I$ subset of 
$\{1,\ldots,d^{\#}\}$, denote $X_{R,I}$ the image of $\k^{I}$ by the map
$$ \xymatrix{\k^{I} \ar[rr] && X_{R}}, \qquad 
(z_{i},\; i\in I) \longmapsto {\goth z}\oplus 
{\mathrm {span}}(\{t_{i}+z_{i}x_{i}, \; i\in I\}) \oplus 
\bigoplus _{i \not \in I} {\goth a}^{\beta _{i}}$$
with $x_{i}$ in ${\goth a}^{\beta _{i}}$ for $i=1,\ldots,d^{\#}$ and 
$\poi t1{,\ldots,}{d^{\#}}{}{}{}$ in ${\goth t}$ such that 
$\beta _{i}(t_{j})=\delta _{i,j}$ for $1\leq i,j\leq d^{\#}$, with $\delta _{i,j}$ the
Kronecker symbol. 

\begin{lemma}\label{l2sa1}
Suppose that ${\goth a}$ has dimension $d^{\#}$. Denote by 
$\poi {\beta }1{,\ldots,}{{d^{\#}}}{}{}{}$ the elements of ${\cal R}$. 

{\rm (i)} The algebra ${\goth a}$ is commutative.

{\rm (ii)} The set $X_{R}$ is the union of $X_{R,I}, \; I \subset \{1,\ldots,d^{\#}\}$.
\end{lemma}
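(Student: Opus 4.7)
The plan is to first handle (i) using a linear-independence observation, then in (ii) use commutativity to make the $R$-orbit completely explicit and compute its closure via one-parameter subgroups. The key observation is that the hypothesis $\dim {\goth a}=d^{\#}$ combined with Condition~(2) forces $\beta _{1},\ldots,\beta _{d^{\#}}$ to be linearly independent: indeed $|{\cal R}|=\dim {\goth a}=d^{\#}$ and by definition of ${\goth z}$, $d^{\#}$ equals the dimension of the span of ${\cal R}$ in ${\goth t}^{*}$, so the $d^{\#}$ weights form a basis of that span.

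For (i), the weight decomposition ${\goth a}=\bigoplus _{i}{\goth a}^{\beta _{i}}$ yields $[{\goth a}^{\beta _{i}},{\goth a}^{\beta _{j}}]\subset {\goth a}^{\beta _{i}+\beta _{j}}$. When $i=j$ the bracket vanishes by antisymmetry on the one-dimensional space ${\goth a}^{\beta _{i}}$; when $i\neq j$, linear independence of the $\beta _{k}$'s prevents $\beta _{i}+\beta _{j}$ from lying in ${\cal R}$, so ${\goth a}^{\beta _{i}+\beta _{j}}=0$. Hence ${\goth a}$ is commutative.

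For (ii), I would first invoke Corollary~\ref{csa1} to reduce to the case ${\goth z}=\{0\}$, so $d=d^{\#}$. Linear independence allows me to pick $t_{1},\ldots,t_{d}\in {\goth t}$ dual to the $\beta _{j}$'s, and I fix nonzero $x_{i}\in {\goth a}^{\beta _{i}}$. Since ${\goth a}$ is commutative by (i), $A$ is abelian and a direct computation gives $\exp(\ad (\sum _{k}s_{k}x_{k}))\cdot t_{j}=t_{j}-s_{j}x_{j}$, the series truncating because $[x_{k},x_{\ell}]=0$. Since ${\goth t}$ acts trivially on itself, $R\cdot {\goth t}=A\cdot {\goth t}$ coincides with $X_{R,\{1,\ldots,d\}}$ under the substitution $z_{i}:=-s_{i}$.

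To pass to the closure, for $I\subset \{1,\ldots,d\}$ I would choose $h\in {\goth t}$ with $\beta _{i}(h)=0$ for $i\in I$ and $\beta _{i}(h)>0$ for $i\notin I$. The element $\exp(s\ad h)\in R$ sends a basis row $t_{i}-s_{i}x_{i}$ of an orbit point to $t_{i}-e^{s\beta _{i}(h)}s_{i}x_{i}$; rescaling the rows with $i\notin I$ by $(e^{s\beta _{i}(h)}s_{i})^{-1}$ (which does not alter the spanned subspace) produces rows tending to $-x_{i}$ as $s\to +\infty$, while rows with $i\in I$ are unchanged. The limit therefore lies in $X_{R,I}$, giving $X_{R,I}\subset X_{R}$. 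The same rescaling shows that the closure of $X_{R,I}$ in $X_{R}$ equals $\bigcup _{J\subset I}X_{R,J}$, so the finite union $\bigcup _{I}X_{R,I}$ is closed and contains the dense orbit $X_{R,\{1,\ldots,d\}}$, forcing $X_{R}=\bigcup _{I}X_{R,I}$. The only delicate step is the Grassmannian limit computation, which is routine once basis rows are correctly rescaled.
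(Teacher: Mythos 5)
Your argument for (i) matches the paper's exactly (the $i=j$ case is additionally covered by Condition (3), since $2\beta_i\notin\mathcal R$). For (ii) you follow the same outline as the paper — reduce to $d=d^{\#}$ by Corollary~\ref{csa1}, identify $X_{R,\{1,\ldots,d\}}$ with $A\cdot\mathfrak t$ using commutativity of $A$, then build up $X_R$ as $\bigcup_I X_{R,I}$ via closures — but the mechanism of the closure computation differs: you use a one-parameter degeneration coming from the torus, while the paper simply notes that $X_{R,I}$ is the $A$-orbit of $V_I$ and that the coordinates $z_i$ parametrizing $\k^I\to X_{R,I}$ extend to $(\mathbb P^1)^I$, so the closure is $\bigcup_{J\subseteq I}X_{R,J}$ by properness.

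Two points of rigor you should tighten. First, $\exp(s\ad h)$ is not defined over a general algebraically closed field of characteristic $0$ when $h\in\mathfrak t$, since $\ad h$ is semisimple rather than nilpotent; likewise the conditions $\beta_i(h)>0$ and the limit $s\to+\infty$ implicitly assume an ordered (real or complex) field. The algebraic substitute is a cocharacter $\lambda\colon\mathbb G_m\to\mathbf T$ with $\langle\beta_i,\lambda\rangle=0$ for $i\in I$ and $\langle\beta_i,\lambda\rangle>0$ for $i\notin I$, taking $\lim_{t\to 0}\lambda(t)\cdot V$ in the Grassmannian. Second, your claim that the closure of $X_{R,I}$ \emph{equals} $\bigcup_{J\subseteq I}X_{R,J}$ states only the inclusion $\supseteq$; you also need $\subseteq$, which follows by extending the parametrization $\k^I\to\mathrm{Gr}_d(\mathfrak r)$ to $(\mathbb P^1)^I$ and using compactness. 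With these adjustments the proof is correct.
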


\begin{proof}
(i) As ${\goth z}$ has codimension $d^{\#}$ in ${\goth t}$, 
$\poi {\beta }1{,\ldots,}{{d^{\#}}}{}{}{}$ are linearly independent. Hence for $i\neq j$, 
$\beta _{i}+\beta _{j}$ is not in ${\cal R}$. As a result, ${\goth a}$ is commutative.

(ii) According to Corollary~\ref{csa1}, we can suppose $d=d^{\#}$ so that 
$\poi t1{,\ldots,}{d}{}{}{}$ is the dual basis of $\poi {\beta }1{,\ldots,}{d}{}{}{}$.
For $I$ subset of $\{1,\ldots,d\}$, denote by $I'$ the complement to $I$ in 
$\{1,\ldots,d\}$ and ${\goth z}_{I'}$ the orthogonal complement to 
$\beta _{i},\; i\in I'$ in ${\goth t}$ and set:
$$ V_{I} := {\goth z}_{I'} \oplus \bigoplus _{i\in I'} {\goth a}^{\beta _{i}} .$$
By (i), for $i$ in $I$,
$$ \exp(z_{1}\ad x_{1} + \cdots + z_{d}\ad x_{d})(t_{i}) = t_{i}-z_{i}x_{i} .$$ 
Hence $X_{R,I}$ is the orbit of $V_{I}$ under $A$ and its closure in $X_{R}$ is 
the union of $X_{R,J}, \; J \subset I$. As a result, $X_{R}$ is the
union of $X_{R,I},\; I\subset \{1,\ldots,d\}$ since $X_{R,\{1,\ldots,d\}}$ is the orbit 
of ${\goth t}$ under $A$.
\end{proof}

\subsection{On some subsets of ${\cal R}$} \label{sa2}
For $\alpha $ in ${\cal R}$, let $x_{\alpha }$ be in ${\goth a}^{\alpha }\setminus \{0\}$.
For $\Lambda $ subset of ${\cal R}$, denote by ${\goth t}_{\Lambda }$ the intersection 
of the kernels of its elements and set:
$$ {\goth a}_{\Lambda } := \bigoplus _{\alpha \in \Lambda } {\goth a}^{\alpha }
\quad  \text{and} \quad {\goth r}_{\Lambda } := {\goth t}\oplus {\goth a}_{\Lambda } .$$
When $\Lambda $ has only one element $\alpha $, set
${\goth t}_{\alpha } := {\goth t}_{\Lambda }$.

\begin{defi}\label{dsa2}
Let $\Lambda $ be a subset of ${\cal R}$. We say that $\Lambda $ is a complete subset of 
${\cal R}$ if it contains all element of ${\cal R}$ whose kernel contains 
${\goth t}_{\Lambda }$ 
\end{defi}

For $\Lambda $ complete subset of ${\cal R}$, ${\goth a}_{\Lambda }$ is a subalgebra
of ${\goth a}$ and ${\goth r}_{\Lambda }$ is a subalgebra of ${\goth r}$. In particular,
${\goth a}_{\Lambda }$ is in ${\cal C}'_{{\goth t}}$. In this case, denote by 
$R_{\Lambda }$ the connected closed subgroup of $R$ whose Lie algebra is 
$\ad {\goth r}_{\Lambda }$.

\begin{lemma}\label{lsa2}
Let $\Lambda $ be a complete subset of ${\cal R}$, strictly contained in ${\cal R}$.
Then ${\goth a}_{\Lambda }$ is contained in an ideal ${\goth a}'$ of ${\goth r}$ of
dimension $\dim {\goth a}-1$ and contained in ${\goth a}$.
\end{lemma}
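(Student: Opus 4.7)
My plan is to look for a single weight $\alpha_{0}\in {\cal R}\setminus \Lambda $ such that the weight-space complement ${\goth a}' := \bigoplus _{\beta \in {\cal R}\setminus \{\alpha_{0}\}} {\goth a}^{\beta }$ is an ideal of ${\goth r}$: by Condition~(2) every ${\goth t}$-stable codimension-one subspace of ${\goth a}$ has this shape, and it contains ${\goth a}_{\Lambda }$ precisely when $\alpha_{0}\notin \Lambda $. Since ${\goth a}'$ is automatically ${\goth t}$-stable, the ideal condition amounts to being an ideal of ${\goth a}$; using $[{\goth a}^{\beta },{\goth a}^{\gamma }]\subset {\goth a}^{\beta +\gamma }$ (understood as zero if $\beta +\gamma \notin {\cal R}$), and noting that $\gamma =\alpha_{0}$ would force $\beta =0\notin {\cal R}$, it collapses to the single requirement
$$x_{\alpha_{0}} \notin [{\goth a},{\goth a}].$$

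The heart of the proof is then to exhibit such an $\alpha_{0}$ in ${\cal R}\setminus \Lambda $. Let ${\cal R}_{0} := \{\alpha \in {\cal R} : x_{\alpha }\notin [{\goth a},{\goth a}]\}$; by Condition~(2) and the ${\goth t}$-stability of $[{\goth a},{\goth a}]$, these are exactly the weights of ${\goth a}/[{\goth a},{\goth a}]$, which is non-zero since ${\goth a}$ is nilpotent. I would prove ${\cal R}_{0}\not\subset \Lambda $. Any lift of a basis of ${\goth a}/[{\goth a},{\goth a}]$ generates the nilpotent Lie algebra ${\goth a}$, so the family $\{x_{\alpha } : \alpha \in {\cal R}_{0}\}$ generates ${\goth a}$, and every $x_{\beta }$, $\beta \in {\cal R}$, is a $\k $-linear combination of iterated Lie brackets in these generators. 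Comparing weights term by term yields an expression
$$\beta = \sum _{\alpha \in {\cal R}_{0}} n_{\alpha }\,\alpha , \qquad n_{\alpha }\in \mathbb N .$$
If ${\cal R}_{0}\subset \Lambda $, each $\alpha \in {\cal R}_{0}$ vanishes on ${\goth t}_{\Lambda }$, hence so does every $\beta \in {\cal R}$; completeness of $\Lambda $ would then force $\beta \in \Lambda $, contradicting $\Lambda \subsetneq {\cal R}$. Picking any $\alpha_{0}\in {\cal R}_{0}\setminus \Lambda $ then delivers the desired ${\goth a}'$, of dimension $\dim {\goth a}-1$, containing ${\goth a}_{\Lambda }$, and an ideal of ${\goth r}$ by the first paragraph.

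The delicate step is exactly this non-inclusion ${\cal R}_{0}\not\subset \Lambda $: this is where the completeness hypothesis on $\Lambda $ is used in an essential way, through the observation that the Lie-algebra generating weights of ${\goth a}$ determine every other weight as a non-negative integer combination, so $\Lambda $ cannot contain the generators without swallowing all of ${\cal R}$.
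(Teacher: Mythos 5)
Your proof is correct, and it takes a genuinely different route from the paper's. The paper's proof is very short: since ${\goth a}_{\Lambda }$ is a proper ${\goth t}$-stable subalgebra of the nilpotent algebra ${\goth a}$, one grows it by a chain of ${\goth t}$-stable subalgebras, each an ideal of codimension one in the next, using the normalizer-growth property of nilpotent Lie algebras (what the paper calls ``Lie's Theorem''); the penultimate term is the desired ${\goth a}'$. You instead exhibit ${\goth a}'$ explicitly as the weight-complement $\bigoplus _{\beta \neq \alpha _{0}}{\goth a}^{\beta }$, reducing the problem to finding $\alpha _{0}\in {\cal R}\setminus \Lambda $ whose weight space avoids $[{\goth a},{\goth a}]$, and supplying one via the generation principle (lifts of a basis of ${\goth a}/[{\goth a},{\goth a}]$ generate a nilpotent ${\goth a}$) together with completeness of $\Lambda $. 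The paper's argument is shorter and relies on a single abstract fact; yours is longer but constructive, and it yields slightly more information, namely that the codimension-one ideal can always be taken to be a weight-hyperplane avoiding $[{\goth a},{\goth a}]$, and it makes visible exactly where completeness of $\Lambda $ enters. One small presentational point: the parenthetical ``noting that $\gamma =\alpha _{0}$ would force $\beta =0\notin {\cal R}$'' is unnecessary for the ideal computation (there one has $\gamma \neq \alpha _{0}$ by hypothesis), but the conclusion that the ideal condition is equivalent to $x_{\alpha _{0}}\notin [{\goth a},{\goth a}]$ is right, since $[{\goth a},{\goth a}]$ is ${\goth t}$-stable and hence a sum of weight spaces.
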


\begin{proof}
As $\Lambda $ is complete and strictly contained in ${\cal R}$, ${\goth a}_{\Lambda }$ is
a subalgebra of ${\goth r}$, strictly contained in ${\goth a}$. Then, by Lie's Theorem, 
there is a sequence
$$ {\goth a}_{\Lambda }=\poi {{\goth a}}0{\subset \cdots \subset}{m}{}{}{}={\goth a}$$
of subalgebras of ${\goth r}$ such that ${\goth a}_{i}$ is an ideal of codimension $1$ of 
${\goth a}_{i+1}$ for $i=0,\ldots,m-1$, whence the lemma.
\end{proof}

For $s$ in ${\goth t}$, denote by $\Lambda _{s}$ the subset of elements of ${\cal R}$
whose kernel contains $s$.

\begin{lemma}\label{l2sa2}
Let $s$ be in ${\goth t}$. 

{\rm (i)} The centralizer ${\goth r}^{s}$ of $s$ in ${\goth r}$ is the direct sum of 
${\goth t}$ and ${\goth a}_{\Lambda _{s}}$.

{\rm (ii)} The center of ${\goth r}^{s}$ is equal to ${\goth t}_{\Lambda _{s}}$. 
\end{lemma}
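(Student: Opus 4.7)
The plan is to exploit the weight-space decomposition of ${\goth r}$ under the action of ${\goth t}$, together with the previously established Lemma~\ref{lsa1}(i), applied this time to a suitable subalgebra.

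For part (i), I would decompose ${\goth r} = {\goth t} \oplus \bigoplus_{\alpha \in {\cal R}} {\goth a}^{\alpha}$. Since ${\goth t}$ is commutative and $s \in {\goth t}$, the full Cartan ${\goth t}$ lies in ${\goth r}^{s}$. On a weight vector $x \in {\goth a}^{\alpha}$, the bracket is $[s,x]=\alpha(s)x$, which vanishes precisely when $\alpha(s)=0$, i.e.\ when $\alpha \in \Lambda_{s}$. Since $\ad s$ is semisimple, its kernel is the sum of its eigenspaces with eigenvalue zero, giving
$$ {\goth r}^{s} = {\goth t} \oplus \bigoplus_{\alpha \in \Lambda_{s}} {\goth a}^{\alpha} = {\goth t} \oplus {\goth a}_{\Lambda_{s}}. $$

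For part (ii), I would first observe that $\Lambda_{s}$ is automatically a complete subset of ${\cal R}$ in the sense of Definition~\ref{dsa2}: since $s$ itself belongs to ${\goth t}_{\Lambda_{s}}$, any $\beta \in {\cal R}$ whose kernel contains ${\goth t}_{\Lambda_{s}}$ must satisfy $\beta(s)=0$, hence $\beta \in \Lambda_{s}$. Consequently ${\goth a}_{\Lambda_{s}}$ is a subalgebra of ${\goth a}$, and the direct sum ${\goth r}^{s} = {\goth t} \oplus {\goth a}_{\Lambda_{s}}$ coincides with the solvable algebra ${\goth r}_{\Lambda_{s}}$ attached to the object ${\goth a}_{\Lambda_{s}}$ of ${\cal C}'_{{\goth t}}$ (an object since ${\goth a}_{\Lambda_{s}}$ is a ${\goth t}$-invariant subalgebra of ${\goth a}$, as noted just after the definition of ${\cal C}'_{{\goth t}}$).

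I would then apply Lemma~\ref{lsa1}(i) to this object: the center of ${\goth r}_{\Lambda_{s}}$ is the orthogonal complement in ${\goth t}$ of its set of weights, which is exactly $\Lambda_{s}$. That orthogonal complement is by definition ${\goth t}_{\Lambda_{s}}$, which establishes (ii). There is essentially no obstacle here; the only point worth being explicit about is that ${\goth r}^{s}$ fits into the framework of ${\cal C}'_{{\goth t}}$, so that Lemma~\ref{lsa1}(i) can be reused without any reproof.
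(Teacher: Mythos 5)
Your proof is correct and follows essentially the same route as the paper: decompose into ${\goth t}$-weight spaces to get (i), then identify ${\goth r}^{s}$ with the solvable algebra ${\goth r}_{{\goth t},{\goth a}_{\Lambda_{s}}}$ built from the object ${\goth a}_{\Lambda_{s}}\in{\cal C}'_{{\goth t}}$ and invoke Lemma~\ref{lsa1}(i) to identify its center. The only difference is that you spell out the one-line argument for why $\Lambda_{s}$ is complete, which the paper leaves implicit under ``by definition.''
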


\begin{proof}
By definition, $\Lambda _{s}$ is a complete subset of ${\cal R}$. Let $x$ be in 
${\goth r}$. Then $x$ has a unique decomposition
$$ x = x_{0} + \sum_{\alpha \in {\cal R}} c_{\alpha }x_{\alpha }$$
with $x_{0}$ in ${\goth t}$ and $c_{\alpha },\alpha \in {\cal R}$ in $\k$. 

(i) Since $s$ is in ${\goth t}$, $x$ is in ${\goth r}^{s}$ if and 
only if $c_{\alpha }=0$ for $\alpha \in {\cal R}\setminus \Lambda _{s}$, whence the 
assertion.

(ii) The algebra ${\goth a}_{\Lambda _{s}}$ is in ${\cal C}'_{{\goth t}}$ and 
${\goth t}_{\Lambda _{s}}$ is the orthogonal complement to $\Lambda _{s}$ in ${\goth t}$.
So, by (i) and Lemma~\ref{lsa1}(i), ${\goth t}_{\Lambda _{s}}$ is the center of 
${\goth r}^{s}$.
\end{proof}

\subsection{Property $({\bf P})$ for objects of ${\cal C}_{{\goth t}}$.} \label{sa3}
Let ${\bf T}$ be the connected closed subgroup of $R$ whose Lie algebra is 
$\ad {\goth t}$. For $s$ in ${\goth t}$, denote by $X_{R}^{s}$ the subset of elements of 
$X_{R}$ contained in ${\goth r}^{s}$ and $\overline{R^{s}.{\goth t}}$ the closure 
in $\ec {Gr}r{}{}d$ of the orbit of ${\goth t}$ under $R^{s}$. Then 
$\overline{R^{s}.{\goth t}}$ is contained in $X_{R}^{s}$. 

\begin{defi}\label{dsa3}
Say that ${\goth a}$ has Property $({\bf P})$ if $X_{R}^{s}$ is equal to
$\overline{R^{s}.{\goth t}}$ for all $s$ in ${\goth t}$.
\end{defi}

By Corollary~\ref{csa1}, ${\goth a}$ has Property $({\bf P})$ if and only if the object 
${\goth a}$ of ${\cal C}_{{\goth t}^{\#}}$ has Property $({\bf P})$.

\begin{lemma}\label{lsa3}
If ${\goth a}$ has dimension $d^{\#}$, then ${\goth a}$ has Property $({\bf P})$.
\end{lemma}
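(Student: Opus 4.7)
The easy direction $\overline{R^{s}.{\goth t}}\subseteq X_{R}^{s}$ is immediate since $R^{s}$ preserves the closed subspace ${\goth r}^{s}$ of ${\goth r}$; the content lies in the reverse inclusion, which I would obtain by a double application of Lemma~\ref{l2sa1}(ii). As a first step, Corollary~\ref{csa1} together with the observation after Definition~\ref{dsa3} that Property $({\bf P})$ is insensitive to the center ${\goth z}$ lets me reduce to $d=d^{\#}$, so that the dual basis $(t_{1},\ldots,t_{d})$ of $(\beta _{1},\ldots,\beta _{d})$ constructed just before Lemma~\ref{l2sa1} is at my disposal.

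Given then $V\in X_{R}^{s}$, Lemma~\ref{l2sa1}(ii) produces some $I\subseteq \{1,\ldots,d\}$ and scalars $z_{i}\in \k$ with
$$V = {\mathrm{span}}\{t_{i}+z_{i}x_{i}:i\in I\} \oplus \bigoplus _{j\notin I}{\goth a}^{\beta _{j}}.$$
The hypothesis $V\subseteq {\goth r}^{s}={\goth t}\oplus {\goth a}_{\Lambda _{s}}$ (Lemma~\ref{l2sa2}(i)), applied to the ${\goth a}$-projection of each generator, forces $\beta _{j}\in \Lambda _{s}$ for every $j\notin I$ and $z_{i}=0$ for every $i\in I$ with $\beta _{i}\notin \Lambda _{s}$. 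Equivalently, $\{1,\ldots,d\}\setminus \Lambda _{s}\subseteq I$, and the $t_{i}$ indexed by $i\in I\setminus \Lambda _{s}$ span precisely ${\goth t}_{\Lambda _{s}}$.

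To conclude I would apply Lemma~\ref{l2sa1}(ii) a second time, now to ${\goth a}_{\Lambda _{s}}$: it is an object of ${\cal C}'_{{\goth t}}$ by the remark following Definition~\ref{dsa2}, and by Lemma~\ref{l2sa2}(ii) its center ${\goth t}_{\Lambda _{s}}$ has codimension $|\Lambda _{s}|=\dim {\goth a}_{\Lambda _{s}}$ in ${\goth t}$, so the lemma applies. Setting $J:= I\cap \Lambda _{s}$, the observations above let me rewrite
$$V = {\goth t}_{\Lambda _{s}} \oplus {\mathrm{span}}\{t_{i}+z_{i}x_{i}:i\in J\} \oplus \bigoplus _{i\in \Lambda _{s}\setminus J}{\goth a}^{\beta _{i}},$$
which is exactly the description of a point of the stratum indexed by $J$ in $X_{R_{{\goth t},{\goth a}_{\Lambda _{s}}}}=\overline{R^{s}.{\goth t}}$. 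Hence $V\in \overline{R^{s}.{\goth t}}$, as required.

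The only delicate point is the combinatorial bookkeeping in the rewriting of $V$: one has to check that the "superfluous" $t_{i}$ for $i\in I\setminus \Lambda _{s}$ combine to give the full ${\goth t}_{\Lambda _{s}}$ and that the ${\goth a}^{\beta _{j}}$ summands for $j\notin I$ are precisely those indexed by $\Lambda _{s}\setminus J$. Both of these follow at once from the two inclusions $\{1,\ldots,d\}\setminus \Lambda _{s}\subseteq I$ and $z_{i}=0$ for $i\in I\setminus \Lambda _{s}$ derived in the previous step, so once the reduction to $d=d^{\#}$ is in place the argument is essentially formal.
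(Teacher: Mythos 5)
Your proof is correct and takes essentially the same route as the paper: reduce to $d=d^{\#}$ via Corollary~\ref{csa1}, place $V$ in a stratum $X_{R,I}$ by Lemma~\ref{l2sa1}(ii), read off from $V\subseteq{\goth r}^{s}$ that $I'\subseteq\Lambda_{s}$ and $z_{i}=0$ for $i\in I\setminus\Lambda_{s}$, and reapply Lemma~\ref{l2sa1}(ii) to the subalgebra in ${\goth r}^{s}$ to land in $\overline{R^{s}.{\goth t}}$. The only cosmetic difference is that the paper works with the set $I''=I'\cup\{i\in I:z_{i}\neq 0\}$ where you work directly with ${\goth a}_{\Lambda_{s}}$; both give the same conclusion.
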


\begin{proof}
According to Corollary~\ref{csa1}, we can suppose $d=d^{\#}$. Denote by 
$\poi {\beta }1{,\ldots,}{d}{}{}{}$ the elements of ${\cal R}$. Then 
$\poi {\beta }1{,\ldots,}{d}{}{}{}$ is a basis of ${\goth t}^{*}$. Let
$\poi t1{,\ldots,}{d}{}{}{}$ be the dual basis, $s$ in ${\goth t}$ and $V$ in 
$X_{R}^{s}$. By Lemma~\ref{l2sa1}(ii), for some subset $I$ of $\{1,\ldots,d\}$, $V$ is 
in $X_{R,I}$. Then for some $(z_{i},\; i\in I)$, 
$$ V = {\mathrm {span}}(\{t_{i}+z_{i}x_{i} \; i\in I\}) \oplus \bigoplus _{i\in I'}
{\goth a}^{\beta _{i}}$$ 
with $I'$ the complement to $I$ in $\{1,\ldots,d\}$ and $x_{i}$ in 
${\goth a}^{\beta _{i}}$ for $i=1,\ldots,d$. Setting
$$ I'' := I' \cup \{i \in I \; \vert \; z_{i} \neq 0\},$$ 
for $i$ in $\{1,\ldots,d\}$, $i$ is in $I''$ if and only if $\beta _{i}(s)=0$. So, 
by Lemma~\ref{lsa2}(i),
$$ {\goth r}^{s} = {\goth t} \oplus \bigoplus _{i\in I''} {\goth a}^{\beta _{i}} .$$
Then by Lemma~\ref{l2sa1}(ii), $V$ is in $\overline{R^{s}.{\goth t}}$.
\end{proof}

By definition, an {\it algebraic subalgebra} ${\goth k}$ of ${\goth r}$ is the 
semi-direct product of a torus ${\goth s}$ contained in ${\goth k}$ and 
${\goth k}\cap {\goth a}$.

\begin{lemma}\label{l2sa3}
Suppose that ${\goth a}$ has Property $({\bf P})$. Let $V$ be in $X_{R}$, $x$ in
$V$ and $y$ in ${\goth r}$ such that $\ad y$ is the semisimple component of $\ad x$.
Then the center of ${\goth r}^{y}$ is contained in $V$. 
\end{lemma}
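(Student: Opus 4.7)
The plan is to reduce via the adjoint action of $R$ to the case where the semisimple element lies in ${\goth t}$ and then invoke Property $({\bf P})$. Since $\ad y$ is semisimple, $y$ itself is semisimple in the sense of Subsection~\ref{sa1}, so by Lemma~\ref{lsa1}(ii) there exists $g\in R$ with $s:=\Ad(g)(y)\in {\goth t}$. The strategy is then to transport $V$ by $g$, show that $\Ad(g)(V)\in X_R^s$, and combine Property $({\bf P})$ with the fact that every limit of $R^s.{\goth t}$ still contains the subspace pointwise fixed by $R^s$.

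The key verification, and the main obstacle, is that $V\subseteq {\goth r}^y$. By Corollary~\ref{csa1}, $V$ is a commutative subalgebra of ${\goth r}$, so $\ad x$ vanishes on $V$ because $x\in V$. Since $\ad x(x)=0$, the scalar $0$ is an eigenvalue of $\ad x$, and hence the Jordan--Chevalley decomposition yields a polynomial $p\in \k[T]$ with $p(0)=0$ satisfying $p(\ad x)=(\ad x)_{\s}=\ad y$. Applying $p(\ad x)$ to any $v\in V$ gives $0$ monomial by monomial, as each term of $p$ begins with $\ad x$ and $\ad x(v)=0$, so $\ad y(V)=0$, i.e.\ $V\subseteq {\goth r}^y$.

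It follows that $\Ad(g)(V)\subseteq \Ad(g)({\goth r}^y)={\goth r}^s$. Since $X_R$ is $R$-stable as the closure of an $R$-orbit, $\Ad(g)(V)\in X_R^s$, and Property $({\bf P})$ then gives $\Ad(g)(V)\in \overline{R^s.{\goth t}}$. By Lemma~\ref{l2sa2}(ii), the center of ${\goth r}^s$ equals ${\goth t}_{\Lambda_s}\subseteq {\goth t}$, and it is fixed pointwise by $R^s$ because $R^s$ is connected with Lie algebra ${\goth r}^s$. Every member of $R^s.{\goth t}$ therefore contains ${\goth t}_{\Lambda_s}$, and since containing a fixed subspace is a closed condition on the Grassmannian, so does every member of $\overline{R^s.{\goth t}}$. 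Thus $\Ad(g)(V)\supseteq {\goth t}_{\Lambda_s}$, and applying $\Ad(g^{-1})$ yields $V\supseteq \Ad(g^{-1})({\goth t}_{\Lambda_s})$, which is the center of ${\goth r}^{\Ad(g^{-1})(s)}={\goth r}^y$, as required.
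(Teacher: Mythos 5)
Your proof is correct and follows essentially the same strategy as the paper: conjugate the semisimple element into $\goth t$ by Lemma~\ref{lsa1}(ii), apply Property $({\bf P})$ to conclude the translated subspace lies in $\overline{R^{s}.\goth t}$, note that every member of that orbit closure contains the center ${\goth t}_{\Lambda_s}$ of ${\goth r}^{s}$, and pull back by $g^{-1}$. The one step you make explicit that the paper leaves implicit is the inclusion $V\subseteq{\goth r}^y$, which you handle cleanly by writing $\ad y = p(\ad x)$ with $p(0)=0$ and noting $\ad x$ kills $V$; the paper instead reduces via Corollary~\ref{csa1} to the case ${\cal C}_{\goth t}$ where the Jordan decomposition of $x$ itself is unique, then uses $\ker(\ad x)\subseteq\ker(\ad y)$ -- your polynomial argument works in the general ${\cal C}'_{\goth t}$ case without needing that reduction. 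One small remark: to see that every member of $R^{s}.\goth t$ contains ${\goth t}_{\Lambda_s}$, you do not need pointwise fixity under $R^s$ (and hence not its connectedness, which you asserted without justification); it suffices that each $k\in R^s$ stabilizes the centralizer ${\goth r}^s$ and hence its center, giving $k({\goth t}_{\Lambda_s})={\goth t}_{\Lambda_s}\subseteq k(\goth t)$, which is exactly the paper's argument.
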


\begin{proof}
By Corollary~\ref{csa1}, we can suppose ${\goth a}$ in ${\cal C}_{{\goth t}}$ so that $y$
is the semisimple component of $x$ by Lemma~\ref{lsa1}(iv). By Lemma~\ref{lsa1}(ii), for 
some $g$ in $R$, $g(y)$ is in ${\goth t}$. Denote by ${\goth z}_{g(y)}$ the center of 
${\goth r}^{g(y)}$. By Lemma~\ref{l2sa2}(ii), ${\goth z}_{g(y)}$ is contained in 
${\goth t}$. As $V$ is a commutative algebra, $g(V)$ is in $X_{R}^{g(y)}$. So, by 
Property $({\bf P})$, ${\goth z}_{g(y)}$ is contained in $g(V)$ since ${\goth z}_{g(y)}$ 
is in $k({\goth t})$ for all $k$ in $R^{g(y)}$, whence the lemma.
\end{proof}

\begin{coro}\label{csa3}
Suppose that ${\goth a}$ has Property $({\bf P})$. Let $V$ be in $X_{R}$. Then $V$ is a 
commutative algebraic subalgebra of ${\goth r}$ and for some subset $\Lambda $ of 
${\cal R}$, the biggest torus contained in $V$ is conjugate to ${\goth t}_{\Lambda }$ 
under $R$.
\end{coro}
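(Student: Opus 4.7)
Commutativity of $V$ is already part of Corollary~\ref{csa1}, so the content of the statement lies in the algebraicity of $V$ and the identification of its maximal torus. My plan is to reduce to the case ${\goth a}\in {\cal C}_{{\goth t}}$ via Corollary~\ref{csa1}, extract the torus part of $V$ internally by Jordan decomposition, conjugate it into ${\goth t}$, and then pin it down as some ${\goth t}_{\Lambda}$ by combining Property $({\bf P})$ with Corollary~\ref{csa1}. In the reduced situation ${\goth z}=\{0\}$, and Lemma~\ref{lsa1}(iv) supplies the Jordan decomposition $x=x_{\s}+x_{\n}$ throughout ${\goth r}$.

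For the extraction step I would take $x\in V$ and use that $x_{\s}$ lies in the centre of ${\goth r}^{x_{\s}}$; Lemma~\ref{l2sa3} then yields $x_{\s}\in V$, and hence also $x_{\n}=x-x_{\s}\in V$. Setting
$$
V_{\s} := \{x_{\s}\mid x\in V\}\quad \text{and}\quad V_{\n} := V\cap {\goth a},
$$
the commutativity of $V$ makes the Jordan decomposition additive on $V$, so $V_{\s}$ is a vector subspace consisting of pairwise commuting semisimple elements; since in ${\cal C}_{{\goth t}}$ any semisimple element of ${\goth a}$ is zero, $V=V_{\s}\oplus V_{\n}$.

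Next, $\ad V_{\s}$ being a commutative Lie algebra of semisimple elements in the connected solvable algebraic group $R$, classical theory (commuting semisimple elements lie in a common maximal torus, and all maximal tori of $R$ are conjugate) produces $g\in R$ with $g(V_{\s})\subseteq {\goth t}$; replacing $V$ by $g(V)$ I may assume $V_{\s}\subseteq {\goth t}$. Setting
$$
\Lambda := \{\alpha \in {\cal R}\mid \alpha|_{V_{\s}}=0\},
$$
I would then pick $s\in V_{\s}$ generic enough that $\Lambda _{s}=\Lambda $, whence ${\goth r}^{s}={\goth t}\oplus {\goth a}_{\Lambda}$ by Lemma~\ref{l2sa2}(i) and $V\subseteq {\goth r}^{s}$ by commutativity. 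Property $({\bf P})$ then gives $V\in \overline{R^{s}.{\goth t}}$; since ${\goth a}_{\Lambda}$ is an object of ${\cal C}'_{{\goth t}}$ whose centre equals ${\goth t}_{\Lambda}$ by Lemma~\ref{l2sa2}(ii), Corollary~\ref{csa1} applied inside ${\goth r}^{s}$ forces every element of $\overline{R^{s}.{\goth t}}$ to contain ${\goth t}_{\Lambda}$. Hence ${\goth t}_{\Lambda}\subseteq V$, and uniqueness of Jordan decomposition places these elements (all lying in ${\goth t}$, hence semisimple) in $V_{\s}$, yielding $V_{\s}={\goth t}_{\Lambda}$.

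This would give $V={\goth t}_{\Lambda}\oplus V_{\n}$, so $V$ is algebraic with biggest torus $V_{\s}$ (any torus in $V$ is a commutative subalgebra of semisimple elements, hence contained in $V_{\s}$); undoing the conjugation gives the conjugacy of the original $V_{\s}$ to ${\goth t}_{\Lambda}$ under $R$. The main obstacle I anticipate is the simultaneous-diagonalisation step, i.e.\ conjugating $V_{\s}$ into ${\goth t}$, which I would have to justify by external input from the theory of commuting semisimple elements in a connected solvable algebraic group rather than by anything proved in Section~\ref{sa}.
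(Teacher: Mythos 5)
Your argument tracks the paper's through the first half: both reduce to ${\cal C}_{{\goth t}}$ via Corollary~\ref{csa1}, both use Lemma~\ref{l2sa3} to see that $V$ contains the semisimple components of its elements, and both observe that commutativity of $V$ (via Lemma~\ref{lsa1}(iv)) makes the Jordan decomposition additive, yielding $V=V_{\s}\oplus V_{\n}$ with $V_{\n}=V\cap {\goth a}$. Where you diverge — and where you correctly anticipate the issue — is the conjugation step. You simultaneously conjugate all of $V_{\s}$ into ${\goth t}$ by appealing to the fact that a commutative set of semisimple elements in $\mathrm{Lie}(R)$ lies in the Lie algebra of a common maximal torus of the connected solvable group $R$. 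That fact is true in characteristic zero (the observable/algebraic hull of $V_{\s}$ is a diagonalizable group whose identity component is a torus), but it is precisely the kind of external input the paper avoids: the paper conjugates only a \emph{single} element $s\in{\goth s}$, chosen so that the centre of ${\goth r}^{s}$ has maximal dimension, which needs only Lemma~\ref{lsa1}(ii). The paper then uses a contradiction argument — take $s'\in {\goth s}\setminus {\goth t}_{\Lambda_{s}}$, conjugate it into ${\goth t}$ by $R^{s}$, compare $\Lambda_{s}$ with $\Lambda_{s+zg(s')}$, and invoke maximality together with Lemma~\ref{l2sa3} — to force ${\goth s}={\goth t}_{\Lambda_{s}}$ directly, with no simultaneous-diagonalization theorem. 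Both routes then close the same way, identifying ${\goth t}_{\Lambda}$ as the centre of ${\goth r}^{s}$ via Lemma~\ref{l2sa2}(ii) and using Property~$({\bf P})$ plus Corollary~\ref{csa1} to get the containment ${\goth t}_{\Lambda}\subseteq V$. What your approach buys is brevity and conceptual clarity; what the paper's approach buys is self-containedness, extracting a conjugacy statement that would normally require the theory of algebraic hulls from nothing more than Property~$({\bf P})$ and a maximality trick. Do make the easy reverse inclusion $V_{\s}\subseteq {\goth t}_{\Lambda}$ explicit (it is immediate from the definition of $\Lambda$) so that your displayed inclusion ${\goth t}_{\Lambda}\subseteq V_{\s}$ actually yields equality.
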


\begin{proof}
According to Corollary~\ref{csa1}, $V$ is a commutative subalgebra of ${\goth r}$ and we 
can suppose $d=d^{\#}$. Let ${\goth s}$ be the set of semisimple elements of $V$. Then 
${\goth s}$ is a subspace of $V$. By Lemma~\ref{l2sa3}, $V$ contains the semisimple 
components of its elements so that $V$ is the direct sum of ${\goth s}$ and 
$V\cap {\goth a}$. Let $s$ be in ${\goth s}$ such that the center of ${\goth r}^{s}$ has 
maximal dimension. After conjugation by an element of $R$, we can suppose that $s$ is in 
${\goth t}$. By Lemma~\ref{l2sa2}(ii), ${\goth t}_{\Lambda _{s}}$ is the center of 
${\goth r}^{s}$. Hence, by Lemma~\ref{l2sa3}, ${\goth t}_{\Lambda _{s}}$ is contained in 
${\goth s}$. Suppose that the inclusion is strict. A contradiction is expected. Let $s'$ 
be in ${\goth s}\setminus {\goth t}_{\Lambda _{s}}$. Since $V$ is 
contained in ${\goth r}^{s}$, for some $g$ in $R^{s}$, $g(s')$ is in ${\goth t}$. 
Moreover, $g({\goth s})$ is the set of semisimple elements of $g(V)$ and 
${\goth t}_{\Lambda _{s}}$ is contained in $g({\goth s})$. Denoting by $\Lambda '$ the 
set of elements of $\Lambda _{s}$ whose kernel contains $g(s')$, for some $z$ in 
$\k^{*}$, $\Lambda '$ is the set of elements of ${\cal R}$ such that 
$\alpha (s+zg(s'))=0$. By Lemma~\ref{l2sa3}, ${\goth t}_{\Lambda '}$ is contained in 
$g(V)$. So, by minimality of $\vert \Lambda  _{s}\vert$, $\Lambda '=\Lambda _{s}$ and 
$g(s')$ is in ${\goth t}_{\Lambda _{s}}$, whence the contradiction since $g(s')$ is in 
$g({\goth s})\setminus {\goth t}_{\Lambda _{s}}$. As a result, 
${\goth t}_{\Lambda _{s}}={\goth s}$ and $V={\goth t}_{\Lambda _{s}}+V\cap {\goth a}$, 
whence the corollary. 
\end{proof}

\subsection{Fixed points in $X_{R}$ under ${\bf T}$ and $R$} \label{sa4}
For $V$ subspace of dimension $d$ of ${\goth r}$, denote by ${\cal R}_{V}$ the 
set of elements $\beta $ of ${\cal R}$ such that ${\goth a}^{\beta }$ 
is contained in $V$, $r_{V}$ the rank of ${\cal R}_{V}$ and ${\goth z}_{V}$ its 
orthogonal complement in ${\goth t}$ so that $\dim {\goth z}_{V}=d-r_{V}$.
As $\ec {Gr}r{}{}d$ and $X_{R}$ are projective varieties, the actions of ${\bf T}$ and 
$R$ in these varieties have fixed points since ${\bf T}$ and $R$ are connected and 
solvable. 

\begin{defi}\label{dsa4}
We say that ${\goth a}$ has Property $({\bf P}_{1})$ if for  $V$ fixed point under 
${\bf T}$ in $X_{R}$ such that $V\cap {\goth t}={\goth z}$, 
$r_{V}= \vert {\cal R}_{V}\vert$.
\end{defi}

\begin{lemma}\label{lsa4}
Suppose that ${\goth a}$ has Property $({\bf P})$. Let $V$ be in $\ec {Gr}r{}{}d$.

{\rm (i)} The element $V$ is a fixed point under ${\bf T}$ in $X_{R}$ if 
and only if $V$ is a commutative subalgebra of ${\goth r}$ and 
$$ V = {\goth z}_{V} \oplus \bigoplus _{\beta \in {\cal R}_{V}} {\goth a}^{\beta } .$$
In this case, $r_{V} = \vert{\cal R}_{V} \vert$.

{\rm (ii)} The element $V$ is a fixed point under $R$ in $X_{R}$ if and only
if $V$ is a commutative ideal of ${\goth r}$ and ${\goth z}$ is the orthogonal complement
of ${\cal R}_{V}$ in ${\goth t}$. In this case, $r_{V}=\vert {\cal R}_{V}\vert = d^{\#}$.
\end{lemma}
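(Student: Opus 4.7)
I plan to prove both parts using Corollary~\ref{csa1}, Lemma~\ref{l2sa2}, and Property $({\bf P})$, together with the observation that since $[{\goth r},{\goth r}]\subset{\goth a}$, the group $R$ acts trivially on ${\goth r}/{\goth a}\cong{\goth t}$; consequently, any element of ${\goth t}$ that some $g\in R$ sends back into ${\goth t}$ is in fact fixed by $g$.

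The $(\Leftarrow)$ direction of (i) is an explicit deformation. If $V$ is a commutative subalgebra of the stated form, then $\dim V=d=(d-r_V)+|\mathcal R_V|$ forces $r_V=|\mathcal R_V|$, so $\mathcal R_V$ is linearly independent; I choose $(t_\beta)_{\beta\in\mathcal R_V}\subset{\goth t}$ dual to $\mathcal R_V$ (so that ${\goth t}={\goth z}_V\oplus\mathrm{span}(t_\beta)$) and generators $x_\beta\in{\goth a}^\beta\cap V$, which pairwise commute by commutativity of $V$. Then $\prod_\beta\exp(z\,\ad x_\beta)({\goth t})={\goth z}_V\oplus\mathrm{span}(t_\beta-zx_\beta,\beta\in\mathcal R_V)$ tends in the Grassmannian to $V$ as $z\to\infty$ (after rescaling $t_\beta-zx_\beta$ by $-1/z$), placing $V$ in $X_R$; the ${\bf T}$-invariance is automatic from the weight decomposition.

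For the $(\Rightarrow)$ direction of (i), let $V\in X_R^{\bf T}$. By Corollary~\ref{csa1}, $V$ is commutative; ${\bf T}$-stability together with Conditions (1)--(2) gives $V=V\cap{\goth t}\oplus\bigoplus_{\beta\in\mathcal R_V}{\goth a}^\beta$, and commutativity forces $V\cap{\goth t}\subset{\goth z}_V$. For the reverse inclusion, if ${\goth z}_V=0$ there is nothing to prove; otherwise I pick $s$ nonzero generic in ${\goth z}_V$, so that $\Lambda_s=\mathcal R\cap\mathrm{span}(\mathcal R_V)$ and hence ${\goth t}_{\Lambda_s}={\goth z}_V$. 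Since $\mathcal R_V\subset\Lambda_s$, we have $V\subset{\goth r}^s$, so Property $({\bf P})$ gives $V\in\overline{R^s.{\goth t}}$. For every $g\in R^s$, the adjoint action preserves the center ${\goth t}_{\Lambda_s}$ of ${\goth r}^s$ (Lemma~\ref{l2sa2}(ii)) as a characteristic subspace, and by the opening observation $g$ fixes ${\goth t}_{\Lambda_s}$ pointwise; hence $g({\goth t})\supset{\goth t}_{\Lambda_s}$ for every $g\in R^s$. Passing to the closure yields $V\supset{\goth t}_{\Lambda_s}={\goth z}_V$, so $V\cap{\goth t}={\goth z}_V$; the identity $r_V=|\mathcal R_V|$ then follows from $\dim V=d$.

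For (ii), any $R$-fixed point is in particular ${\bf T}$-fixed, so (i) gives the structural form with $r_V=|\mathcal R_V|$. The additional $R$-invariance is equivalent to $V$ being a ${\goth r}$-ideal; for $\alpha\in\mathcal R\setminus\mathcal R_V$ and $z\in{\goth z}_V$, the bracket $[x_\alpha,z]=-\alpha(z)x_\alpha$ must lie in $V\cap{\goth a}^\alpha=0$, forcing $\alpha|_{{\goth z}_V}=0$. Combined with the trivial vanishing for $\alpha\in\mathcal R_V$, this gives $\mathcal R\subset\mathrm{span}(\mathcal R_V)$, whence $\mathrm{span}(\mathcal R_V)=\mathrm{span}(\mathcal R)$ has rank $d^\#$, ${\goth z}_V={\goth z}$, and $|\mathcal R_V|=r_V=d^\#$. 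Conversely, a commutative ${\goth r}$-ideal of the stated form with ${\goth z}_V={\goth z}$ is visibly $R$-invariant and lies in $X_R$ by (i). The principal difficulty is the inclusion ${\goth z}_V\subset V$ in the $(\Rightarrow)$ direction of (i), where dimension estimates alone point the wrong way and Property $({\bf P})$ is essential to exclude the possibility of $\mathcal R_V$ being linearly dependent.
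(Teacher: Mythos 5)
Your proof is correct and takes essentially the same route as the paper: both directions of (i) are handled exactly as the paper does — the explicit one-parameter degeneration $\prod_\beta\exp(z\,\ad x_\beta)({\goth t})\to V$ for $(\Leftarrow)$ (the paper routes this through Lemma~\ref{l2sa1}(ii), which is the same computation), and for $(\Rightarrow)$ a generic $s\in{\goth z}_V$ so that ${\goth t}_{\Lambda_s}={\goth z}_V$, then Property $({\bf P})$ to put $V$ in $\overline{R^s.{\goth t}}$ and the ${R^s}$-stability of the center of ${\goth r}^s$ to force ${\goth z}_V\subset V$. Your opening observation (that $R$ acts trivially on ${\goth r}/{\goth a}$, so any $t\in{\goth t}$ sent back into ${\goth t}$ is actually fixed) is a slight sharpening of what the paper uses — the paper only needs that $g\in R^s$ preserves ${\goth t}_{\Lambda_s}$ set-wise, hence $g({\goth t})\supset{\goth t}_{\Lambda_s}$, but your pointwise version is cleaner and costs nothing. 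The argument for (ii) ($R$-fixed $\Leftrightarrow$ ideal, then the bracket $[x_\alpha,z]=-\alpha(z)x_\alpha\in V\cap{\goth a}^\alpha=\{0\}$ for $\alpha\notin{\cal R}_V$) is the paper's as well.
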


\begin{proof}
If $V$ is a fixed point under ${\bf T}$, 
$$ V = V\cap {\goth t} \oplus \bigoplus _{\beta \in {\cal R}_{V}} {\goth a}^{\beta } .$$

(i) Suppose that $V$ is a fixed point under ${\bf T}$ in $X_{R}\setminus \{{\goth t}\}$. 
Then ${\cal R}_{V}$ is not empty. Let $s$ be an element of ${\goth z}_{V}$ such that 
$\beta (s)\neq 0$ if $\beta $ is not a linear combination of elements of ${\cal R}_{V}$. 
Then $V$ is contained in ${\goth r}^{s}$. So, by Property $({\bf P})$, $V$ is in 
$\overline{R^{s}.{\goth t}}$. By Lemmma~\ref{l2sa2}(i), ${\goth z}_{V}$ is the center of 
${\goth r}^{s}$. Hence ${\goth z}_{V}$ is contained in $V$ and 
${\goth z}_{V}=V\cap {\goth t}$ since $V\cap {\goth t}$ is contained in ${\goth z}_{V}$. 
As a result, ${\goth z}_{V}$ has dimension $d-\vert {\cal R}_{V} \vert$ and 
$r_{V}=\vert {\cal R}_{V} \vert$.

Conversely, suppose that $V$ is a commutative algebra and 
$$ V = {\goth z}_{V} \oplus \bigoplus _{\beta \in {\cal R}_{V}} {\goth a}^{\beta } .$$
Set:
$$ {\goth a}_{V} := \bigoplus _{\beta \in {\cal R}_{V}} {\goth a}^{\beta }, \qquad 
{\goth r}_{V} := {\goth t} \oplus {\goth a}_{V} .$$
Then ${\goth a}_{V}$ is a commutative Lie algebra and ${\goth a}_{V}$ is in 
${\cal C}'_{{\goth t}}$. Moreover, ${\goth z}_{V}$ is the center of ${\goth r}_{V}$
by Lemma~\ref{lsa1}(i). By Lemma~\ref{l2sa1}(ii), $V$ is in the closure
of the orbit of ${\goth t}$ under the action of the adjoint group of ${\goth r}_{V}$
in $\ec {Gr}r{}Vd$, whence the assertion.

(ii) The element $V$ of $\ec {Gr}r{}{}d$ is a fixed point under $R$ if and only if 
$V$ is an ideal of ${\goth r}$. So, by (i), the condition is sufficient. Suppose that $V$
is a fixed point under the action of $R$ in $X_{R}$. By (i),
$$ V = {\goth z}_{V} \oplus \bigoplus _{\beta \in {\cal R}_{V}} {\goth a}^{\beta } .$$
As $V$ is an ideal of ${\goth r}$, ${\goth z}_{V}$ is contained in the kernel of all
elements of ${\cal R}$ so that ${\goth z}_{V}={\goth z}$. In particular, 
$\vert {\cal R}_{V} \vert = d^{\#}$ and the elements of ${\cal R}_{V}$ are linearly 
independent.
\end{proof}

\subsection{On some varieties related to $X_{R}$} \label{sa5}
Let ${\goth a}'$ be an ideal of ${\goth r}$ of dimension $\dim {\goth a}-1$ and contained
in ${\goth a}$. As a subalgebra of ${\goth a}$ normalized by ${\goth t}$, ${\goth a}'$
is in ${\cal C}'_{{\goth t}}$. Denote by ${\goth r}'$ the subalgebra 
${\goth t}+{\goth a}'$ of ${\goth r}$, $A'$ and $R'$ the connected closed subgroups of 
$R$ whose Lie algebras are $\ad {\goth a}'$ and $\ad {\goth r'}$ respectively. Let 
$X_{R'}$ be the closure in $\ec {Gr}r{}{}d$ of the orbit of ${\goth t}$ under $R'$ and 
$\alpha $ the element of ${\cal R}$ such that
$$ {\goth a} = {\goth a}' \oplus {\goth a}^{\alpha } .$$
For $\delta $ in ${\cal R}$ denote again by $\delta $ the character of ${\bf T}$ whose 
differential at the identity is $\ad x \longmapsto \delta (x)$.

Setting:
$$ {\goth G}_{d-1,d,d,d+1} := 
\ec {Gr}r{}{}{d-1}\times \ec {Gr}r{}{}d\times \ec {Gr}r{}{}d\times \ec {Gr}r{}{}{d+1}
\quad  \text{and} \quad 
{\goth G}_{d-1,d,d+1} := 
\ec {Gr}r{}{}{d-1}\times \ec {Gr}r{}{}d\times \ec {Gr}r{}{}{d+1},$$
denote by $\thetaup _{\alpha }$ and $\thetaup '_{\alpha }$ the maps
$$ \xymatrix{ \k\times A' \ar[rr]^{\thetaup _{\alpha }} && {\goth G}_{d-1,d,d,d+1}}, 
\quad
(z,g) \longmapsto (g.{\goth t}_{\alpha },g.{\goth t},g\exp(z \ad x_{\alpha }).{\goth t}
,g.({\goth t}+{\goth a}^{\alpha })) ,$$
$$ \xymatrix{ A' \ar[rr]^{\thetaup '_{\alpha }} && {\goth G}_{d-1,d,d+1}}, 
\quad
g \longmapsto (g.{\goth t}_{\alpha },g.{\goth t},g.({\goth t}+{\goth a}^{\alpha })) .$$
Let $I_{\alpha }$ and $S_{\alpha }$ be the closures in $\ec {Gr}r{}{}{d-1}$ and 
$\ec {Gr}r{}{}{d+1}$ of the orbits of ${\goth t}_{\alpha }$ and 
${\goth t}+{\goth a}^{\alpha }$  under $A'$ respectively.

\begin{lemma}\label{lsa5}
Let $\Gamma $ and $\Gamma '$ be the closures in ${\goth G}_{d-1,d,d,d+1}$ and 
${\goth G}_{d-1,d,d+1}$ of the images of $\thetaup _{\alpha }$ and $\thetaup '_{\alpha }$.

{\rm (i)} The varieties $\Gamma $ and $\Gamma '$ have dimension $n$ and $n-1$ 
respectively. Moreover, they are invariant under the diagonal actions of $R'$ in 
${\goth G}_{d-1,d,d,d+1}$ and ${\goth G}_{d-1,d,d+1}$.

{\rm (ii)} The image of $\Gamma $ by the first, second, third and fourth projections are 
equal to $I_{\alpha }$, $X_{R'}$, $X_{R}$, $S_{\alpha }$ respectively. 

{\rm (iii)} The set $\Gamma '$ is the image of $\Gamma $ by the projection
$$ \xymatrix{{\goth G}_{d-1,d,d,d+1} \ar[rr]^{\varpi } && {\goth G}_{d-1,d,d+1}}, \qquad
(V_{1},V',V,W) \longmapsto (V_{1},V',W) .$$

{\rm (iv)} For all $(V_{1},V',V,W)$ in $\Gamma $, $V_{1}$ is contained in $V'\cap V$ 
and $V'+V$ is contained in $W$.

{\rm (v)} Let $(V_{1},V',V,W)$ be in $\Gamma $ such that $V'$ is contained in 
${\goth t}_{\alpha }+{\goth a}'$. Then $W$ is contained in 
${\goth t}_{\alpha }+{\goth a}$.

{\rm (vi)} Let $(V_{1},V',V,W)$ be in $\Gamma $ such that $V'$ is not contained in 
${\goth t}_{\alpha }+{\goth a}$. Then $W$ is not commutative.
\end{lemma}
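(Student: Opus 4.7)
The plan is to verify (i)--(iv) directly on the generating sets $\Gamma_0 := \mathrm{Im}(\thetaup_{\alpha})$ and $\Gamma_0' := \mathrm{Im}(\thetaup'_{\alpha})$ and extend to the closures via closedness of the relevant conditions, then to attack (v)--(vi) via a limit argument using the structural input that $g(x_{\alpha}) \in x_{\alpha} + {\goth a}' \subseteq {\goth a}$ for every $g \in A'$.

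For (i), $\thetaup'_{\alpha}$ is injective because the stabilizer of ${\goth t}$ in $A'$ is trivial: if $g = \exp(\ad y) \in A'$ with $y \in {\goth a}'$ fixes ${\goth t}$ setwise, then $g(t)-t \in {\goth a}' \cap {\goth t} = \{0\}$ gives $g|_{\goth t} = \mathrm{id}$, whence $\ad y|_{\goth t} = 0$ (since $\ad y$ is nilpotent and $\exp$-injective), and Condition~(1) on ${\cal C}'_{\goth t}$ forces $y=0$. Hence $\dim \Gamma' = n-1$, and the parameter $z$ adds one further dimension to $\thetaup_{\alpha}$ (as $\exp(z\ad x_{\alpha}).{\goth t}$ depends non-trivially on $z$ because $[x_{\alpha}, t_d] \neq 0$ for some $t_d \in {\goth t}$), giving $\dim \Gamma = n$. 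For $R'$-invariance, decomposing $R' = {\bf T} A'$, one has $g' \cdot \thetaup_{\alpha}(z,g) = \thetaup_{\alpha}(z, g'g)$ for $g' \in A'$, and, using $\tau \exp(z \ad x_\alpha)\tau^{-1} = \exp(z\alpha(\tau)\ad x_\alpha)$, $\tau \cdot \thetaup_{\alpha}(z, g) = \thetaup_{\alpha}(z\alpha(\tau), \tau g \tau^{-1})$ for $\tau \in {\bf T}$.

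For (ii), the four projections of $\Gamma$ give $\overline{A'.{\goth t}_\alpha} = I_{\alpha}$; $\overline{A'.{\goth t}} = \overline{R'.{\goth t}} = X_{R'}$ (as ${\bf T}$ fixes ${\goth t}$); $\overline{A.{\goth t}} = X_R$, using the factorization $A = A' \exp(\k \ad x_{\alpha})$ valid because ${\goth a}'$ is a codimension-$1$ ideal of ${\goth a}$; and $\overline{A'.({\goth t} + {\goth a}^{\alpha})} = S_{\alpha}$. For (iii), $\varpi \circ \thetaup_{\alpha}$ factors through $\thetaup'_{\alpha}$, and since $\varpi$ is proper, $\varpi(\Gamma) = \Gamma'$. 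The inclusions in (iv) are closed conditions on Grassmannian products and hold on $\Gamma_0$: ${\goth t}_{\alpha} \subseteq {\goth t}$ yields $V_1 \subseteq V'$; since $[x_{\alpha}, {\goth t}_{\alpha}] = 0$, $\exp(z\ad x_{\alpha})$ fixes ${\goth t}_{\alpha}$ pointwise, giving $V_1 \subseteq V$; finally both $V$ and $V'$ are contained in $W$ because $\exp(z\ad x_{\alpha}).{\goth t} \subseteq {\goth t} + {\goth a}^{\alpha}$.

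For (v)--(vi), the essential input is that $g(x_{\alpha}) \in x_{\alpha} + {\goth a}' \subseteq {\goth a}$, because ${\goth a}'$ is an ideal of ${\goth r}$ (so iterated brackets $[y, x_{\alpha}], [y,[y,x_{\alpha}]], \ldots$ remain in ${\goth a}'$). Consequently on $\Gamma_0$ one has $W = V' + \k g(x_{\alpha}) \subseteq V' + {\goth a}$ and $[W, W] = g.[{\goth t} + {\goth a}^{\alpha}, {\goth t} + {\goth a}^{\alpha}] = \k g(x_{\alpha}) \neq 0$. Let $\tilde\alpha : {\goth r} \to \k$ be the Lie-algebra homomorphism extending $\alpha$ by zero on ${\goth a}$; then (given $V' \subseteq {\goth r}'$) $V' \subseteq {\goth t}_{\alpha} + {\goth a}'$ iff $\tilde\alpha(V') = 0$, and $W \subseteq {\goth t}_{\alpha} + {\goth a}$ iff $\tilde\alpha(W) = 0$. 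For (v), I will show that the equality $\tilde\alpha(V') = \tilde\alpha(W)$ (both equal $\k$ on $\Gamma_0$) persists throughout $\Gamma$ by exploiting the $\mathbb{P}^1$-bundle structure of $\Gamma$ over $\Gamma'$, whose fiber at $(V_1, V', W)$ is the pencil of $d$-dimensional subspaces between $V_1$ and $W$: the extra direction of $W$ over $V'$ is the projective limit of lines in ${\goth a} \subseteq \ker \tilde\alpha$. For (vi), assuming $W$ commutative and $V' \not\subseteq {\goth t}_{\alpha} + {\goth a}$ for contradiction, the generic derived line $\k g(x_{\alpha})$ must collapse into $V'$ in the limit, forcing the new direction of $W/V'$ to be represented by an element $w \in {\goth a}$ that centralizes $V'$. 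The analysis of ${\goth r}^w$ via Lemma~\ref{l2sa2} then forces $V' \subseteq {\goth t}_{\alpha} + {\goth a}$, contradicting the hypothesis.

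The main obstacle is the persistence step in (v): rigorously showing $\tilde\alpha(V') = \tilde\alpha(W)$ (equivalently $W \subseteq V' + {\goth a}$) on the boundary $\Gamma \setminus \Gamma_0$. The underlying condition is open but not automatically closed, so its extension from the dense open $\Gamma_0$ to the closure requires either a direct Pl\"ucker or basis-rescaling analysis of how the $A'$-orbit of the base point degenerates, or careful exploitation of the fibered structure of $\Gamma \to \Gamma'$ combined with $R'$-invariance.
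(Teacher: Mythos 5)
Your treatment of (i)--(iv) is correct and essentially matches the paper: injectivity of the two maps follows from the fact that the normalizer of ${\goth t}$ in ${\goth r}$ is ${\goth t}$ (Condition~(1)), the $\k^*$-direction adds one dimension, and (ii)--(iv) follow by properness of the Grassmannians and closedness of the relevant incidence conditions. The gaps are in (v) and (vi).

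For (v) you explicitly concede the key step --- persistence of $W\subseteq V'+{\goth a}$ (equivalently $\alpha\rond\pi|_{W}=0$, given $\alpha\rond\pi|_{V'}=0$) on the boundary $\Gamma\setminus\thetaup_\alpha(\k\times A')$ --- so this is not a proof. The missing idea in the paper is this: since ${\goth a}'$ is an ideal of ${\goth r}$, one has $\tau\rond g=\tau$ for every $g\in A'$, where $\tau:{\goth r}\to{\goth r}/{\goth a}'={\goth t}+{\goth a}^\alpha$, and hence $\pi\rond\tau\rond g=\pi$ on ${\goth r}$. The paper uses this to build an auxiliary closure $Y$ in ${\goth r}'\times{\goth r}$, namely the closure of the set of pairs $\left(g(u_{1}),g(u_{2})\right)$ with $g\in A'$, $u_{1}\in{\goth t}$ and $u_{2}-u_{1}\in {\goth a}^\alpha$, on which the \emph{linear} identity $\alpha\rond\pi(v)=\alpha\rond\pi(w)$ holds (since both sides equal $\alpha\rond\pi(u_{1})$), and therefore persists on the closure; one then concludes $\alpha\rond\pi|_{W}=0$ via the incidence correspondence over $\Gamma_*=\varpi_2\!\mul\varpi_4(\Gamma)$. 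This is what replaces the ``Pl\"ucker or basis-rescaling analysis'' you suspect is needed but do not supply.

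For (vi) your argument is a heuristic sketch rather than a proof. Two concrete problems: first, the assertion that in the limit ``the generic derived line $\k g(x_\alpha)$ must collapse into $V'$'' is exactly the kind of degeneration statement that needs justification --- boundary points of $\Gamma$ need not arise from any single one-parameter degeneration of $g$. Second, the appeal to Lemma~\ref{l2sa2} is misplaced: that lemma describes centralizers ${\goth r}^{s}$ for $s\in{\goth t}$ (i.e.\ semisimple), whereas your $w$ lies in ${\goth a}$ and is therefore nilpotent, so the lemma gives no control on ${\goth r}^{w}$ or on $V'$. The paper's route is again an identity-persistence argument: one enlarges $\Gamma$ to track the line $g({\goth a}^\alpha)$ as an additional factor in $\ec {Gr}r{}{}1$, forms the closure $Y'\subseteq {\goth r}'\times{\goth r}$ of the pairs $\left(g(t),g(zx_\alpha)\right)$, and observes that the \emph{bilinear polynomial} identity $[v,w]=\alpha\rond\pi(v)\,w$ holds on the parametrized locus (because $\tau\rond g=\tau$ reduces it to $[t,zx_\alpha]=\alpha(t)zx_\alpha$) and hence on $Y'$. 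Since the extra line belongs to $W$ and, by hypothesis, there is $v\in V'\subseteq W$ with $\alpha\rond\pi(v)\neq0$, one obtains $v,w\in W$ with $[v,w]=\alpha\rond\pi(v)w\neq 0$, so $W$ is not commutative --- no case analysis of ${\goth r}^w$ is needed.

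In short: the argument you are missing in both (v) and (vi) is to parametrize \emph{pairs of vectors} in ${\goth r}'\times{\goth r}$ by $A'$, observe that $\tau\rond g=\tau$ converts the desired conclusions into linear or bilinear polynomial identities that hold on the parametrized locus, and use that such identities are closed conditions that automatically pass to the limiting $(V_{1},V',V,W)$.
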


\begin{proof}
(i) The maps $\thetaup _{\alpha }$ and $\thetaup '_{\alpha }$ are injective since 
${\goth t}$ is the normalizer of ${\goth t}$ in ${\goth r}$ by Condition (1), whence 
$\dim \Gamma = n$ and $\dim \Gamma ' = n-1$. For $(z,g,k)$ in $\k\times A'\times A'$, 
$\thetaup _{\alpha }(z,kg) = k.\thetaup _{\alpha }(z,g)$ and 
$\thetaup '_{\alpha }(kg) = k.\thetaup '_{\alpha }(g)$. Hence $\Gamma $ and $\Gamma '$ 
are invariant under the diagonal action of $A'$ in ${\goth G}_{d-1,d,d,d+1}$ and 
${\goth G}_{d-1,d,d+1}$. Let $k$ be in ${\bf T}$. For all $(z,g)$ in $\k \times A'$, 
$$ kg.{\goth t}_{\alpha } = kgk^{-1}({\goth t}_{\alpha }), \qquad 
kg.{\goth t} = kgk^{-1}({\goth t}), $$ $$ 
kg.({\goth t}+{\goth a}^{\alpha }) = kgk^{-1}.({\goth t}+{\goth a}^{\alpha }), \qquad 
kg\exp (z\ad x_{\alpha }).{\goth t} = 
kgk^{-1}\exp (z\alpha (k)\ad x_{\alpha }).{\goth t}$$
so that the images of $\thetaup _{\alpha }$ and $\thetaup '_{\alpha }$ are invariant 
under ${\bf T}$, whence the assertion.

(ii) Since $\ec {Gr}r{}{}d$, $\ec {Gr}r{}{}{d-1}$, $\ec {Gr}r{}{}{d+1}$ are projective 
varieties, the images of $\Gamma $ by the first, second, third and
fourth projections are closed subsets of their target varieties. Since the image of 
$\thetaup _{\alpha }$ is contained in the closed subset 
$I_{\alpha }\times X_{R'}\times X_{R}\times S_{\alpha }$ of ${\goth G}_{d-1,d,d,d+1}$,
they are contained in $I_{\alpha }$, $X_{R'}$, $X_{R}$ and $S_{\alpha }$ respectively. By
definition, $R'.{\goth t}_{\alpha }$, $R'.{\goth t}$ and 
$R'.({\goth t}+{\goth a}^{\alpha })$ are contained in the images of $\Gamma $ by the 
first, second and fourth projections and $R.{\goth t}$ is contained in the image of 
$\Gamma $ by the third projection since $A$ is the image of $\k\times A'$ by the map
$(z,g) \mapsto g\exp(z\ad x_{\alpha })$, whence the assertion.

(iii) As $\ec {Gr}r{}{}d$ is a projective variety, $\varpi (\Gamma )$ is a closed 
subset of ${\goth G}_{d-1,d,d+1}$ containing the image of $\thetaup '_{\alpha }$ since 
$\varpi \rond \thetaup _{\alpha }(z,g)=\thetaup '_{\alpha }(g)$ for all $(z,g)$ in 
$\k\times A'$. Moreover, $\Gamma $ is contained in $\varpi ^{-1}(\Gamma ')$, whence
$\Gamma '=\varpi (\Gamma )$.

(iv) The subset $\widetilde{\Gamma }$ of elements $(V_{1},V',V,W)$ of 
${\goth G}_{d-1,d,d,d+1}$ such that $V_{1}$ is contained in $V'$ and $V$ and such that 
$V'$ and $V$ are contained in $W$, is closed. For all $(z,g)$ in $\k\times A'$,
$$ g\exp (z\ad x_{\alpha }).({\goth t}+{\goth a}^{\alpha }) =
g.({\goth t}+{\goth a}^{\alpha }) .$$
Hence the image of $\thetaup _{\alpha }$ and $\Gamma $ are contained in 
$\widetilde{\Gamma }$ so that $V_{1}$ and $V+V'$ are contained in $V'\cap V$ and $W$ 
respectively for all $(V_{1},V',V,W)$ in $\Gamma $.

(v) Denote by $\Gamma _{*}$ the closure in $\ec {Gr}r{}{}d\times \ec {Gr}r{}{}{d+1}$
of the image of the map
$$ \xymatrix{ A' \ar[rr]^{\thetaup _{\alpha ,*}} && 
\ec {Gr}r{}{}d \times \ec {Gr}r{}{}{d+1}},
\qquad g \longmapsto (g({\goth t}),g({\goth t}+{\goth a}^{\alpha })) .$$
For all $(T_{1},T',T,T_{2})$ in the image of $\thetaup _{\alpha }$, $(T',T_{2})$ is in 
the image of $\thetaup _{\alpha ,*}$. Then $\Gamma _{*}$ is the image of $\Gamma $
by the projection
$$ \xymatrix{ {\goth G}_{d-1,d,d,d+1} \ar[rr] && 
\ec {Gr}r{}{}d \times \ec {Gr}r{}{}{d+1}}, \qquad 
(T_{1},T',T,T_{2}) \longmapsto (T',T_{2}) .$$

Denote by $\tau $ the quotient morphism 
$$ \xymatrix{ {\goth r} \ar[rr]^{\tau } && {\goth r}/{\goth a}' = 
{\goth t}+{\goth a}^{\alpha }}.$$  
For $g$ in $A'$ and $x$ in ${\goth r}$, $\tau \rond g(x) = \tau (x)$. Set:
$$ X := \{(g,t,z,z',v,w) \in A'\times {\goth t}_{\alpha }\times \k ^{2}\times 
{\goth r}'\times {\goth r} ; \vert \; v = g(zs + t), \; w = g(zs+t+z'x_{\alpha }) \}$$
and denote by $Y$ the closure in ${\goth r}'\times {\goth r}$ of the image of $X$ by the
canonical projection 
$$ \xymatrix{ A'\times {\goth t}_{\alpha }\times \k ^{2}\times {\goth r}'\times {\goth r}
\ar[rr] && {\goth r}'\times {\goth r} } .$$
As for all $(g,t,z,z',v,w)$ in $X$, 
$$ \tau (v) = zs + t \quad  \text{and} \quad \tau (w) = zs + t + z'x_{\alpha },$$
$$\alpha \rond \pi \rond \tau (v) = \alpha \rond \pi \rond \tau (w)$$
for all $(v,w)$ in $Y$. By definition, for all $(T,T')$ in $\Gamma _{*}$, 
$T\times T'$ is contained in $Y$. By hypothesis, $V'$ is contained in the kernel of 
$\alpha \rond \pi $ and $(V',W)$ is in $\Gamma _{*}$. Hence $W$ is contained in 
the kernel of $\alpha \rond \pi $.

(vi) Denote by $\Gamma '_{*}$ the closure in 
${\goth G}_{d-1,d,d,d+1}\times \ec {Gr}r{}{}1$ of the image of the map
$$ \xymatrix{ \k \times A' \ar[rr]^{\thetaup '_{\alpha ,*}} && 
{\goth G}_{d-1,d,d,d+1}\times \ec {Gr}r{}{}1},
\qquad (z,g) \longmapsto (\thetaup _{\alpha }(z,g),g({\goth a}^{\alpha })) $$
and $\Gamma '_{**}$ the closure in $\ec {Gr}{}{{\goth r}'}{}d\times \ec {Gr}r{}{}1$ of 
the image of the map
$$ \xymatrix{ A' \ar[rr] && \ec {Gr}{}{{\goth r}'}{}d \times \ec {Gr}r{}{}1},
\qquad g \longmapsto (g({\goth t}),g({\goth a}^{\alpha }) .$$
For all $(T_{1},T',T,T_{2},T'_{2})$ in the image of $\thetaup '_{\alpha ,*}$, $T'+T'_{2}$
is contained in $T_{2}$. Then so is it for all $(T_{1},T',T,T_{2},T'_{2})$ in 
$\Gamma '_{*}$. As ${\goth G}_{d-1,d,d,d+1}$ and $\ec {Gr}r{}{}1$ are projective 
varieties, $\Gamma $ and $\Gamma '_{**}$ are the images of $\Gamma '_{*}$ by the 
projections
$$ \xymatrix{ {\goth G}_{d-1,d,d,d+1}\times \ec {Gr}r{}{}1 \ar[rr] &&
{\goth G}_{d-1,d,d,d+1}}, \qquad
(T_{1},T',T,T_{2},T'_{2}) \longmapsto (T_{1},T',T,T_{2}) ,$$
$$ \xymatrix{ {\goth G}_{d-1,d,d,d+1}\times \ec {Gr}r{}{}1 \ar[rr] &&
\ec {Gr}{}{{\goth r}'}{}d\times \ec {Gr}r{}{}1}, \qquad
(T_{1},T',T,T_{2},T'_{2}) \longmapsto (T',T'_{2}) $$
respectively. 

Set: 
$$ X' := \{(g,t,z,v,w) \in 
A'\times {\goth t}\times \k \times {\goth r}'\times {\goth r} \; \vert \;
v = g(t), \; w = g(z x_{\alpha }) \} $$
and denote by $Y'$ the closure in ${\goth r}'\times {\goth r}$ of the image of $X'$ by
the canonical projection 
$$ \xymatrix{A'\times {\goth t}\times \k \times {\goth r}'\times {\goth r} \ar[rr] &&
{\goth r}'\times {\goth r}}.$$
As for all $(g,t,z,v,w)$ in $X'$, 
$$ [v,w] = g([t,zx_{\alpha }]) = \alpha (t) g(zx_{\alpha }) = 
\alpha \rond \pi (v) w ,$$
$[v,w] = \alpha \rond \pi (v) w$ for all $(v,w)$ in $Y'$. By definition, for all
$(T,T')$ in $\Gamma '_{**}$, $T\times T'$ is contained in $Y'$. For some $W'$ in 
$\ec {Gr}r{}{}1$, $(V_{1},V',V,W,W')$ is in $\Gamma '_{*}$. By hypothesis, $V'$ is not 
contained in the kernel of $\alpha \rond \pi $. Hence, for some $v$ in $V'$ and 
$w$ in $W\setminus \{0\}$, $\alpha \rond \pi (v)\neq 0$ and 
$[v,w]=\alpha \rond \pi (v)w$. 
\end{proof}

\begin{coro}\label{csa5}
Suppose that ${\goth a}'$ has Property $({\bf P})$. Let $s$ be in ${\goth t}$ such that
${\goth r}^{s}$ is contained in ${\goth a}'$ and $(V_{1},V',V,W)$ be in $\Gamma $ such 
that $V$ is contained in ${\goth r}^{s}$ and $[s,V']$ is contained in $V'$.

{\rm (i)} If $W$ is not commutative then $V'=V$ and $V$ is in 
$\overline{R^{s}.{\goth t}}$.

{\rm (ii)} Suppose that for some $v$ in ${\goth a}$, $s+v$ is in $V$. Then 
$V'=V$ and $V$ is in $\overline{R^{s}.{\goth t}}$.
\end{coro}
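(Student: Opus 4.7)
The plan is to reduce both parts to proving the single equality $V=V'$, then to reduce part (ii) to part (i), and finally to establish $V=V'$ under hypothesis (i) by direct computation on the image of $\thetaup _{\alpha }$ together with a closure argument. I read the hypothesis ``${\goth r}^{s}\subset {\goth a}'$'' as ``${\goth r}^{s}\subset {\goth r}'$'' (equivalently $\alpha (s)\neq 0$, i.e.~$\alpha \notin \Lambda _{s}$), so that $R^{s}\subset R'$ and $R^{s}=R'^{s}$. The reduction of the second statement is then immediate: once $V=V'$ is known, Lemma~\ref{lsa5}(ii) gives $V\in X_{R'}$, and $V\subset {\goth r}^{s}$ places $V$ in $X_{R'}^{s}$, so Property $({\bf P})$ for ${\goth a}'$ yields $V\in \overline{R'^{s}.{\goth t}}=\overline{R^{s}.{\goth t}}$.

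To reduce (ii) to (i), suppose for contradiction that $W$ is commutative. By the contrapositive of Lemma~\ref{lsa5}(vi), $V'\subset {\goth t}_{\alpha }+{\goth a}$; combined with $V'\subset {\goth r}'$, this gives $V'\subset ({\goth t}_{\alpha }+{\goth a})\cap {\goth r}'={\goth t}_{\alpha }+{\goth a}'$. Lemma~\ref{lsa5}(v) then forces $W\subset {\goth t}_{\alpha }+{\goth a}$, hence $V\subset W\subset {\goth t}_{\alpha }+{\goth a}$ and $\pi (V)\subset {\goth t}_{\alpha }$. But $s+v\in V$ with $v\in {\goth a}$ gives $\pi (s+v)=s\in \pi (V)\subset {\goth t}_{\alpha }$, i.e.~$\alpha (s)=0$, contradicting $\alpha (s)\neq 0$. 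Hence $W$ is non-commutative, so (ii) is a special case of (i).

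For (i), I first handle points in the image of $\thetaup _{\alpha }$. If $(V_{1},V',V,W)=\thetaup _{\alpha }(z,g)$ with $z\in \k $ and $g\in A'$, the identity $[s,g(t)]=g([g^{-1}(s),t])$, together with $[{\goth r},{\goth t}]\subset {\goth a}$ (from Condition (1)) and ${\goth a}\cap {\goth t}=\{0\}$, applied to $[s,V']\subset V'$, yields $g^{-1}(s)\in {\goth t}$; and since $g-1$ maps ${\goth r}$ into ${\goth a}'$, this forces $g^{-1}(s)=s$, i.e.~$g\in A'^{s}$. The analogous computation $[s,g\exp (z\ad x_{\alpha })(t)]=-z\alpha (t)\alpha (s)g(x_{\alpha })$, combined with $V\subset {\goth r}^{s}$ and $\alpha (s)\neq 0$, then forces $z=0$, so $V=V'$. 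To extend this to the full closed hypothesis locus in $\Gamma $, I would use that, by the bracket formula $[v,w]=\alpha \rond \pi (v)w$ in the proof of Lemma~\ref{lsa5}(vi), hypothesis (i) is equivalent to the open condition $V'\not\subset {\goth t}_{\alpha }+{\goth a}$, which on the $A'$-orbit $A'.{\goth t}\subset X_{R'}$ places the quadruple in the image of $\thetaup _{\alpha }$ and reduces to the generic computation above.

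The main obstacle I foresee is this closure extension: verifying that the open condition $V'\not\subset {\goth t}_{\alpha }+{\goth a}$ really forces $V'$ into the orbit $A'.{\goth t}$ (and not merely into a larger open subset of $X_{R'}$), so that the full quadruple indeed lies in the image of $\thetaup _{\alpha }$. This will rely on the irreducibility of $\Gamma $ (Lemma~\ref{lsa5}(i)) and on the $\mathbb{P}^{1}$-fiber structure of the third projection $\Gamma \to X_{R}$ over a generic $V'\in A'.{\goth t}$: within such a fiber, the ``point at infinity'' $V=g({\goth t}_{\alpha })+\k g(x_{\alpha })$ is ruled out by $V\subset {\goth r}^{s}$ together with $\alpha (s)\neq 0$, leaving only $V=V'$ at $z=0$.
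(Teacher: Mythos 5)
Your preliminary reductions are correct. The equality $V = V'$ does suffice (with $R^{s} = R'^{s}$ and Property $({\bf P})$ for ${\goth a}'$), and your derivation of (ii) from (i) by directly showing $W$ cannot be commutative --- via the contrapositive of Lemma~\ref{lsa5}(vi), then Lemma~\ref{lsa5}(v), then $\pi (s+v) = s \notin {\goth t}_{\alpha }$ --- is clean and in fact a touch more direct than the paper's route, which instead supposes $V \neq V'$, deduces from (i) that $W$ must be commutative, and then contradicts this. The explicit computation forcing $g \in {A'}^{s}$ and then $z = 0$ on the image of $\thetaup _{\alpha }$ is also correct.

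The gap is exactly the one you flag, and it does not close along the lines you describe. The quadruple $(V_{1},V',V,W)$ in (i) is an arbitrary point of $\Gamma $, and the open condition $V' \not\subset {\goth t}_{\alpha }+{\goth a}$ does not place it over the orbit $A'.{\goth t}$: the boundary $X_{R'}\setminus A'.{\goth t}$ is a divisor whose components meet this open set (for instance, for $\gamma \in {\cal R}'\setminus \{\alpha \}$, the ${\bf T}$-fixed point ${\goth t}_{\gamma }\oplus {\goth a}^{\gamma }$ is not contained in ${\goth t}_{\alpha }+{\goth a}$ by Condition (3) of Section~\ref{sa}). Moreover only one direction of the stated equivalence between ``$W$ not commutative'' and ``$V'\not\subset {\goth t}_{\alpha }+{\goth a}$'' follows from Lemma~\ref{lsa5}(vi); the converse is not established and is not needed in the paper. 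The paper's proof avoids coordinates entirely: assuming $V\neq V'$, Lemma~\ref{lsa5}(iv) forces $V_{1}=V\cap V'$ and $W=V+V'$, hence $V=V_{1}\oplus \k x$, $V'=V_{1}\oplus \k y$, $W=V_{1}\oplus \k x\oplus \k y$ with $[x,y]\neq 0$; a case analysis on whether $V'\subset {\goth r}^{s}$, using Property $({\bf P})$ for ${\goth a}'$, Lemma~\ref{lsa1}(ii), and the semisimple component of $\ad x$, then produces a contradiction valid at every point of $\Gamma $, boundary included. That is the robustification your argument is missing.
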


\begin{proof}
By Lemma~\ref{lsa5}(ii), $V$ and $V'$ are in $X_{R}$ and $X_{R'}$ respectively.

(i) If $V'=V$, $V$ is in $\overline{R^{s}.{\goth t}}$ by Property $({\bf P})$ for 
${\goth a}'$. Suppose $V'\neq V$. A contradiction is expected. Then, by 
Lemma~\ref{lsa5}(iv), for some $x$ and $y$ in $W$,
$$ V = V_{1}\oplus \k x, \quad V' = V_{1} \oplus \k y, \quad 
W = V_{1} \oplus \k x \oplus \k y .$$
Moreover, as $V$ is contained in ${\goth r}^{s}$ and $[s,V']\subset V'$, $W$ is contained
in ${\goth r}'$ and we can choose $y$ so that $[s,y]\in \k y$. Since $V$ and $V'$ are 
commutative subalgebras of ${\goth r}$, $[x,y]\neq 0$. We have two cases to consider:
\begin{itemize}
\item [{\rm (a,1)}] $V'$ is contained in ${\goth r}^{s}$,
\item [{\rm (a,2)}] $V'$ is not contained in ${\goth r}^{s}$.
\end{itemize}

(a,1) By Property $({\bf P})$ for ${\goth a}'$, $s$ is in $V'$. Hence $s=ty+v$ for some 
in $(t,v)$ in $\k\times V_{1}$. As $V$ is a commutative subalgebra of ${\goth r}^{s}$, 
containing $V_{1}$ and $x$, 
$$ 0 = [x,s] = t[x,y] .$$
Then $s=v$ is in $V_{1}$, whence a contradiction since $\alpha (s) \neq 0$ and $V_{1}$ is
contained in ${\goth t}_{\alpha }+{\goth a}'$ by Lemma~\ref{lsa5}(ii).

(a,2) For some $a$ in $\k^{*}$, $[s,y]=ay$. Then $y$ is in ${\goth a}'$ and $V'$ is 
contained in ${\goth t}_{\alpha }+{\goth a}'$ since so is $V_{1}$. As a result, 
by Lemma~\ref{lsa5}(v), $V$ and $W$ are contained in ${\goth t}_{\alpha }+{\goth a}'$ 
since $V$ is contained in ${\goth a}'$. As $[s,[x,y]]=a[x,y]$, $[x,y]=by$ for some $b$ in
$\k^{*}$ since the eigenspace of eigenvalue $a$ of the restriction of $\ad s$ to $V'$ is 
generated by $y$. Then $\ad x$ is not nilpotent. Let $x_{\s}$ be in ${\goth r}'$ such 
that $\ad x_{\s}$ is the semisimple component of $\ad x$. Then $x_{\s}$ is in 
${\goth t}_{\alpha }+{\goth a}'$, $[s,x_{s}]=0$ and $[x_{\s},V_{1}]=\{0\}$ since 
$[s,x]=0$ and $[x,V_{1}]=\{0\}$. Moreover, $[ax_{\s}-bs,y]=0$. Then $ax_{\s}-bs$ is a 
semisimple element of ${\goth r}'$ such that $[ax_{\s}-bs,V']=\{0\}$. As it is conjugate 
under $R'$ to an element of ${\goth t}$ by Lemma~\ref{lsa1}(ii), by Property $({\bf P})$ 
for ${\goth a}'$, $ax_{\s}-bs$ is in $V'$, whence a contradiction since $V'$ is contained
in ${\goth t}_{\alpha }+{\goth a}'$ and $ax_{\s}-bs$ is not in 
${\goth t}_{\alpha }+{\goth a}'$.

(ii) If $V=V'$, $V$ is in $\overline{R^{s}.{\goth t}}$ by Property $({\bf P})$ for 
${\goth a}'$. Suppose $V\neq V'$. A contradiction is expected. As $V$ is contained in 
${\goth r}^{s}$, $[s,v]=0$. Let $x$ and $y$ be as in (i). As $V_{1}$ is contained in 
${\goth t}_{\alpha }+{\goth a}'$, $s+v$ is not in $V_{1}$ since $\alpha (s)\neq 0$. So we
can choose $s+v=x$. By (i), $W$ is commutative. Then $[s+v,y]=0$ and $[\ad s,\ad y]=0$ 
since $\ad s$ is the semisimple component of $\ad (s+v)$. 
Hence, by Lemma~\ref{lsa1}(i), $[s,y]=0$ since $[s,y]$ is in ${\goth a}$. As a result,
$V'$ is contained in ${\goth r}^{s}$ since so is $V_{1}$. So, by Property $({\bf P})$ for
${\goth a}'$, $s$ is in $V'$ and $W$ is not commutative by Lemma~\ref{lsa5}(vi), whence 
a contradiction. 
\end{proof}

For $(T_{1},T',T_{2})$ in $\Gamma '$, denote by $\Gamma _{T_{1},T',T_{2}}$ the subset of 
elements $(T_{1},T',T,T_{2})$ of ${\goth G}_{d-1,d,d,d+1}$ such that $T$ is contained in 
$T_{2}$ and contains $T_{1}$. Then $\Gamma _{T_{1},T',T_{2}}$ is a closed subvariety of 
${\goth G}_{d-1,d,d,d+1}$, isomorphic to ${\Bbb P}^{1}(\k)$. Let $(V_{1},V',V,W)$ be a 
fixed point under ${\bf T}$ of $\Gamma $.

\begin{lemma}\label{l2sa5}
{\rm (i)} For some affine open neighborhood $\Omega $ of $(V_{1},V',W)$ in $\Gamma '$,
$\Omega $ is invariant under ${\bf T}$.

{\rm (ii)} For $i=0,\ldots,n-2$, there exist $Y_{i}$ and $O_{i}$ such that
\begin{itemize}
\item [{\rm (a)}] $Y_{i}$ is an irreducible closed subset of dimension $n-1-i$ of 
$\Omega $, containing $(V_{1},V',W)$ and invariant under ${\bf T}$,
\item [{\rm (b)}] $O_{i}$ is a locally closed subvariety of dimension $n-1-i$ of $A'$,
invariant under ${\bf T}$ by conjugation,
\item [{\rm (c)}] $\thetaup '_{\alpha }(O_{i})$ is contained in $Y_{i}$ and 
$(V_{1},V',V,W)$ is in the closure of $\thetaup _{\alpha }(\k\times O_{i})$ in $\Gamma $.
\end{itemize}

{\rm (iii)} There exist a smooth projective curve $C$, an action of ${\bf T}$ on 
$C$, $\poi x1{,\ldots,}{m}{}{}{}$ in $C$ and two morphisms
$$ \xymatrix{ C\setminus \{\poi x1{,\ldots,}{m}{}{}{}\} \ar[rr]^{\mu } && A'}, 
\qquad \xymatrix{ C \ar[rr]^{\nu } && \Gamma '}$$
satisfying the following conditions:
\begin{itemize}
\item [{\rm (a)}] $\poi x1{,\ldots,}{m}{}{}{}$ are the fixed points under ${\bf T}$ in 
$C$, 
\item [{\rm (b)}] for $g$ in ${\bf T}$ and $x$ in 
$C\setminus \{\poi x1{,\ldots,}{m}{}{}{}\}$, $\mu (g.x) = g\mu (x)g^{-1}$ and 
$\nu (g.x)=g.\nu (x)$, 
\item [{\rm (c)}] $\nu (x_{1})=(V_{1},V',W)$,
\item [{\rm (d)}] $(V_{1},V',V,W)$ is in the closure of the image of 
$\k\times (C\setminus \{\poi x1{,\ldots,}{m}{}{}{}\})$ by the map
$(z,x) \longmapsto \thetaup _{\alpha }(z,\mu (x))$.
\end{itemize}
\end{lemma}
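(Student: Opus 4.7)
The plan is to prove (i) via Sumihiro's theorem, to prove (ii) by an inductive cutting construction using ${\bf T}$-semi-invariants on $\Omega$, and to deduce (iii) from the $i=n-2$ case by normalizing the resulting $1$-dimensional subvariety.

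For (i), $\Gamma'$ is a projective ${\bf T}$-variety by Lemma~\ref{lsa5}(i) and $(V_{1},V',W)$ is a ${\bf T}$-fixed point; Sumihiro's theorem (applied through a ${\bf T}$-linearized projective embedding of $\Gamma'$ into a product of projective spaces via Plücker) produces the desired affine open ${\bf T}$-invariant neighborhood $\Omega$.

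For (ii), I would proceed by induction on $i$, setting $Y_{0}:=\Omega$, irreducible of dimension $n-1$ since $\Gamma'$ is irreducible of that dimension by Lemma~\ref{lsa5}(i), and $O_{0}:=\thetaup'^{-1}_{\alpha}(\Omega)$, an open ${\bf T}$-invariant subset of $A'$ whose image under $\thetaup'_{\alpha}$ is dense in $\Omega$ because $\thetaup'_{\alpha}(A')$ is open dense in $\Gamma'$. For the inductive step, the coordinate ring $\k[\Omega]$ of the affine ${\bf T}$-variety $\Omega$ decomposes into ${\bf T}$-weight spaces, so the maximal ideal $\mathfrak{m}$ at $(V_{1},V',W)$ is ${\bf T}$-stable and admits a basis of semi-invariants. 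I would pick a ${\bf T}$-semi-invariant $f_{i}\in\mathfrak{m}$ whose restriction to $Y_{i}$ is not identically zero (possible since $\dim Y_{i}\geq 1$), and take $Y_{i+1}$ to be an irreducible component of the zero locus of $f_{i}$ in $Y_{i}$ passing through $(V_{1},V',W)$: it has dimension $n-2-i$ and is ${\bf T}$-invariant because the component of a ${\bf T}$-stable set through a fixed point is preserved by the connected group ${\bf T}$. I would then take $O_{i+1}$ to be an irreducible component of $\thetaup'^{-1}_{\alpha}(Y_{i+1})\cap O_{i}$ of dimension $n-2-i$; its existence and ${\bf T}$-invariance follow from the injectivity and ${\bf T}$-equivariance of $\thetaup'_{\alpha}$ together with the fact that $\thetaup'_{\alpha}$ is birational onto its (dense open) image.

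The closure condition in (ii)(c) reduces, via Lemma~\ref{lsa5}(iii), to showing that the fiber of $\varpi$ over $(V_{1},V',W)$ — a $\mathbb{P}^{1}$ parameterizing the $d$-dimensional subspaces $T$ with $V_{1}\subseteq T\subseteq W$ — meets $\overline{\thetaup_{\alpha}(\k\times O_{i})}$ in the fixed point $V$. The intersection is a nonempty ${\bf T}$-invariant closed subset of this $\mathbb{P}^{1}$; the induced ${\bf T}$-action on the $\mathbb{P}^{1}$ has exactly two fixed points, arising as the $z=0$ and $z\to\infty$ limits of $g\exp(z\ad x_{\alpha}).\goth{t}$ in the family $\thetaup_{\alpha}$, and since the point $V$ specified in the hypothesis is a ${\bf T}$-fixed element of this $\mathbb{P}^{1}$, it coincides with one of them and lies in the closure.

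For (iii), apply (ii) with $i=n-2$ to obtain a $1$-dimensional ${\bf T}$-invariant $Y_{n-2}\subset\Omega$ and $O_{n-2}\subset A'$. Let $\widetilde{Y}$ be the closure of $Y_{n-2}$ in $\Gamma'$ — a ${\bf T}$-invariant projective irreducible curve — and let $C\longrightarrow\widetilde{Y}$ be its normalization. Then $C$ is a smooth projective irreducible curve; the ${\bf T}$-action lifts uniquely to $C$ by the universal property of normalization, and its set of ${\bf T}$-fixed points is a finite set $\{x_{1},\ldots,x_{m}\}$, labelled so that $x_{1}\mapsto(V_{1},V',W)$. Define $\nu$ as the composite $C\to\widetilde{Y}\hookrightarrow\Gamma'$. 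Since $\thetaup'_{\alpha}$ restricts to a ${\bf T}$-equivariant birational morphism $O_{n-2}\to Y_{n-2}$, the inverse rational map composed with $C\to\widetilde{Y}$ gives a morphism $\mu$ defined on an open subset $C^{\circ}\subset C$ whose complement is a finite ${\bf T}$-invariant set, hence consists of fixed points; this yields $\mu: C\setminus\{x_{1},\ldots,x_{m}\}\to A'$, and the equivariance in (iii)(b) follows from the equivariance of $\thetaup'_{\alpha}$ under conjugation. Property (iii)(d) then transfers directly from (ii)(c) via $\mu$ and $O_{n-2}$.

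The main obstacle is condition (ii)(c): verifying that the specific ${\bf T}$-fixed $V$ lies in $\overline{\thetaup_{\alpha}(\k\times O_{i})}$. This rests on identifying the two ${\bf T}$-fixed points of the $\mathbb{P}^{1}$-fiber of $\varpi$ over $(V_{1},V',W)$ as the $z=0$ and $z\to\infty$ degenerations of the family $g\exp(z\ad x_{\alpha}).\goth{t}$, and in checking that both limits are indeed hit in the closure, which is most delicate near the boundary points of $O_{i}$ on $\widetilde{Y}\setminus Y_{n-2}$.
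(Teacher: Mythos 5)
Your overall scaffolding matches the paper's: Sumihiro (or rather the standard fact behind it) for (i), an inductive cutting by ${\bf T}$-semi-invariants for (ii), and normalization of $\overline{Y_{n-2}}$ for (iii). Parts (i) and (iii) are in order. The genuine gap is in the inductive step of (ii), at exactly the place you yourself flag as ``the main obstacle'', and the argument you give there contains a non sequitur. You note that the fiber $\Gamma_{V_{1},V',W}\cong{\Bbb P}^{1}$ carries a ${\bf T}$-action with two fixed points $(V_{1},V',V,W)$ and $(V_{1},V',V',W)$, that $\Gamma_{V_{1},V',W}\cap\overline{\thetaup_{\alpha}(\k\times O_{i+1})}$ is a nonempty ${\bf T}$-invariant closed subset, and then conclude that since $V$ ``is a ${\bf T}$-fixed element of this ${\Bbb P}^{1}$, it coincides with one of them and lies in the closure.'' That inference fails: a nonempty ${\bf T}$-invariant proper closed subset of that ${\Bbb P}^{1}$ may consist of the single fixed point $(V_{1},V',V',W)$ alone, in which case $(V_{1},V',V,W)$ is not in it. Picking an arbitrary semi-invariant $f_{i}\in\mathfrak{m}$ with nonvanishing restriction to $Y_{i}$ gives no control over which of the two fixed points survives in the closure.

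The paper's proof closes this gap by two devices, both absent from your construction. First, it works in $\Gamma$ and not in $\Gamma'$: it forms $Y'_{i}:=\overline{\thetaup_{\alpha}(\k\times O_{i})}\subset\Gamma$ and chooses $Y'_{i+1}$ as an irreducible component of the nullvariety of $p$ in $Y'_{i}\cap\varpi^{-1}(\Omega)$ \emph{containing $(V_{1},V',V,W)$}, then sets $Y_{i+1}:=\overline{\varpi(Y'_{i+1})}$; the point $(V_{1},V',V,W)$ itself, not merely its image $(V_{1},V',W)$ in $\Gamma'$, is carried along at every stage. Second, the cutting function $p$ is not arbitrary: the paper assembles a ${\bf T}$-invariant closed ``bad locus'' $Z_{*}\subset\Omega$ out of the boundaries $Y'_{i}\setminus O'_{i}$ and $Y_{i}\setminus\thetaup'_{\alpha}(O_{i})$, collects the defining ideals $I$ of its codimension-one components, and requires $p$ to be a semi-invariant in $\k[Y_{i}]$ vanishing at $(V_{1},V',W)$ but lying \emph{outside} $I$. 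The condition $p\notin I$ is precisely what forces $\thetaup_{\alpha}(\k\times O_{i+1})$ to be dense in $Y'_{i+1}$ and hence $(V_{1},V',V,W)\in\overline{\thetaup_{\alpha}(\k\times O_{i+1})}$. Without both of these the recursion can break at any stage, and no ${\bf T}$-formal argument on the single ${\Bbb P}^{1}$-fiber repairs it.
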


\begin{proof}
(i) As $\Gamma '$ is a projective variety with a ${\bf T}$ action and $(V_{1},V',W)$
is a fixed point under ${\bf T}$, there exists an affine open neighborhood $\Omega $
of $(V_{1},V',W)$ in $\Gamma '$, invariant under ${\bf T}$. 

(ii) Prove the assertion by induction on $i$. For $i=0$, $Y_{i}=\Omega $ and 
$O_{i}$ is the inverse image of $\Omega $ by $\thetaup '_{\alpha }$. Suppose that 
$Y_{i}$ and $O_{i}$ are known. Let $Y'_{i}$ be the closure in $\Gamma $ of 
$\thetaup _{\alpha }(\k\times O_{i})$. By Condition (c), $Y'_{i}$ is invariant under 
${\bf T}$ and $\thetaup _{\alpha }(\k\times O_{i})$ is a ${\bf T}$-invariant dense
subset of $Y'_{i}$. So, it contains an ${\bf T}$-invariant dense open subset $O'_{i}$ of 
$Y'_{i}$. As $\thetaup '_{\alpha }$ is an orbital injective morphism, 
$\thetaup '_{\alpha }(O_{i})$ is a dense open subset of $Y_{i}$. Set:
$$Z':=Y'_{i}\setminus O'_{i}, \quad Z := Y_{i}\setminus \theta _{\alpha }(O'_{i}),
\quad Z_{*} := \Omega \cap (\varpi (Z)\cup Z') .$$ 
Then $Z_{*}$ is a ${\bf T}$-invariant closed subset of $Y_{i}$, containing 
$(V_{1},V',W)$. 

Denote by $Z_{**}$ the union of irreducible components of dimension
$\dim Y_{i}-1$ of $Z_{*}$ and $I$ the union of the ideals of definition in $\k[Y_{i}]$ of 
the irreducible components of $Z_{**}$. Let $p$ be in $\k[Y_{i}]\setminus I$, 
semiinvariant under ${\bf T}$ and such that $p((V_{1},V',W))=0$. Denote by $Y'_{i+1}$
an irreducible component of the nullvariety of $p$ in $Y'_{i}\cap \varpi ^{-1}(\Omega )$,
containing $(V_{1},V',V,W)$ and $Y_{i+1}$ the closure in $\Omega $ of 
$\varpi (Y'_{i+1})$. Then $Y_{i+1}$ has dimension $n-i-1$ and its intersection with
$\thetaup '_{\alpha }(O_{i})$ is not empty so that 
$O_{i+1}:={\thetaup '_{\alpha }}^{-1}(O_{i}\cap\thetaup '_{\alpha }(O_{i}))$
is a nonempty locally closed subset of dimension $n-i-1$ of $A'$. Moreover, 
$Y_{i+1}$ and $O_{i+1}$ are invariant under ${\bf T}$ since $p$ is semiinvariant
under ${\bf T}$. As $\thetaup _{\alpha }(\k\times O_{i+1})$ is the intersection of 
$Y'_{i+1}$ and $\thetaup _{\alpha }(\k\times O_{i})$, it is dense in $Y'_{i+1}$ so that 
$(V_{1},V',V,W)$ is in the closure of $\thetaup _{\alpha }(\k\times O_{i+1})$ and 
$(V_{1},V',W)$ is in $Y_{i+1}$.  

(iii) Let $\overline{Y_{n-2}}$ be the closure of $Y_{n-2}$, $C$ its normalization and 
$\nu $ the normalization morphism. Then $C$ is a smooth projective curve. As $Y_{n-2}$ is
invariant under ${\bf T}$, so is $\overline{Y_{n-2}}$ and there is an action of 
${\bf T}$ on $C$ such that $\nu $ is an equivariant morphism. As the restriction of 
$\thetaup '_{\alpha }$ to $O_{n-2}$ is an isomorphism onto a dense open subset of 
$Y_{n-2}$, the actions of ${\bf T}$ on $\overline{Y_{n-2}}$ and $C$ are not trivial
since $\thetaup '_{\alpha }$ is equivariant under the actions of ${\bf T}$. As
a result, ${\bf T}$ has an open orbit $O_{*}$ in $\overline{Y_{n-2}}$ and 
$\overline{Y_{n-2}}\setminus O_{*}$ is the set of fixed points under ${\bf T}$ of 
$\overline{Y_{n-2}}$ since $\overline{Y_{n-2}}$ has dimension $1$. Hence the 
restriction of $\nu $ to $\nu ^{-1}(O_{*})$ is an isomorphism, 
$C\setminus \nu ^{-1}(O_{*})$ is finite, its elements are fixed under ${\bf T}$ and 
there exists a ${\bf T}$-equivariant morphism $\mu $ from $\nu ^{-1}(O_{*})$ to 
$A'$ such that $\thetaup '_{\alpha }\rond \mu = \nu $. As $(V_{1},V',W)$ is a fixed point 
under ${\bf T}$, for some $x_{1}$ in $C\setminus \nu ^{-1}(O_{*})$, 
$\nu (x_{1})=(V_{1},V',W)$ since $(V_{1},V',W)$ is in $\nu (C)$. Moreover, 
$(V_{1},V',V,W)$ is in the closure of the map
$$ \xymatrix{ \k \times (C\setminus \nu ^{-1}(O_{*})) \ar[rr] && \Gamma }, 
\qquad (z,x) \longmapsto \thetaup _{\alpha }(z,\mu (x))$$
since it is in $\overline{\thetaup _{\alpha }(\k\times O_{n-2})}$.
\end{proof}

Denote by $\eta $ the morphism 
$$ \xymatrix{ \k \times (C\setminus \{\poi x1{,\ldots,}{m}{}{}{}\}) \ar[rr]^{\eta } && 
\Gamma }, \qquad (z,x) \longmapsto \thetaup _{\alpha }(z,\mu (x)) $$
and $\Delta $ the closure of the graph of $\eta $ in 
${\Bbb P}^{1}(\k)\times C\times \Gamma $. Let $\upsilon $ be the restriction to 
$\Delta $ of the canonical projection
$$ \xymatrix{ {\Bbb P}^{1}(\k)\times C\times \Gamma \ar[rr] && {\Bbb P}^{1}(\k)\times C}$$
and for $(z,x)$ in ${\Bbb P}^{1}(\k)\times C$, let $F_{z,x}$ be the subset of $\Gamma $
such that $\{(z,x)\}\times F_{z,x}$ is the fiber of $\upsilon $ at $(z,x)$. We have 
an action of ${\bf T}$ in ${\Bbb P}^{1}(\k)$ given by
$$ t.z := \left \{ \begin{array}{ccc} \alpha (t)z & \mbox{ if } & z \in \k^{*} \\
z & \mbox{ if } & z \in \{0,\infty \} \end{array}  \right. .$$

\begin{lemma}\label{l3sa5}
Let $\Delta _{\nu }$ be the graph of $\nu $.

{\rm (i)} The set $\Delta _{\nu }$ is the image of $\Delta $ by the map
$(z,x,y)\mapsto (x,\varpi (y))$.

{\rm (ii)} For $t$ in ${\bf T}$ and $(z,x,y)$ in $\Delta $, $t.(z,x,y) := (t.z,t.x,t.y)$
is in $\Delta $.

{\rm (iii)} For $(z,x)$ in ${\Bbb P}^{1}(\k)\times C$, $\eta $ is regular at $(z,x)$ 
if and only if $F_{z,x}$ has dimension $0$. In this case, $\vert F_{z,x} \vert=1$.

{\rm (iv)} For $(z,x)$ in 
${\Bbb P}^{1}(\k)\times C\setminus \{0,\infty \}\times \{\poi x1{,\ldots,}{m}{}{}{}\}$, 
$\eta $ is regular at $(z,x)$.

{\rm (v)} For $i=1,\ldots,m$, there exists a regular map $\eta _{i}$ from 
${\Bbb P}^{1}(\k)$ to $\Gamma $ such that $\eta _{i}(z)=\eta (z,x_{i})$ for 
all $z$ in $\k^{*}$. Moreover, its image is contained in 
$\varpi ^{-1}(\{\nu (x_{i})\})\cap \Gamma $. 
\end{lemma}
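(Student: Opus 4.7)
The plan is to handle the five parts sequentially, exploiting throughout the factorization $\eta(z,x) = \thetaup_\alpha(z, \mu(x))$ and the equivariance properties of $\thetaup_\alpha$ and $\mu$. The essential equivariance input, already implicit in the proof of Lemma~\ref{lsa5}(i), is the identity $t.\thetaup_\alpha(z,g) = \thetaup_\alpha(\alpha(t)z, tgt^{-1})$ for $t$ in ${\bf T}$; this exactly matches the ${\bf T}$-action on ${\Bbb P}^1(\k)$ fixed before the statement.

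Parts (i) and (ii) will be direct consequences. For (i), Lemma~\ref{lsa5}(iii) gives $\varpi \rond \thetaup_\alpha(z,g) = \thetaup'_\alpha(g)$ and the construction in Lemma~\ref{l2sa5}(iii) gives $\thetaup'_\alpha \rond \mu = \nu$, so $(z,x,y) \mapsto (x, \varpi(y))$ sends the graph of $\eta$ onto the graph of $\nu$ restricted to $C \setminus \{x_1, \ldots, x_m\}$. Since this projection is proper and $\Delta_\nu$ is already the closure of that restricted graph (because $\nu$ is regular on all of $C$), passing to closures yields $\Delta_\nu$. For (ii), combining the equivariance of $\thetaup_\alpha$ with $\mu(t.x) = t\mu(x)t^{-1}$ from Lemma~\ref{l2sa5}(iii)(b) gives $t.\eta(z,x) = \eta(t.z, t.x)$, so the graph of $\eta$, and hence $\Delta$, is stable under the diagonal ${\bf T}$-action.

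The central geometric step is (iii). The morphism $\upsilon$ is proper and birational from the irreducible surface $\Delta$ onto the smooth, and hence normal, surface ${\Bbb P}^1(\k) \times C$: it is an isomorphism over the dense open subset $\k \times (C \setminus \{x_1, \ldots, x_m\})$. By the Zariski main theorem for proper birational morphisms to a normal variety, every fiber of $\upsilon$ is connected, so a zero-dimensional fiber reduces to one point and at such a point $\upsilon$ is a local isomorphism by normality of the base; this is precisely what it means for $\eta$ to extend regularly there. Conversely, if $\eta$ is regular at $(z,x)$ then $\Delta$ coincides locally with the closed graph of the extension, forcing $F_{z,x}$ to reduce to a single point. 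I expect this to be the most delicate step, though it amounts to a clean application of a classical result.

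For (iv), (iii) identifies the non-regular locus of $\eta$ with the image under $\upsilon$ of its exceptional set, which is closed of codimension at least two in the surface ${\Bbb P}^1(\k) \times C$ and hence finite; by (ii) it is ${\bf T}$-stable. Since Condition~(1) on $\cal R$ forces the character $\alpha$ to be non-trivial, the ${\bf T}$-action on ${\Bbb P}^1(\k)$ has exactly $\{0, \infty\}$ as fixed points, so the ${\bf T}$-fixed set of ${\Bbb P}^1(\k) \times C$ is $\{0, \infty\} \times \{x_1, \ldots, x_m\}$ and must contain the non-regular locus. For (v), (iv) makes $z \mapsto \eta(z, x_i)$ a morphism on $\k^*$; since ${\Bbb P}^1(\k)$ is a smooth projective curve and $\Gamma$ is projective, it extends uniquely to a morphism $\eta_i : {\Bbb P}^1(\k) \to \Gamma$, and the identity $\varpi(\eta_i(z)) = \nu(x_i)$, valid on $\k^*$ by (i), propagates by continuity to all of ${\Bbb P}^1(\k)$, placing the image of $\eta_i$ in the closed subvariety $\varpi^{-1}(\{\nu(x_i)\}) \cap \Gamma$.
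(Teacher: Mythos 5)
Your proof is correct and follows essentially the same route as the paper's: projectivity and closure arguments for (i), equivariance of $\thetaup_\alpha$ and $\mu$ for (ii), Zariski's Main Theorem for the proper birational morphism $\upsilon$ for (iii), a codimension-plus-equivariance argument for (iv), and extension of rational maps to projective varieties for (v). The only minor deviation is in (iv), where you derive the finiteness of the non-regular locus from (iii) and the normality of the base, while the paper cites \cite[Ch.~6, Theorem~6.1]{Sh} directly for extension over a big open subset; these are two phrasings of the same classical fact.
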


\begin{proof}
(i) As ${\Bbb P}^{1}(\k)$ and $\ec {Gr}r{}{}d$ are projective varieties, the image of 
$\Delta $ by the map $(z,x,y) \mapsto (x,\varpi (y))$ is a closed subset of 
$C\times \Gamma '$ contained in $\Delta _{\nu }$ since 
$\varpi \rond \eta (z,x)=\nu (x)$ for all $(z,x)$ in 
$\k\times (C\setminus \{\poi x1{,\ldots,}{m}{}{}{}\})$, whence the assertion since 
the inverse image of $\Delta _{\nu }$ by this map is a closed subset of 
${\Bbb P}^{1}(\k)\times C\times \Gamma $ containing the graph of $\eta $.

(ii) From the equality
$$ t \exp (z\ad x_{\alpha }) t^{-1} = \exp (\alpha (t)z\ad x_{\alpha })$$
for all $(t,z)$ in ${\bf T}\times \k$, we deduce the equality
$$ t.\eta (z,x) = t.\thetaup _{\alpha }(z,\mu (x))= 
\thetaup _{\alpha }(\alpha (t)z,\mu (t.x) = \eta (t.z,t.x)$$
for all $(t,z,x)$ in ${\bf T}\times \k \times (C\setminus \{\poi x1{,\ldots,}{m}{}{}{}\})$
since $\thetaup _{\alpha }$ and $\mu $ are ${\bf T}$-equivariant, whence the assertion.

(iii) As $\Gamma $ is a projective variety, $\upsilon $ is a projective morphism. 
Moreover, it is birational since $\Delta $ is the closure of the graph of $\eta $. So, by 
Zariski's Main Theorem \cite[Ch. III, Corollary 11.4]{Ha}, the fibers of $\upsilon $ are 
connected of dimension $0$ or $1$ since ${\Bbb P}^{1}(\k)\times C$ is normal of 
dimension $2$. Let $(z,x)$ be in ${\Bbb P}^{1}(\k)\times C$ such that $F_{z,x}$ dimension
$0$. There exists a neighborhood $O_{z,x}$ of $(z,x)$ in ${\Bbb P}^{1}(\k)\times C$ such 
that $F_{y}$ has dimension $0$ for $y$ in $O_{z,x}$. In other words, the restriction of 
$\upsilon $ to $\upsilon ^{-1}(O_{z,x})$ is a quasi finite morphism. Moreover, it is 
birational and surjective. So, again by Zariski's Main Theorem~\cite[\S 9]{Mu}, it is an 
isomorphism. Hence $\eta $ is regular at $(z,x)$. Conversely, if $\eta $ is regular at 
$(z,x)$, $\eta (z,x)$ is an isolated point in $F_{z,x}$, whence $F_{z,x}=\{\eta (z,x)\}$ 
since $F_{z,x}$ is connected.  

(iv) The variety $\k\times (C\setminus \{\poi x1{,\ldots,}{m}{}{}{}\})$ is an open
subset of the smooth variety ${\Bbb P}^{1}(\k)\times C$ and $\Gamma $
is a projective variety. Hence $\eta $ has a regular extension to a big open subset of 
${\Bbb P}^{1}(\k)\times C$ by~\cite[Ch. 6, Theorem 6.1]{Sh}. By Condition (a) of 
Lemma~\ref{l2sa5}(iii), $\{0,\infty \}\times \{\poi x1{,\ldots,}{m}{}{}{}\}$ is the set 
of fixed points under ${\bf T}$ in ${\Bbb P}^{1}(\k)\times C$ and by (ii),
$t.\eta (z,x)=\eta (t.z,t.x)$ for all $(t,z,x)$ in 
${\bf T}\times {\Bbb P}^{1}(\k)\times (C\setminus \{\poi x1{,\ldots,}{m}{}{}{}\})$. Hence
$\eta $ is regular on 
$P^{1}(\k)\times C\setminus \{0,\infty \}\times \{\poi x1{,\ldots,}{m}{}{}{}\}$.

(v) The restriction of $\eta $ to $\k^{*}\times \{x_{i}\}$ is a regular 
map from a dense open subset of the smooth variety ${\Bbb P}^{1}(\k)\times \{x_{i}\}$
to the projective variety $\Gamma $. So, again by~\cite[Ch. 6, Theorem 6.1]{Sh}, 
this map has regular extension to ${\Bbb P}^{1}(\k)\times \{x_{i}\}$, whence the 
assertion by (i).
\end{proof}

Let $I$ be the set of indices such that $\nu (x_{i})=(V_{1},V',W)$. Denote by $S$ the 
image of $\Delta $ by the canonical projection 
$\xymatrix{{\Bbb P}^{1}(\k)\times C\times \Gamma \ar[r] & \Gamma }$, 
$S_{\n}$ its normalization, $\sigma $ the normalization morphism, $S^{{\bf T}}$ and 
$S_{\n}^{{\bf T}}$ the sets of fixed points under ${\bf T}$ in $S$ and $S_{\n}$ 
respectively. Set 
$$ C_{*} := {\Bbb P}^{1}(\k)\times C\setminus 
\{(0,\infty \}\times \{\poi x1{,\ldots,}{m}{}{}{}\} .$$ 
By Lemma~\ref{l2sa5}(iv), $\eta $ is a dominant morphism from $C_{*}$ to $S$, whence 
a commutative diagram
$$ \xymatrix{ C_{*} \ar[rr]^{\eta _{\n}} \ar[rrd]_{\eta } && S_{\n} \ar[d]^{\sigma }\\
&& S}$$ 
since $C_{*}$ is smooth. Let $\Delta _{\n}$ be the closure in 
${\Bbb P}^{1}(\k)\times C\times S_{\n}$ of the graph of $\eta _{\n}$ and $\upsilon _{2}$
the restriction to $\Delta _{\n}$ of the canonical projection
$$\xymatrix{ {\Bbb P}^{1}(\k)\times C\times S_{\n} \ar[rr] && S_{\n}}.$$

\begin{lemma}\label{l4sa5}
Suppose $V'\neq V$ and $V$ and $V'$ contained in ${\goth z}+{\goth a}$. 

{\rm (i)} The variety $\Delta $ is the image of $\Delta _{\n}$ by the map
$(z,x,y)\mapsto (z,x,\sigma (y))$.

{\rm (ii)} The morphism $\upsilon _{2}$ is projective and birational.

{\rm (iii)} There exists a ${\bf T}$-equivariant morphism 
$$ \xymatrix{ (S_{\n}\setminus S_{\n}^{{\bf T}}) \ar[rr]^{\varphi } && C_{*}}$$
such that $\eta \rond \varphi $ is the restriction of $\sigma $ to 
$S_{\n}\setminus S_{\n}^{{\bf T}}$.

{\rm (iv)} For some $i$ in $I$, $\eta _{i}(1)$ is not invariant under ${\bf T}$.
\end{lemma}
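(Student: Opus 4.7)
Parts (i), (ii), and (iii) are essentially formal; the substance lies in (iv).

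For (i), the map $\Sigma :(z,x,y)\mapsto (z,x,\sigma (y))$ is proper since the normalization $\sigma $ is finite, and the identity $\sigma \rond \eta _{\n}=\eta $ identifies the graph of $\eta _{\n}$ with that of $\eta $; passing to closures gives $\Sigma (\Delta _{\n})=\Delta $. For (ii), projectivity of $\upsilon _{2}$ follows from the projectivity of ${\Bbb P}^{1}(\k)\times C$. For birationality, $\thetaup _{\alpha }$ is injective by Condition~(1) (which makes ${\goth t}$ self-normalizing in ${\goth r}$), while $\nu $ restricts to an isomorphism over the open ${\bf T}$-orbit $O_{*}$ of $\overline{Y_{n-2}}$; hence $\mu $ is injective on $\nu ^{-1}(O_{*})$ and $\eta $ is injective on the dense open subset $\k^{*}\times \nu ^{-1}(O_{*})$ of $C_{*}$, so $\upsilon _{2}$ is birational.

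For (iii), the rational inverse of $\upsilon _{2}$ composed with the projection $\Delta _{\n}\to {\Bbb P}^{1}(\k)\times C$ produces a ${\bf T}$-equivariant rational map $\varphi $ from $S_{\n}$ to ${\Bbb P}^{1}(\k)\times C$. Its indeterminacy locus is closed and ${\bf T}$-invariant; for $y\in S_{\n}\setminus S_{\n}^{{\bf T}}$, the orbit ${\bf T}.y$ has positive dimension, so a positive-dimensional fiber of $\upsilon _{2}$ over $y$ would, together with ${\bf T}.y$, sweep out a $2$-dimensional subvariety of $\Delta _{\n}$; by irreducibility this would exhaust $\Delta _{\n}$, contradicting the birationality of $\upsilon _{2}$ onto the positive-dimensional target. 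Hence $\varphi $ is regular on $S_{\n}\setminus S_{\n}^{{\bf T}}$, and the same orbit argument forces $\varphi (y)\in C_{*}$, as a value in the ${\bf T}$-fixed set $\{0,\infty \}\times \{x_{1},\ldots ,x_{m}\}$ would collapse the orbit ${\bf T}.y$ to a point.

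For (iv), I argue by contradiction: suppose $\eta _{i}(1)\in \Gamma ^{{\bf T}}$ for every $i\in I$. Since ${\bf T}$ acts on ${\Bbb P}^{1}(\k)$ through the character $\alpha $, fixing only $\{0,\infty \}$, ${\bf T}$-equivariance of each $\eta _{i}$ forces $\eta _{i}$ to be constant on $\k^{*}$, hence on ${\Bbb P}^{1}(\k)$. Combining Lemma~\ref{l2sa5}(iii)(d) with compactness of ${\Bbb P}^{1}(\k)\times C$, extract a sequence $(z_{n},y_{n})\to (z^{*},x^{*})$ with $\eta (z_{n},y_{n})\to (V_{1},V',V,W)$. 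Applying $\varpi $ yields $\nu (x^{*})=(V_{1},V',W)$, a ${\bf T}$-fixed point, whose $\nu $-preimage lies in $\{x_{1},\ldots ,x_{m}\}$ (since $\nu $ is an isomorphism over $O_{*}$). Thus $x^{*}=x_{i}$ for some $i\in I$, so $\eta _{i}\equiv (V_{1},V',V,W)$.

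The main obstacle is ruling out this constancy using $V'\neq V$ together with $V,V'\subset {\goth z}+{\goth a}$. The deformation $z\mapsto \mu (x)\exp (z\ad x_{\alpha }).{\goth t}$ genuinely moves ${\goth t}$ in its third coordinate, and the inequality $V\neq V'$ should prevent the limiting curve $\eta _{i}$ from collapsing to the common value $V$ for every $i\in I$: if it did, then $V$ would arise as a limit of $\mu (x).{\goth t}$ as $x\to x_{i}$, forcing $V=V'$. Making this rigorous---by tracking the behaviour of $\mu $ near each $x_{i}$ and exploiting that the nilpotency hypothesis $V\subset {\goth z}+{\goth a}$ rules out unintended coincidences of limits---is where the core difficulty lies.
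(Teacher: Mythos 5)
Parts (i), (ii) and (iii) are sound and run essentially along the lines of the paper's argument. For (i), you use properness of $\sigma $ and the identity $\sigma \rond \eta _{\n}=\eta $, which is the paper's argument phrased in terms of finiteness rather than projectivity. For (ii), your observation that $\nu $ restricts to an isomorphism over $O_{*}$ and that $\theta '_{\alpha }$ is injective, so that $\mu $ is injective, usefully fills a detail that the paper leaves implicit in the phrase ``Since $\thetaup _{\alpha }$ is injective, so is the restriction of $\eta $.'' For (iii), you replace the paper's stabilizer argument (showing the fiber of $\upsilon _{2}$ at a non-fixed $y$ is finite and connected, hence a singleton) with a sweeping-orbit dimension count; both routes rest on Zariski's Main Theorem applied to the projective birational $\upsilon _{2}$, and both have the same mildly delicate last step of showing $\varphi $ takes values in $C_{*}$ and not merely in ${\Bbb P}^{1}(\k)\times C$.

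The real problem is (iv), where you explicitly acknowledge that you cannot close the argument, and the gap is genuine. Two things go wrong. First, the step ``Thus $x^{*}=x_{i}$ for some $i\in I$, so $\eta _{i}\equiv (V_{1},V',V,W)$'' is not justified when $z^{*}\in \{0,\infty \}$: you only know $(V_{1},V',V,W)\in F_{z^{*},x_{i}}$, and without knowing $F_{z^{*},x_{i}}$ is a singleton you cannot read off $\eta _{i}(z^{*})$. (In fact, along $z=0$ one sees from the definition of $\thetaup _{\alpha }$ that every point of $F_{0,x_{i}}$ has equal second and third components, so $F_{0,x_{i}}\subset \{(V_{1},V',V',W)\}$ and $z^{*}=0$ is impossible; but nothing so direct excludes $z^{*}=\infty $.) Second, and more importantly, the closing step you gesture at---``ruling out constancy'' by tracking $\mu $ near $x_{i}$---is not where the contradiction comes from in the paper's proof: the paper also concludes that each $\eta _{i}$ is constant, and the contradiction is obtained another way. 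The missing idea is to analyze the line $\Gamma _{V_{1},V',W}\cong {\Bbb P}^{1}(\k)$, on which ${\bf T}$ acts with exactly the two fixed points $(V_{1},V',V',W)$ and $(V_{1},V',V,W)$ when $V,V'\subset {\goth z}+{\goth a}$. Since $S$ is ${\bf T}$-invariant, $\Gamma _{V_{1},V',W}\cap S$ is either these two fixed points or all of $\Gamma _{V_{1},V',W}$, and one argues in each case separately. In the two-point case, the fibers $F_{0,x_{i}}$ and $F_{\infty ,x_{i}}$ are finite, hence (by Lemma~\ref{l3sa5}(iii)) singletons, and they are forced to be $\{(V_{1},V',V',W)\}$ and $\{(V_{1},V',V,W)\}$ respectively (the latter using Lemma~\ref{l2sa5}(iii)(d) to place $(V_{1},V',V,W)$ on some fiber over $\{0,\infty \}\times \{x_{i}\}$), contradicting constancy of $\eta _{i}$ since $V\neq V'$. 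In the full-line case, one picks a non-fixed point in $\Gamma _{V_{1},V',W}\cap S$, lifts it to $y\in S_{\n}\setminus S_{\n}^{{\bf T}}$, and uses (iii) to write $\sigma (y)=\eta (z,x_{i})=\eta _{i}(z)$ for some $z\in \k ^{*}$; but $\eta _{i}(z)$ is assumed ${\bf T}$-fixed while $\sigma (y)$ is not. This case split, resting on the structure of the ${\bf T}$-action on the ${\Bbb P}^{1}$-fiber of $\varpi $, is exactly what your sketch is missing.
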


\begin{proof}
(i) As $S$ is a projective variety, so are $S_{\n}$, 
${\Bbb P}^{1}(\k)\times C\times S_{\n}$, $\Delta _{\n}$ and the image of 
$\Delta _{\n}$ by the map $(z,x,y)\mapsto (z,x,\sigma (y))$, whence the assertion since
the image of the graph of $\eta _{\n}$ by this map is the graph of $\eta $.

(ii) As $\Delta _{\n}$ is projective so is $\upsilon _{2}$. Since $\thetaup _{\alpha }$
is injective, so is the restriction of $\eta $ to 
$\k \times (C\setminus \{\poi x1{,\ldots,}{m}{}{}{}\})$. Hence $\upsilon _{2}$ is 
birational.

(iii) By (ii) and Zariski's Main Theorem~\cite[Ch. III, Corollary 11.4]{Ha}, the fibers 
of $\upsilon _{2}$ are connected. For $y$ in $S_{\n}\setminus S_{\n}^{{\bf T}}$ and 
$(z,x)$ in ${\Bbb P}^{1}(\k)\times C$ such that $(z,x,y)$ is in $\Delta _{\n}$, 
$\varpi \rond \sigma (y)=\nu (x)$ by (i). If $x$ is not in 
$\{\poi x1{,\ldots,}{m}{}{}{}\}$, $\nu ^{-1}(\varpi \rond \sigma (y))=\{x\}$ by Condition
(b) of Lemma~\ref{l2sa5}(iii) and $z$ is the element of $\k$ such that 
$\thetaup _{\alpha }(z,\mu (x))=\sigma (y)$. Suppose $x=x_{i}$ for some $i=1,\ldots,m$. 
Let $z$ and $z'$ be in $\k^{*}$ such that $(z,x_{i},y)$ and $(z',x_{i},y)$ are in 
$\Delta _{\n}$. Then $(z,x_{i},\sigma (y))$ and $(z',x_{i},\sigma (y))$ are in $\Delta $.
By Lemma~\ref{l3sa5},(iii) and (iv), $\sigma (y)=\eta (z,x_{i})=\eta (z',x_{i})$. For 
some $t$ in ${\bf T}$, $z'=t.z$ so that $t.\sigma (y)=\sigma (y)$. As $y$ is not 
invariant under ${\bf T}$ so is $\sigma (y)$ since the fibers of $\sigma $ are finite. 
Hence the stabilizer of $\sigma (y)$ in ${\bf T}$ is finite and so is the fiber of 
$\upsilon _{2}$ at $y$. As a result, the restriction of $\upsilon _{2}$ to 
$\Delta _{\n}\setminus {\Bbb P}^{1}(\k)\times C\times S_{\n}^{{\bf T}}$ is an injective 
morphism. So, again by Zariski's Main Theorem~\cite[\S 9]{Mu}, this map
is an isomorphism, whence a morphism 
$$ \xymatrix{ (S_{\n}\setminus S_{\n}^{{\bf T}}) \ar[rr]^{\varphi } && C_{*}}.$$
Moreover, $\varphi $ is ${\bf T}$-equivariant since so is $\upsilon _{2}$. For 
$y$ in $S_{\n}$ such that $\sigma (y)=\eta (z,x)$ for some $(z,x)$ in 
$\k^{*}\times (C\setminus \{\poi x1{,\ldots,}{m}{}{}{}\})$, $(z,x,y)$ is the unique 
element of $\Delta _{\n}$ above $y$. Hence $\eta \rond \varphi =\sigma $.

(iv) Suppose that for all $i$ in $I$, $\eta _{i}(1)$ is invariant under ${\bf T}$. A 
contradiction is expected. As $V\neq V'$, $V_{1}=V\cap V'$ and $V+V'=W$ by 
Lemma~\ref{lsa5}(iv). Moreover, since $V$ and $V'$ are contained in 
${\goth z}+{\goth a}$, for some $\beta $ and $\gamma $ in ${\cal R}$, 
$$ V = V_{1}+{\goth a}^{\beta } \quad  \text{and} \quad V' = V_{1}+{\goth a}^{\gamma }.$$
Then $\Gamma _{V_{1},V',W}$ is invariant under ${\bf T}$. More precisely,  
$\Gamma _{V_{1},V',W}$ is a union of one orbit of dimension $1$ and the set 
$\{(V_{1},V',V',W),(V_{1},V',V,W)\}$ of fixed points. As a result,
$\Gamma _{V_{1},V',W}\cap S$ is equal to $\{(V_{1},V',V',W),(V_{1},V',V,W)\}$ or 
$\Gamma _{V_{1},V',W}$ since $S$ is invariant under ${\bf T}$. By Lemma~\ref{l3sa5},(ii) 
and (v), for $i$ in $I$, $\eta _{i}({\Bbb P}^{1}(\k))$ is equal to $(V_{1},V',V',W)$ or 
$(V_{1},V',V,W)$ since $\nu (x_{i})=(V_{1},V',W)$. 

Suppose $\Gamma _{V_{1},V',W}\cap S=\{(V_{1},V',V',W),(V_{1},V',V,W)\}$. By 
Lemma~\ref{l3sa5},(v) and (iii), for all $i$ in $I$, $\eta $ is regular at $(0,x_{i})$ 
and $(\infty ,x_{i})$ since $\nu (x_{i})=(V_{1},V',W)$, whence
$$ \lim _{z\rightarrow 0} \eta _{i}(0) = (V_{1},V',V',W) \quad  \text{and} \quad
\lim _{z\rightarrow \infty } \eta _{i}(\infty ) = (V_{1},V',V,W) ,$$   
whence a contradiction.

Suppose $\Gamma _{V_{1},V',W}\cap S = \Gamma _{V_{1},V',W}$. Let $y$ be in $S_{\n}$
such that 
$$\sigma (y) \in \Gamma _{V_{1},V',W}\setminus \{(V_{1},V',V',W),(V_{1},V',V,W)\}.$$
By (iii), for some $i$ in $I$ and some $z$ in $\k^{*}$, $\varphi (t.y)=(t.z,x_{i})$ and 
$t.\sigma (y)=t.\eta (z,x_{i})=t.\eta _{i}(z)$ for all $t$ in ${\bf T}$, whence a 
contradiction since $(V_{1},V',V',W)$ and $(V_{1},V',V,W)$ are in 
$\overline{{\bf T}.\sigma (y)}$.  
\end{proof}

\begin{coro}\label{c2sa5}
Let $(V_{1},V',V,W)$ be a fixed point under ${\bf T}$ of $\Gamma $ such that
$V\cap {\goth t}=V'\cap {\goth t}={\goth z}$. Then $V'=V$. 
\end{coro}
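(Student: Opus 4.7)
The plan is to argue by contradiction. Suppose that $V' \neq V$. The first step is to verify that the hypotheses of Lemma~\ref{l4sa5} are satisfied, which reduces to showing that $V$ and $V'$ are contained in $\mathfrak{z}+\mathfrak{a}$. Since $(V_1,V',V,W)$ is a fixed point under ${\bf T}$ of $\Gamma$, both $V \in X_R$ and $V' \in X_{R'}$ are themselves ${\bf T}$-invariant, so each decomposes into weight spaces:
\[
V = (V\cap \mathfrak{t}) \oplus \bigoplus_{\beta \in \mathcal{R}_V} \mathfrak{a}^\beta, \qquad
V' = (V'\cap \mathfrak{t}) \oplus \bigoplus_{\beta \in \mathcal{R}_{V'}} \mathfrak{a}^\beta.
\]
The hypothesis $V\cap \mathfrak{t} = V'\cap \mathfrak{t} = \mathfrak{z}$ then forces $V, V' \subset \mathfrak{z}+\mathfrak{a}$, so Lemma~\ref{l4sa5} is available.

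Next, describe the geometry of the fiber $\Gamma_{V_1,V',W}$. Applying Lemma~\ref{lsa5}(iv) and the equality $V\cap V' = V_1$ (forced by $V\neq V'$), one can write $V = V_1\oplus \mathfrak{a}^\beta$ and $V' = V_1\oplus \mathfrak{a}^\gamma$ for two distinct weights $\beta,\gamma \in \mathcal{R}$. Thus $\Gamma_{V_1,V',W} \cong \mathbb{P}(\mathfrak{a}^\beta \oplus \mathfrak{a}^\gamma)$ carries a ${\bf T}$-action via two distinct characters, and its two fixed points are precisely $(V_1,V',V,W)$ and $(V_1,V',V',W)$.

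The third step invokes Lemma~\ref{l4sa5}(iv) to obtain some index $i \in I$ such that $\eta_i(1)$ is not ${\bf T}$-invariant. By the ${\bf T}$-equivariance established in Lemma~\ref{l3sa5}(ii) and the fact (Lemma~\ref{l3sa5}(v)) that $\eta_i$ extends to a regular map $\mathbb{P}^1 \to \Gamma_{V_1,V',W}$, non-invariance of $\eta_i(1)$ forces $\eta_i$ to be non-constant, hence surjective onto $\Gamma_{V_1,V',W}$. In particular $\Gamma_{V_1,V',W}\subset S$.

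The final and hardest step is to derive a contradiction from this surjectivity. The idea is to replay the Case B analysis from the proof of Lemma~\ref{l4sa5}(iv): choose $y \in S_{\n}$ with $\sigma(y)$ lying in the open ${\bf T}$-orbit of $\Gamma_{V_1,V',W}$. By the morphism $\varphi$ produced in Lemma~\ref{l4sa5}(iii), $\varphi(t.y) = (\alpha(t)z, x_{i_0})$ for some $z\in \k^*$ and $i_0\in I$, whence $t.\sigma(y) = \eta_{i_0}(\alpha(t)z)$ for all $t\in {\bf T}$. The orbit closure $\overline{{\bf T}.\sigma(y)}$ therefore contains the two distinct points $(V_1,V',V,W)$ and $(V_1,V',V',W)$, but on the other hand it is carried by the single family $\eta_{i_0}$ coming from one index $i_0$, producing the contradiction expected in Case B of Lemma~\ref{l4sa5}(iv) and completing the argument.

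The main obstacle is this last step: one has to exploit the specific structure of the fiber $\Gamma_{V_1,V',W}$ together with the birationality of $\upsilon_2$ in Lemma~\ref{l4sa5}(ii) to extract a genuine contradiction. Steps 1--3 are essentially a packaging of prior lemmas, but the ${\bf T}$-orbit argument in step 4 is where the extra strength of the hypothesis $V\cap \mathfrak{t} = V'\cap \mathfrak{t} = \mathfrak{z}$ is used, via the identification of the ${\bf T}$-fixed locus of $\Gamma_{V_1,V',W}$ with exactly the two distinguished points.
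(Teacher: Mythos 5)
Your first three steps correctly assemble the ingredients: the hypothesis forces $V,V'\subset\mathfrak{z}+\mathfrak{a}$, hence $V=V_1\oplus\mathfrak{a}^\beta$ and $V'=V_1\oplus\mathfrak{a}^\gamma$ for distinct weights $\beta,\gamma$, and Lemma~\ref{l4sa5}(iv) supplies an $i\in I$ with $\eta_i(1)$ not ${\bf T}$-fixed, whence $\eta_i$ is non-constant and surjects onto $\Gamma_{V_1,V',W}\cong\mathbb{P}^1$. This matches the paper up to that point.

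Step 4, however, has a genuine gap. You claim a contradiction because $\overline{{\bf T}.\sigma(y)}$ contains both fixed points yet ``is carried by the single family $\eta_{i_0}$.'' That is not a contradiction: a single regular map $\eta_{i_0}\colon\mathbb{P}^1\to\Gamma_{V_1,V',W}$ can perfectly well hit both fixed points in its image --- indeed it must, since it is surjective. Moreover, you cannot simply ``replay Case B of Lemma~\ref{l4sa5}(iv)'': that case derived its contradiction from the standing hypothesis that $\eta_i(1)$ \emph{is} ${\bf T}$-invariant for every $i\in I$, which is exactly the negation of what you have established here by invoking Lemma~\ref{l4sa5}(iv). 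The engine of contradiction in that proof is unavailable to you now.

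What actually closes the argument in the paper is a pointwise computation exploiting that $\beta\neq\gamma$ as characters of ${\bf T}$: writing $\eta_i(1)_3 = V_1\oplus\k(x_\beta + a x_\gamma)$ and propagating by ${\bf T}$-equivariance, $\eta_i(\alpha(t))_3 = V_1\oplus\k(\beta(t)x_\beta + \gamma(t)a x_\gamma)$. One then chooses $t_1,t_2\in{\bf T}$ with all weights taking positive rational values, $\alpha(t_1)>1$, $\alpha(t_2)>1$, but $\beta(t_1)<\gamma(t_1)$ while $\beta(t_2)>\gamma(t_2)$. Both sequences $\alpha(t_1^k)$, $\alpha(t_2^k)$ tend to $\infty\in\mathbb{P}^1$, yet the corresponding third components of $\eta_i$ converge to $V_1\oplus\mathfrak{a}^\gamma$ and to $V_1\oplus\mathfrak{a}^\beta$ respectively. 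Since $\eta_i(\infty)$ is a single well-defined point, this forces $V_1\oplus\mathfrak{a}^\beta = V_1\oplus\mathfrak{a}^\gamma$, i.e.\ $V=V'$ --- the required contradiction. Your proposal lacks precisely this two-direction limit argument, which is the only place where the distinctness of $\beta$ and $\gamma$ (equivalently, the hypothesis $V\cap\mathfrak{t}=V'\cap\mathfrak{t}=\mathfrak{z}$ that rules out $V,V'$ differing along $\mathfrak{t}$) is actually used.
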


\begin{proof}
Suppose $V'\neq V$. A contradiction is expected. By Lemma~\ref{lsa5}(iv),
$V_{1}=V\cap V'$ and $W=V+V'$. As $V\cap {\goth t}=V'\cap {\goth t}={\goth z}$, $V$ and 
$V'$ are contained in ${\goth z}+{\goth a}$. So, for some $\beta $ in ${\cal R}$ and 
$\gamma $ in 
${\cal R}\setminus \{\alpha \}$, 
$$ V = V_{1} \oplus {\goth a}^{\beta } \quad  \text{and} \quad 
V' = V_{1}\oplus {\goth a}^{\gamma }.$$
since $(V_{1},V',V,W)$ is invariant under ${\bf T}$. By Lemma~\ref{l4sa5}(iv), for 
some $i$ in $I$, $\eta _{i}(1)$ is not fixed under ${\bf T}$. Then, by 
Lemma~\ref{lsa5}(ii), $\eta _{i}({\Bbb P}^{1}(\k))=\Gamma _{V_{1},V',W}$. Denoting by 
$\eta _{i}(z)_{3}$ the third component of $\eta _{i}(z)$, for all $z$ in 
${\Bbb P}^{1}(\k)$, $V_{1}$ is contained in $\eta _{i}(z)_{3}$ and $\eta _{i}(z)_{3}$ is 
contained in $W$. Hence for some $a$ in $\k^{*}$,   
$$ \eta _{i}(1)_{3} = V_{1} \oplus \k (x_{\beta }+ax_{\gamma }) \quad  \text{and} \quad 
\eta _{i}(\alpha (t))_{3} = V_{1} \oplus \k (\beta (t)x_{\beta }+\gamma (t)ax_{\gamma })$$
for all $t$ in ${\bf T}$. For some $t_{1}$ and $t_{2}$ in ${\bf T}$, for all $\delta $
in ${\cal R}$, $\delta (t_{1})$ and $\delta (t_{2})$ are positive rational numbers and
$$ \alpha (t_{1}) > 1, \qquad \alpha (t_{2}) > 1, \qquad \beta (t_{1}) < \gamma (t_{1}), 
\qquad \beta (t_{2}) > \gamma (t_{2}) .$$ 
Then 
$$ \lim _{k \rightarrow \infty } V_{1} \oplus 
\k (\beta (t_{1}^{k})x_{\beta }+\gamma (t_{1}^{k})ax_{\gamma }) = 
V_{1} \oplus {\goth a}^{\gamma }, \quad 
\lim _{k \rightarrow \infty } V_{1} \oplus 
\k (\beta (t_{2}^{k})x_{\beta }+\gamma (t_{2}^{k})ax_{\gamma }) = 
V_{1} \oplus {\goth a}^{\beta }, $$ $$
\lim _{k \rightarrow \infty } \eta _{i}(\alpha (t_{1}^{k}) = 
\lim _{k \rightarrow \infty } \eta _{i}(\alpha (t_{2}^{k}) = \eta _{i}(\infty ),$$
whence $V=V'$ and the contradiction.
\end{proof}

\subsection{Property $({\bf P})$ and Property $({\bf P}_{1})$} \label{sa6}
In this subsection we suppose that all objects of ${\cal C}'_{{\goth t}}$ of dimension 
smaller than $n$ has Property $({\bf P})$. For $V$ a fixed point of $X_{R}$ under 
${\bf T}$, denote by $\Lambda _{V}$ the orthogonal complement to ${\goth z}_{V}$ in 
${\cal R}$ and set:
$${\goth r}_{V} := {\goth r}_{\Lambda _{V}}, \qquad R_{V} := R_{\Lambda _{V}} .$$

\begin{lemma}\label{lsa6}
Let $V$ be a fixed point under ${\bf T}$ in $X_{R}$. 

{\rm (i)} The action of $R_{V}$ in $\overline{R_{V}.V}$ has fixed points. For 
$V_{\infty }$ such a point, 
$$ V_{\infty } = V\cap {\goth t} \oplus 
\bigoplus _{\beta \in {\cal R}_{V_{\infty }}} {\goth a}^{\beta }, \quad
\vert {\cal R}_{V} \vert = \vert {\cal R}_{V_{\infty }} \vert, \quad
\rk V \geq  \rk {V_{\infty }}.$$

{\rm (ii)} The set ${\cal R}_{V}$ has rank at least $\vert {\cal R}_{V} \vert-1$. 

{\rm (iii)} Suppose that ${\goth a}$ has Property $({\bf P}_{1})$. Then ${\cal R}_{V}$ has
rank $\vert {\cal R}_{V} \vert$. 

{\rm (iv)} If ${\goth a}$ has Property $({\bf P}_{1})$, for $s$ in ${\goth t}$ such that
$V$ is contained in ${\goth r}^{s}$, $V$ is in $\overline{R^{s}.{\goth t}}$.
\end{lemma}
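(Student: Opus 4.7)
My plan is to treat each of the four parts in turn, leveraging the geometric setup of Subsection~\ref{sa5} and the inductive hypothesis that smaller objects of ${\cal C}'_{{\goth t}}$ enjoy Property $({\bf P})$.

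\emph{Part (i).} The orbit closure $\overline{R_{V}.V}$ is closed in the projective variety $\ec {Gr}r{}{}d$, hence projective, and $R_{V}$ is connected solvable, so Borel's fixed point theorem produces a fixed point $V_{\infty}$. Being fixed by ${\bf T}\subseteq R_{V}$, $V_{\infty}$ splits as $V_{\infty}\cap {\goth t}\oplus\bigoplus_{\beta\in {\cal R}_{V_{\infty}}}{\goth a}^{\beta}$. The key identification $V_{\infty}\cap{\goth t}=V\cap{\goth t}$ comes from the fact that the projection $\pi\colon{\goth r}\to{\goth t}$ is $R$-invariant (since $g(x)-x\in{\goth a}$ for $g\in R$, $x\in {\goth r}$): thus $\pi(g(V))=\pi(V)$ for all $g\in R_{V}$, and by continuity in the Grassmannian $\pi(V_{\infty})=\pi(V)$; on any ${\bf T}$-fixed subspace $\pi$ restricts to the intersection with ${\goth t}$. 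A dimension count yields $|{\cal R}_{V}|=|{\cal R}_{V_{\infty}}|$. Finally, $V_{\infty}\subseteq{\goth r}_{V}$ forces ${\cal R}_{V_{\infty}}\subseteq\Lambda_{V}$, and since $\Lambda_{V}$ has rank $r_{V}$ by its very definition, $\rk{V_{\infty}}\leq r_{V}$.

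\emph{Part (ii).} By (i) I may assume $V$ is $R_{V}$-fixed, i.e., a commutative ideal of ${\goth r}_{V}$. Arguing by contradiction, suppose $|{\cal R}_{V}|-r_{V}\geq 2$. The plan is to invoke the machinery of Subsection~\ref{sa5} applied to $({\goth r}_{V},{\goth a}')$, where ${\goth a}'\subset{\goth a}_{\Lambda_{V}}$ is a codimension-one ideal of ${\goth r}_{V}$ provided by Lemma~\ref{lsa2}. Since $\dim {\goth a}'<n$, the inductive hypothesis gives ${\goth a}'$ Property $({\bf P})$, so Corollary~\ref{csa5} is available. The presence of two independent linear relations among ${\cal R}_{V}$ then allows one to exhibit a ${\bf T}$-fixed quadruple $(V_{1},V',V,W)\in\Gamma$ with $V'\neq V$ whose combinatorics contradict that corollary. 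Making this construction explicit is the main obstacle: one must pin down which candidates for $V'$ and $W$ are manufactured by the two relations and verify they satisfy the hypotheses of Corollary~\ref{csa5}.

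\emph{Part (iii).} By (ii), $r_{V}\in\{|{\cal R}_{V}|-1,|{\cal R}_{V}|\}$, and I only need to rule out the first possibility. If $V\cap{\goth t}={\goth z}$ this follows immediately from Property $({\bf P}_{1})$ for ${\goth a}$. Otherwise, pick $s\in(V\cap{\goth t})\setminus{\goth z}$; then $V\subseteq{\goth r}^{s}={\goth r}_{\Lambda_{s}}$ with $\Lambda_{s}\subsetneq{\cal R}$, so ${\goth a}_{\Lambda_{s}}\in{\cal C}'_{{\goth t}}$ has dimension smaller than $n$ and by the inductive hypothesis possesses Property $({\bf P})$, hence also Property $({\bf P}_{1})$ by the implication recorded in Subsection~\ref{sa4}. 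Transferring $V$ into the main variety attached to ${\goth r}_{\Lambda_{s}}$ via that smaller Property $({\bf P})$ (together with part~(i)) and then applying $({\bf P}_{1})$ there gives the linear independence of ${\cal R}_{V}$, iterating the procedure if $V\cap{\goth t}$ still strictly contains the center of ${\goth r}_{\Lambda_{s}}$.

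\emph{Part (iv).} By (iii), ${\cal R}_{V}=\{\poi{\beta}1{,\ldots,}k{}{}{}\}$ is linearly independent, and a dimension count gives $V\cap{\goth t}={\goth z}_{V}$. Since $V$ is commutative, $[x_{\beta_{i}},x_{\beta_{j}}]=0$, so $U:=\exp({\mathrm{span}}\{\poi x1{,\ldots,}k{\beta}{}{}\})$ is a commutative unipotent subgroup of $R$, contained in $R^{s}$ because ${\cal R}_{V}\subseteq\Lambda_{s}$. Choosing $\poi t1{,\ldots,}k{}{}{}\in{\goth t}$ dual to the $\beta_{i}$ and completing by a basis of ${\goth z}_{V}$, the map $(z_{1},\ldots,z_{k})\mapsto\exp(\sum z_{i}\ad x_{\beta_{i}})({\goth t})$ truncates at first order (all higher commutators vanish by commutativity of $U$) and in the Grassmannian tends to $V$ as the $z_{i}\to\infty$, placing $V$ in $\overline{U.{\goth t}}\subseteq\overline{R^{s}.{\goth t}}$.
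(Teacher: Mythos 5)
Only part (iv) is airtight: expanding $\exp(\sum_i z_i\ad x_{\beta _i})({\goth t})$ by commutativity of $V$ and letting the $z_i\to\infty$ in the Grassmannian is exactly the content of Lemma~\ref{l2sa1}(ii), which the paper applies to the commutative subalgebra ${\goth a}'_V=\bigoplus_{\beta\in{\cal R}_V}{\goth a}^\beta$; your version is correct.

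Each of (i)--(iii) has a genuine hole. In (i), the step ``by continuity $\pi(V_\infty)=\pi(V)$'' does not hold: for a linear projection $\pi$ the assignment $U\mapsto\pi(U)$ on a Grassmannian is not continuous, and in a limit one only gets $\pi(V_\infty)\subseteq\pi(V)$, which gives the inclusion $V_\infty\cap{\goth t}\subseteq V\cap{\goth t}$, the \emph{wrong} direction. The paper obtains the needed inclusion $V\cap{\goth t}\subseteq V_\infty$ by observing that $V\cap{\goth t}\subseteq{\goth z}_V$ (commutativity of $V$), that ${\goth z}_V$ is the center of ${\goth r}_V$ by Lemma~\ref{l2sa2}(ii), hence that $V\cap{\goth t}$ is pointwise fixed by $R_V$ and therefore lies in every element of $\overline{R_V.V}$. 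In (ii) you leave the central construction unexecuted, and the auxiliary ideal is set up backwards: Lemma~\ref{lsa2} produces an ideal ${\goth a}'$ of \emph{${\goth r}$} of codimension $1$ in ${\goth a}$ that \emph{contains} ${\goth a}_{\Lambda_V}$, not a codimension-one ideal of ${\goth r}_V$ contained in ${\goth a}_{\Lambda_V}$. The paper's proof of (ii) is also not a contradiction from two independent relations and does not use Corollary~\ref{csa5}: it takes an $R_V$-fixed quadruple $(V_1,V',V,W)$ in $\varpi_3^{-1}(V)\subset\Gamma$, applies Lemma~\ref{lsa4}(i) to ${\goth a}'$ (which by induction has Property $({\bf P})$) to see that ${\cal R}_{V'}$ is linearly independent, and deduces $r_V\geq|{\cal R}_V|-1$ from the codimension-one comparison of $V$ and $V'$ through $V_1$. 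In (iii), the move ``transferring $V$ into the main variety attached to ${\goth r}_{\Lambda_s}$ via that smaller Property $({\bf P})$'' is circular: Property $({\bf P})$ for ${\goth a}_{\Lambda_s}$ constrains only elements that are already in $X_{R_{\Lambda_s}}$, whereas the statement that $V\in X_R$ with $V\subseteq{\goth r}^s$ lies in $\overline{R^s.{\goth t}}=\overline{R_{\Lambda_s}.{\goth t}}$ is precisely an instance of Property $({\bf P})$ for ${\goth a}$ itself. The paper avoids this by inducting on $\dim{\goth z}_V$, using part~(ii) together with Property $({\bf P}_1)$ for ${\goth a}$ (not for a smaller algebra), and passing to an $R_\Lambda$-fixed point of $\overline{R_\Lambda.V}$ for a suitably chosen complete subset $\Lambda$.
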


\begin{proof}
(i) As $\overline{R_{V}.V}$ is a projective variety and $R_{V}$ is connected and 
solvable, $R_{V}$ has fixed points in $\overline{R_{V}.V}$. Denote by $V_{\infty }$
such a point. Since $V$ is fixed under ${\bf T}$,
$$ V = V\cap {\goth t} \oplus \bigoplus _{\beta \in {\cal R}_{V}} {\goth a}^{\beta } .$$
Moreover, $V\cap {\goth t}$ is contained in ${\goth z}_{V}$ since $V$ is commutative.
By Lemma~\ref{l2sa2}(ii), ${\goth z}_{V}$ is the center of ${\goth r}_{V}$. Hence
$V\cap {\goth t}$ is contained in all element of $R_{V}.V$. Moreover, all element of 
$R_{V}.V$ is contained in $V\cap {\goth t}+{\goth a}_{\Lambda _{V}}$. Then 
$$ V_{\infty } = V\cap {\goth t} \oplus 
\bigoplus _{\beta \in {\cal R}_{V_{\infty }}} {\goth a}^{\beta } ,$$ 
whence $\vert {\cal R}_{V} \vert = \vert {\cal R}_{V_{\infty }} \vert$. Since 
${\cal R}_{V_{\infty }}$ is contained in $\Lambda _{V}$ and 
$\rk V = d-\dim {\goth z}_{V}$, $\rk V \geq \rk {V_{\infty }}$.

(ii) By (i), we can suppose that $V$ is invariant under $R_{V}$. By Lemma~\ref{lsa2}, 
${\goth a}_{\Lambda _{V}}$ is contained in an ideal ${\goth a}'$ of 
${\goth r}$ of dimension $\dim {\goth a}-1$ and contained in ${\goth a}$. We then use
the notations of Lemma~\ref{lsa5}. Set $\Gamma _{V} := \varpi _{3}^{-1}(V)$. By 
Lemma~\ref{lsa5}(i), $\Gamma _{V}$ is a projective variety invariant under $R_{V}$ since
so is $V$. Then $R_{V}$ has a fixed point in $\Gamma _{V}$. Let $(V_{1},V',V,W)$ be such 
a point. As ${\goth a}'$ has Property $({\bf P})$, by Lemma~\ref{lsa4}(i),
$$ V' = {\goth z}_{V'} \oplus \bigoplus _{\beta \in {\cal R}_{V'}} {\goth a}^{\beta } .$$
and the elements of ${\cal R}_{V'}$ are linearly independent. 

If $V'=V$ then ${\cal R}_{V'}={\cal R}_{V}$ so that 
$\rk V = \rk {V'}=\vert {\cal R}_{V} \vert$. Suppose $V'\neq V$. Then, by 
Lemma~\ref{lsa5}(iv),
$$ V_{1} = {\goth z}_{V'}\cap V\cap {\goth t} \oplus 
\bigoplus _{\beta \in {\cal R}_{V}\cap {\cal R}_{V'}} {\goth a}^{\beta } .$$
As $V_{1}$ has codimension $1$ in $V$ and $V'$, ${\cal R}_{V'}={\cal R}_{V}$ or 
${\goth z}_{V'}=V\cap {\goth t}$. In the first case, $\rk V=\vert {\cal R}_{V} \vert$
and in the second case, 
$$\vert {\cal R}_{V} \cap {\cal R}_{V'} \vert = \vert {\cal R}_{V} \vert -1
= \vert {\cal R}_{V'} \vert -1,$$
whence $\rk V \geq \vert {\cal R}_{V} \vert-1$ since the elements of ${\cal R}_{V'}$
are linearly independent.

(iii) Prove the assertion by induction on $\dim {\goth z}_{V}$. If 
${\goth z}_{V}={\goth z}$, then $r_{V}=\vert {\cal R}_{V} \vert$ by Property 
$({\bf P}_{1})$. Suppose $\dim {\goth z}_{V}=\dim {\goth z}+1$ and 
$V\cap {\goth t}={\goth z}$. Then $\vert {\cal R}_{V} \vert=d^{\#}$ and 
$r_{V}=d^{\#}-1$. By Property $({\bf P}_{1})$, it is impossible. Hence 
$V\cap {\goth t}={\goth z}_{V}$ since $V\cap {\goth t}$ is contained in ${\goth z}_{V}$.
As a result $r_{V}=\vert {\cal R}_{V} \vert$. 
 
Suppose $\dim {\goth z}_{V}\geq 2+\dim {\goth z}$, the assertion true for the integers 
smaller than $\dim {\goth z}_{V}$ and $r_{V}<\vert {\cal R}_{V} \vert$. A contradiction 
is expected. By (ii), $V\cap {\goth t}$ has dimension at least $\dim {\goth z}_{V}-1$. 
Then, for some $\alpha $ in ${\cal R}$, $V\cap {\goth t}_{\alpha }$ is strictly contained
in $V\cap {\goth t}$. Let $\Lambda $ be the orthogonal complement to 
${\goth z}_{V}\cap {\goth t}_{\alpha }$ in ${\cal R}$. As $\overline{R_{\Lambda }.V}$ is 
a projective variety and $R_{\Lambda }$ is connected, $R_{\Lambda }$ has a fixed point in
$\overline{R_{\Lambda }.V}$. Let $V_{\infty }$ be such a point. By 
Lemma~\ref{l2sa2}(ii), ${\goth z}_{V}\cap {\goth t}_{\alpha }$ is the center of 
${\goth r}_{\Lambda }$. Hence $V\cap {\goth t}_{\alpha }$ is contained in all element of 
$R_{\Lambda }.V$. Moreover, all element of $R_{\Lambda }.V$ is contained in 
$V\cap {\goth t}+{\goth a}_{\Lambda }$. As $V_{\infty }$ is an ideal of 
${\goth r}_{\Lambda }$, $V\cap {\goth t}$ is not contained in $V_{\infty }$ since it is 
not contained in the kernel of $\alpha $. Then 
$$ V_{\infty } = V\cap {\goth t}_{\alpha } \oplus 
\bigoplus _{\beta \in {\cal R}_{V_{\infty }}} {\goth a}^{\beta } .$$
By (ii), $r_{V_{\infty }}\geq \vert {\cal R}_{V_{\infty }} \vert -1$, whence
$$ \dim {\goth z}_{V_{\infty }} \leq \dim V\cap {\goth t}_{\alpha } + 1 =  
\dim V\cap {\goth t} < \dim {\goth z}_{V} .$$
So, by induction hypothesis, $\vert {\cal R}_{V_{\infty }} \vert = \rk {V_{\infty }}$ and
${\goth z}_{V_{\infty }}=V\cap {\goth t}_{\alpha }$. Since 
${\goth z}_{V}\cap {\goth t}_{\alpha }$ is the center of ${\goth r}_{\Lambda }$, 
${\goth z}_{V}\cap {\goth t}_{\alpha }$ is contained in ${\goth z}_{V_{\infty }}$,
whence
$$ \dim {\goth z}_{V}-1 \leq \dim V\cap {\goth t}_{\alpha } = \dim V\cap {\goth t} - 1 .$$
As a result, ${\goth z}_{V}=V\cap {\goth t}$ since $V\cap {\goth t}$ is contained in 
${\goth z}_{V}$, whence a contradiction.

(iv) Suppose that ${\goth a}$ has Property $({\bf P}_{1})$. By (iii), 
$$ V = {\goth z}_{V} \oplus \bigoplus _{\beta \in {\cal R}_{V}} {\goth a}^{\beta } $$
and $\rk V = \vert {\cal R}_{V} \vert$. As a result, the centralizer of $V$ in 
${\goth t}$ is equal to ${\goth z}_{V}$. Set
$$ {\goth a}'_{V} = \bigoplus _{\beta \in {\cal R}_{V}} {\goth a}^{\beta }, \qquad
{\goth r}'_{V} := {\goth t} + {\goth a}'_{V} .$$
Denote by $R'_{V}$ the connected closed subgroup of $R$ whose Lie algebra is 
$\ad {\goth r}'_{V}$. The algebra ${\goth a}'_{V}$ is in 
${\cal C}'_{{\goth t}}$ and has dimension $d-\dim {\goth z}_{V}$. Then, by 
Lemma~\ref{l2sa1}(ii), $V$ is in $\overline{R'_{V}.{\goth t}}$, whence the assertion 
since ${\goth r}'_{V}$ is contained in ${\goth r}^{s}$.
\end{proof}

\begin{coro}\label{csa6}
Suppose that ${\goth a}$ has Property $({\bf P}_{1})$. Then ${\goth a}$ has Property
$({\bf P})$.
\end{coro}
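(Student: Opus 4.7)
The Corollary is the inductive step leading to Proposition~\ref{pint}: under the standing hypothesis of Subsection~\ref{sa6}---that every object of ${\cal C}'_{{\goth t}}$ of dimension strictly less than $n$ has Property~$({\bf P})$---together with Property~$({\bf P}_1)$ for ${\goth a}$, one must deduce Property~$({\bf P})$ for ${\goth a}$. Fix $s\in {\goth t}$ and $V\in X_R^s$; since $\overline{R^s.{\goth t}}\subseteq X_R^s$ is automatic, the content is $V\in \overline{R^s.{\goth t}}$. The case $s\in {\goth z}$ is immediate because then $R^s=R$, so assume $s\notin {\goth z}$, i.e., $\Lambda_s \subsetneq {\cal R}$ and $\dim {\goth a}_{\Lambda_s} < n$.

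By Lemma~\ref{lsa6}(iv), every ${\bf T}$-fixed point of $X_R^s$ already lies in $\overline{R^s.{\goth t}}$, so the real content is to extend this from the ${\bf T}$-fixed locus to all of $X_R^s$. Applying Borel's fixed point theorem to the projective $R^s$-invariant orbit closure $\overline{R^s.V}$ produces an $R^s$-fixed, hence ${\bf T}$-fixed, point $V_0\in \overline{R^s.V}\cap \overline{R^s.{\goth t}}$; the remaining task is to propagate this information back from $V_0$ to $V$.

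For the propagation I would invoke the codimension-$1$ ideal machinery of Subsection~\ref{sa5}. By Lemma~\ref{lsa2}, since $\Lambda_s$ is a complete proper subset of ${\cal R}$, we may embed ${\goth a}_{\Lambda_s}$ in an ideal ${\goth a}'\subset {\goth a}$ of codimension $1$; then $R^s\subseteq R'$ and, as $\dim {\goth a}'=n-1$, the induction hypothesis gives Property~$({\bf P})$ for ${\goth a}'$. Since ${\goth r}^s\subseteq {\goth r}'$ forces $({\goth r}')^s={\goth r}^s$ and $(R')^s=R^s$, this yields $X_{R'}^s=\overline{R^s.{\goth t}}$, and it therefore suffices to show $V\in X_{R'}$. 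Using the surjectivity of the third projection $\varpi_3:\Gamma\to X_R$ in Lemma~\ref{lsa5}(ii), lift $V$ to a quadruple $(V_1,V',V,W)\in \Gamma$; the second projection puts $V'\in X_{R'}$, so it is enough to force $V'=V$, and the natural tool for this is Corollary~\ref{csa5} applied to our ${\goth a}'$ and $s$.

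The main obstacle is arranging at least one of the two alternative hypotheses of Corollary~\ref{csa5}---non-commutativity of $W$ in~(i), or the incidence $s+v\in V$ in~(ii)---since neither need hold a priori for a generic lift of a generic $V$. My plan here is to deform the chosen quadruple within the fiber $\varpi_3^{-1}(V)$ along a ${\bf T}$-equivariant curve, exactly as furnished by Lemma~\ref{l2sa5}(iii) and the resolution analysis of Lemma~\ref{l3sa5}. At any ${\bf T}$-fixed limit of this deformation, the explicit classification of ${\bf T}$-fixed quadruples given by Lemma~\ref{lsa4} together with Corollary~\ref{c2sa5} forces $V'=V$, which then forces one of the alternatives of Corollary~\ref{csa5} to hold along the curve---and hence at $V$ itself. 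Combined with Property~$({\bf P})$ for ${\goth a}'$, this yields $V\in \overline{R^s.{\goth t}}$ as required.
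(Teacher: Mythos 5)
Your opening moves match the paper's: reduce to $s\notin{\goth z}$, degenerate $V$ to a ${\bf T}$-fixed point of $X_{R}^{s}$, and use Lemma~\ref{lsa6}(iv) (hence Property $({\bf P}_{1})$) to place that fixed point in $\overline{R^{s}.{\goth t}}$. But the propagation step, which is the whole content of the corollary, is where your argument has a genuine gap. You want to lift $V$ to a quadruple $(V_{1},V',V,W)\in\Gamma$ and conclude $V'=V$ via Corollary~\ref{csa5}, yet, as you yourself note, neither hypothesis (i) (non-commutativity of $W$) nor hypothesis (ii) ($s+v\in V$ for some $v\in{\goth a}$) is available, and your plan for producing one of them does not work. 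The fiber $\varpi_{3}^{-1}(V)$ is ${\bf T}$-invariant only when $V$ is ${\bf T}$-fixed, so the equivariant-curve machinery of Lemmas~\ref{l2sa5} and~\ref{l3sa5} and Corollary~\ref{c2sa5} --- which in the paper serves exclusively to prove Property $({\bf P}_{1})$ itself in Proposition~\ref{psa6}, for ${\bf T}$-fixed quadruples with $V\cap{\goth t}={\goth z}$ --- does not apply to a general $V\in X_{R}^{s}$. And even granting a degeneration whose ${\bf T}$-fixed limit satisfies $V'_{\infty}=V_{\infty}$, that says nothing about the original quadruple: distinct $V'$ and $V$ can have equal limits, and the claim that ``$V'=V$ at the limit forces one of the alternatives of Corollary~\ref{csa5} to hold along the curve'' inverts the logical direction, since those alternatives are hypotheses of the corollary, not consequences of its conclusion.

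What is missing is the actual mechanism of the paper's proof. After locating the fixed point $V_{\infty}\in\overline{R^{s}.{\goth t}}$ (which therefore contains $s$), one uses the local chart $\kappa$ on ${\mathrm{Hom}}_{\k}(V_{\infty},E)$ centered at $V_{\infty}$ to see that some ${\bf T}$-translate $g(V)$ contains an element $s+v$ with $v\in{\goth a}^{s}$; since $s\notin{\goth z}$ this element has nonzero semisimple part $x$, which after conjugation by some $k\in R^{s}$ lands in ${\goth t}$. Corollary~\ref{csa5}(ii) is then applied to the element $k(x)$ --- not to $s$ --- yielding $kg(V)\in\overline{R^{k(x)}.{\goth t}}$; finally, because ${\goth a}^{k(x)}$ is an object of ${\cal C}'_{{\goth t}}$ of dimension smaller than $n$, the standing induction hypothesis (Property $({\bf P})$ for ${\goth a}^{k(x)}$) applied to $kg(V)\subset{\goth r}^{s}\cap{\goth r}^{k(x)}$ gives $kg(V)\in\overline{R^{s}.{\goth t}}$, hence $V\in\overline{R^{s}.{\goth t}}$ since $kg\in R^{s}$. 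Both the extraction of the element $s+v$ via the chart at $V_{\infty}$ and the descent to the strictly smaller algebra ${\goth a}^{k(x)}$ are absent from your proposal, and without them the hypotheses of Corollary~\ref{csa5} cannot be verified.
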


\begin{proof}
Let $V$ be in $X_{R}$ and $s$ in ${\goth t}\setminus {\goth z}$ such that $V$ is 
contained in ${\goth r}^{s}$. As $\overline{{\bf T}.V}$ is a projective variety and 
${\bf T}$ is a connected commutative group, ${\bf T}$ has a fixed point in 
$\overline{{\bf T}.V}$. Let $V_{\infty }$ be such a point. Since all element of 
${\bf T}.V$ is contained in ${\goth r}^{s}$, so is $V_{\infty }$. Then, by 
Lemma~\ref{lsa6}(iv), $V_{\infty }$ is in $\overline{R^{s}.{\goth t}}$. In particular, 
$s$ is in $V_{\infty }$. Let $E$ a complement to $V_{\infty }$ in ${\goth r}$, invariant 
under ${\bf T}$. The map 
$$\xymatrix{ {\mathrm {Hom}}_{\k}(V_{\infty },E) \ar[rr]^{\kappa } && \ec {Gr}r{}{}d}, 
\qquad \varphi \longmapsto 
\kappa (\varphi ) := {\mathrm {span}}(\{v+\varphi (v) \; \vert \; v \in V_{\infty }\}) $$
is an isomorphism onto an open neighborhood $\Omega _{E}$ of $V_{\infty }$ in 
$\ec {Gr}r{}{}d$. For $\varphi $ in ${\mathrm {Hom}}_{\k}(V_{\infty },E)$ such that 
$\kappa (\varphi )$ is in ${\bf T}.V$, $\varphi (s)$ is in ${\goth a}^{s}$. Then, for 
some $g$ in ${\bf T}$ and for some $v$ in ${\goth a}^{s}$, $s+v$ is in $g(V)$ and the 
semisimple component of $\ad (s+v)$ is different from $0$ since $s$ is not in 
${\goth z}$. Let $x$ be in ${\goth r}^{s}$ such that $\ad x$ is the semisimple component
of $\ad (s+v)$. By Lemma~\ref{lsa1}(ii), for some $k$ in $R^{s}$, $k(x)$ is in 
${\goth t}$. Then, by Corollary~\ref{csa5}(ii), $kg(V)$ is in 
$\overline{R^{k(x)}.{\goth t}}$. As $k(x)$ is not in ${\goth z}$, ${\goth a}^{k(x)}$
is an object of ${\cal C}'_{{\goth t}}$ of dimension smaller than $n$. By hypothesis, 
${\goth a}^{k(x)}$ has Property $({\bf P})$. Moreover, $kg(V)$ is contained in 
${\goth r}^{s}\cap {\goth r}^{k(x)}$. Hence, by Property $({\bf P})$ for 
${\goth a}^{k(x)}$, $kg(V)$ is in $\overline{R^{s}.{\goth t}}$, whence $V$ is in 
$\overline{R^{s}.{\goth t}}$ since $kg$ is in $R^{s}$. 
\end{proof}

\begin{prop}\label{psa6}
The objects of ${\cal C}'_{{\goth t}}$ have Property $({\bf P})$.
\end{prop}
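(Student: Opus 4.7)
I will proceed by induction on $n = \dim {\goth a}$. The base case $n = d^{\#}$ is already handled by Lemma~\ref{lsa3}, which gives Property~$({\bf P})$ directly for objects of minimal dimension. So assume $n > d^{\#}$ and that every object of ${\cal C}'_{{\goth t}}$ of dimension strictly less than $n$ has Property $({\bf P})$. This is precisely the standing hypothesis of Subsection~\ref{sa6}, so all lemmas of that subsection are available. By Corollary~\ref{csa6}, it suffices to establish Property $({\bf P}_{1})$ for ${\goth a}$.

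To verify $({\bf P}_{1})$, let $V$ be a ${\bf T}$-fixed point of $X_{R}$ with $V \cap {\goth t} = {\goth z}$. By Lemma~\ref{lsa4}(i), $V = {\goth z} \oplus \bigoplus_{\beta \in {\cal R}_{V}} {\goth a}^{\beta}$ and $\vert {\cal R}_{V}\vert = d^{\#}$; we must show $r_{V} = d^{\#}$. Lemma~\ref{lsa6}(ii) already yields $r_{V} \geq \vert {\cal R}_{V}\vert - 1$, so suppose for contradiction that $r_{V} = \vert {\cal R}_{V}\vert - 1$. Then ${\goth z}_{V} \supsetneq {\goth z}$ and $\Lambda_{V} \subsetneq {\cal R}$, so Lemma~\ref{lsa2} supplies a codimension-one ideal ${\goth a}' \subset {\goth a}$ of ${\goth r}$ with $\mathfrak{a}_{\Lambda_V} \subseteq {\goth a}'$. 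We apply the construction of Subsection~\ref{sa5} to this ${\goth a}'$ and, as in the proof of Lemma~\ref{lsa6}(ii), extract a fixed point $(V_{1}, V', V, W)$ of $R_{V}$ (hence of ${\bf T}$) in the fiber of the third projection $\Gamma \to X_R$ over $V$.

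Since $\dim {\goth a}' = n - 1$, the inductive hypothesis gives Property $({\bf P})$ for ${\goth a}'$, so by Lemma~\ref{lsa4}(i) applied in the $R'$-setting, $V' = {\goth z}_{V'} \oplus \bigoplus_{\beta \in {\cal R}_{V'}} {\goth a}^{\beta}$ with the weights in ${\cal R}_{V'}$ linearly independent. If $V' = V$, then ${\cal R}_{V} = {\cal R}_{V'}$ is independent, contradicting $r_V < \vert {\cal R}_V \vert$. Otherwise, Lemma~\ref{lsa5}(iv) forces $V_{1} = V \cap V'$ of codimension one in each of $V, V'$; comparing the weight decompositions yields either ${\cal R}_{V'} = {\cal R}_{V}$ (again giving the contradiction $r_V = \vert {\cal R}_V \vert$), or else ${\goth z}_{V'} = V \cap {\goth t} = {\goth z}$. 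In the latter case $V' \cap {\goth t} \subseteq {\goth z}_{V'} = {\goth z}$, and the reverse inclusion holds as ${\goth z}$ is central; hence $V \cap {\goth t} = V' \cap {\goth t} = {\goth z}$, and Corollary~\ref{c2sa5} forces $V' = V$, contradicting $V' \neq V$.

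The main obstacle is the bookkeeping in the final step: one must check that the $R_V$-fixed point produced by the projectivity of $\Gamma_V$ is genuinely ${\bf T}$-fixed (which follows from ${\bf T} \subseteq R_V$), that the structural dichotomy forced by $V_1$ having codimension one in both $V$ and $V'$ is exhaustive, and above all that the hypothesis $V \cap {\goth t} = {\goth z}$ of $({\bf P}_1)$ propagates to $V' \cap {\goth t} = {\goth z}$ in the residual sub-case, since this is exactly what licenses invoking Corollary~\ref{c2sa5}. Once these are verified, all cases lead to contradictions, establishing $r_{V} = \vert {\cal R}_{V} \vert$ and completing the induction.
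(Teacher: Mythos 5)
Your argument reproduces the paper's proof almost step for step: induction on $\dim {\goth a}$ with base case Lemma~\ref{lsa3}; reduction to Property $({\bf P}_1)$ via Corollary~\ref{csa6}; assume $r_V = \vert {\cal R}_V\vert - 1$ for a ${\bf T}$-fixed $V$ with $V\cap{\goth t}={\goth z}$; pick a codimension-one ideal ${\goth a}'$ by Lemma~\ref{lsa2}, take a fixed point in $\Gamma_V$, invoke $({\bf P})$ and then Lemma~\ref{lsa4}(i) for ${\goth a}'$, and close with Corollary~\ref{c2sa5}. One small inaccuracy: you claim to extract an $R_V$-fixed point of $\Gamma_V$ ``as in the proof of Lemma~\ref{lsa6}(ii)'', but that proof first normalizes $V$ to be $R_V$-invariant via Lemma~\ref{lsa6}(i), a reduction you do not perform; without it, $\Gamma_V$ is only ${\bf T}$-stable (Lemma~\ref{lsa5}(i)) and only a ${\bf T}$-fixed point is guaranteed, which is what the paper uses and what your argument actually needs. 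Also, the dichotomy you invoke in the final step (${\cal R}_{V'}={\cal R}_V$ versus $V'\cap{\goth t}={\goth z}$) is not exhaustive on purely dimensional grounds --- a priori one could have ${\cal R}_{V'}\subsetneq{\cal R}_V$ with $V'\cap{\goth t}\supsetneq{\goth z}$ --- and ruling this out uses the linear independence of ${\cal R}_{V'}$ together with the relation $r_V=\vert{\cal R}_V\vert-1$; you correctly flag this as a point needing care, and the paper's own treatment is equally telegraphic here, so this is not a defect peculiar to your write-up.
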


\begin{proof}
Prove by induction on $n$ that ${\goth a}$ has Property $({\bf P})$. By 
Lemma~\ref{lsa3}, it is true for $n=d^{\#}$. Suppose that it is true for the integers
smaller than $n$. By Corollary~\ref{csa6}, it remains to prove that ${\goth a}$ has 
Property $({\bf P}_{1})$.

Suppose that ${\goth a}$ has not Property $({\bf P}_{1})$. A contradiction is expected. 
For some fixed point $V$ under ${\bf T}$ in $X_{R}$ such that 
$V\cap {\goth t}={\goth z}$, $r_{V}\neq \vert {\cal R}_{V} \vert$. By 
Lemma~\ref{lsa6}(ii), $r_{V}=\vert {\cal R}_{V} \vert -1$. Then the orthogonal complement
of ${\cal R}_{V}$ in ${\goth t}$ is generated by ${\goth z}$ and an element $s$ in 
${\goth t}\setminus {\goth z}$. In particular, $V$ is contained in ${\goth r}^{s}$.
According to Lemma~\ref{lsa2}, for some ideal ${\goth a}'$ of codimension $1$ of 
${\goth a}$, normalized by ${\goth t}$, ${\goth a}^{s}$ is contained in ${\goth a}'$. 
Denote by $\alpha $ the element of ${\cal R}$ such that 
$$ {\goth a} = {\goth a}'\oplus {\goth a}^{\alpha } $$
and consider $\thetaup _{\alpha }$ and $\Gamma $ as in Subsection~\ref{sa5}. Denote 
by $\Gamma _{V}$ the set of elements of $\Gamma $ whose image by the projection
$$ \xymatrix{ \Gamma \ar[rr] && \ec {Gr}r{}{}d}, \qquad (T_{1},T',T,T_{2}) \longmapsto T$$
is equal to $V$. By Lemma~\ref{lsa5}(ii), $\Gamma _{V}$ is not empty and it is invariant 
under ${\bf T}$ by Lemma~\ref{lsa5}(i). As it is a projective variety, it has a fixed 
point under ${\bf T}$. Denote by $(V_{1},V',V,W)$ such a point. As ${\goth a}'$ has
Property $({\bf P})$, it has Property $({\bf P}_{1})$ by Lemma~\ref{lsa4}. Hence 
$r_{V'}=\vert {\cal R}_{V'} \vert$ and $V'\neq V$ since $r_{V}\neq {\cal R}_{V}$. Then, 
by Lemma~\ref{lsa5}(iv), 
$$ V_{1} = V\cap V' \quad  \text{and} \quad W = V'+V .$$ 
As a result, $V'\cap {\goth t}=V\cap {\goth t}={\goth z}$ since 
${\cal R}_{V'}\neq {\cal R}_{V}$ and $V_{1}$ has codimension $1$ in $V$ and $V'$. 
Then $V'=V$ by Corollary~\ref{c2sa5}, whence a contradiction.
\end{proof}

The following corollary results from Proposition~\ref{psa6}, Corollary~\ref{csa3} and
Lemma~\ref{lsa4}.

\begin{coro}\label{c2sa6}
Let $V$ be in $X_{R}$. 

{\rm (i)} The space $V$ is a commutative algebraic subalgebra of ${\goth r}$ and for some
subset $\Lambda $ of ${\cal R}$, the biggest torus contained in $V$ is conjugate to 
${\goth t}_{\Lambda }$ under $R$.

{\rm (ii)} If $V$ is a fixed point under $R$, then $V$ is an ideal of ${\goth r}$ and the
elements of ${\cal R}_{V}$ are linearly independent.
\end{coro}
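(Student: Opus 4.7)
The plan is to simply combine the three ingredients cited: Proposition~\ref{psa6} supplies the missing hypothesis needed to activate both Corollary~\ref{csa3} and Lemma~\ref{lsa4}, after which (i) and (ii) follow essentially by quotation. No genuine new argument is required; the difficulty was entirely absorbed into proving Property $({\bf P})$.

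First, invoke Proposition~\ref{psa6} to conclude that ${\goth a}$ has Property $({\bf P})$. This is the unique nontrivial input and it is now available unconditionally.

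For part (i), Corollary~\ref{csa3} was stated under the hypothesis that ${\goth a}$ has Property $({\bf P})$, and its conclusion is verbatim the assertion of (i): every $V$ in $X_R$ is a commutative algebraic subalgebra of ${\goth r}$, and the largest torus contained in $V$ is $R$-conjugate to ${\goth t}_\Lambda$ for some $\Lambda \subset {\cal R}$. So (i) is immediate.

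For part (ii), assume in addition that $V$ is a fixed point of $R$ in $X_R$. Lemma~\ref{lsa4}(ii), also available now that Property $({\bf P})$ holds, tells us that $V$ is a commutative ideal of ${\goth r}$, that ${\goth z}$ is the orthogonal complement of ${\cal R}_V$ in ${\goth t}$, and that $r_V = |{\cal R}_V| = d^{\#}$. The equality $r_V = |{\cal R}_V|$ is exactly the statement that the rank of the set ${\cal R}_V$ equals its cardinality, i.e.\ that the elements of ${\cal R}_V$ are linearly independent in ${\goth t}^{*}$. Thus (ii) follows. The only mild subtlety is recalling that $r_V$ was defined as the rank (dimension of the span) of ${\cal R}_V$, so the equality $r_V = |{\cal R}_V|$ is literally linear independence; there is no obstacle beyond this bookkeeping.
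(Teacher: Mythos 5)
Your proposal is correct and coincides exactly with the paper's own (one-line) justification: the paper states that Corollary~\ref{c2sa6} "results from Proposition~\ref{psa6}, Corollary~\ref{csa3} and Lemma~\ref{lsa4}," which is precisely the chain you spell out. Your added observation that $r_V = |{\cal R}_V|$ is literally the linear-independence statement is the only bookkeeping the reader needs, and it matches the definitions in Subsection~\ref{sa4}.
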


\section{Solvable algebras and main varieties} \label{sav}
Let ${\goth t}$ be a vector space of positive dimension $d$ and ${\goth a}$ in 
${\cal C}_{{\goth t}}$. Set:
$$ {\cal R} := {\cal R}_{{\goth t},{\goth a}}, \qquad 
{\goth r} := {\goth r}_{{\goth t},{\goth a}} \qquad \pi := \pi _{{\goth t},{\goth a}},
\qquad R := R_{{\goth t},{\goth a}}, \qquad A := A_{{\goth t},{\goth a}}, \qquad 
{\cal E} := {\cal E}_{{\goth t},{\goth a}}, \qquad n := \dim {\goth a} .$$
In this section, we give some results on the singular locus of $X_{R}$.
For ${\goth a}'$ in ${\cal C}_{{\goth t}}$, denote by $X_{R_{{\goth t},{\goth a}'},\n}$ 
the subset of elements of $X_{R_{{\goth t},{\goth a}'}}$ contained in ${\goth a}'$.

\subsection{Subvarieties of  $X_{R}$} \label{sav1}
Denote by ${\cal P}_{c}({\cal R})$ the set of complete subsets of ${\cal R}$ and 
for $\Lambda $ in ${\cal P}_{c}({\cal R})$ denote by $X_{R_{\Lambda }}$ the closure in 
$\ec {Gr}r{}{}d$ of the orbit $R_{\Lambda }.{\goth t}$.

\begin{prop}\label{psav1}
Let $Z$ be an irreducible closed subset of $X_{R}$, invariant under $R$.

{\rm (i)} For a well defined complete subset $\Lambda $ of ${\cal R}$, all element of
a dense open subset of $Z$ is conjugate under $R$ to the sum of ${\goth t}_{\Lambda }$ 
and a subspace of ${\goth a}$.

{\rm (ii)} All element of $Z$ is contained in ${\goth t}_{\Lambda }\oplus {\goth a}$.

{\rm (iii)} For some irreducible closed subset $Z_{\Lambda }$ of $X_{R_{\Lambda }}$, 
invariant under $R_{\Lambda }$, $R.Z_{\Lambda }$ is dense in $Z$.
\end{prop}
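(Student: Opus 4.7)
The strategy is to associate to each $V\in X_{R}$ a ``type'' $\Lambda (V)\in {\cal P}_{c}({\cal R})$ via the projection $\pi $, to show that this type is $R$-invariant and constant on a dense open subset of $Z$, and then to use Property $({\bf P})$ (Proposition~\ref{psa6}) to realise the reduction of $Z$ inside $X_{R_{\Lambda }}$. Since ${\goth a}$ is in ${\cal C}_{{\goth t}}$, one has ${\goth z}=\{0\}$, and for every $V\in X_{R}$ the biggest torus ${\goth s}_{V}\subset V$ produced by Corollary~\ref{c2sa6}(i) yields a decomposition $V={\goth s}_{V}\oplus (V\cap {\goth a})$ (Corollary~\ref{csa3} together with Lemma~\ref{lsa1}(iv)). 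Moreover $\pi :{\goth r}\to {\goth t}$ is $R$-invariant, because $[{\goth r},{\goth r}]\subset {\goth a}$ implies that $R$ acts trivially on ${\goth r}/{\goth a}\simeq {\goth t}$; hence $\pi (V)=\pi ({\goth s}_{V})$, and by Lemma~\ref{lsa1}(v) combined with Corollary~\ref{c2sa6}(i) this subspace of ${\goth t}$ equals ${\goth t}_{\Lambda (V)}$ for a uniquely determined complete $\Lambda (V)\subset {\cal R}$.

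For (i), the map $V\mapsto \dim (V\cap {\goth a})$ is upper semicontinuous on the Grassmannian, hence minimal on a dense open subset $U$ of $Z$. On $U$ the map $V\mapsto \pi (V)$ is a morphism into $\mathrm{Gr}_{k}({\goth t})$, with $k=d-\dim (V\cap {\goth a})$ constant on $U$, and its image is contained in the finite set $\{{\goth t}_{\Lambda }\; \vert \; \Lambda \in {\cal P}_{c}({\cal R}),\; \dim {\goth t}_{\Lambda }=k\}$. Irreducibility of $U$ forces this image to reduce to a single point ${\goth t}_{\Lambda }$, which determines $\Lambda $. For any $V\in U$, choosing $g\in R$ with $g({\goth s}_{V})={\goth t}_{\Lambda }$ (Lemma~\ref{lsa1}(v)), one obtains $g(V)={\goth t}_{\Lambda }\oplus W$ with $W:=g(V\cap {\goth a})\subset {\goth a}$; commutativity of $g(V)$ forces $W$ to centralise ${\goth t}_{\Lambda }$, so in fact $W\subset {\goth a}_{\Lambda }\subset {\goth a}$.

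For (ii), the subspace ${\goth t}_{\Lambda }+{\goth a}\subset {\goth r}$ is $R$-stable, since ${\goth a}$ is an $R$-invariant ideal and $R$ acts trivially on ${\goth t}$. Hence $\{V\in \mathrm{Gr}_{d}({\goth r})\; \vert \; V\subset {\goth t}_{\Lambda }+{\goth a}\}$ is a closed $R$-invariant subvariety containing $U$, hence contains $Z$ by irreducibility. For (iii), pick $s\in {\goth t}_{\Lambda }$ with $\Lambda _{s}=\Lambda $; Lemma~\ref{l2sa2}(i) gives ${\goth r}^{s}={\goth r}_{\Lambda }$ and hence $R^{s}=R_{\Lambda }$, while Property $({\bf P})$ for ${\goth a}$ yields $\{V\in X_{R}\; \vert \; V\subset {\goth r}_{\Lambda }\}=X_{R}^{s}=\overline{R_{\Lambda }.{\goth t}}=X_{R_{\Lambda }}$. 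For $V\in U$ and $g$ as above, $g(V)\subset {\goth r}_{\Lambda }$ and $g(V)\in X_{R}$, so $g(V)\in X_{R_{\Lambda }}$, whence $U\subset R.(Z\cap X_{R_{\Lambda }})$. The set $Y:=Z\cap X_{R_{\Lambda }}$ is closed in $Z$ and $R_{\Lambda }$-invariant; its finitely many irreducible components are each $R_{\Lambda }$-invariant by connectedness of $R_{\Lambda }$, and at least one of them, which we call $Z_{\Lambda }$, satisfies $\overline{R.Z_{\Lambda }}=Z$.

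The main obstacle is the identification $X_{R_{\Lambda }}=\{V\in X_{R}\; \vert \; V\subset {\goth r}_{\Lambda }\}$ used in (iii): it relies essentially on Property $({\bf P})$ for ${\goth a}$ (Proposition~\ref{psa6}), which has only just been established. The inclusion $X_{R_{\Lambda }}\subset X_{R}\cap \mathrm{Gr}_{d}({\goth r}_{\Lambda })$ is immediate from $R_{\Lambda }\subset R$, but it is the converse direction that actually allows the reduction of $Z$ to a closed subvariety of $X_{R_{\Lambda }}$ and is not available by purely formal arguments.
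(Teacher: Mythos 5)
Your proof is correct and takes essentially the same route as the paper. The paper stratifies $Z$ by $Y_{\Lambda}:=\{V\in Z\;\vert\;\pi(V)={\goth t}_{\Lambda}\}$ and uses the closure relation $\overline{Y_{\Lambda}}\subset Y_{\Lambda}\cup\bigcup_{\Lambda'\supsetneq\Lambda}Y_{\Lambda'}$ together with irreducibility to locate the generic type; your upper-semicontinuity argument for $\dim(V\cap{\goth a})$ is the same observation phrased dually. In (iii) the paper works with $Z_{*}:=\{V\in Z\;\vert\;{\goth t}_{\Lambda}\subset V\}$ while you work with $Z\cap X_{R_{\Lambda}}$, but Property $({\bf P})$ — exactly as you flag in your closing remark — shows these two sets coincide, and both proofs then extract $Z_{\Lambda}$ as an irreducible component of that set.
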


\begin{proof}
(i) For $\Lambda $ in ${\cal P}_{c}({\cal R})$, let $Y_{\Lambda }$ be the subset 
of elements $V$ of $Z$ such that $\pi (V)={\goth t}_{\Lambda }$. Since $Z$ is invariant 
under $R$, so is $Y_{\Lambda }$. Moreover, by Corollary~\ref{c2sa6}(i),
$$\overline{Y_{\Lambda }} \subset Y_{\Lambda } \cup 
\bigcup _{\mycom {\Lambda '\in {\cal P}_{c}({\cal R})}
{\Lambda ' \supsetneq \Lambda }} Y_{\Lambda '} . $$ 
According to Corollary~\ref{c2sa6}(i), $Z$ is the union of 
$Y_{\Lambda },\Lambda \in {\cal P}_{c}({\cal R})$. As a result, since 
${\cal R}$ is finite and $Z$ is irreducible, for a well defined complete subset 
$\Lambda $ of ${\cal R}$, $Y_{\Lambda }$ contains a dense open subset of $Z$. By 
Lemma~\ref{lsa1}(v), all element of $Y_{\Lambda }$ is conjugate under 
$R$ to the sum of ${\goth t}_{\Lambda }$ and a subspace of ${\goth a}$. 

(ii) By (i), for all $V$ in a dense subset of $Z$, $V$ is contained in 
${\goth t}_{\Lambda }\oplus {\goth a}$, whence the assertion.

(iii) Let $Z_{*}$ be the subset of elements of $Z$, containing ${\goth t}_{\Lambda }$. 
Denote by $s$ an element of ${\goth t}_{\Lambda }$ such that $\alpha (s)\neq 0$ for all
$\alpha $ in ${\cal R}\setminus \Lambda $. By Lemma~\ref{l2sa2}(i), 
$${\goth r}^{s}={\goth t}\oplus {\goth a}_{\Lambda }.$$
Hence $Z_{*}$ is contained in $X_{R_{\Lambda }}$ by Proposition~\ref{psa6}. Moreover, 
$Z_{*}$ is invariant under $R_{\Lambda }$ since $Z$ is invariant under $R$. By (i), 
$R.Z_{*}$ is dense in $Z$. So, for some irreducible component $Z_{\Lambda }$ of 
$Z_{*}$, $R.Z_{\Lambda }$ is dense in $Z$. Moreover, $Z_{\Lambda }$ is 
invariant under $R_{\Lambda }$ since so is $Z_{*}$.
\end{proof}

For $\Lambda $ in ${\cal P}_{c}({\cal R})$, denote by ${\goth t}_{\Lambda }^{\#}$ a 
complement to ${\goth t}_{\Lambda }$ in ${\goth t}$ and set:
$$ {\goth r}^{\#}_{\Lambda } := {\goth t}^{\#}_{\Lambda } + {\goth a}_{\Lambda } .$$
Let $R^{\#}_{\Lambda }$ be the adjoint group of ${\goth r}^{\#}_{\Lambda }$
and $A^{\#}_{\Lambda }$ the connected closed subgroup of $R^{\#}_{\Lambda }$ whose Lie
algebra is $\ad {\goth a}_{\Lambda }$.

\begin{lemma}\label{lsav1}
Let $\Lambda $ be in ${\cal P}_{c}({\cal R})$, nonempty and strictly contained in 
${\cal R}$.

{\rm (i)} The tori ${\goth t}_{\Lambda }$ and ${\goth t}_{\Lambda }^{\#}$ have positive 
dimension and ${\goth a}_{\Lambda }$ is in ${\cal C}_{{\goth t}_{\Lambda }^{\#}}$.
Moreover,
$$\dim {\goth a}_{\Lambda } - \dim {\goth t}_{\Lambda }^{\#} \leq 
\dim {\goth a} - \dim {\goth t} .$$

{\rm (ii)} The map $V\mapsto V \oplus {\goth t}_{\Lambda }$ is an isomorphism from
$X_{R^{\#}_{\Lambda }}$ onto $X_{R_{\Lambda }}$.
\end{lemma}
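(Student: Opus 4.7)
The plan is to reduce both assertions to an application of Corollary~\ref{csa1} to the subalgebra ${\goth a}_{\Lambda }$, which the excerpt has already placed in ${\cal C}'_{{\goth t}}$. The key observation is that the center of the associated solvable algebra ${\goth r}_{\Lambda }={\goth t}\oplus {\goth a}_{\Lambda }$ is exactly ${\goth t}_{\Lambda }$: by Lemma~\ref{lsa1}(i) applied inside ${\goth r}_{\Lambda }$, this center is the orthogonal complement in ${\goth t}$ of the weight set ${\cal R}_{{\goth t},{\goth a}_{\Lambda }}=\Lambda $, which by definition is ${\goth t}_{\Lambda }$. Hence ${\goth t}_{\Lambda }^{\#}$ plays the role of the complement to the center in Corollary~\ref{csa1} for ${\goth a}_{\Lambda }$.

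I would begin with the positive-dimension claims. Picking $\alpha \in \Lambda $, which is possible since $\Lambda $ is nonempty, the inclusion ${\goth t}_{\Lambda }\subseteq \ker \alpha $ forces ${\goth t}_{\Lambda }$ to be a proper subspace of ${\goth t}$, giving $\dim {\goth t}_{\Lambda }^{\#}\geq 1$. Conversely, were ${\goth t}_{\Lambda }$ equal to $\{0\}$, every element of ${\cal R}$ would vacuously have kernel containing ${\goth t}_{\Lambda }$, and completeness of $\Lambda $ would force $\Lambda ={\cal R}$, contradicting the strict inclusion; hence $\dim {\goth t}_{\Lambda }\geq 1$.

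With these in hand, the assertion ${\goth a}_{\Lambda }\in {\cal C}_{{\goth t}_{\Lambda }^{\#}}$ follows directly from Corollary~\ref{csa1} applied to ${\goth a}_{\Lambda }$. For the dimension inequality, Conditions~(1) and~(2) give $\dim {\goth a}=|{\cal R}|$ and $\dim {\goth a}_{\Lambda }=|\Lambda |$, so what must be proved is
$$|{\cal R}\setminus \Lambda |\geq \dim {\goth t}_{\Lambda }.$$
This reduces to a short linear-algebra step: the restricted forms $\beta \vert _{{\goth t}_{\Lambda }}$, for $\beta \in {\cal R}\setminus \Lambda $, must span ${\goth t}_{\Lambda }^{*}$, for otherwise their common kernel inside ${\goth t}_{\Lambda }$ would be a nonzero subspace on which every element of ${\cal R}$ vanishes (those of $\Lambda $ vanishing by definition of ${\goth t}_{\Lambda }$), contradicting ${\goth z}=\{0\}$, which holds because ${\goth a}\in {\cal C}_{{\goth t}}$.

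Finally, (ii) is a direct transcription of Corollary~\ref{csa1} for ${\goth a}_{\Lambda }$: the map $V\mapsto V\oplus {\goth t}_{\Lambda }$ provides the claimed isomorphism from $X_{R^{\#}_{\Lambda }}$ onto the closure of the orbit of ${\goth t}$ under the adjoint group of ${\goth r}_{\Lambda }$, and this closure coincides with $X_{R_{\Lambda }}$ because $R_{\Lambda }$ and that adjoint group are connected with common Lie algebra $\ad {\goth r}_{\Lambda }$ and hence produce identical orbits of ${\goth t}$ in $\ec {Gr}r{}{}d$. The only step requiring genuine care is the spanning argument in~(i), where the hypothesis ${\goth a}\in {\cal C}_{{\goth t}}$ must be invoked to preclude a hidden common zero of the restricted weights; everything else is pure extraction from already-established results.
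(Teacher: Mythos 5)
Your proposal is correct and follows essentially the same route as the paper: identify ${\goth t}_{\Lambda }$ as the center of ${\goth r}_{\Lambda }$ so that Corollary~\ref{csa1} (and the remark preceding it) gives both the membership ${\goth a}_{\Lambda }\in {\cal C}_{{\goth t}_{\Lambda }^{\#}}$ and assertion (ii), and then derive the dimension inequality from the fact that ${\cal R}$ spans ${\goth t}^{*}$ because ${\goth z}=\{0\}$. Your counting step (the restrictions of ${\cal R}\setminus \Lambda $ to ${\goth t}_{\Lambda }$ span ${\goth t}_{\Lambda }^{*}$, hence $\vert {\cal R}\setminus \Lambda \vert \geq \dim {\goth t}_{\Lambda }$) is just the dual phrasing of the paper's inequality $\vert \Lambda \vert +\dim {\goth t}_{\Lambda }\leq \vert {\cal R}\vert $.
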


\begin{proof}
Since $\Lambda $ is a complete subset of ${\cal R}$ strictly contained in ${\cal R}$, 
${\goth t}_{\Lambda }$ has positive dimension and since $\Lambda $ is not empty, 
${\goth t}_{\Lambda }$ is strictly contained in ${\goth t}$. By definition, $\Lambda $
is the set of weights of ${\goth t}$ in ${\goth a}_{\Lambda }$ so that 
${\goth a}_{\Lambda }$ is in ${\cal C}'_{{\goth t}}$. Then ${\goth a}_{\Lambda }$
is in ${\cal C}_{{\goth t}^{\#}_{\Lambda }}$ and Assertion (ii) results from 
Corollary~\ref{csa1}.

By Lemma~\ref{lsa1},(i) and (iv), ${\cal R}$ generates ${\goth t}^{*}$. Hence
$$ \vert \Lambda  \vert + \dim {\goth t}_{\Lambda } \leq \vert {\cal R} \vert.$$ 
By Condition (2) of Section~\ref{sa}, ${\goth a}$ has dimension $\vert {\cal R} \vert$ 
and ${\goth a}_{\Lambda }$ has dimension $\vert \Lambda  \vert$. As a result,
$$ \dim {\goth a} - \dim {\goth t} = \vert {\cal R} \vert - \dim {\goth t}_{\Lambda }
- \dim {\goth t}_{\Lambda }^{\#} \geq \dim {\goth a}_{\Lambda } - 
\dim {\goth t}_{\Lambda }^{\#} .$$
\end{proof}

\subsection{Smooth points of $X_{R}$ and commutators} \label{sav2}
Denote by ${\goth t}_{\r}$ the complement in ${\goth t}$ to the union of 
${\goth t}_{\alpha }, \alpha \in {\cal R}$ and ${\goth r}_{\r}$ the set of elements $x$ 
of ${\goth r}$ such that ${\goth r}^{x}$ has minimal dimension.

\begin{lemma}\label{lsav2}
{\rm (i)} The set ${\goth t}_{\r}$ is a dense open subset of ${\goth t}$, contained in 
${\goth r}_{\r}$. Moreover, $R.{\goth t}_{\r}$ is a dense open subset of ${\goth r}$.

{\rm (ii)} For all $x$ in ${\goth r}_{\r}$, ${\goth r}^{x}$ is in $X_{R}$.

{\rm (iii)} The set ${\goth r}_{\r}$ is a big open subset of ${\goth r}$.
\end{lemma}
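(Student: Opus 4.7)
The plan is to handle the three parts in sequence; the codimension-two claim in (iii) is the delicate point.

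For (i), the set ${\goth t}_{\r}$ is the complement in ${\goth t}$ of the finitely many hyperplanes ${\goth t}_{\alpha }$, $\alpha \in {\cal R}$, hence a dense open subset of ${\goth t}$. For $t \in {\goth t}_{\r}$ the set $\Lambda _{t}$ is empty, so Lemma~\ref{l2sa2}(i) gives ${\goth r}^{t}={\goth t}$, of dimension $d$. On the other hand, ${\goth r}/{\goth a}$ is commutative, so $[x,{\goth r}]\subset {\goth a}$ for every $x\in {\goth r}$, forcing $\dim {\goth r}^{x}\geq d$ universally. Hence $d$ is the minimum value, and ${\goth t}_{\r}\subset {\goth r}_{\r}$. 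The differential of the morphism $R\times {\goth t}_{\r}\to {\goth r}$, $(g,t)\mapsto g.t$, at any point $(e,t)$ has image $[{\goth r},t]+{\goth t}={\goth a}+{\goth t}={\goth r}$ since $\alpha (t)\neq 0$ for all $\alpha \in {\cal R}$; the resulting smoothness together with $R$-equivariance shows $R.{\goth t}_{\r}$ is open in ${\goth r}$, and it is dense because ${\goth t}_{\r}$ is.

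For (ii), since $\dim {\goth r}^{x}$ is the constant $d$ on ${\goth r}_{\r}$, the rule $x\mapsto {\goth r}^{x}$ defines a morphism $\phi \colon {\goth r}_{\r}\to \ec {Gr}r{}{}d$. It is constant equal to ${\goth t}$ on ${\goth t}_{\r}$, so $R$-equivariance gives $\phi (R.{\goth t}_{\r})=R.{\goth t}\subset X_{R}$. The irreducibility of ${\goth r}_{\r}$, the density of $R.{\goth t}_{\r}$ in ${\goth r}_{\r}$ by (i), and the closedness of $X_{R}$ in $\ec {Gr}r{}{}d$ then force $\phi ({\goth r}_{\r})\subset X_{R}$.

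For (iii), I propose a direct rank analysis of the linear map $\ad x\colon {\goth r}\to {\goth a}$, well-defined since $[{\goth r},{\goth r}]\subset {\goth a}$; the open set ${\goth r}_{\r}$ is precisely the locus where this map is surjective. Nilpotence of ${\goth a}$ combined with complete reducibility of the ${\goth t}$-action produces a ${\goth t}$-stable flag $0={\goth a}^{(0)}\subset \cdots \subset {\goth a}^{(n)}={\goth a}$ of ideals of ${\goth a}$ with $1$-dimensional quotients. Ordering ${\cal R}=\{\alpha _{1},\ldots,\alpha _{n}\}$ compatibly and choosing $x_{\alpha _{i}}\in {\goth a}^{(i)}\setminus {\goth a}^{(i-1)}$, the endomorphism $\ad y$ of ${\goth a}$ is strictly upper triangular in the basis $(x_{\alpha _{i}})$, for every $y\in {\goth a}$. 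Writing $x=t+y$ with $t\in {\goth t}$ and $y\in {\goth a}$, the $n\times n$ minor of $\ad x$ formed by the columns indexed by $(x_{\alpha _{i}})$ is then upper triangular with diagonal $(\alpha _{1}(t),\ldots,\alpha _{n}(t))$ and therefore equals $\prod _{i}\alpha _{i}(t)$. Consequently $F := {\goth r}\setminus {\goth r}_{\r}\subset \bigcup _{\alpha \in {\cal R}}({\goth t}_{\alpha }+{\goth a})$, a union of finitely many codimension-$1$ subspaces.

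The main remaining obstacle is to show that each $F\cap ({\goth t}_{\alpha }+{\goth a})$ is a proper closed subset of ${\goth t}_{\alpha }+{\goth a}$; combined with the inclusion just obtained, this will give $\dim F\leq \dim {\goth r}-2$. For $\alpha =\alpha _{i_{0}}$ I would exhibit a regular element in ${\goth t}_{\alpha }+{\goth a}$ explicitly: pick $t\in {\goth t}_{\alpha }$ with $\beta (t)\neq 0$ for all $\beta \in {\cal R}\setminus \{\alpha \}$, choose $t_{j}\in {\goth t}$ with $\alpha (t_{j})\neq 0$, and set $x:=t+x_{\alpha }$. Then $(\ad x)(t_{j})=[x_{\alpha },t_{j}]=-\alpha (t_{j})x_{\alpha }$, so the column of $\ad x$ at $t_{j}$ is $-\alpha (t_{j})\,e_{i_{0}}$, nonzero only in row $i_{0}$. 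Replacing in the previous minor the column $x_{\alpha }$ by the column $t_{j}$ and cofactor-expanding along the $i_{0}$-th column yields $-\alpha (t_{j})\prod _{k\neq i_{0}}\alpha _{k}(t)\neq 0$; hence $x\in {\goth r}_{\r}$, as required. Taking the union over the finitely many $\alpha \in {\cal R}$ then shows $F$ has codimension at least $2$, i.e., ${\goth r}_{\r}$ is a big open subset of ${\goth r}$.
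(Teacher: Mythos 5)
Your proof is correct and follows essentially the same strategy as the paper's: the endgame of part~(iii) --- exhibiting a regular element $s+x_{\alpha}$ in each ${\goth t}_{\alpha}+{\goth a}$ and concluding that ${\goth r}\setminus {\goth r}_{\r}$ has codimension at least~$2$ --- is exactly the paper's argument. The flag-and-minor computation you use to establish ${\goth r}\setminus {\goth r}_{\r}\subset \bigcup_{\alpha}({\goth t}_{\alpha}+{\goth a})$ is correct but redundant, since your own part~(i) already shows $R.{\goth t}_{\r}={\goth t}_{\r}+{\goth a}\subset {\goth r}_{\r}$, which is the same inclusion.
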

 
\begin{proof}
(i) By definition, ${\goth t}_{\r}$ is a dense open subset of ${\goth t}$. According to 
Lemma~\ref{l2sa2}(i), for $x$ in ${\goth t}_{\r}$, ${\goth r}^{x}={\goth t}$. Then 
$R.x = A.x=x+{\goth a}$ since $A.x$ is a closed subset of $x+{\goth a}$ of 
dimension $\dim {\goth a}$. As a result, $R.{\goth t}_{\r}={\goth t}_{\r}+{\goth a}$ is a
dense open subset of ${\goth r}$. Hence $R.{\goth t}_{\r}$ is contained in 
${\goth r}_{\r}$ since ${\goth r}^{x}$ is conjugate to ${\goth t}$ for all $x$ in 
$R.{\goth t}_{\r}$ and ${\goth r}_{\r}$ is a dense open subset of ${\goth r}$. 

(ii) By (i), for all $x$ in ${\goth r}_{\r}$, ${\goth r}^{x}$ has dimension $d$, whence a
regular map
$$ \xymatrix{ {\goth r}_{\r} \ar[rr]^{\theta } && \ec {Gr}r{}{}d}, \qquad 
x \longmapsto {\goth r}^{x} .$$
As a result, by (i), for all $x$ in ${\goth r}_{\r}$, ${\goth r}^{x}$ is in $X_{R}$.

(iii) Suppose that ${\goth r}_{\r}$ is not a big open subset of ${\goth r}$. A 
contradiction is expected. Let $\Sigma $ be an irreducible component of codimension $1$ 
of ${\goth r}\setminus {\goth r}_{\r}$. Since $\Sigma \cap A.{\goth t}_{\r}$ is empty, 
$\pi (\Sigma )$ is contained in ${\goth t}_{\alpha }$ for some $\alpha $ in ${\goth r}$. 
Then $\Sigma ={\goth t}_{\alpha }+{\goth a}$ since $\Sigma $ has codimension $1$ in 
${\goth r}$. By Condition (3) of Section~\ref{sa}, for some $s$ in ${\goth t}_{\alpha }$, 
$\gamma (s)\neq 0$ for all $\gamma $ in ${\cal R}\setminus \{\alpha \}$. Then
${\goth r}^{s+x_{\alpha }}={\goth t}_{\alpha }+{\goth a}^{\alpha }$ so that 
$s+x_{\alpha }$ is in ${\goth r}_{\r}$ by (i) and Condition (2) of Section~\ref{sa}, 
whence the contradiction.
\end{proof}

Denote by $X'_{R}$ the image of $\theta $. 
 
\begin{prop}\label{psav2}
{\rm (i)} The complement to $R.{\goth t}$ in $X_{R}$ is equidimensional of dimension 
$\dim {\goth a}-1$.

{\rm (ii)} The set $X'_{R}$ is a smooth open subset of $X_{R}$, containing $R.{\goth t}$.
\end{prop}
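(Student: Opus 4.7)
The plan is to handle (ii) by exhibiting $X'_R$ as the smooth base of a smooth surjection from the open subset ${\cal E} \cap (X_R \times {\goth r}_\r)$ of the tautological bundle, and to handle (i) by explicitly enumerating the irreducible components of $X_R \setminus R.{\goth t}$ via Proposition~\ref{psav1}.

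For (ii), I would consider the open subvariety ${\cal E}' := {\cal E} \cap (X_R \times {\goth r}_\r)$ of the tautological bundle. The key observation is that for each $x \in {\goth r}_\r$, any $V \in X_R$ containing $x$ must coincide with ${\goth r}^x$: indeed $V$ is commutative by Corollary~\ref{c2sa6}(i), so $V \subset {\goth r}^x$, with equality since $\dim V = d = \dim {\goth r}^x$. Together with Lemma~\ref{lsav2}(ii), this shows the second projection $p_2 \colon {\cal E}' \to {\goth r}_\r$ is an isomorphism with inverse $x \mapsto (\theta(x),x)$, so ${\cal E}'$ is smooth. The vector bundle projection ${\cal E} \to X_R$ is smooth (flat with smooth fibers), hence its open restriction $p_1 \colon {\cal E}' \to X_R$ is smooth and open, so $X'_R = p_1({\cal E}')$ is open in $X_R$ and smoothness descends from ${\cal E}'$ to $X'_R$ along the smooth surjection $p_1$. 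The inclusion $R.{\goth t} \subset X'_R$ follows since $\theta(g(s)) = g({\goth t})$ for $g \in R$, $s \in {\goth t}_\r$.

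For (i), each irreducible component $Z$ of $X_R \setminus R.{\goth t}$ is $R$-stable (as $R$ is connected), so Proposition~\ref{psav1} associates a nonempty complete subset $\Lambda \subset {\cal R}$ and a proper $R_\Lambda$-invariant irreducible closed $Z_\Lambda \subset X_{R_\Lambda}$ with $R.Z_\Lambda$ dense in $Z$. I would then introduce, for each $\alpha \in {\cal R}$, the subvariety $Z_\alpha := \overline{R \cdot V^\alpha_0}$ where $V^\alpha_0 := {\goth t}_\alpha \oplus {\goth a}^\alpha$; this lies in $X_R$ as the limit of $\exp(z\,\ad x_\alpha)({\goth t})$ when $z \to \infty$. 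A short stabilizer computation---using Condition~(3) to exclude $\exp(\ad x_\beta)$ for $\beta \neq \alpha$ from the stabilizer of $V^\alpha_0$---yields that the stabilizer equals ${\bf T} \cdot \exp(\k x_\alpha)$ of dimension $d+1$, so $\dim Z_\alpha = n-1$; and $Z_\alpha \cap R.{\goth t} = \emptyset$ since $\pi$ is $R$-invariant while $\pi(V^\alpha_0) = {\goth t}_\alpha \neq {\goth t}$.

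To finish (i) I would establish $X_R \setminus R.{\goth t} = \bigcup_{\alpha \in {\cal R}} Z_\alpha$. By the argument of Lemma~\ref{lsav2}(i), $V \in R.{\goth t}$ iff $V$ meets ${\goth t}_\r + {\goth a}$, so $V \notin R.{\goth t}$ forces $\pi(V) \subset \bigcup_\alpha {\goth t}_\alpha$; since a subspace of ${\goth t}$ over $\k$ cannot be a finite union of proper subspaces, $\pi(V) \subset {\goth t}_\alpha$ for some $\alpha$. Then Proposition~\ref{psav1} applied to the $R$-invariant closed set $X_{R,\alpha} := \{V \in X_R : \pi(V) \subset {\goth t}_\alpha\}$ forces its associated complete subset to be $\{\alpha\}$ (by comparing generic $\pi$-images and using Condition~(3) to rule out higher-rank $\Lambda$); since $X_{R_{\{\alpha\}}}$ is a projective line whose only proper $R_{\{\alpha\}}$-invariant closed subset is the fixed point $\{V^\alpha_0\}$, we get $X_{R,\alpha} = Z_\alpha$. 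The $Z_\alpha$ are pairwise distinct (distinct generic $\pi$-images, by Condition~(3)) and of equal dimension $n-1$, so they are pairwise incomparable and thus are precisely the irreducible components. The main obstacle is exactly this identification $X_{R,\alpha} = Z_\alpha$---verifying that boundary points with $\pi(V) \subsetneq {\goth t}_\alpha$ still lie in $\overline{R \cdot V^\alpha_0}$ and do not produce extra components---which is resolved by the one-weight case of Proposition~\ref{psav1} and the trivial orbit structure on $X_{R_{\{\alpha\}}} \cong {\Bbb P}^1$.
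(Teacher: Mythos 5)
Your proof of~(ii) is correct and follows the paper's argument closely: both realize $X'_{R}$ as the image of the smooth open subset ${\cal E}\cap (X_{R}\times {\goth r}_{\r})\cong {\goth r}_{\r}$ of ${\cal E}$ under the vector bundle projection, which is flat and open, so $X'_{R}$ is open in $X_{R}$ and smoothness descends (the paper cites \cite[Ch.\ 8, Theorem 23.7]{Mat}); the inclusion $R.{\goth t}\subset X'_{R}$ is as you say.

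For~(i), however, your argument is both far more elaborate than needed and contains a genuine gap. The paper disposes of~(i) in one line: since $R$ is connected solvable, the dense orbit $R.{\goth t}$ is affine (it is isomorphic to $A$, the stabilizer of ${\goth t}$ in $R$ being exactly ${\bf T}$), and the complement to a dense affine open subset of an irreducible variety is purely of codimension one by \cite[Corollaire 21.12.7]{Gro1}. You instead attempt to identify the irreducible components of $X_{R}\setminus R.{\goth t}$ explicitly as the $\overline{R.V_{\alpha}}$, $\alpha\in{\cal R}$; but that identification is precisely Corollary~\ref{csav3}(i), which the paper establishes only several results later via Proposition~\ref{psav3} and Lemma~\ref{lsav3}, proved by a nontrivial induction on $\dim {\goth a}$. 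The unsupported step in your sketch is the assertion that Proposition~\ref{psav1} ``forces the associated complete subset to be $\{\alpha\}$.'' For an irreducible component $Z$ whose generic element has $\pi(V)\subset {\goth t}_{\alpha}$, Proposition~\ref{psav1} yields a complete subset $\Lambda$ with ${\goth t}_{\Lambda}\subset{\goth t}_{\alpha}$, hence $\alpha\in\Lambda$, but nothing rules out $\Lambda$ being strictly larger. Condition~(3) only excludes proportional weights; it does not forbid complete subsets of rank $\geq 2$ (take $\Lambda_{s}$ for $s$ in the intersection of two weight hyperplanes). Eliminating such $\Lambda$ is exactly the substance of Proposition~\ref{psav3}, which is not yet available at this point in the paper, so your route for~(i) cannot be closed with the tools at hand. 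Replace it by the affine-complement argument.
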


\begin{proof}
(i) As $R$ is solvable and $R.{\goth t}$ is dense in $X_{R}$, $R.{\goth t}$ is an affine
open subset of $X_{R}$. So, by \cite[Corollaire 21.12.7]{Gro1}, 
$X_{R}\setminus R.{\goth t}$ is equidimensional of dimension $\dim {\goth a}-1$ since 
$X_{R}$ has dimension $\dim {\goth a}$. 

(ii) By definition, ${\cal E}$ is the subvariety of elements $(V,x)$ of 
$X_{R}\times {\goth r}$ such that $x$ is in $V$. Let $\Gamma $ be the image of the 
graph of $\theta $ by the isomorphism 
$$ \xymatrix{{\goth r}\times \ec {Gr}r{}{}d \ar[rr] && \ec {Gr}r{}{}d\times {\goth r}},
\qquad (x,V) \longmapsto (V,x) .$$
Then $\Gamma $ is the intersection of ${\cal E}$ and $X_{R}\times {\goth r}_{\r}$. Since 
$\Gamma $ is isomorphic to ${\goth r}_{\r}$, $\Gamma $ is a smooth open subset of 
${\cal E}$ whose image by the bundle projection is $X'_{R}$. As a result, $X'_{R}$ is a 
smooth open subset of $X_{R}$ by~\cite[Ch. 8, Theorem 23.7]{Mat}.   
\end{proof}

For $\alpha $ in ${\cal R}$, set 
$V_{\alpha } := {\goth t}_{\alpha }\oplus {\goth a}^{\alpha }$ and denote by 
$\theta _{\alpha }$ the map
$$ \xymatrix{\k \ar[rr]^{\theta _{\alpha }} && \ec {Gr}r{}{}d}, \qquad
z \longmapsto \exp( z\ad x_{\alpha })({\goth t}) ,$$
By Condition (2) of Section~\ref{sa}, $V_{\alpha }$ has dimension $d$.

\begin{lemma}\label{l2sav2}
Let $\alpha $ be in ${\cal R}$. Set $X_{R,\alpha } := \overline{A.V_{\alpha }}$.

{\rm (i)} The map $\theta _{\alpha }$ has a regular extension to ${\Bbb P}^{1}(\k)$ such 
that $\theta _{\alpha }(\infty )=V_{\alpha }$.

{\rm (ii)} The variety $X_{R,\alpha }$ has dimension 
$\dim {\goth a}-1$ and it is an irreducible component of $X_{R}\setminus R.{\goth t}$. 

{\rm (iii)} The intersection $X_{R,\alpha }\cap X'_{R}$ is not empty.
\end{lemma}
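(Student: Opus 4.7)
The plan is to treat the three parts in order, exploiting very concrete formulas coming from the fact that $[x_\alpha,x_\alpha]=0$ and $[x_\alpha,\mathfrak{a}^\alpha]=0$ (the latter by Condition~(3)).

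For (i), I would compute $\theta_\alpha$ explicitly. For $t\in\mathfrak{t}$, $[x_\alpha,t]=-\alpha(t)x_\alpha$ and iterated brackets vanish, so
\[
\exp(z\ad x_\alpha)(t)=t-z\alpha(t)x_\alpha .
\]
Choose a basis $t_1,\dots,t_{d-1}$ of $\mathfrak{t}_\alpha$ and $t_d\in\mathfrak{t}$ with $\alpha(t_d)=1$. Then $\theta_\alpha(z)=\mathrm{span}(t_1,\dots,t_{d-1},\,t_d-zx_\alpha)$; rescaling the last vector by $z^{-1}$ shows that in $\ec{Gr}r{}{}d$ the limit as $z\to\infty$ is $\mathrm{span}(t_1,\dots,t_{d-1},x_\alpha)=\mathfrak{t}_\alpha\oplus\mathfrak{a}^\alpha=V_\alpha$. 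Since ${\Bbb P}^1(\k)$ is a smooth curve and the Grassmannian is projective, the rational map $\theta_\alpha$ extends uniquely, and the value at $\infty$ is $V_\alpha$.

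For (ii), I would compute the Lie algebra of the stabilizer of $V_\alpha$ in $A$, namely the set of $y=\sum_{\beta\in\mathcal R}c_\beta x_\beta\in\mathfrak{a}$ with $[y,V_\alpha]\subset V_\alpha$. For $t\in\mathfrak{t}_\alpha$ one has $[y,t]=-\sum_\beta c_\beta\beta(t)x_\beta$, which belongs to $V_\alpha$ only if $c_\beta\beta|_{\mathfrak{t}_\alpha}=0$ for every $\beta\neq\alpha$. By Condition~(3), $\beta|_{\mathfrak{t}_\alpha}\neq0$ when $\beta\neq\alpha$, forcing $c_\beta=0$; conversely $\mathfrak{a}^\alpha$ does centralize $V_\alpha$. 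Hence the stabilizer has dimension $1$, so $\dim A.V_\alpha=n-1$ and $\dim X_{R,\alpha}=n-1$. Now $V_\alpha$ lies in $X_R\setminus R.\mathfrak{t}$: any element of $R.\mathfrak{t}$ projects isomorphically onto $\mathfrak{t}$ under $\pi$ and so has trivial intersection with $\mathfrak{a}$, whereas $V_\alpha\cap\mathfrak{a}=\mathfrak{a}^\alpha\neq0$. The closed irreducible set $X_{R,\alpha}$ is therefore contained in $X_R\setminus R.\mathfrak{t}$, which by Proposition~\ref{psav2}(i) is equidimensional of dimension $n-1$; dimensions match, so $X_{R,\alpha}$ is an irreducible component.

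For (iii), I would produce a single element of $X_{R,\alpha}\cap X'_R$ by checking that $V_\alpha$ itself belongs to $X'_R$. Pick $s\in\mathfrak{t}_\alpha$ with $\gamma(s)\neq0$ for every $\gamma\in\mathcal{R}\setminus\{\alpha\}$ (possible by Condition~(3)); the proof of Lemma~\ref{lsav2}(iii) already shows that $y:=s+x_\alpha$ lies in $\mathfrak{r}_{\mathrm{r}}$ with $\mathfrak{r}^{y}$ of dimension $d$. Since $[y,\mathfrak{t}_\alpha]=0$ and $[y,x_\alpha]=0$, one has $V_\alpha\subset\mathfrak{r}^{y}$, and comparing dimensions gives $V_\alpha=\mathfrak{r}^{y}=\theta(y)\in X'_R$. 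Combined with $V_\alpha\in X_{R,\alpha}$ from (i)--(ii), this yields $V_\alpha\in X_{R,\alpha}\cap X'_R$.

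I do not anticipate a serious obstacle: Condition~(3) (no $\k\alpha$-string other than $\alpha$) is the single ingredient that both makes $\exp(z\ad x_\alpha)$ affine-linear on $\mathfrak{t}$ and guarantees that $\beta|_{\mathfrak{t}_\alpha}\neq0$ for $\beta\neq\alpha$, which controls the stabilizer computation in (ii) and the regularity of $s+x_\alpha$ in (iii). The only mild subtlety is (ii), where one must verify that $V_\alpha$ really is not in $R.\mathfrak{t}$; the $\pi$-projection argument handles it cleanly.
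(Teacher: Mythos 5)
Your proof is correct and follows essentially the same route as the paper: the explicit computation $\exp(z\ad x_\alpha)(t)=t-z\alpha(t)x_\alpha$ for~(i), the stabilizer/normalizer calculation via Condition~(3) plus Proposition~\ref{psav2}(i) for~(ii), and the regular element $s+x_\alpha$ with $\mathfrak{r}^{s+x_\alpha}=V_\alpha$ for~(iii) are all the same ingredients the paper uses.
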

 
\begin{proof}
(i) Let $h_{\alpha }$ be in ${\goth t}$ such that $\alpha (h_{\alpha })=1$.
Since $X_{R}$ is a projective variety, the map $\theta _{\alpha }$ has a regular 
extension to ${\Bbb P}^{1}(\k)$ by~\cite[Ch. 6, Theorem 6.1]{Sh}. For $z$ in $\k$, 
$$ \theta _{\alpha }(z) = {\goth t}_{\alpha } \oplus \k (h_{\alpha }-zx_{\alpha }) .$$
Hence $\theta _{\alpha }(\infty )=V_{\alpha }$.

(ii) By (i), $X_{R,\alpha }$ is contained in $X_{R}$ and its elements are contained in 
${\goth t}_{\alpha }\oplus {\goth a}$ so that $X_{R,\alpha }$ is contained in 
$X_{R}\setminus R.{\goth t}$. By Condition (3) of Section~\ref{sa}, for $\gamma $ in 
${\cal R}\setminus \{\alpha \}$ and $v$ in ${\goth a}^{\gamma }$, 
$[{\goth t}_{\alpha },v]=\k v$ so that no element of ${\goth a}^{\gamma }$ normalizes 
$V_{\alpha }$. As a result, the normalizer of $V_{\alpha }$ in ${\goth r}$ is equal to 
${\goth t}+{\goth a}^{\alpha }$ so that $X_{R,\alpha }$ has dimension 
$\dim {\goth a} -1$. Hence $X_{R,\alpha }$ is an irreducible component 
$X_{R}\setminus R.{\goth t}$. 

(iii) According to Condition (3) of Section~\ref{sa}, for some $s$ in 
${\goth t}_{\alpha }$, $\gamma (s)\neq 0$ for all $\gamma $ in 
${\cal R}\setminus \{\alpha \}$. Then $V_{\alpha } = {\goth r}^{s+x_{\alpha }}$ so that 
$s+x_{\alpha }$ is in ${\goth r}_{\r}$, whence the assertion.
\end{proof}

\subsection{On the singular locus of $X_{R}$} \label{sav3}
In this subsection we suppose $\dim {\goth a} > d$ and we fix an ideal ${\goth a}'$ of 
codimension $1$ in ${\goth a}$, normalized by ${\goth t}$ and such that ${\goth a}'$ is 
in ${\cal C}_{{\goth t}}$. For example, all ideal of ${\goth r}$ of dimension 
$\dim {\goth a}-1$, contained in ${\goth a}$ and containing a fixed point under $R$ in
$X_{R}$ is in ${\cal C}_{{\goth t}}$ by Corollary~\ref{c2sa6}(ii).
Set:
$$ {\goth r}' := {\goth r}_{{\goth t},{\goth a}'} \qquad 
\pi ':= \pi _{{\goth t},{\goth a}'},
\qquad R' := R_{{\goth t},{\goth a}'}, \qquad A' := A_{{\goth t},{\goth a}'}, \qquad 
{\cal R}' := {\cal R}_{{\goth t},{\goth a}'}.$$
Let $\alpha $ be in ${\cal R}$ such that 
$$ {\goth a} = {\goth a}' \oplus {\goth a}^{\alpha } $$
and $\Gamma $ as in Subsection~\ref{sa5}. Denote by $\varpi _{1}$, $\varpi _{2}$, 
$\varpi _{3}$, $\varpi _{4}$ the restrictions to $\Gamma $ of the first, second, third, 
fourth projections. Let $Z$ be an irreducible component of $X_{R,\n}$. According to 
Lemma~\ref{lsa5}(ii), for some irreducible component $T$ of $\varpi _{3}^{-1}(Z)$, 
$\varpi _{3}(T)=Z$. Denote by $Z'$ the image of $T$ by $\varpi _{2}$ and by $T_{1}$ the 
image of $T$ by the projection $\varpi _{1}\mul \varpi _{4}$. Then $Z'$ and $T_{1}$ are 
irreducible closed subsets of $\ec {Gr}r{}{}d$ and 
$\ec {Gr}r{}{}{d-1}\times \ec {Gr}r{}{}{d+1}$ respectively. Let $T_{0}$ be the subset of 
elements $(V_{1},V',V,W)$ of $T$ such that $V'=V$. Then $T_{0}$ is a closed subset of 
$T$. If $T_{0}=T$, $Z'=Z$ and $Z$ is contained in $X_{R',\n}$.
Otherwise, $O:=T\setminus T_{0}$ is a dense open subset of $T$. According to 
Lemma~\ref{lsa5}(iv), for all $(V_{1},V',V,W)$ in $O$, $V_{1}=V'\cap V$ and $V'+V=W$.
Denote by  $O_{1}$ an open subset of $T_{1}$, contained and dense in 
$\varpi _{1}\mul \varpi _{4}(O)$.

Let $(V_{1},W)$ be in $O_{1}$. Denote by $E$ a complement to $V_{1}$ in ${\goth r}$ and 
by $E'$ a complement to $W$ in ${\goth r}$ contained in $E$. Let $\kappa $ be the map
\begin{eqnarray*}
\xymatrix{ {\mathrm {Hom}}_{\k}(V_{1},W\cap E)\times {\mathrm {Hom}}_{\k}(W,E') 
\ar[rr]^{\kappa } && \ec {Gr}r{}{}{d-1}\times \ec {Gr}r{}{}{d+1} }, \\
(\varphi ,\psi ) \longmapsto 
({\mathrm {span}}(\{v+\varphi (v)+\psi (v)+\psi \rond \varphi (v) 
\; \vert \; v \in V_{1}\}), {\mathrm {span}}(\{v+\psi (v) \; \vert \; v \in W \})) .
\end{eqnarray*}
Then $\kappa $ is an isomorphism from its source to an open neighborhood of $(V_{1},W)$
in the subvariety of elements $(W_{1},W_{2})$ of 
$\ec {Gr}r{}{}{d-1}\times \ec {Gr}r{}{}{d+1}$ such that $W_{1}$ is contained in $W_{2}$.
Denote by $\Omega $ the inverse image by $\kappa $ of the intersection of $T_{1}$ and the 
image of $\kappa $. Let $(e_{1},e_{2})$ be a basis of $W\cap E$ and let $\kappa _{*}$ be 
the map
$$ \xymatrix{ \Omega \times (\k^{2}\setminus \{(0,0)\}) \ar[rr]^{\kappa _{*}} && 
\ec {Gr}r{}{}d}, $$
$$ (\varphi ,\psi ,x_{1},x_{2}) \longmapsto 
{\mathrm {span}}(\{v+\varphi (v)+\psi (v)+\psi \rond \varphi (v) \; \vert \; v \in V_{1}\}
\cup \{x_{1}(e_{1}+\psi (e_{1}))+x_{2}(e_{2}+\psi (e_{2}))\}) .$$

\begin{lemma}\label{lsav3}
Suppose that $O$ is not empty. Denote by $\widetilde{\Omega }$ the image of $\kappa _{*}$
and $\tilde{Z}$ the closure of $\widetilde{\Omega }$ in $\ec {Gr}r{}{}d$.

{\rm (i)} The intersections $\widetilde{\Omega }\cap Z'$ and 
$\widetilde{\Omega }\cap Z$ are dense in $Z'$ and $Z$ respectively. In particular $Z'$ 
and $Z$ are contained in $\tilde{Z}$.

{\rm (ii)} For $V$ in $\widetilde{\Omega }$, there exists $(V',V'')$ in 
$Z'\times Z$ such that 
$$ V'\cap V'' \subset V, \qquad V \subset V'+V'', \qquad 
(V'\cap V'',V'+V'') \in \kappa (\Omega ) .$$

{\rm (iii)} Let $F'$ be the fiber of $\kappa _{*}$ at some element $V$ of 
$\kappa _{*}(\Omega )$. Denote by $F$ the subset of elements $(\varphi ,\psi )$ of 
$\Omega $ such that $V$ contains the first component of $\kappa (\varphi ,\psi )$ and is 
contained in the second component of $\kappa (\varphi ,\psi )$. Then 
$F' = F \times \k^{*}(x_{1},x_{2})$ for some $(x_{1},x_{2})$ in 
$\k^{2}\setminus \{(0,0)\}$.

{\rm (iv)} The varieties $\tilde{Z}$ and $Z$ have dimension at most $\dim Z' + 1$.
\end{lemma}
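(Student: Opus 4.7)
The plan is to exploit the chart $\kappa$ centered at $(V_{1},W)\in O_{1}$ together with Lemma~\ref{lsa5}(iv), which on the dense open subset $O$ of $T$ forces $V'_{1}=V'\cap V''$ and $W'=V'+V''$, so the quadruple $(V'_{1},V',V'',W')$ is determined by the pair $(V',V'')$. Throughout, I identify $\Omega$ with the open neighborhood $\kappa(\Omega)$ of $(V_{1},W)$ in $T_{1}$. For (i), set $U:=O\cap (\varpi_{1}\mul \varpi_{4})^{-1}(O_{1}\cap \kappa(\Omega))$, a nonempty open subset of $T$ since $(V_{1},W)\in O_{1}\cap \kappa(\Omega)$. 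For each $(W_{1},V',V'',W_{2})\in U$, Lemma~\ref{lsa5}(iv) and $V'\neq V''$ force $W_{1}\subset V',V''\subset W_{2}$; writing $(\varphi,\psi):=\kappa^{-1}(W_{1},W_{2})\in \Omega$ and expanding in the basis $\{e_{i}+\psi(e_{i})\}_{i=1,2}$ of $\kappa(\varphi,\psi)_{2}/\kappa(\varphi,\psi)_{1}$, each of $V'$ and $V''$ is realized as $\kappa_{*}(\varphi,\psi,x_{1},x_{2})$. Hence $\varpi_{2}(U),\varpi_{3}(U)\subset \widetilde{\Omega}$, and surjectivity of $\varpi_{2},\varpi_{3}$ onto $Z',Z$ together with irreducibility of $T$ yields density of $\widetilde{\Omega}\cap Z'$ in $Z'$ and $\widetilde{\Omega}\cap Z$ in $Z$. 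The inclusions $Z,Z'\subset \tilde{Z}$ follow.

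For (ii), given $V\in \widetilde{\Omega}$, write $V=\kappa_{*}(\varphi,\psi,x_{1},x_{2})$ and set $(W_{1},W_{2}):=\kappa(\varphi,\psi)\in T_{1}\cap \kappa(\Omega)$. Since $(\varpi_{1}\mul \varpi_{4})(O)\supset O_{1}$ is dense open in $T_{1}$, for $(\varphi,\psi)$ generic in $F$ the fiber of $\varpi_{1}\mul \varpi_{4}$ over $(W_{1},W_{2})$ meets $O$, and any point $(W_{1},V',V'',W_{2})$ in that intersection satisfies, by Lemma~\ref{lsa5}(iv), $V'\cap V''=W_{1}\subset V\subset W_{2}=V'+V''$ with $(V'\cap V'',V'+V'')\in \kappa(\Omega)$; this supplies the required $(V',V'')$. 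The boundary case in which the fiber over every candidate $(W_{1},W_{2})\in F$ lies entirely in $T_{0}$ is handled either by choosing a different $(\varphi,\psi)\in F$ in the dense open subset where the fiber does meet $O$, or by a closedness/limit argument using the closedness of $Z',Z$ and of the inclusion conditions.

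For (iii), the fiber $F'=\kappa_{*}^{-1}(V)$ consists of $(\varphi,\psi,x_{1},x_{2})$ with $\kappa_{*}(\varphi,\psi,x_{1},x_{2})=V$. The inclusions $\kappa(\varphi,\psi)_{1}\subset V\subset \kappa(\varphi,\psi)_{2}$ are exactly $(\varphi,\psi)\in F$, and then $(x_{1},x_{2})$ is determined up to $\k^{*}$ as the coordinate vector of the line $V/\kappa(\varphi,\psi)_{1}$ in the basis $\{e_{i}+\psi(e_{i})\}_{i=1,2}$. To show this line is independent of $(\varphi,\psi)\in F$, consider the composite projection ${\goth r}=W\oplus E'\to W\to W\cap E$ (the second being projection along $V_{1}$): the $\psi$-contribution dies because $\psi$ lands in $E'$, a generator $v+\varphi(v)+\psi(v+\varphi(v))$ of $\kappa(\varphi,\psi)_{1}$ is sent to $\varphi(v)\in W\cap E$, and $x_{1}(e_{1}+\psi(e_{1}))+x_{2}(e_{2}+\psi(e_{2}))$ is sent to $x_{1}e_{1}+x_{2}e_{2}$. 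The image of $V$ under this projection is thus ${\rm image}(\varphi)+\k(x_{1}e_{1}+x_{2}e_{2})\subset W\cap E\cong \k^{2}$, a subspace depending only on $V$; a careful unwinding of this identity, using that the line $V/\kappa(\varphi,\psi)_{1}$ corresponds under the canonical identification to the complement of ${\rm image}(\varphi)$ in this subspace, shows that $\k(x_{1},x_{2})\subset \k^{2}$ is the same for all $(\varphi,\psi)\in F$, yielding $F'=F\times \k^{*}(x_{1},x_{2})$.

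For (iv), by (iii) the generic fiber of $\kappa_{*}$ over $\widetilde{\Omega}$ has dimension $\dim F+1$, so $\dim \tilde{Z}=\dim \widetilde{\Omega}=\dim \Omega+2-(\dim F+1)=\dim T_{1}+1-\dim F$. By (i), $Z'\subset \tilde{Z}$ and the same fiber calculation applied to a generic $V'\in Z'$ gives $\dim Z'=\dim T_{1}+1-\dim F_{V'}$ for the analogous fiber. The bound $\dim \tilde{Z}\leq \dim Z'+1$ then reduces to $\dim F_{V'}-\dim F_{V}\leq 1$ for generic choices, which expresses that the only extra degree of freedom distinguishing a generic $V\in \tilde{Z}$ from a generic $V'\in Z'$ is the choice of one specific point in the ${\Bbb P}^{1}$ of $d$-dimensional subspaces between $(W_{1},W_{2})$. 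Since $Z\subset \tilde{Z}$ by (i), the same bound then holds for $Z$. I expect this final dimension comparison to be the main obstacle, likely requiring a careful analysis of the $\k^{*}$-quotient $\bar\kappa_{*}\colon \Omega\times {\Bbb P}^{1}\to \tilde{Z}$ together with the constraint that one coordinate of the corresponding point of $T$ must lie in $Z'$.
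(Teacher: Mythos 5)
Your parts (i) and (ii) follow the paper's route (push the fibers of $\varpi _{1}\mul \varpi _{4}$ over $\kappa (\Omega )$ through Lemma~\ref{lsa5}(iv) and read off elements of $Z'$ and $Z$ as values of $\kappa _{*}$), and part (iii) is the paper's argument plus an attempt at the product structure. The real problem is part (iv), which is the only part of the lemma actually used downstream, and which you leave unproven. Your identity $\dim Z'=\dim T_{1}+1-\dim F_{V'}$ is not a ``same fiber calculation'': the source of $\kappa _{*}$ restricted over $Z'$ is $\kappa _{*}^{-1}(Z'\cap \widetilde{\Omega })$, a proper closed subset of $\Omega \times (\k ^{2}\setminus \{(0,0)\})$ whenever $Z'\subsetneq \tilde{Z}$, so its dimension is not $\dim \Omega +2$ and the formula fails. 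Consequently the reduction of (iv) to ``$\dim F_{V'}-\dim F_{V}\leq 1$'' is unfounded, and you explicitly defer that comparison anyway. The paper argues differently: it takes $V$ with $F'$ of minimal dimension, so that $\dim \tilde{Z}=\dim \Omega +2-(\dim F+1)$, and then uses (ii) together with (iii) to produce a \emph{lower} bound $\dim Z'\geq \dim \Omega -\dim F$ — the point being that (ii) attaches to the data defining $V$ an element $V'$ of $Z'$ squeezed between the same pairs $(\kappa (\varphi ,\psi )_{1},\kappa (\varphi ,\psi )_{2})$, so that the incidence between $\Omega $ and $Z'$ dominates $\Omega $ while its fibers over $Z'$ are controlled by (iii). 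Subtracting the two estimates gives $\dim \tilde{Z}\leq \dim Z'+1$, and $Z\subset \tilde{Z}$ finishes. Some version of this lower bound on $\dim Z'$ is the missing idea in your write-up.

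Two smaller points. In (ii), your ``boundary case'' is real but is resolved by replacing $\Omega $ with $\kappa ^{-1}(O_{1}\cap \kappa (\Omega ))$ at the outset, so that $\kappa (\Omega )\subset \varpi _{1}\mul \varpi _{4}(O)$; this does not change $\tilde{Z}$ because $\Omega $ is irreducible. Your alternative ``closedness/limit argument'' is not safe as stated, since $V'\cap V''$ and $V'+V''$ do not vary continuously in limits. In (iii), your composite projection onto $W\cap E$ only recovers ${\mathrm {image}}(\varphi )+\k (x_{1}e_{1}+x_{2}e_{2})$, which does not determine the line when ${\mathrm {image}}(\varphi )\neq \{0\}$ (and gives nothing when it is two-dimensional); the correct recovery is to project $V$ onto $W$ along $E'$ and then intersect with $W\cap E$, which yields exactly $\k (x_{1}e_{1}+x_{2}e_{2})$. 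Only the dimension count $\dim F'=\dim F+1$ is needed later, so this is a repairable detail rather than a fatal one.
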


\begin{proof}
(i) Since $T$ is irreducible so are $T_{1}$ and $\Omega $. Hence $\tilde{Z}$ is 
irreducible. For some $(V',V)$ in $Z'\times Z$, $V_{1}$ is contained in $V'$ and $V$ and 
$V'$ and $V$ are contained in $W$. Since $\kappa (\Omega )$ is an open neighbourhood of 
$(V_{1},W)$ in $T_{1}$, 
$$\varpi _{2}(\varpi _{1}\mul \varpi _{4}^{-1}(\kappa (\Omega ))\cap T) 
\quad  \text{and} \quad
\varpi _{3}(\varpi _{1}\mul \varpi _{4}^{-1}(\kappa (\Omega ))\cap T) $$
are dense subsets of $Z'$ and $Z$ respectively. For all $(\varphi ,\psi )$ in $\Omega $,
all element of 
$\varpi _{2}(\varpi _{1}\mul \varpi _{4}^{-1}(\kappa (\varphi ,\psi ))\cap T)$ 
contains the first component of $\kappa (\varphi ,\psi )$ and is contained in 
the second component of $\kappa (\varphi ,\psi )$. Hence all element of 
$\varpi _{2}(\varpi _{1}\mul \varpi _{4}^{-1}(\kappa (\Omega ))\cap T)$ is in the image 
of $\kappa _{*}$. As a result, $\widetilde{\Omega }\cap Z'$ is dense in $Z'$ and $Z'$ is 
contained in $\tilde{Z}$. In the same way, $\widetilde{\Omega }\cap Z$ is dense in $Z$ 
and $Z$ is contained in $\tilde{Z}$.

(ii) According to Lemma~\ref{lsa5}(iv), for all $(V'_{1},V',V,W')$ in $O$, 
$V'_{1}=V'\cap V$ and $W'=V'+V$. By definition, $\kappa (\Omega )$ is contained in 
$\varpi _{1}\mul \varpi _{4}(O)$ and for $V$ in $\widetilde{\Omega }$, $V'_{1}\subset V$ 
and $V\subset W'$ for some $(V'_{1},W')$ in $\kappa (\Omega )$, whence the assertion.

(iii) For $(\varphi ,\psi )$ in $F$ and for $(x_{1},x_{2})$ in 
$\k^{2}\setminus \{(0,0)\}$ such that
$$ V = \kappa _{*}(\varphi ,\psi ,x_{1},x_{2}),$$
the subset of elements $(y_{1},y_{2})$ of $\k^{2}$ such that 
$(\varphi ,\psi ,y_{1},y_{2})$ is in $F'$ is equal to $\k^{*}.(x_{1},x_{2})$. Moreover,
for all $(\varphi ,\psi ,y_{1},y_{2})$ in $F'$, $(\varphi ,\psi )$ is in $F$, whence the 
assertion.

(iv) In (iii), we can choose $V$ such that $F'$ has minimal dimension so that
$$ \dim \tilde{Z} = \dim \Omega +2 - (\dim F+1) = \dim \Omega - \dim F +1 .$$ 
By (ii), for some $V'$ in $Z'$, for all $(\varphi ,\psi )$ in $F$, $V'$ contains the
first component of $\kappa (\varphi ,\psi )$ and is contained in the second component of 
$\kappa (\varphi ,\psi )$. So, again by (iii) and (ii),
$$ \dim Z' \geq \dim \Omega - \dim F,$$
whence $\dim \tilde{Z} \leq \dim Z' +1$ and $\dim Z \leq \dim Z' +1$ since $Z$ is 
contained in $\tilde{Z}$ by (i).
\end{proof}

\begin{prop}\label{psav3}
The variety $X_{R,\n}$ has dimension at most $n-d$.
\end{prop}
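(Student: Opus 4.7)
The plan is to argue by induction on $n=\dim{\goth a}$.

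For the base case $n=d$, Lemma~\ref{l2sa1} forces ${\goth a}$ to be commutative and identifies $X_R$ as the union of the $X_{R,I}$; the only one contained in ${\goth a}$ is $X_{R,\emptyset}=\{{\goth a}\}$, so $X_{R,\n}=\{{\goth a}\}$ has dimension $0=n-d$.

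For the inductive step ($n>d$), I first observe that $R={\bf T}A$ preserves ${\goth a}$ (since ${\bf T}$ preserves the weight decomposition and $A=\exp(\ad{\goth a})$ preserves the subalgebra ${\goth a}$), hence $X_{R,\n}$ is $R$-stable and each of its irreducible components is $R$-invariant by connectedness of $R$. Let $Z$ be such a component. Applying Proposition~\ref{psav1} to $Z$, I obtain a complete $\Lambda\subset{\cal R}$ and an $R_\Lambda$-invariant irreducible $Z_\Lambda\subset X_{R_\Lambda}$ with $\overline{R.Z_\Lambda}=Z$, and a generic $V\in Z$ is conjugate under $R$ to ${\goth t}_\Lambda+W$ for some $W\subset{\goth a}$. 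Because $V\subset{\goth a}$ contains no nonzero semisimple element, ${\goth t}_\Lambda=\{0\}$, so $\Lambda$ spans ${\goth t}^{*}$.

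In Case~A, when $\Lambda\subsetneq{\cal R}$, Lemma~\ref{lsav1} gives ${\goth a}_\Lambda\in{\cal C}_{\goth t}$ of dimension $|\Lambda|<n$ (as ${\goth t}_\Lambda^\#={\goth t}$), and the elements of $Z_\Lambda$, being subspaces of ${\goth r}_\Lambda$ contained in ${\goth a}$, lie in ${\goth r}_\Lambda\cap{\goth a}={\goth a}_\Lambda$; hence $Z_\Lambda\subset X_{R_\Lambda,\n}$. The inductive hypothesis yields $\dim Z_\Lambda\leq|\Lambda|-d$. Since $Z_\Lambda$ is $R_\Lambda$-invariant, the orbit map $R\times Z_\Lambda\to X_R$, $(g,V)\mapsto g.V$, has generic fibre of dimension at least $\dim R_\Lambda=d+|\Lambda|$, so
$$\dim Z\leq\dim R-\dim R_\Lambda+\dim Z_\Lambda=(d+n)-(d+|\Lambda|)+\dim Z_\Lambda\leq n-d,$$
as required.

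In Case~B, when $\Lambda={\cal R}$, Proposition~\ref{psav1} is uninformative and I instead invoke Lemma~\ref{lsav3}. Pick a codimension-$1$ ideal ${\goth a}'\subset{\goth a}$ in ${\cal C}_{\goth t}$ (available by Corollary~\ref{c2sa6}(ii), as noted at the start of Subsection~\ref{sav3}), the weight $\alpha$ with ${\goth a}={\goth a}'\oplus{\goth a}^\alpha$, and build $\Gamma$ as in Subsection~\ref{sa5}; choose an irreducible component $T$ of $\varpi_3^{-1}(Z)$ with $\varpi_3(T)=Z$ and set $Z'=\varpi_2(T)\subset X_{R'}$, which is $R'$-invariant since $R'$ is connected and permutes the components of $\varpi_3^{-1}(Z)$. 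If $T_0=T$, then $V'=V\subset{\goth a}\cap{\goth r}'={\goth a}'$, so $Z=Z'\subset X_{R',\n}$, and induction gives $\dim Z\leq(n-1)-d$. Otherwise $O$ is dense in $T$, Lemma~\ref{lsav3}(iv) supplies $\dim Z\leq\dim Z'+1$, and reapplying the Case-A/Case-B dichotomy to the $R'$-invariant $Z'\subset X_{R'}$ (with $\dim{\goth a}'=n-1<n$) yields $\dim Z'\leq(n-1)-d$, finishing the argument. The hardest step will be justifying this last reduction: one must carefully propagate the nilpotency constraint $V\subset{\goth a}$ through the sandwiching $V_1\subset V\cap V'$ and $V+V'\subset W$ from Lemma~\ref{lsa5}(iv), together with the commutativity/non-commutativity dichotomy of Lemma~\ref{lsa5}(v)--(vi), to ensure the Case-A orbit bound still applies to $Z'$ in the absence of an a priori inclusion $Z'\subset X_{R',\n}$.
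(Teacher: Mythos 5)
Your Case~A/Case~B split obtained by applying Proposition~\ref{psav1} directly to $Z$ is degenerate: once you observe that a generic $V\in Z\subset X_{R,\n}$ gives ${\goth t}_{\Lambda}=\{0\}$, the completeness of $\Lambda$ forces $\Lambda={\cal R}$, so Case~A is empty and the induction never actually traverses it. That observation on its own is harmless, but it means the entire weight of the argument falls on the reduction from $Z$ to $Z'$, and that is precisely where the genuine gap sits.

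The step that fails is the last one: reapplying the dichotomy to $Z'$ does \emph{not} yield $\dim Z'\leq(n-1)-d$ when $Z'\not\subset X_{R',\n}$. In that regime the orbit-stratification bound can only land the fibre $Z_{\Lambda}$ in a nilpotent slice after peeling off a one-dimensional torus ${\goth t}_{\Lambda}$ (the constraint being that each generic $V'\in Z'$ contains the $(d-1)$-dimensional space $V_{1}=V\cap V'\subset{\goth a}'$, which only forces $\dim{\goth t}_{\Lambda}\leq 1$, not $=0$). Running the numbers as the paper does gives the weaker $\dim Z'\leq n-d$, and then Lemma~\ref{lsav3}(iv) only yields $\dim Z\leq n-d+1$, which is one too many. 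The paper therefore still has to exclude the equality case $\dim Z'=n-d$, $\dim Z=n-d+1$, and does so with the auxiliary variety $\tilde Z$ of Lemma~\ref{lsav3}: $\tilde Z$ is irreducible, contains both $Z$ and $Z'$, and has dimension at most $\dim Z'+1$, so in the equality case $Z=\tilde Z\supset Z'$; but every element of $Z$ lies in ${\goth a}$ whereas, since $Z'\not\subset X_{R',\n}$, not every element of $Z'$ does, a contradiction. Your proposal names the difficulty without supplying either the correct bound $\dim Z'\leq n-d$ (you assert the false $\leq(n-1)-d$) or the mechanism that removes the final excess dimension. That mechanism, not a refinement of the sandwiching via Lemma~\ref{lsa5}(v)--(vi), is the missing idea.
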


\begin{proof}
Prove this by induction on $n$. According to Lemma~\ref{l2sa1}(ii), it is true
for $n-d=0$. Suppose that $n-d$ is positive and that it is true for all integer smaller
than $n-d$. In particular, $X_{R',\n}$ has dimension at most $n-d-1$. Let $Z$ be an 
irreducible component of $X_{R,\n}$. According to Lemma~\ref{lsa5}(ii), for some 
irreducible component $T$ of $\varpi _{3}^{-1}(Z)$, $\varpi _{3}(T)=Z$. Denote by 
$Z'$ the image of $T$ by $\varpi _{2}$. Let $T_{0}$ be the subset of elements 
$(V_{1},V',V,W)$ of $T$ such that $V'=V$. Consider the following cases:
\begin{itemize}
\item [{\rm (a)}] $T_{0}=T$,  
\item [{\rm (b)}] $T_{0}\neq T$ and $Z'$ is contained in $X_{R',\n}$,
\item [{\rm (c)}] $Z'$ is not contained in $X_{R',\n}$.
\end{itemize}

(a) In this case, $Z'=Z$ and $\dim Z \leq n-d-1$ by induction hypothesis.

(b) By induction hypothesis, $\dim Z' \leq n-d-1$ and by Lemma~\ref{lsav3}(iv), 
$\dim Z \leq \dim Z'+1$, whence $\dim Z \leq n-d$.

(c) In this case, $T_{0}\neq T$, whence $\dim Z \leq \dim Z'+1$ by Lemma~\ref{lsav3}(iv).
Since $Z$ is an irreducible component of $X_{R,\n}$, $Z$ is invariant under $R$. By 
Lemma~\ref{lsa5}(i), $\varpi _{2}$ and $\varpi _{3}$ are equivariant under the action 
of $R'$ in $\Gamma $ so that $T$ and $Z'$ are invariant under $R'$. For all 
$(V_{1},V',V,W)$ in $T\setminus T_{0}$, $V_{1}=V'\cap V$. Hence all element of 
a dense open subset of $Z'$ contains a subspace of dimension $d-1$ of ${\goth a}'$. 
Then, by Proposition~\ref{psav1}, for some complete subset $\Lambda $ of ${\cal R}'$ such 
that ${\goth t}_{\Lambda }$ has dimension $1$ and for some closed subset $Z_{\Lambda }$ 
of $X_{R_{\Lambda }}$, $R'.Z_{\Lambda }$ is dense in $Z'$ so that 
$$ \dim Z' \leq \dim Z_{\Lambda } + \dim {\goth a}'-\dim {\goth a}_{\Lambda }.$$
If $\dim {\goth a}_{\Lambda }-\dim {\goth t}+1 = n-d$, then $\Lambda ={\cal R}'$.
In this case, since ${\goth a}'$ is in ${\cal C}_{{\goth t}}$, $\Lambda $ 
generates ${\goth t}^{*}$. As ${\goth t}_{\Lambda }$ has dimension $1$, it is impossible.
As a result, 
$$ \dim Z_{\Lambda } \leq \dim {\goth a}_{\Lambda }-\dim {\goth t}+1 
\quad  \text{and} \quad \dim Z' \leq n-d $$
by Lemma~\ref{lsav1} and induction hypothesis for ${\goth a}_{\Lambda }$. Then 
$\dim Z \leq n-d+1$. According to Lemma~\ref{lsav3},(i) and (iv), $\tilde{Z}$ is an 
irreducible variety of dimension at most $\dim Z'+1$, containing $Z'$ and $Z$. If 
$\dim Z'=n-d$ and $\dim Z = n-d+1$, then $Z=\tilde{Z}$. In particular, $Z'$ is contained 
in $Z$. It is impossible since all element of $Z$ is contained in ${\goth a}$. As a 
result, $\dim Z \leq n-d$, whence the proposition.
\end{proof}

\begin{coro}\label{csav3}
{\rm (i)} The irreducible components of $X_{R}\setminus R.{\goth t}$ are the 
$X_{R,\alpha },\alpha \in {\cal R}$.  

{\rm (ii)} The set $X'_{R}$ is a smooth big open subset of $X_{R}$, containing 
$R.{\goth t}$.
\end{coro}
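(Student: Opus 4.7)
The plan is to deduce both parts from results already in place, handling (i) by an induction on $n = \dim {\goth a}$ formulated for every ${\goth a}' \in {\cal C}_{{\goth t}'}$, and then obtaining (ii) formally from (i) and Lemma~\ref{l2sav2}.

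For (i), Lemma~\ref{l2sav2}(ii) already shows each $X_{R,\alpha}$ is an irreducible component of $X_R \setminus R.{\goth t}$, so I only need to rule out any other component. Let $Z$ be such a component. By Proposition~\ref{psav2}(i), $\dim Z = n - 1$, and $Z$ is $R$-invariant since $R$ is connected. Proposition~\ref{psav1}(i) gives a complete subset $\Lambda \subset {\cal R}$ such that the generic $V \in Z$ is $R$-conjugate to ${\goth t}_{\Lambda} + W$ with $W \subset {\goth a}_{\Lambda}$. The case $\Lambda = \emptyset$ is excluded, since it would force $Z \subset R.{\goth t}$; and the case $\Lambda = {\cal R}$ is excluded, since it would place $Z \subset X_{R,\n}$, whereas Proposition~\ref{psav3} yields $\dim X_{R,\n} \leq n - d \leq n - 2 < n - 1$. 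Here $d \geq 2$, because $d = 1$ together with Condition~(3) of Section~\ref{sa} would give $\vert {\cal R} \vert \leq 1$, hence $n = \vert {\cal R} \vert \leq 1 = d$, contradicting $n > d$.

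Next, applying Proposition~\ref{psav1}(iii), I obtain an $R_{\Lambda}$-invariant irreducible closed subset $Z_{\Lambda} \subset X_{R_{\Lambda}}$ with $R.Z_{\Lambda}$ dense in $Z$. Since $\dim R - \dim R_{\Lambda} = n - \vert \Lambda \vert$, the fiber-bundle estimate gives
$$ n-1 = \dim Z = \dim (R.Z_{\Lambda}) \leq (n - \vert \Lambda \vert) + \dim Z_{\Lambda},$$
so $\dim Z_{\Lambda} \geq \vert \Lambda \vert - 1 = \dim X_{R_{\Lambda}} - 1$. The possibility $Z_{\Lambda} = X_{R_{\Lambda}}$ is ruled out as it would force $Z \subset R.{\goth t}$; hence $\dim Z_{\Lambda} = \vert \Lambda \vert - 1$ and $Z_{\Lambda}$ is an irreducible component of $X_{R_{\Lambda}} \setminus R_{\Lambda}.{\goth t}$. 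Via the isomorphism of Lemma~\ref{lsav1}(ii), $Z_{\Lambda}$ transfers to an irreducible component of $X_{R^{\#}_{\Lambda}} \setminus R^{\#}_{\Lambda}.{\goth t}^{\#}_{\Lambda}$ for ${\goth a}_{\Lambda} \in {\cal C}_{{\goth t}^{\#}_{\Lambda}}$ of dimension $\vert \Lambda \vert < n$. The induction hypothesis (with base case $\dim {\goth a}_{\Lambda} = \dim {\goth t}^{\#}_{\Lambda}$ handled directly by Lemma~\ref{l2sa1}(ii)) gives $Z_{\Lambda} = X_{R_{\Lambda}, \alpha}$ for some $\alpha \in \Lambda$. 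Then $X_{R, \alpha} = \overline{R.V_{\alpha}} \subset \overline{R.Z_{\Lambda}} = Z$, and the dimension count forces $Z = X_{R, \alpha}$.

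For (ii), Proposition~\ref{psav2}(ii) already gives that $X'_R$ is a smooth open subset of $X_R$, and $R.{\goth t} \subset X'_R$ since for $h \in {\goth t}_{\r}$ one has ${\goth r}^{h} = {\goth t}$, so $\theta(g.h) = g.{\goth t}$ for $g \in R$. To show bigness, note $X_R \setminus X'_R \subset X_R \setminus R.{\goth t}$; were its codimension in $X_R$ equal to $1$, it would contain an irreducible component of $X_R \setminus R.{\goth t}$, which by (i) would be some $X_{R,\alpha}$. But Lemma~\ref{l2sav2}(iii) asserts $X_{R,\alpha} \cap X'_R \neq \emptyset$, contradicting $X_{R,\alpha} \subset X_R \setminus X'_R$. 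The principal obstacle is the inductive step in (i): verifying that the bundle estimate combined with the exclusion $Z_{\Lambda} \neq X_{R_{\Lambda}}$ genuinely pins down $\dim Z_{\Lambda} = \vert \Lambda \vert - 1$, and that the recursion correctly accommodates both the typical case and the base case where Lemma~\ref{l2sa1}(ii) applies directly.
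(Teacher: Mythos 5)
Your proof is correct and follows essentially the same route as the paper: induction on $n$, reduction via Propositions~\ref{psav1}--\ref{psav3} to a complete subset $\Lambda\subsetneq{\cal R}$, transfer to ${\goth a}_\Lambda$ via Lemma~\ref{lsav1}, invocation of the induction hypothesis, and then part~(ii) from Proposition~\ref{psav2}(ii) and Lemma~\ref{l2sav2}(iii). The only real divergence is that you pin down $\dim Z_\Lambda = |\Lambda|-1$ via the orbit-map estimate and conclude $Z_\Lambda=X_{R_\Lambda,\alpha}$, whereas the paper gets away with the weaker observation that $Z'_\Lambda$ lies in $X_{R^{\#}_\Lambda}\setminus R^{\#}_\Lambda.{\goth t}^{\#}_\Lambda$ and is therefore contained in some component $X_{R^{\#}_\Lambda,\alpha}$, giving $Z\subset X_{R,\alpha}$ directly; your extra step is sound but not needed, and your base case at $n=d$ (via Lemma~\ref{l2sa1}(ii)) replaces the paper's base case $n=1$ without affecting the argument.
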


\begin{proof}
According to Proposition~\ref{psav2}(ii) and Lemma~\ref{l2sav2}(iii), Assertion (ii) 
results from Assertion (i). Prove Assertion (i) by induction on $n=\dim {\goth a}$. 
For $n=1$, $d=1$ by Lemma~\ref{lsa1},(i) and (iv) so that $X_{R}$ is the union of 
$R.{\goth t}$ and ${\goth a}^{\alpha }$, whence Assertion (i) in this case. Suppose 
$n\geq 2$ and the assertion true for the integers smaller than $n$. By 
Lemma~\ref{lsa1}(i), Condition (2) and Condition (3) of Section~\ref{sa}, $d\geq 2$. 
According to Lemma~\ref{l2sav2}(ii), for all $\alpha $ in ${\cal R}$, $X_{R,\alpha }$ is 
an irreducible component of $X_{R}\setminus R.{\goth t}$. Let $Z$ be an irreducible 
component of $X_{R}\setminus R.{\goth t}$. By Proposition~\ref{psav2}(i), $Z$ has 
dimension $n-1$. So, by Proposition~\ref{psav3}, $Z$ is not contained in $X_{R,\n}$. 
Moreover, $Z$ is invariant under $R$. Then, by Proposition~\ref{psav1}, for some complete 
subset $\Lambda $ of ${\cal R}$, strictly contained in ${\cal R}$ and for some 
irreducible closed subset $Z_{\Lambda }$ of $X_{R_{\Lambda }}$, invariant under 
$R_{\Lambda }$, $R.Z_{\Lambda }$ is dense in $Z$. By Lemma~\ref{lsav1}, 
${\goth a}_{\Lambda }$ is in ${\cal C}_{{\goth t}_{\Lambda }^{\#}}$ 
and $Z_{\Lambda }$ is the image of a closed subset $Z'_{\Lambda }$ of 
$X_{R^{\#}_{\Lambda }}$, invariant by $R^{\#}_{\Lambda }$, by the map 
$V\mapsto V\oplus {\goth t}_{\Lambda }$. Since $Z_{\Lambda }$ is contained in $Z$, 
$Z'_{\Lambda }\cap R^{\#}_{\Lambda }.{\goth t}_{\Lambda }^{\#}$ is empty. As
$\Lambda $ is strictly contained in ${\cal R}$, $\dim {\goth a}_{\Lambda }$ is smaller 
than $n$. So, by induction hypothesis, for some $\alpha $ in $\Lambda $, 
$Z'_{\Lambda }$ is contained in $X_{R^{\#}_{\Lambda },\alpha }$. As a result, 
$Z_{\Lambda }$ and $Z$ are contained in $X_{R,\alpha }$, whence $Z=X_{R,\alpha }$ since 
$Z$ is an irreducible component of $X_{R}\setminus R.{\goth t}$.
\end{proof}

\section{Normality for solvable Lie algebras} \label{ns}
Let ${\goth t}$ be a vector space of positive dimension $d$ and ${\goth a}$ in 
${\cal C}_{{\goth t}}$. Set:
$$ {\cal R} := {\cal R}_{{\goth t},{\goth a}}, \qquad 
{\goth r} := {\goth r}_{{\goth t},{\goth a}} \qquad \pi := \pi _{{\goth t},{\goth a}},
\qquad R := R_{{\goth t},{\goth a}}, \qquad A := A_{{\goth t},{\goth a}}, \qquad 
{\cal E} := {\cal E}_{{\goth t},{\goth a}}, \qquad n := \dim {\goth a} .$$
The goal of the section is to prove that $X_{R}$ is normal and Cohen-Macaulay.

\subsection{The case $\dim {\goth a}=\dim {\goth t}$} \label{ns1}
By Condition (2) of Section~\ref{sa}, ${\cal R}$ has $d$ elements 
$\poi {\beta }1{,\ldots,}{d}{}{}{}$ linearly independent. Denote by 
$\poi t1{,\ldots,}{d}{}{}{}$ the dual basis in ${\goth t}$. For $i=1,\ldots,d$, let 
$v_{i}$ be a generator of ${\goth a}^{\beta _{i}}$.

\begin{lemma}\label{lns1}
If $\dim {\goth a}=\dim {\goth t}$ then $X_{R}$ is a smooth variety. Moreover, for all 
$(\poi z1{,\ldots,}{d}{}{}{})$ in $\k^{d}$, the subspace generated by 
$v_{1}+z_{1}t_{1},\ldots,v_{d}+z_{d}t_{d}$ is in $X_{R}$.
\end{lemma}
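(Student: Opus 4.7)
The plan is to cover $X_{R}$ by $2^{d}$ affine open charts of $\ec {Gr}r{}{}d$, one around each ${\bf T}$-fixed point, and to show that each intersects $X_{R}$ in a copy of $\k^{d}$. This will simultaneously establish smoothness and give the ``moreover'' statement as the description of the chart around ${\goth a}$.

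First, for the ``moreover'' claim, I would identify the given subspace as a point of one of the sets $X_{R,I}$ defined just above Lemma~\ref{l2sa1}. Concretely, for $(z_{1},\ldots,z_{d})$ in $\k^{d}$ and $I := \{i \; \vert \; z_{i}\neq 0\}$, rescaling each generator $v_{i}+z_{i}t_{i}$ with $i \in I$ by $z_{i}^{-1}$ rewrites ${\mathrm {span}}(v_{1}+z_{1}t_{1},\ldots,v_{d}+z_{d}t_{d})$ as
$$ \bigoplus _{i \not \in I} {\goth a}^{\beta _{i}} \oplus {\mathrm {span}}(\{t_{i}+z_{i}^{-1}v_{i} \; \vert \; i\in I\}), $$
which is manifestly an element of $X_{R,I}$, hence of $X_{R}$.

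For smoothness, the starting point is the standard affine chart $U$ of $\ec {Gr}r{}{}d$ consisting of subspaces complementary to ${\goth t}$, identified with ${\mathrm {Hom}}_{\k}({\goth a},{\goth t})$ via $\varphi \mapsto V_{\varphi } := {\mathrm {span}}(\{v_{i}+\varphi (v_{i}) \; \vert \; 1\leq i \leq d\})$. Since $R.{\goth t}$ consists of conjugates of the abelian subalgebra ${\goth t}$ and commutativity of a $d$-dimensional subspace is a closed condition on the Grassmannian, every element of $X_{R}$ is commutative, so $X_{R}\cap U$ is contained in the commutativity locus of $U$. The key computation uses $[{\goth a},{\goth a}]=[{\goth t},{\goth t}]=\{0\}$ and the weight relation $[t,v_{i}]=\beta _{i}(t)v_{i}$ to give
$$[v_{i}+\varphi (v_{i}),v_{j}+\varphi (v_{j})] = -\beta _{i}(\varphi (v_{j}))v_{i}+\beta _{j}(\varphi (v_{i}))v_{j};$$
its vanishing for $i\neq j$ forces $\varphi (v_{j})\in \bigcap _{i\neq j} \ker \beta _{i} = \k t_{j}$, so $\varphi (v_{j})=z_{j}t_{j}$ for some $z_{j}\in \k$. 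The commutativity locus in $U$ is therefore a linear subspace isomorphic to $\k^{d}$, and by the ``moreover'' part it equals $X_{R}\cap U$, which is thus smooth.

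To finish, I would replicate this analysis in the chart $U_{I}$ around each ${\bf T}$-fixed point $V_{I} := {\mathrm {span}}(\{t_{i} \; \vert \; i \in I\}) \oplus \bigoplus _{i \not \in I} {\goth a}^{\beta _{i}}$, obtained by swapping the roles of $t_{i}$ and $v_{i}$ for $i \in I$ in the preceding construction; the same diagonality computation gives $X_{R}\cap U_{I}\cong \k^{d}$. The remaining verification, which I expect to be the main piece of bookkeeping, is that these $2^{d}$ charts cover $X_{R}$: given an arbitrary $V={\mathrm {span}}(\{t_{i}+z_{i}v_{i} \; \vert \; i \in I'\}) \oplus \bigoplus _{i \not \in I'} {\goth a}^{\beta _{i}}$ in $X_{R,I'}$, setting $I := \{i \in I' \; \vert \; z_{i}=0\}$ should yield a chart $U_{I}$ containing $V$. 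Beyond this and the commutator computation, the argument is routine.
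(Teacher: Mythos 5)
Your proof is correct, and it follows a genuinely different route from the paper's. The paper establishes smoothness at a single point: it uses the parametrization of Lemma~\ref{l2sa1} to exhibit an affine chart $\k^{d}\xrightarrow{\sim}X_{R}\cap U$ around ${\goth a}$, concludes that ${\goth a}$ is a smooth point, and then propagates smoothness globally by a group-action argument: Corollary~\ref{c2sa6} shows ${\goth a}$ is the \emph{unique} $R$-fixed point in $X_{R}$, every $R$-orbit closure contains a fixed point, and the smooth locus is an open $R$-invariant subset containing ${\goth a}$, hence is all of $X_{R}$. You instead cover $X_{R}$ by the $2^{d}$ affine charts $U_{I}$ around all ${\bf T}$-fixed points $V_{I}$, carry out the commutator computation in each (the weight relation $[t,v_{i}]=\beta_{i}(t)v_{i}$ forces $\varphi(v_{j})\in\k t_{j}$, so the commutativity locus is a diagonal $\k^{d}$), and then check the covering and the membership of each chart in $X_{R}$ via the sets $X_{R,I}$. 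Your route trades the fixed-point/invariance argument for explicit bookkeeping in all charts: it is more elementary and self-contained (in particular it spells out the linear-algebra content that the paper's citation of Lemma~\ref{l2sa1} leaves implicit), whereas the paper's argument is shorter once Corollary~\ref{c2sa6} is available. One small point: when you ``replicate the analysis'' in a chart $U_{I}$ with $I\neq\emptyset$, the membership of the resulting diagonal $\k^{d}$ in $X_{R}$ no longer follows from the ``moreover'' part verbatim, but the same rescaling identifies each point of that diagonal with some $X_{R,J}$, so the argument goes through; it would be worth making that explicit rather than referring back to the original ``moreover'' claim.
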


\begin{proof}
According to Lemma~\ref{l2sa1}, ${\goth a}$ is in in $X_{R}$ and the map
$$ \xymatrix{ \k^{d} \ar[rr] && X_{R}}, \qquad
(\poi z1{,\ldots,}{d}{}{}{}) \longmapsto 
{\mathrm {span}}(\{v_{1}+z_{1}t_{1},\ldots,v_{d}+z_{d}x_{d}\})$$
is an isomorphism onto an open neighborhood of ${\goth a}$ in $X_{R}$. Hence 
${\goth a}$ is a smooth point of $X_{R}$. By Corollary~\ref{c2sa6}, $R$ has only one 
fixed point ${\goth a}$ in $X_{R}$. Since for all $V$ in $X_{R}$, $R$ has a fixed point 
in $\overline{R.V}$ and ${X_{R}}_{\loc}$ is an open subset of $X_{R}$, invariant under 
$R$, $X_{R}={X_{R}}_{\loc}$.
\end{proof}

\subsection{Cohen-Macaulayness property for some algebras} \label{ns2}
Let $A_{*}$ be an integral domain and a local commutative $\k$-algebra with maximal ideal
${\goth m}$ and $\poi u1{,\ldots,}{s}{}{}{}$ a regular sequence in $A_{*}$ of elements of 
${\goth m}$. Let $\poi T1{,\ldots,}{s}{}{}{}$ be indeterminates. Set 
$B_{s} := A_{*}[\poi T1{,\ldots,}{s}{}{}{}]$ and denote by $P_{s}$ and $P'_{s}$ the 
ideals of $B_{s}$ generated by the sequences 
$u_{j}T_{k}-u_{k}T_{j},1\leq j,k\leq s$ and $u_{j}T_{1}-u_{1}T_{j},1\leq j\leq s$
respectively.

\begin{lemma}\label{lns2}
The ideal $P_{s}$ is a prime ideal of $B_{s}$.
\end{lemma}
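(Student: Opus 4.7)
\medskip

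\noindent\textbf{Proof proposal for Lemma~\ref{lns2}.}

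The plan is to realize $B_s/P_s$ as a subring of an integral domain. Introduce a fresh indeterminate $T$ and define the $A_*$-algebra homomorphism
$$ \phi : B_s \longrightarrow A_*[T], \qquad T_i \longmapsto u_i T .$$
Since $A_*$ is a domain, so is $A_*[T]$, and the image of $\phi$ is the Rees-type subalgebra $A_*[u_1T,\ldots,u_sT]$. The inclusion $P_s \subseteq \ker\phi$ is immediate from $\phi(u_j T_k - u_k T_j) = u_j u_k T - u_k u_j T = 0$, so the whole task reduces to establishing the reverse inclusion $\ker\phi \subseteq P_s$; this will yield an injection $B_s/P_s \hookrightarrow A_*[T]$ and hence primality of $P_s$.

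To prove $\ker\phi \subseteq P_s$, I would argue by induction on $s$. For $s=1$ one has $P_1 = 0$, and $\phi(\sum c_k T_1^k) = \sum c_k u_1^k T^k$ vanishes iff each $c_k u_1^k = 0$, which forces $c_k = 0$ since $u_1$ is a non-zerodivisor in the domain $A_*$. For the inductive step, note that $\phi$ is graded for the total $T$-degree (where $\deg T = 1$), so one may restrict attention to $f \in \ker\phi$ homogeneous of some degree $n$. Within this setup I would run a second induction on the $T_s$-degree $m$ of $f$. If $m=0$, then $f \in A_*[T_1,\ldots,T_{s-1}]$ and $\phi(f) = 0$ says that $f$ lies in the kernel of the analogous map associated with the regular sequence $u_1,\ldots,u_{s-1}$, hence $f \in P_{s-1} \subseteq P_s$ by the outer inductive hypothesis.

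For the nontrivial case $m \geq 1$, write $f = \sum_{k=0}^m f_k T_s^k$ with $f_k \in A_*[T_1,\ldots,T_{s-1}]$ homogeneous of degree $n-k$; then $\phi(f)=0$ becomes $\sum_k f_k(u_1,\ldots,u_{s-1})\,u_s^k = 0$ in $A_*$. Setting $J_{s-1} := (u_1,\ldots,u_{s-1})$, the leading-order analysis modulo appropriate powers of $J_{s-1}$ uses that, for a regular sequence, $u_s$ is a non-zerodivisor on each quotient $A_*/J_{s-1}^{\ell}$ (equivalently, the associated graded ring $\mathrm{gr}_{J_{s-1}}(A_*)$ is a polynomial ring over $A_*/J_{s-1}$ on the images of $u_1,\ldots,u_{s-1}$, and $u_s$ remains regular on $A_*/J_{s-1}$). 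This forces $f_m(u_{<s})$ to admit an expression in $J_{s-1}$ of the appropriate degree, which can be lifted to a polynomial identity of the form $f_m \equiv \sum_j h_j u_j$ modulo a suitable Koszul combination in $A_*[T_1,\ldots,T_{s-1}]$. Multiplying by $T_s^m$ and applying the defining relations $u_j T_s \equiv u_s T_j \pmod{P_s}$ converts the top term $f_m T_s^m$ into a sum of monomials of $T_s$-degree $<m$. The resulting $f' \equiv f \pmod{P_s}$ lies again in $\ker\phi$ but has strictly smaller $T_s$-degree, so the inner induction concludes.

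The principal obstacle lies in the last step: one must translate the purely algebraic vanishing $\sum_k f_k(u_{<s}) u_s^k = 0$ in $A_*$ into an identity of polynomials in $A_*[T_1,\ldots,T_{s-1}]$ that is visibly a Koszul combination, rather than merely a statement about values. This is precisely the content of the classical fact that for a regular sequence the module of first syzygies is generated by Koszul syzygies, and it is the core ingredient of Micali's theorem that $\mathrm{Sym}_{A_*}(J) \simeq \mathcal{R}(J)$ when $J$ is generated by a regular sequence. Care must also be taken that $A_*$ need not be Noetherian a priori, so the regular-sequence arguments used should be those that rely only on the successive non-zerodivisor conditions, not on permutation of the sequence.
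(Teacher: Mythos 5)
Your plan—map $B_s\to A_*[T]$ by $T_i\mapsto u_iT$, note the image is a subring of a domain, and prove $\ker\phi=P_s$—is a genuinely different route from the paper's, and it does close. For the inner induction, the precise congruence is modulo $J_{s-1}^{\,n-m+1}$ (where $n$ is the total degree, $m$ the $T_s$-degree): every term $f_k(u_{<s})u_s^k$ with $k<m$ already lies in $J_{s-1}^{\,n-k}\subseteq J_{s-1}^{\,n-m+1}$, so the relation forces $f_m(u_{<s})u_s^m\equiv 0$ there; since $u_s$ is a nonzerodivisor on each $A_*/J_{s-1}^{\,\ell}$, one gets $f_m(u_{<s})\in J_{s-1}^{\,n-m+1}$, and via $\mathrm{gr}_{J_{s-1}}(A_*)\cong(A_*/J_{s-1})[X_1,\ldots,X_{s-1}]$ every coefficient of $f_m$ lands in $J_{s-1}$; the Koszul exchanges $u_jT_s\equiv u_sT_j\pmod{P_s}$ then drop the $T_s$-degree. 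The trade-off is that this outsources the hard part to the standard (but nontrivial) theorem that $\mathrm{gr}_J(A)$ is polynomial for $J$ regular, a result of comparable depth to the lemma; by contrast, the paper proves everything from scratch by inverting $T_1$, showing $(u_j-u_1S_j)\subseteq A_*[S_2,\ldots,S_s]$ is prime using only the degree-one Koszul syzygy fact, and then recovering $P_s$ from $P_s'$ via regularity of $T_2,\ldots,T_s$. Your final paragraph slightly misattributes the crux: "first syzygies are Koszul" alone does not give $\ker\phi=P_s$, and citing Micali's theorem there is circular since $\ker\phi=P_s$ \emph{is} Micali; what actually closes the gap is the graded-ring theorem, which you do mention, and since $A_*$ here is Noetherian the caveat you raise is moot.
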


\begin{proof}
For $s=1$, $P_{s}=\{0\}$. Suppose $s\geq 2$. Let $\tilde{P}$ be the ideal of 
$B_{s}[T_{1}^{-1}]$ generated by $P_{s}$. For $1\leq j,k\leq s$, 
$$ T_{1}(u_{j}T_{k}-u_{k}T_{j}) = T_{k}(u_{j}T_{1}-u_{1}T_{j}) + 
T_{j}(u_{1}T_{k}-u_{k}T_{1}).$$
Hence $\tilde{P}$ is the ideal of $B_{s}[T_{1}^{-1}]$ generated by 
$P'_{s}$. Setting $S_{j} := T_{j}/T_{1}$ for $j=2,\ldots,s$, 
denote by $C$ the polynomial algebra $A_{*}[\poi S2{,\ldots,}{s}{}{}{}]$ over $A_{*}$ 
so that $B_{s}[T_{1}^{-1}] = C[T_{1},T_{1}^{-1}]$ and $\tilde{P}$
is the ideal of $B_{s}[T_{1}^{-1}]$ generated by $u_{j}-u_{1}S_{j},j=2,\ldots,s$.

\begin{claim}\label{clns2}
Let $Q$ be the ideal of $C$ generated by $u_{j}-u_{1}S_{j},j=2,\ldots,s$. Then $Q$
is prime.
\end{claim}

\begin{proof}{[Proof of Claim~\ref{clns2}]}
Let $\tilde{Q}$ be the ideal of $C[u_{1}^{-1}]$ generated by $Q$. Then 
$\tilde{Q}$ is prime since it is generated by $u_{j}u_{1}^{-1}-S_{j},j=2,\ldots,s$. As a 
result, for $p$ and $q$ in $C$ such that $pq$ is in $Q$, for some nonnegative 
integer $m$, $u_{1}^{m}p$ or $u_{1}^{m}q$ is in $Q$. So it remains to prove that 
for $p$ in $C$, $p$ is in $Q$ if so is $u_{1}p$.

Let $p$ be in $C$ such that $u_{1}p$ is in $Q$. For some $\poi q2{,\ldots,}{s}{}{}{}$ in 
$C$, 
$$ u_{1}p = \sum_{j=2}^{s} q_{j}(u_{j}-u_{1}S_{j}) \quad  \text{whence} \quad
\sum_{j=1}^{s} q_{j}u_{j} = 0 \quad  \text{with} \quad 
q_{1} := -(p + \sum_{j=2}^{s} q_{j}S_{j}) .$$
By hypothesis, the sequence $\poi u1{,\ldots,}{s}{}{}{}$ is regular in $C$. So for some 
sequence $q_{j,k},1\leq j,k\leq s$ in $C$ such that $q_{j,k}=-q_{k,j}$, 
$$ q_{j} = \sum_{k=1}^{s} q_{j,k}u_{k}$$ 
for $j=1,\ldots,s$. As a result,
\begin{eqnarray*}
u_{1}p = & \sum_{j=2}^{s} \sum_{k=1}^{s} q_{j,k}u_{k} (u_{j}-u_{1}S_{j}) \\
= &  \sum_{j=2}^{s} q_{j,1}u_{j}u_{1} - 
\sum_{j=2}^{s} \sum_{k=1}^{s} q_{j,k}u_{k}u_{1}S_{j} \\ = &  
u_{1} (\sum_{j=2}^{s} q_{j,1}(u_{j}-u_{1}S_{j}) + 
\sum_{2\leq j<k\leq s} q_{j,k}(u_{j}S_{k} - u_{k}S_{j})) .
\end{eqnarray*}
For $2\leq j,k\leq s$, 
$$ u_{j}S_{k} - u_{k}S_{j} = (u_{j}-u_{1}S_{j})S_{k} - (u_{k}-u_{1}S_{k})S_{j}
\in Q,$$
whence the claim.
\end{proof}

By the claim, $\tilde{P}$ is a prime ideal of $B_{s}[T_{1}^{-1}]$ since it is 
generated by $Q$. As a result for $p$ and $q$ in $B_{s}$ such that $pq$ is in $P_{s}$, 
for some nonnegative integer $m$, $T_{1}^{m}p$ or $T_{1}^{m}q$ is in $P'_{s}$ since
$T_{1}P_{s}$ is contained in $P'_{s}$ by the equality
$$ T_{1}(u_{j}T_{k}-u_{k}T_{j}) = T_{k}(u_{j}T_{1}-u_{1}T_{j})
+T_{j}(u_{1}T_{k}-u_{k}T_{1})$$
for $1\leq i,j\leq s$. So it remains to prove that for $p$ in $B_{s}$, $p$ is in $P_{s}$ 
if $T_{1}p$ is in $P'_{s}$.

Let $p$ be in $B_{s}$ such that $T_{1}p$ is in $P'_{s}$. For some 
$\poi r2{,\ldots,}{s}{}{}{}$ in $B_{s}$, 
$$ T_{1}p = \sum_{j=2}^{s} r_{j}(u_{j}T_{1}-u_{1}T_{j}) .$$
For $j=2,\ldots,s$, $r_{j}$ has an expansion
$$ r_{j} = r_{j,0} + T_{1} r_{j,1}$$
with $r_{j,0}$ and $r_{j,1}$ in $B'_{s}:=A_{*}[\poi T2{,\ldots,}{s}{}{}{}]$ and $B_{s}$ 
respectively. Set:
$$ p' := p - \sum_{j=2}^{s} r_{j,1}(u_{j}T_{1}-u_{1}T_{j}) .$$
Then 
$$T_{1}p'= \sum_{j=2}^{s} r_{j,0} (u_{j}T_{1}-u_{1}T_{j}) $$
so that the element 
$$ \sum_{j=2}^{s} r_{j,0} u_{1}T_{j} \in B'_{s}$$
is divisible by $T_{1}$ in $B_{s}$, whence
$$ \sum_{j=2}^{s} r_{j,0} T_{j} = 0 .$$
As $\poi T2{,\ldots,}{s}{}{}{}$ is a regular sequence in $B_{s}$, for some sequence 
$r_{j,k,0},2\leq j,k\leq s$ in $B_{s}$ such that $r_{j,k,0}=-r_{k,j,0}$ for all
$(j,k)$, 
$$ r_{j,0} = \sum_{k=2}^{s} r_{j,k,0}T_{k}$$
for $j=2,\ldots,s$. Then
$$ T_{1}p' = \sum_{2\leq j,k\leq s} r_{j,k,0}T_{k}(u_{j}T_{1}-u_{1}T_{j})
=  T_{1} \sum_{2\leq j<k\leq s} r_{j,k,0}(T_{k}u_{j}-T_{j}u_{k}) .$$
As a result $p'$ and $p$ are in $P_{s}$, whence the lemma.
\end{proof}

Denote by $P''_{s}$ the ideal of $B_{s}$ generated by $P_{s-1}$ and 
$u_{s}T_{1}-u_{1}T_{s}$. Let ${\goth B}_{s}$ and ${\goth B}'_{s}$ be the quotients of
$B_{s}$ by $P_{s}$ and $P''_{s}$ respectively. The restrictions to $A_{*}$ of the 
quotient morphisms $\xymatrix{B_{s} \ar[r] & {\goth B}'_{s}}$ and 
$\xymatrix{ B_{s} \ar[r] & {\goth B}_{s}}$ are embeddings. For $j=1,\ldots,s$, denote
again by $T_{j}$ its images in ${\goth B}'_{s}$ and ${\goth B}_{s}$ by these morphisms.

\begin{lemma}\label{l2ns2}
Denote by $\overline{P_{s}}$ the image in ${\goth B}'_{s}$ of $P_{s}$ by the quotient 
morphism.

{\rm (i)} The intersection of $\overline{P_{s}}$ and $T_{1}{\goth B}'_{s}$ is equal 
to $\{0\}$.

{\rm (ii)} The ${\goth B}'_{s}$-modules $T_{1}{\goth B}'_{s}$ and ${\goth B}_{s}$ are 
isomorphic. 
\end{lemma}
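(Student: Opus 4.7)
The plan is to deduce both parts from two elementary facts about $P_{s}$ in $B_{s}$, namely (A) $T_{1}\,P_{s}\subseteq P''_{s}$ and (B) $T_{1}\notin P_{s}$. For (A), I would use the identity
$$T_{1}(u_{j}T_{k}-u_{k}T_{j})=T_{k}(u_{j}T_{1}-u_{1}T_{j})+T_{j}(u_{1}T_{k}-u_{k}T_{1})$$
already exploited in the proof of Lemma~\ref{lns2}: each generator of $P_{s}$ is sent by $T_{1}$ into $P'_{s}\subseteq P''_{s}$. For (B), every generator $u_{j}T_{k}-u_{k}T_{j}$ of $P_{s}$ lies in ${\goth m}B_{s}$, whence $P_{s}\subseteq{\goth m}B_{s}$; but the $T_{1}$-coefficient of $T_{1}$ is $1\notin{\goth m}$, so $T_{1}\notin P_{s}$. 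Note also that $P''_{s}\subseteq P_{s}$ (since $P_{s-1}\subseteq P_{s}$ and $u_{s}T_{1}-u_{1}T_{s}\in P_{s}$), so the canonical projection ${\goth B}'_{s}\twoheadrightarrow{\goth B}_{s}$ has kernel $\overline{P_{s}}$.

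For (ii), by (A) multiplication by $T_{1}$ on ${\goth B}'_{s}$ kills $\overline{P_{s}}$ and hence factors through a ${\goth B}'_{s}$-linear surjection $\phi\colon{\goth B}_{s}\longrightarrow T_{1}{\goth B}'_{s}$, $\bar{x}\mapsto T_{1}x$. To prove $\phi$ injective, suppose $T_{1}x=0$ in ${\goth B}'_{s}$ and lift $x$ to $\tilde{x}\in B_{s}$; then $T_{1}\tilde{x}\in P''_{s}\subseteq P_{s}$, and since $P_{s}$ is prime by Lemma~\ref{lns2} and $T_{1}\notin P_{s}$ by (B), I conclude $\tilde{x}\in P_{s}$, i.e.\ $\bar{x}=0$ in ${\goth B}_{s}$. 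Thus $\phi$ is an isomorphism of ${\goth B}'_{s}$-modules.

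Part (i) will then follow immediately: given $y\in\overline{P_{s}}\cap T_{1}{\goth B}'_{s}$, I write $y=T_{1}x$ in ${\goth B}'_{s}$ and project to ${\goth B}_{s}$; the image is $T_{1}\bar{x}$ and vanishes because $y\in\overline{P_{s}}$, so $T_{1}\bar{x}=0$ in the domain ${\goth B}_{s}$ (Lemma~\ref{lns2}), forcing $\bar{x}=0$ since $T_{1}\ne 0$ there by (B), i.e.\ $x\in\overline{P_{s}}$; then (A) yields $y=T_{1}x=0$ in ${\goth B}'_{s}$. The only genuinely nontrivial input is the primality of $P_{s}$ from Lemma~\ref{lns2}; facts (A) and (B) are both direct, and the rest is a short diagram chase through the three rings $B_{s}$, ${\goth B}'_{s}$, ${\goth B}_{s}$, so I do not anticipate a real obstacle.
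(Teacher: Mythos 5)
Your proposal is correct and follows essentially the same route as the paper: both hinge on the containment $T_{1}P_{s}\subseteq P''_{s}$ (via the same polynomial identity), the primality of $P_{s}$ from Lemma~\ref{lns2}, and $T_{1}\notin P_{s}$, combined to show that $\overline{P_{s}}$ is exactly the kernel of multiplication by $T_{1}$ on ${\goth B}'_{s}$. You merely order the two parts differently (establishing the isomorphism in (ii) first and deducing (i) from it) and supply a small extra justification for $T_{1}\notin P_{s}$, which the paper leaves implicit.
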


\begin{proof}
Let $a$ be in $B_{s}$ such that $T_{1}a$ is in $P_{s}$. According to Lemma~\ref{lns2}, 
$P_{s}$ is a prime ideal of $B_{s}$. Hence $a$ is in $P_{s}$ since $T_{1}$ is not in 
$P_{s}$. Moreover, for $j=1,\ldots,s$,
$$ T_{1}(u_{j}T_{s}-u_{s}T_{j}) = T_{s}(u_{j}T_{1}-u_{1}T_{j})
+T_{j}(u_{1}T_{s}-u_{s}T_{1}).$$
Hence $T_{1}P_{s}$ is contained in $P''_{s}$. As a result, $\overline{P_{s}}$ is the 
kernel of the endomorphism $a\mapsto T_{1}a$ of ${\goth B}'_{s}$ and the intersection
of $\overline{P_{s}}$ and $T_{1}{\goth B}'_{s}$ is equal to $\{0\}$. As 
${\goth B}_{s}$ is the quotient of ${\goth B}'_{s}$ by $\overline{P_{s}}$, the 
endomorphism $a\mapsto T_{1}a$ defines through the quotient an isomorphism 
$$\xymatrix{{\goth B}_{s} \ar[rr] && T_{1}{\goth B}'_{s}}$$
of ${\goth B}'_{s}$-modules.
\end{proof}

Let $Q_{s}$ be the ideal of the polynomial algebra $A_{*}[\poi T2{,\ldots,}{s}{}{}{}]$
generated by the sequence $u_{i}T_{k}-u_{k}T_{i},\; 2\leq i,k\leq s$ and denote by 
${\goth B}_{s}^{\#}$ the quotient of $A_{*}[\poi T2{,\ldots,}{s}{}{}{}]$ by $Q_{s}$.

\begin{lemma}\label{l3ns2}
{\rm (i)} The quotient of the algebra ${\goth B}_{s}/T_{1}{\goth B}_{s}$ by the ideal
generated by $u_{1}$ is equal to the quotient of ${\goth B}_{s}^{\#}$ by the ideal
generated by $u_{1}$.

{\rm (ii)} The canonical map $\xymatrix{ A_{*} \ar[r] & {\goth B}_{s}/T_{1}{\goth B}_{s}}$
is an embedding.

{\rm (iii)} The ideal of ${\goth B}_{s}/T_{1}{\goth B}_{s}$ generated by $u_{1}$ is 
isomorphic to $A_{*}$
\end{lemma}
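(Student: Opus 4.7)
The plan is to compute ${\goth B}_{s}/T_{1}{\goth B}_{s}$ explicitly as a quotient of the polynomial ring $A_{*}[T_{2},\ldots,T_{s}]$, and then deduce all three assertions from the observation that the resulting ideal sits inside $(T_{2},\ldots,T_{s})$.

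First I would write ${\goth B}_{s}/T_{1}{\goth B}_{s}=B_{s}/(P_{s}+T_{1}B_{s})$ and reduce the generators of $P_{s}$ modulo $T_{1}$. The generator $u_{j}T_{k}-u_{k}T_{j}$ becomes $0$ for $j=k=1$, reduces to $\pm u_{1}T_{k}$ when exactly one of $j,k$ is $1$, and remains a generator of $Q_{s}$ when $j,k\geq 2$. This yields the explicit presentation
\[
{\goth B}_{s}/T_{1}{\goth B}_{s}\ \cong\ A_{*}[T_{2},\ldots,T_{s}]/\bar{I},
\qquad \bar{I}\ :=\ Q_{s}+u_{1}\bigl(T_{2},\ldots,T_{s}\bigr).
\]

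Assertion (i) would then follow from the identity $\bar{I}+(u_{1})=Q_{s}+(u_{1})$, valid because $u_{1}(T_{2},\ldots,T_{s})\subset(u_{1})$: quotienting ${\goth B}_{s}/T_{1}{\goth B}_{s}$ by $u_{1}$ produces the same ring as quotienting ${\goth B}_{s}^{\#}=A_{*}[T_{2},\ldots,T_{s}]/Q_{s}$ by $u_{1}$.

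For (ii) and (iii), the central observation is that $\bar{I}\subset(T_{2},\ldots,T_{s})$, so the augmentation $A_{*}[T_{2},\ldots,T_{s}]\to A_{*}$ sending each $T_{j}$ to $0$ descends to a retraction $r\colon{\goth B}_{s}/T_{1}{\goth B}_{s}\to A_{*}$ of the canonical map $\iota\colon A_{*}\to{\goth B}_{s}/T_{1}{\goth B}_{s}$. The existence of $r$ gives (ii) at once. For (iii), I would consider the surjection $A_{*}\to u_{1}\cdot({\goth B}_{s}/T_{1}{\goth B}_{s})$, $a\mapsto u_{1}\iota(a)$; any $a$ in its kernel satisfies $u_{1}a=r(u_{1}\iota(a))=0$ in $A_{*}$, and since $A_{*}$ is an integral domain with $u_{1}\neq 0$ (being the first element of a regular sequence), we get $a=0$. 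I do not expect a serious obstacle: once the presentation of $\bar{I}$ has been written down, together with the inclusions $u_{1}(T_{2},\ldots,T_{s})\subset(u_{1})$ and $\bar{I}\subset(T_{2},\ldots,T_{s})$, the three assertions are formal; the only point requiring mild care is invoking that $u_{1}\neq 0$ in the integral domain $A_{*}$, which is immediate from the regular sequence hypothesis.
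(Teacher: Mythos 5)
Your proof is correct and follows essentially the same route as the paper: the paper works with the ideal $Q'_{s}=P_{s}+(T_{1})$ inside $B_{s}$, which under the identification $B_{s}/(T_{1})\cong A_{*}[T_{2},\ldots,T_{s}]$ is exactly your $\bar{I}=Q_{s}+u_{1}(T_{2},\ldots,T_{s})$, and then deduces (i) from $Q'_{s}+(u_{1})=Q_{s}+(T_{1})+(u_{1})$, (ii) from $A_{*}\cap Q'_{s}=\{0\}$ (your retraction makes this explicit), and (iii) from $u_{1}B_{s}\subset u_{1}A_{*}+Q'_{s}$ plus the integral-domain hypothesis, exactly as you do. The only cosmetic difference is that you reduce modulo $T_{1}$ at the outset and phrase injectivity via the augmentation retraction rather than stating $A_{*}\cap Q'_{s}=\{0\}$ directly.
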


\begin{proof}
Denote by $Q'_{s}$ the ideal of $B_{s}$ generated by $P_{s}$ and $T_{1}$.

(i) As the ideal of $B_{s}$ generated by $Q'_{s}$ and $u_{1}$ is equal to the
ideal generated by $u_{1}$, $T_{1}$ and $Q_{s}$, 
${\goth B}_{s}^{\#}/u_{1}{\goth B}_{s}^{\#}$ is equal to the quotient of 
${\goth B}_{s}/T_{1}{\goth B}_{s}$ by the ideal generated by $u_{1}$.

(ii) Since the intersection of $A_{*}$ and $Q'_{s}$ is equal to $\{0\}$, the canonical 
map $\xymatrix{ A_{*} \ar[r] & {\goth B}_{s}/T_{1}{\goth B}_{s}}$ is an embedding.

(iii) For $k=2,\ldots,s$, $u_{1}T_{k}$ is in $Q'_{s}$. Hence $u_{1}B_{s}$ is contained 
in the sum of $u_{1}A_{*}$ and $Q'_{s}$. As a result, $u_{1}A_{*}$ is equal to
$u_{1}{\goth B}_{s}/T_{1}{\goth B}_{s}$ by (ii), whence the assertion since $A_{*}$ is
an integral domain.
\end{proof}

\begin{prop}\label{pns2}
Suppose that $A_{*}$ is Cohen-Macaulay. 

{\rm (i)} The algebra ${\goth B}_{s}$ is an integral domain and a Cohen-Macaulay algebra 
of dimension $\dim A_{*}+1$.

{\rm (ii)} For $\poi a1{,\ldots,}{m}{}{}{}$ regular sequence in $A_{*}$ of elements of
${\goth m}$ an for ${\goth p}$ prime ideal of ${\goth B}_{s}$ containing it, 
$\poi a1{,\ldots,}{m}{}{}{}$ is a regular sequence in the localization of 
${\goth B}_{s}$ at ${\goth p}$.
\end{prop}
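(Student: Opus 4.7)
The plan is to prove (i) and (ii) simultaneously by induction on $s$. The base case $s=1$ is immediate: $P_{1}=(0)$, so ${\goth B}_{1}=A_{*}[T_{1}]$ is a polynomial extension of the Cohen--Macaulay integral domain $A_{*}$, hence itself a Cohen--Macaulay integral domain of dimension $\dim A_{*}+1$; moreover it is faithfully flat over $A_{*}$, which immediately yields (ii).

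For the inductive step, Lemma~\ref{lns2} already gives that ${\goth B}_{s}$ is an integral domain, so $T_{1}$ is a non-zerodivisor in ${\goth B}_{s}$. The heart of (i) lies in proving that ${\goth B}_{s}/T_{1}{\goth B}_{s}$ is Cohen--Macaulay of dimension $\dim A_{*}$: combined with the standard depth-shift argument for the non-zerodivisor $T_{1}$ at primes containing $T_{1}$, and the explicit description ${\goth B}_{s}[T_{1}^{-1}]\cong A_{*}[S_{2},\ldots,S_{s},T_{1},T_{1}^{-1}]/(u_{j}-u_{1}S_{j})_{j=2}^{s}$ obtained in the proof of Lemma~\ref{lns2} (visibly a complete intersection in a polynomial extension of $A_{*}$, hence CM) for primes not containing $T_{1}$, this will establish that ${\goth B}_{s}$ is CM of dimension $\dim A_{*}+1$ at every prime.

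To show ${\goth B}_{s}/T_{1}{\goth B}_{s}$ is CM of dimension $\dim A_{*}$, I combine parts (i) and (iii) of Lemma~\ref{l3ns2} into the short exact sequence of ${\goth B}_{s}/T_{1}{\goth B}_{s}$-modules
\[
0 \longrightarrow A_{*} \stackrel{\cdot u_{1}}{\longrightarrow} {\goth B}_{s}/T_{1}{\goth B}_{s} \longrightarrow {\goth B}_{s}^{\#}/u_{1}{\goth B}_{s}^{\#} \longrightarrow 0,
\]
in which the left-hand map identifies $A_{*}$ with the principal ideal generated by $u_{1}$ (Lemma~\ref{l3ns2}(iii)) and the quotient is given by Lemma~\ref{l3ns2}(i). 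Up to relabeling of indices, ${\goth B}_{s}^{\#}$ is the analogue of ${\goth B}_{s-1}$ built from the regular sequence $u_{2},\ldots,u_{s}$ in $A_{*}$ (which is regular since $u_{1},\ldots,u_{s}$ is and the $u_{i}$'s lie in ${\goth m}$); by induction it is a CM integral domain of dimension $\dim A_{*}+1$, and since it is a domain with $u_{1}\ne 0$, multiplication by $u_{1}$ is injective, so ${\goth B}_{s}^{\#}/u_{1}{\goth B}_{s}^{\#}$ is CM of dimension $\dim A_{*}$. Both ends of the sequence are then CM modules of dimension $\dim A_{*}$, and the depth lemma applied after localizing at each prime of ${\goth B}_{s}/T_{1}{\goth B}_{s}$ forces the middle term to have depth equal to its dimension $\dim A_{*}$.

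Part (ii) is deduced from (i): $({\goth B}_{s})_{{\goth p}}$ is a Cohen--Macaulay local ring, and in such a ring a sequence of elements of the maximal ideal is regular if and only if its height equals its length. The verification reduces, via the dimension formula for the CM catenary domain ${\goth B}_{s}$, to checking that $\dim {\goth B}_{s}/(a_{1},\ldots,a_{m}){\goth B}_{s} = \dim A_{*}+1-m$; this follows by identifying the quotient with the Rees-like algebra of the images of $u_{1},\ldots,u_{s}$ in $A_{*}/(a_{1},\ldots,a_{m})$ and applying the inductive dimension count. The main obstacle is the CM-ness of ${\goth B}_{s}/T_{1}{\goth B}_{s}$: although both ends of the short exact sequence are CM, one must carefully localize at each maximal ideal of ${\goth B}_{s}/T_{1}{\goth B}_{s}$ and verify the dimension count, because ${\goth B}_{s}/T_{1}{\goth B}_{s}$ itself is not a domain (the relations $u_{1}T_{k}=0$ for $k\ge 2$ produce zero-divisors).
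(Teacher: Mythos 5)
Your inductive framework and the short exact sequence $0 \to A_* \to {\goth B}_s/T_1{\goth B}_s \to {\goth B}_s^{\#}/u_1{\goth B}_s^{\#} \to 0$ from Lemma~\ref{l3ns2} are the same as in the paper; the genuine divergence is in how Cohen--Macaulayness is lifted from ${\goth B}_s/T_1{\goth B}_s$ to ${\goth B}_s$. The paper exploits the ${\goth B}'_s$-module isomorphism ${\goth B}_s\cong T_1{\goth B}'_s$ from Lemma~\ref{l2ns2}(ii): it applies the long exact sequence of $\mathrm{Ext}(\k,-)$ to $0\to \overline{P_s}+T_1{\goth B}'_s \to {\goth B}'_s\to {\goth B}_s/T_1{\goth B}_s\to 0$, peels off the direct summand $T_1{\goth B}'_s$ using Lemma~\ref{l2ns2}(i), and transports the vanishing through that isomorphism. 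Your route --- the non-zerodivisor $T_1$ plus the presentation ${\goth B}_s[T_1^{-1}]\cong A_*[S_2,\ldots,S_s,T_1,T_1^{-1}]/(u_j-u_1S_j)$ extracted from the proof of Lemma~\ref{lns2} --- is more elementary and bypasses Lemma~\ref{l2ns2} entirely; the cost is that you must check that $u_2-u_1S_2,\ldots,u_s-u_1S_s$ is a regular sequence (it is, by iterating Claim~\ref{clns2} on initial subfamilies, so that successive quotients are domains) and close the depth inequality at each prime of the non-domain ${\goth B}_s/T_1{\goth B}_s$, a point you flag but leave open.

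For (ii) there is a genuine gap. You reduce to showing $\dim {\goth B}_s/(a_1,\ldots,a_m){\goth B}_s = \dim A_*+1-m$ and propose to obtain this by recognizing the quotient as the analogous construction over $A_*/(a_1,\ldots,a_m)$ and invoking ``the inductive dimension count.'' But the inductive hypothesis requires the base to be a domain whose distinguished $s$-tuple is a regular sequence, and both conditions can fail after killing $(a_1,\ldots,a_m)$. Concretely, taking $a_i=u_i$ for $i=1,\ldots,s$ gives $P_s\subset(u_1,\ldots,u_s)B_s$, hence ${\goth B}_s/(u_1,\ldots,u_s){\goth B}_s\cong\bigl(A_*/(u_1,\ldots,u_s)\bigr)[T_1,\ldots,T_s]$, which has dimension $\dim A_*$ rather than $\dim A_*+1-s$. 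The global dimension equality you lean on therefore fails, so the reduction does not show that every minimal prime over $(a_1,\ldots,a_m){\goth B}_s$ has height $m$. The paper instead argues prime-by-prime: for each minimal prime ${\goth q}$ it bounds $\dim{\goth B}_s/{\goth q}$ from above using the height of ${\goth q}\cap A_*$ in the Cohen--Macaulay domain $A_*$ together with a fraction-field count, which does not require a single global dimension formula for ${\goth B}_s/(a_1,\ldots,a_m){\goth B}_s$.
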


\begin{proof}
(i) Prove the assertion by induction on $s$. As ${\goth B}_{1}$ is the polynomial algebra 
$A_{*}[T_{1}]$, the assertion is true for $s=1$ since $A_{*}$ an integral domain and a 
Cohen-Macaulay algebra. Suppose the assertion true for $s-1$. By induction hypothesis, 
${\goth B}_{s-1}[T_{s}]$ is an integral domain and a Cohen-Macaulay algebra as a 
polynomial algebra over ${\goth B}_{s-1}$ and its dimension is equal to $\dim A_{*}+2$. 
As a result, ${\goth B}'_{s}$ is Cohen-Macaulay of dimension $\dim A_{*}+1$ as the 
quotient of the integral domain and a Cohen-Macaulay algebra ${\goth B}_{s-1}[T_{s}]$ by 
the ideal generated by $T_{s}u_{1}-T_{1}u_{s}$. As ${\goth B}_{s}$ is the quotient of 
${\goth B}'_{s}$ by $\overline{P_{s}}$, ${\goth B}_{s}$ has dimension at most 
$\dim A_{*}+1$. By Lemma~\ref{lns2}, ${\goth B}_{s}$ is an integral domain so that 
${\goth B}_{s}/T_{1}{\goth B}_{s}$ has dimension at most $\dim A_{*}$.

By induction hypothesis again, ${\goth B}_{s}^{\#}$ is an integral domain and a Cohen-
Macaulay algebra of dimension $\dim A_{*}+1$. Hence 
${\goth B}_{s}^{\#}/u_{1}{\goth B}_{s}^{\#}$ is Cohen-Macaulay of dimension $\dim A_{*}$.
According to Lemma~\ref{l3ns2}, we have a short exact sequence
$$ \xymatrix{ 0 \ar[r] & A_{*} \ar[r] & {\goth B}_{s}/T_{1}{\goth B}_{s} \ar[r] &
{\goth B}_{s}^{\#}/u_{1}{\goth B}_{s}^{\#} \ar[r] & 0} .$$  
Hence the algebra ${\goth B}_{s}/T_{1}{\goth B}_{s}$ is Cohen-Macaulay of dimension 
$\dim A_{*}$ since $A_{*}$ and ${\goth B}_{s}^{\#}/u_{1}{\goth B}_{s}^{\#}$ are 
Cohen-Macaulay of dimension $\dim A_{*}$ and ${\goth B}_{s}/T_{1}{\goth B}_{s}$ has 
dimension at most $\dim A_{*}$. As a result, ${\goth B}_{s}$ has dimension 
$\dim A_{*}+1$. As ${\goth B}_{s}$ is the quotient of 
${\goth B}'_{s}$ by $\overline{P_{s}}$, we have a short exact sequence
$$ \xymatrix{ 0 \ar[r] & \overline{P_{s}}+T_{1}{\goth B}'_{s} \ar[r] & 
{\goth B}'_{s} \ar[r] & {\goth B}_{s}/T_{1}{\goth B}_{s} \ar[r] & 0 }.$$
Then, setting $M := \overline{P_{s}}+T_{1}{\goth B}'_{s}$ and denoting by $M_{*}$ the 
localization of $M$ at a maximal ideal of ${\goth B}'_{s}$, containing $T_{1}$, 
$$ {\mathrm {Ext}}^{j}(\k,M_{*}) = 0$$
for $j\leq \dim A_{*}$ since ${\goth B}'_{s}$ and ${\goth B}_{s}/T_{1}{\goth B}_{s}$
have dimension $\dim A_{*}+1$ and $\dim A_{*}$. By Lemma~\ref{l2ns2}(i), 
$M$ is the direct sum $\overline{P_{s}}$ and $T_{1}{\goth B}'_{s}$. So, denoting
by $(T_{1}{\goth B}'_{s})_{*}$ the localization of $T_{1}{\goth B}'_{s}$ at a maximal
ideal of ${\goth B}'_{s}$, 
$$ {\mathrm {Ext}}^{j}(\k,(T_{1}{\goth B}'_{s})_{*}) = 0$$
for $j\leq \dim A_{*}$ since $(T_{1}{\goth B}'_{s})_{*}$ is the localization of 
${\goth B}'_{s}$ at this maximal ideal when it does not contain $T_{1}$. As a result, 
by Lemma~\ref{l2ns2}(ii), ${\goth B}_{s}$ is Cohen-Macaulay since it has dimension 
$\dim A_{*}+1$.

(ii) Let ${\goth q}$ be a minimal prime ideal of ${\goth B}_{s}$, containing 
$\poi a1{,\ldots,}{m}{}{}{}$. Since $A_{*}$ is embedded in ${\goth B}_{s}$, 
${\goth q}\cap A_{*}$ is a prime ideal of $A_{*}$ containing 
$\poi a1{,\ldots,}{m}{}{}{}$. As $A_{*}$ is Cohen-Macaulay and 
$\poi a1{,\ldots,}{m}{}{}{}$ is a regular sequence in $A_{*}$, ${\goth q}\cap A_{*}$ has 
height at least $m$ and $A_{*}/{\goth q}\cap A_{*}$ has dimension at most 
$\dim A_{*}-m$ by \cite[Ch. 6, Theorem 17.4]{Mat}. Then ${\goth B}_{s}/{\goth q}$ has 
dimension at most $\dim A_{*}+1-m$ since the fraction field of ${\goth B}_{s}/{\goth q}$
is generated by the fraction field of $A_{*}/{\goth q}\cap A_{*}$ and the image of $T_{1}$
by the quotient morphism $\xymatrix{ B_{s} \ar[r] & {\goth B}_{s}/{\goth q}}$.
As a result, by (i) and \cite[Ch. 6, Theorem 17.4]{Mat}, ${\goth q}$ has height at least 
$m$. As a minimal prime ideal of ${\goth B}_{s}$ containing $m$ elements, ${\goth q}$ has
height at most $m$. Hence all minimal prime ideal of ${\goth B}_{s}$, containing 
$\poi a1{,\ldots,}{m}{}{}{}$, has height $m$. So, by (i) and 
\cite[Ch. 6, Theorem 17.4]{Mat}, $\poi a1{,\ldots,}{m}{}{}{}$ is a regular sequence in 
the localization of ${\goth B}_{s}$ at ${\goth p}$.
\end{proof}

\subsection{Normality and Cohen-Macaulayness property for $X_{R}$} \label{ns3}
Let $V_{0}$ be a fixed point under the action of $R$ in $X_{R}$ and  
$\poi {\beta }1{,\ldots,}{d}{}{}{}$ the elements of ${\cal R}_{V_{0}}$. By 
Corollary~\ref{c2sa6}(ii), $\poi {\beta }1{,\ldots,}{d}{}{}{}$ is a basis of 
${\goth t}^{*}$. Let $\poi t1{,\ldots,}{d}{}{}{}$ be the dual basis. Denote by $m$ the 
codimension of $V_{0}$ in ${\goth a}$. According to Lie's Theorem, for $m>0$, the 
elements $\poi {\gamma }1{,\ldots,}{m}{}{}{}$ of 
${\cal R}\setminus \{\poi {\beta }1{,\ldots,}{d}{}{}{}\}$ can be ordered so that
$$ {\goth a}_{i} := 
V_{0} \oplus {\goth a}^{\gamma _{1}} \oplus \cdots \oplus {\goth a}^{\gamma _{i}}$$
is an algebra of codimension $m-i$ of ${\goth a}$ for $i=1,\ldots,m$. Set:
$$ {\cal R}' := {\cal R}\setminus \{\gamma _{m}\}, \qquad {\goth a}'={\goth a}_{m-1},
\qquad  {\goth r}' := {\goth r}_{{\goth t},{\goth a}'}, \qquad 
\pi ':= \pi _{{\goth t},{\goth a}'}, \qquad R' := R_{{\goth t},{\goth a}'}, \qquad 
A' := A_{{\goth t},{\goth a}'},$$ 
$$ E := \bigoplus _{i=1}^{m} {\goth a}^{\gamma _{i}}, \qquad E' := E\cap {\goth a}' .$$
Denote by $\kappa $ the map
$$ \xymatrix{ {\mathrm {Hom}}_{\k}(V_{0},E\oplus {\goth t}) \ar[rr]^{\kappa } &&
\ec {Gr}r{}{}d}, \qquad \varphi \longmapsto 
{\mathrm {span}}(\{v+\varphi (v) \; \vert \; v \in V_{0}\}) .$$
Then $\kappa $ is an isomorphism from ${\mathrm {Hom}}_{\k}(V_{0},E\oplus {\goth t})$ onto
an affine open neighbourhood of $V_{0}$ in $\ec {Gr}r{}{}d$. Moreover, there is a short
exact sequence 
$$ \xymatrix{0 \ar[r] & {\mathrm {Hom}}_{\k}(V_{0},\k x_{\gamma _{m}}) \ar[r] & 
{\mathrm {Hom}}_{\k}(V_{0},E\oplus {\goth t}) \ar[r]^{p} &
{\mathrm {Hom}}_{\k}(V_{0},E'\oplus {\goth t}) \ar[r] &  0}.$$ 
Let $\Omega $ and $\Omega '$ be the inverse images by $\kappa $ of the intersections of 
the image of $\kappa $ with $X_{R}$ and $X_{R'}$ respectively. For $\varphi $ in 
$\Omega $ and $i=1,\ldots,d$
$$ \varphi (v_{i}) = \sum_{i=1}^{d} z_{i,j}(\varphi ) t_{j} + 
\sum_{j=1}^{m} a_{i,j}(\varphi ) x_{\gamma _{j}}$$
so that the $z_{i,j}$'s, $1\leq i,j\leq d$ and the $a_{i,j}$'s, $1\leq i\leq d$ and 
$1\leq j\leq m$ are regular functions on $\Omega $. 

Let $\psi $ be the map
$$ \xymatrix{\k \times \Omega ' \ar[rr]^{\psi } && X_{R} }, \qquad
(s,\varphi ) \longmapsto \exp(s\ad x_{\gamma _{m}}).\kappa (\varphi ) .$$

\begin{lemma}\label{lns3}
Let $O$ be the subset of elements $(s,\varphi )$ of $\k\times \Omega '$ such that
$\psi (s,\varphi )$ is in $\kappa (\Omega )$.

{\rm (i)} The subset $O$ of $\k\times \Omega '$ is open and contains 
$\{0\}\times \Omega '$.

{\rm (ii)} The map
$$ \xymatrix{ O \ar[rr]^{\overline{\psi }} && \Omega },\qquad  
(s,\varphi ) \longmapsto \kappa ^{-1}\rond \psi (s,\varphi )$$
is a birational morphism from $O$ to $\Omega $. In particular, the function 
$(s,\varphi )\mapsto s$ is in $\k(\Omega )$.
\end{lemma}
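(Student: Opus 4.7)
The plan is to handle (i) by elementary topology and (ii) by a dimension count combined with a generic-injectivity argument that isolates the variable $s$.

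For (i): the set $\kappa(\mathrm{Hom}_{\mathbb{k}}(V_0, E\oplus\mathfrak{t}))$ is an affine open neighbourhood of $V_0$ in the Grassmannian, so $\kappa(\Omega) = X_R \cap \kappa(\mathrm{Hom}_{\mathbb{k}}(V_0, E\oplus\mathfrak{t}))$ is open in $X_R$; by continuity of the regular map $\psi$, the set $O = \psi^{-1}(\kappa(\Omega))$ is open in $\mathbb{k}\times\Omega'$. For $\varphi\in\Omega'$, $\psi(0,\varphi)=\kappa(\varphi)$ lies in $X_{R'}\subseteq X_R$ (since $R'\subseteq R$ implies $\overline{R'.\mathfrak{t}}\subseteq\overline{R.\mathfrak{t}}$), giving $(0,\varphi)\in O$.

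For (ii): $\overline{\psi}=\kappa^{-1}\circ\psi$ is a morphism. From Condition (1) of Section~\ref{sa} and Lemma~\ref{lsa1}(i), the stabilizer of $\mathfrak{t}$ in $A$ is trivial, so $\dim X_R = \dim\mathfrak{a} = n$ and similarly $\dim X_{R'}=n-1$; hence $\dim(\mathbb{k}\times\Omega')=n=\dim\Omega$. The crucial computation is the following. For $y=h+v\in\mathfrak{r}'$ with $h\in\mathfrak{t}$ and $v\in\mathfrak{a}'$, decompose $\mathfrak{r}=\mathfrak{r}'\oplus\mathbb{k}x_{\gamma_m}$ and compute the $x_{\gamma_m}$-component of $\exp(s\,\mathrm{ad}\,x_{\gamma_m})y$. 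The identity $[x_{\gamma_m},h]=-\gamma_m(h)x_{\gamma_m}$ produces the contribution $-s\gamma_m(h)$, while $\mathrm{ad}^k x_{\gamma_m}(y)$ for $k\geq 1$ has weights of the form $k\gamma_m+\alpha$ with $\alpha\in\{0\}\cup\mathcal{R}'$; the equation $k\gamma_m+\alpha=\gamma_m$ forces $\alpha=(1-k)\gamma_m$, which is excluded by Condition (1) ($0\notin\mathcal{R}$) for $k=1$ applied to $v$, and by Condition (3) ($\mathbb{k}\gamma_m\cap\mathcal{R}=\{\gamma_m\}$) for $k\geq 2$. Hence the $x_{\gamma_m}$-component is exactly $-s\gamma_m(h)$.

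Now suppose $\overline{\psi}(s_1,\varphi_1)=\overline{\psi}(s_2,\varphi_2)$; equivalently, $\exp((s_2-s_1)\mathrm{ad}\,x_{\gamma_m})\kappa(\varphi_1)=\kappa(\varphi_2)\subseteq\mathfrak{r}'$. For generic $\varphi_1\in\Omega'$ (those with $\kappa(\varphi_1)\in R'.\mathfrak{t}$), one has $\pi(\kappa(\varphi_1))=\mathfrak{t}$, because for $g\in R'$ and $h\in\mathfrak{t}$, $g.h-h\in\mathfrak{a}'$; hence $\kappa(\varphi_1)$ contains some $y=h+v$ with $\gamma_m(h)\neq 0$. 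The computation above then forces $s_1=s_2$, and the equality $\kappa(\varphi_1)=\kappa(\varphi_2)$ gives $\varphi_1=\varphi_2$. Generic injectivity together with the equality of dimensions of the irreducible varieties $\mathbb{k}\times\Omega'$ and $\Omega$ yields that $\overline{\psi}$ is dominant with generic fibre a single point, hence birational; the final assertion follows since the regular function $(s,\varphi)\mapsto s$ on $O$ corresponds via the rational inverse to a rational function on $\Omega$. The main obstacle is the weight analysis pinning the $x_{\gamma_m}$-component of $\exp(s\,\mathrm{ad}\,x_{\gamma_m})y$ to exactly $-s\gamma_m(h)$: the vanishing of all higher-order corrections depends essentially on the simultaneous use of Conditions (1) and (3), and it is this precision that makes $s$ recoverable from a generic image point.
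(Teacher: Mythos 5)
Your proof is correct and follows essentially the same route as the paper's: part (i) is the same observation, and for part (ii) both arguments establish birationality by showing generic injectivity via the weight computation that isolates the $x_{\gamma_m}$-component of $\exp(s\,\ad x_{\gamma_m})y$ as exactly $-s\gamma_m(\pi(y))$, then use the dimension equality $\dim(\k\times\Omega')=\dim\Omega=n$. The only cosmetic difference is that the paper phrases the generic restriction and the injectivity step in terms of the matrix entries $z_{i,j}(\varphi)$ and the condition $\det[z_{i,j}]\neq 0$, whereas you work with a single generic vector $y=h+v$ with $\gamma_m(h)\neq 0$ and spell out the vanishing of all higher-order contributions via Conditions (1) and (3), which the paper leaves implicit.
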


\begin{proof}
(i) As $\kappa (\Omega )$  is an open neighborhood of $V_{0}$ in $X_{R}$, $O$ is 
an open subset of $\k\times \Omega '$, containing $\{0\}\times \Omega '$ since
$\psi $ is a regular map such that $\psi (0,\varphi )=\kappa (\varphi )$ for all 
$\varphi $ in $\Omega '$.

(ii) Let $\Omega ^{c}$ be the subset of elements $\varphi $ of $\Omega $ such that 
$\kappa (\varphi )$ is in  $A.{\goth t}$. Then $\Omega ^{c}$ is a dense open subset of 
$\Omega $. Let $O'$ be the inverse image of $\Omega ^{c}$ by $\overline{\psi }$. 
Let $(s,\varphi )$ and $(s',\varphi ')$ be in $O'$ such that
$\overline{\psi }(s,\varphi )=\overline{\psi }(s',\varphi ')$, that is
$$ \exp(s\ad x_{\gamma _{m}}).\kappa (\varphi ) =
\exp(s'\ad x_{\gamma _{m}}).\kappa (\varphi ') \quad  \text{whence} \quad
\exp((s-s')\ad x_{\gamma _{m}}).\kappa (\varphi ) = \kappa (\varphi ') . $$
According to the above notations, for $i=1,\ldots,d$, 
$$ \varphi (v_{i}) = \sum_{j=1}^{d} z_{i,j}(\varphi ) t_{j} + 
\sum_{j=1}^{m-1} a_{i,j}(\varphi ) x_{\gamma _{j}}.$$
Since $\kappa (\varphi )$ is in $A.{\goth t}$, 
$$ \det ([z_{i,j}(\varphi ),1\leq i,j\leq d]) \neq 0 .$$
For $i=1,\ldots,d$, 
$$ \exp((s-s')\ad x_{\gamma _{m}})(\sum_{j=1}^{d} z_{i,j}(\varphi )t_{j}) = 
\sum_{j=1}^{d} z_{i,j}(\varphi )t_{j} -(s-s')(\sum_{j=1}^{d} z_{i,j}(\varphi )
\gamma _{m}(t_{j}))x_{\gamma _{m}} .$$
For some $j$, $\gamma _{m}(t_{j}) \neq 0$, whence $s=s'$ since $\kappa (\varphi ')$ is
contained in ${\goth r}'$. As a result, the restriction of $\overline{\psi }$ to $O'$
is injective, whence the assertion since $\overline{\psi }$ is a dominant morphism.
\end{proof}

For $i=1,\ldots,d$ and $\gamma $ in ${\goth t}^{*}$, denote by $u_{i,\gamma }$ the 
function on $\Omega $,
$$ u_{i,j} := z_{i,1}\gamma (t_{1}) + \cdots + z_{i,d} \gamma (t_{d}) .$$
Let ${\goth A}$ be the subalgebra of $\k[\Omega ]$ generated by the functions 
$z_{i,j}$'s, $1\leq i,j\leq d$ and $a_{i,j}$'s, $1\leq i\leq d$ and 
$1\leq j\leq m-1$.

\begin{lemma}\label{l2ns3}
Let $\iota $ be the restriction morphism from $\Omega $ to $\Omega '$.

{\rm (i)} The restriction of $\iota $ to ${\goth A}$ is an isomorphism onto 
$\k[\Omega ']$.

{\rm (ii)} For $1\leq i,j\leq d$, 
$u_{i,\gamma _{m}}a_{j,m}-u_{j,\gamma _{m}}a_{i,m}$ is equal to $0$.

{\rm (iii)} For $i=1,\ldots,d$ and $\gamma $ in ${\goth t}^{*}$, if 
$\gamma (t_{i})\neq 0$ then $u_{i,\gamma }$ is different from $0$.  
\end{lemma}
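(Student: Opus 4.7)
\emph{Strategy for (ii).} Since $\kappa(\varphi)\in X_R$, it is a commutative Lie subalgebra of $\mathfrak{r}$ by Corollary~\ref{c2sa6}(i), so its basis vectors $w_i:=v_i+\sum_j z_{i,j}(\varphi)\,t_j+\sum_k a_{i,k}(\varphi)\,x_{\gamma_k}$ satisfy $[w_i,w_j]=0$. I plan to expand this bracket and extract the $x_{\gamma_m}$-component. The terms involving any $v_l$ land in $V_0$ (using that $V_0$ is a commutative ideal of $\mathfrak{r}$, from Corollary~\ref{c2sa6}(ii)) and hence contribute nothing to the $x_{\gamma_m}$-coefficient. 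The cross brackets $[\sum_k z_{i,k}t_k,\,\sum_l a_{j,l}x_{\gamma_l}]$ and $[\sum_k a_{i,k}x_{\gamma_k},\,\sum_l z_{j,l}t_l]$ yield, through the relation $[t_k,x_{\gamma_l}]=\gamma_l(t_k)x_{\gamma_l}$, exactly the contributions $a_{j,m}u_{i,\gamma_m}$ and $-a_{i,m}u_{j,\gamma_m}$ to the $x_{\gamma_m}$-coefficient. The only remaining potential source is $[A_i,A_j]$ with $A_i:=\sum_k a_{i,k}x_{\gamma_k}$; a pair $(k,l)$ contributes only if $\gamma_k+\gamma_l=\gamma_m$, which is impossible for $k=m$ or $l=m$ (it would force a vanishing weight) and impossible for $k,l<m$ since then $[x_{\gamma_k},x_{\gamma_l}]\in\mathfrak{a}_{m-1}$ while $\mathfrak{a}^{\gamma_m}\cap\mathfrak{a}_{m-1}=\{0\}$. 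Thus the $x_{\gamma_m}$-coefficient of $[w_i,w_j]$ reduces to $a_{j,m}u_{i,\gamma_m}-a_{i,m}u_{j,\gamma_m}=0$.

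\emph{Strategy for (iii).} The idea is to exhibit an explicit $\varphi\in\Omega$ at which $u_{i,\gamma}$ does not vanish. For $z=(z_1,\ldots,z_d)\in\k^d$ with all $z_i$ small and nonzero, I will set $V_z:=\mathrm{span}(v_i+z_i t_i:1\le i\le d)$; using $\beta_l(t_j)=\delta_{l,j}$ one checks directly that $V_z$ is commutative. For generic $c\in\k^d$, the element $x:=\sum_l c_l(v_l+z_lt_l)$ has the following behaviour: on each plane $\mathrm{span}(t_j,v_j)$ the restriction of $\mathrm{ad}\,x$ diagonalizes with eigenvectors $v_j+z_jt_j$ (eigenvalue $0$) and $v_j$ (eigenvalue $c_jz_j\neq 0$), while on each line $\k x_{\gamma_k}$ the eigenvalue $\sum_l c_lz_l\gamma_k(t_l)$ is generically nonzero, so after an adjustment by a $V_0$-vector no $x_{\gamma_k}$-direction contributes to the centralizer. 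Thus $\mathfrak{r}^x=V_z$ has dimension $d$, whence $x\in\mathfrak{r}_{\r}$ and $V_z\in X_R$ by Lemma~\ref{lsav2}(ii). For $z$ sufficiently small, $V_z=\kappa(\varphi)$ with $\varphi(v_i)=z_it_i$, so $z_{i,j}(\varphi)=z_i\delta_{i,j}$ and $a_{i,k}(\varphi)=0$, giving $u_{i,\gamma}(\varphi)=z_i\gamma(t_i)\neq 0$ whenever $\gamma(t_i)\neq 0$.

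\emph{Strategy for (i), and the main obstacle.} I view $\iota\colon \k[\Omega]\to\k[\Omega']$ as restriction of regular functions along the closed embedding $\Omega'\hookrightarrow\Omega$ cut out by $a_{1,m}=\cdots=a_{d,m}=0$. Since the generators $z_{i,j}$ and $a_{i,k}$ (for $k<m$) of $\mathfrak{A}$ restrict to the coordinate functions generating $\k[\Omega']$, the restricted map $\iota|_\mathfrak{A}$ surjects onto $\k[\Omega']$. For injectivity I intend to combine Lemma~\ref{lns3}(ii) with assertion~(ii) just proved: the $\mathbb{G}_a$-action $\exp(s\,\mathrm{ad}\,x_{\gamma_m})$ and a direct computation of its effect on the basis of $\kappa(\varphi)$ yield the identity $a_{i,m}\circ\overline\psi=-s\,u_{i,\gamma_m}$ on the source $O$, which together with (ii) and the nonvanishing of the $u_{i,\gamma_m}$'s from (iii) presents $\Omega$ as birational to $\k\times\Omega'$ over $\Omega'$. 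Consequently any element of $\mathfrak{A}$ that vanishes on $\Omega'$ vanishes on a dense open subset of $\Omega$ and is therefore zero. The main obstacle is precisely this injectivity step: one must rule out any polynomial identity in the generators of $\mathfrak{A}$ that holds on $\Omega'$ but not on $\Omega$, and this is exactly what the interplay between Lemma~\ref{lns3} and the identities (ii) and (iii) makes possible.
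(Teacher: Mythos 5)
Parts (ii) and (iii) of your proposal are correct and in substance the same as the paper's. For (ii) the paper also extracts the $x_{\gamma_m}$-coefficient of the relation $[w_i,w_j]=0$; your extra bookkeeping for why the brackets against $V_0$ and the brackets $[A_i,A_j]$ contribute nothing to that coefficient is correct and makes explicit what the paper leaves implicit. For (iii) the paper restricts the functions $z_{i,j}$, hence $u_{i,\gamma}$, to the subvariety $X_{R_0}$ built from $\mathfrak r_0=\mathfrak t+V_0$ and invokes Lemma~\ref{lns1}; your explicit $V_z=\mathrm{span}(v_i+z_it_i)$ together with the regular element $x=\sum_l c_l(v_l+z_lt_l)$ whose centralizer is $V_z$ produces exactly these points of $X_{R_0}\subset X_R$, so the two arguments coincide.

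Part (i) is where the real work lies, and your injectivity sketch has a gap. The identity $a_{i,m}\circ\overline\psi=-s\,(u_{i,\gamma_m}\circ\overline\psi)$ you rely on is correct: $\exp(s\,\mathrm{ad}\,x_{\gamma_m})$ fixes the $\mathfrak t$-component of each $w_i=v_i+\varphi(v_i)$, so the $x_{\gamma_m}$-coefficient after the flow is $-s\sum_j z'_{i,j}(\varphi)\gamma_m(t_j)$, and this proportionality to $u_{i,\gamma_m}$ survives the row reduction by the matrix of $V_0$-components. But this does not present $\Omega$ as birational to $\k\times\Omega'$ \emph{over} $\Omega'$. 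For that one would need the pullbacks $z_{i,j}\circ\overline\psi$ and $a_{i,k}\circ\overline\psi$ ($k<m$) to be independent of $s$, and they are not in general: $\exp(s\,\mathrm{ad}\,x_{\gamma_m})(v_i)$ need not equal $v_i$ (since $[x_{\gamma_m},V_0]\subset V_0$ may be nonzero), and $\exp(s\,\mathrm{ad}\,x_{\gamma_m})(x_{\gamma_k})$ may acquire nonzero $V_0$- and $x_{\gamma_{k'}}$-components for $k'<m$; thus both the row-reduction matrix and the $\overline a_{i,k}$ depend on $s$. As a result your final step -- that an $f\in\mathfrak A$ vanishing on $\Omega'$ vanishes on a dense open subset of $\Omega$ -- is not established: what follows from the data you use is only that $f\circ\overline\psi$ is divisible by $s$, which does not force $f=0$.

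What is actually needed is a dimension count: the image $\overline{\mathfrak A}$ of $\mathfrak A$ in $\k(\Omega)\cong\k(\Omega')(s)$ must have dimension $n-1$, equivalently the projection forgetting the $a_{i,m}$-coordinates must map $\Omega$ dominantly onto a variety of dimension $n-1$. This is the content of Claim~\ref{clns3} in the paper's proof, established by a case analysis on whether $[\mathfrak a^{\gamma_m},\mathfrak a^{\gamma_k}]\subset V_0$ for all $k<m$: when this holds the relevant coefficients are $s$-independent; otherwise an induction down the chain of weights shows $a'_{i,k}-\overline a_{i,k}\in s\,\overline{\mathfrak A}[s]$, so $\k[\Omega'][s]=\overline{\mathfrak A}[s]$ and $\overline{\mathfrak A}$ is a quotient of $\k[\Omega']$. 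That case analysis and dimension count, which your sketch omits, is what rules out nontrivial polynomial identities among the generators of $\mathfrak A$ that hold on $\Omega'$ but not on $\Omega$.
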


\begin{proof}
(i) For $1\leq i,j\leq d$, denote by 
$z'_{i,j}$ the restriction of $z_{i,j}$ to $\Omega '$ and for $1\leq i\leq d$ and 
$1\leq j\leq m-1$ denote by $a'_{i,j}$ the restriction of $a_{i,j}$ to $\Omega '$. Since 
$\k[\Omega ']$ is generated by the functions 
$$z'_{i,j}, \ 1\leq i,j\leq d \quad  \text{and} \quad a'_{i,j}, \
1\leq i\leq d,1\leq j\leq m-1,$$
the restriction of $\iota $ to ${\goth A}$ is surjective. Let ${\goth p}$ be the kernel
of the restriction of $\iota $ to ${\goth A}$. It remains to prove ${\goth p}=\{0\}$.

For $1\leq i,j\leq d$ and $k=1,\ldots,m-1$, denote by $\overline{z}_{i,j}$ and 
$\overline{a}_{i,k}$ the functions on $\k \times \Omega '$ such that
\begin{eqnarray*}
\exp(s\ad x_{\gamma _{m}})(v_{i}+\sum_{j=1}^{d} z'_{i,j}(\varphi ) t_{j} +
\sum_{k=1}^{m-1} a'_{i,k}(\varphi ) x_{\gamma _{k}}) - \\ 
(\sum_{j=1}^{d} \overline{z}_{i,j}(s,\varphi ) t_{j} -
\sum_{j=1}^{d} sz_{i,j}(\varphi )\gamma _{m}(t_{j})x_{\gamma _{m}} +
\sum_{k=1}^{m-1} \overline{a}_{i,k}(s,\varphi ) x_{\gamma _{k}}) \in V_{0} .
\end{eqnarray*}
Then $\overline{z}_{i,j}$ and $\overline{a}_{i,k}$ are regular functions on 
$\k\times \Omega '$ as restrictions to $\k\times \Omega '$ of regular functions on 
$\k\times {\mathrm {Hom}}(V_{0},E'\oplus {\goth t})$. Let $\overline{{\goth A}}$ be the 
subalgebra of $\k[\Omega '][s]$ generated by the functions 
$$\overline{z}_{i,j},i,j=1,\ldots,d \quad  \text{and} \quad
\overline{a}_{i,k},i=1,\ldots,d,k=1,\ldots,m-1 .$$
Since $z'_{i,j}(\varphi )=\overline{z}_{i,j}(0,\varphi )$ and 
$a'_{i,k}(\varphi )=\overline{a}_{i,k}(0,\varphi )$ for all $\varphi $ in $\Omega '$, the
restriction to $\overline{{\goth A}}$ of the quotient morphism 
$\xymatrix{\k[\Omega '][s] \ar[r]& \k[\Omega ']}$ is surjective. As a result, 
$\overline{{\goth A}}$ has dimension $n$ or $n-1$ since $\Omega '$ and 
$\k[\Omega '][s]$ have dimension $n-1$ and $n$ respectively. As 
$\exp(s\ad x_{\gamma _{m}})(v_{i})$ is not necessarily equal to $v_{i}$, 
$$p\rond \psi \neq 
(\overline{z}_{i,j},\overline{a}_{i,j}, \; 1\leq i\leq d,1\leq j \leq m-1) .$$ 
Moreover, $\Omega '$ is contained in $p(\Omega )$ by Lemma~\ref{lns3}(i) but the
inclusion may be strict.

\begin{claim}\label{clns3}
The algebra $\overline{{\goth A}}$ has dimension $n-1$.
\end{claim}

\begin{proof}{[Proof of Claim~\ref{clns3}]}
There are two cases to consider:
\begin{itemize}
\item [{\rm (1)}] for $i=1,\ldots,m-1$, 
$[{\goth a}^{\gamma _{m}},{\goth a}^{\gamma _{i}}]$ is contained in $V_{0}$,
\item [{\rm (2)}] for some $i$ in $\{1,\ldots,m-1\}$,
$[{\goth a}^{\gamma _{m}},{\goth a}^{\gamma _{i}}]$ is not contained in $V_{0}$.
\end{itemize}
In the first case, $\overline{{\goth A}}=\k[\Omega ']$. Otherwise, denote by 
$j$ the biggest integer such that $[{\goth a}^{\gamma _{m}},{\goth a}^{\gamma _{j}}]$ is 
not contained in $V_{0}$ and $a'_{i,j}\neq 0$ for some $i=1,\ldots,d$. Then, for some 
$j'$ smaller than $j$, $\gamma _{m}+\gamma _{j}=\gamma _{j'}$. Furthermore, 
for $k<j$ such that $[{\goth a}^{\gamma _{m}},{\goth a}^{\gamma _{k}}]$ is not contained 
in $V_{0}$, $\gamma _{m}+\gamma _{k}$ is in 
${\cal R}\setminus \{\poi {\gamma }{j'}{,\ldots,}{m}{}{}{}\}$. Then for $k\geq j'$ and 
$i=1,\ldots,d$, $a'_{i,k}=\overline{a}_{i,k}$ and for all $(s,\varphi )$ in 
$\k\times \Omega '$, 
$$ \overline{a}_{i,j'}(s,\varphi ) = a'_{i,j'}(\varphi ) + s a'_{i,j}(\varphi ) .$$
As a result, by induction on $m-k$, for $i=1,\ldots,d$, 
$$ a'_{i,k} - \overline{a}_{i,k} \in s\overline{{\goth A}}[s] .$$
Hence $\k[\Omega '][s]=\overline{{\goth A}}[s]$ and there exists a surjective morphism 
$\xymatrix{ \k[\Omega '] \ar[r]& \overline{{\goth A}}}$ so that $\overline{{\goth A}}$ 
has dimension $n-1$.
\end{proof}

According to Lemma~\ref{lns3}(ii), the comorphism of $\overline{\psi }$ is an embedding 
of $\k[\Omega ]$ into $\k[O]$ and from this embedding results an isomorphism from 
$\k(\Omega )$ onto $\k(\Omega ')(s)$. Moreover, $\overline{{\goth A}}$ is the image of 
${\goth A}$ by this embedding so that ${\goth A}$ has dimension $n-1$. As a result, 
${\goth p}=\{0\}$ since $\iota $ is surjective and $\Omega '$ has dimension $n-1$.

(ii) Let $\varphi $ be in $\Omega $. Since $\kappa (\varphi )$ is a commutative algebra, 
for $1\leq i,j\leq d$, 
$$ 0 = [v_{i}+\varphi (v_{i}),v_{j}+\varphi (v_{j})] = [v_{i},\varphi (v_{j})] +
[\varphi (v_{i}),v_{j}] + [\varphi (v_{i}),\varphi (v_{j})] .$$
The component on $x_{\gamma _{m}}$ of the right hand side is
$$ \sum_{k=1}^{d} (z_{i,k}a_{j,m}(\varphi )-z_{j,k}a_{i,m}(\varphi )) 
[t_{k},x_{\gamma _{m}}] = 
(u_{i,\gamma _{m}}a_{j,m}-u_{j,\gamma _{m}}a_{i,m})(\varphi )x_{\gamma _{m}},$$
whence the assertion.

(iii) Denote by $R_{0}$ the adjoint group of 
${\goth r}_{0} := {\goth t}+V_{0}$ and $X_{R_{0}}$ the closure in $\ec {Gr}r{}0{d}$ of 
$R_{0}.{\goth t}$. Let $\Omega _{0}$ be the inverse image of $X_{R_{0}}$ by $\kappa $. 
According to Lemma~\ref{lns1}, for $i,j=1,\ldots,d$, the restriction to $\Omega _{0}$ of 
$z_{i,j}$ is equal to $0$ if $j\neq i$, otherwise it is different from $0$. As a result, 
for $i=1,\ldots,d$ and $\gamma $ in ${\goth t}^{*}$, the restriction of $u_{i,\gamma }$ 
to $\Omega _{0}$ is equal to $\overline{z_{i,i}}\gamma (t_{i})$ with $\overline{z_{i,i}}$
the restriction of $z_{i,i}$ to $\Omega _{0}$, whence the assertion.
\end{proof}

For $\gamma $ in ${\goth t}^{*}$, set:
$$ I_{\gamma } := \{j \in \{1,\ldots,d\} \ \vert \ \gamma (t_{j}) \neq 0 \}.$$

\begin{prop}\label{pns3}
Denote by $\k[\Omega ]_{0}$ the localization of $\k[\Omega ]$ at $0$.

{\rm (i)} The local algebra $\k[\Omega ]_{0}$ is Cohen-Macaulay.

{\rm (ii)} For $\gamma $ in ${\goth t}^{*}$, $u_{i,\gamma },i\in I_{\gamma }$ is a 
regular sequence in $\k[\Omega ]_{0}$ of elements of its maximal ideal.
\end{prop}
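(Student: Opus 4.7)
The plan is to prove both assertions simultaneously by induction on $n = \dim {\goth a}$, the base case being $n = d$ and the inductive step identifying $\k[\Omega]_0$ with (or realizing it as a quotient of) the Cohen-Macaulay ring ${\goth B}_s$ of Subsection~\ref{ns2} attached to $A_* := \k[\Omega']_0$ and the regular sequence $\{u_{i,\gamma_m} : i \in I_{\gamma_m}\}$.

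For the base case $n = d$, Lemma~\ref{lns1} gives that $X_R$ is smooth, so $\k[\Omega]_0$ is a regular local ring of dimension $n$ and hence Cohen-Macaulay. For (ii), fix $\gamma \in {\goth t}^*$; by Lemma~\ref{l2ns3}(iii) each $u_{i,\gamma}$ with $i \in I_\gamma$ is a non-zero linear form, and because distinct indices $i$ involve disjoint subsets of the coordinates $\{z_{i,j}\}$, these forms are linearly independent modulo the square of the maximal ideal. In a regular local ring they therefore extend to a regular system of parameters and constitute a regular sequence.

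For the inductive step, assume the proposition for every object of ${\cal C}_{\goth t}$ of dimension less than $n$. Applied to ${\goth a}'$, it gives that $A_* := \k[\Omega']_0$ is Cohen-Macaulay of dimension $n-1$ and that for every $\gamma \in {\goth t}^*$ the family $\{u_{i,\gamma} : i \in I_\gamma\}$ is a regular sequence in $A_*$ of elements of its maximal ideal. By Lemma~\ref{l2ns3}(i), ${\goth A} \simeq \k[\Omega']$, so $A_*$ embeds into $\k[\Omega]_0$, and $\k[\Omega]_0$ is generated over $A_*$ by $T_i := a_{i,m}$ for $i = 1,\ldots,d$. Setting $u_i := u_{i,\gamma_m}$, Lemma~\ref{l2ns3}(ii) supplies the relations $u_i T_j - u_j T_i = 0$. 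An inspection of the component on $x_{\gamma_m}$ of the vanishing commutator $[v_i + \varphi(v_i), v_j + \varphi(v_j)] = 0$ confirms that these are the only relations, because any potential higher-order contribution from brackets $[x_{\gamma_l}, x_{\gamma_{l'}}]$ with $\gamma_l + \gamma_{l'} = \gamma_m$ and $l, l' < m$ vanishes, since such a bracket would lie in both ${\goth a}_{m-1}$ and ${\goth a}^{\gamma_m}$, whose intersection is trivial.

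Applying Proposition~\ref{pns2}(i) to $A_*$ and the regular sequence $\{u_i : i \in I_{\gamma_m}\}$ produces a Cohen-Macaulay integral domain ${\goth B}_s$ of dimension $\dim A_* + 1 = n$, together with a natural morphism ${\goth B}_s \to \k[\Omega]_0$ arising from the defining relations. Since $\k[\Omega]_0$ is an integral domain of dimension $n$ (as $X_R$ is irreducible of dimension $n$), comparison of Krull dimensions between the integral domains ${\goth B}_s$ and $\k[\Omega]_0$ forces this morphism to be an isomorphism: any non-zero element of its kernel would generate a prime of positive height in the Cohen-Macaulay integral domain ${\goth B}_s$, contradicting the equality of dimensions. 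This proves (i). Part (ii) then follows by applying Proposition~\ref{pns2}(ii) to the regular sequence $\{u_{i,\gamma} : i \in I_\gamma\}$ in $A_*$ supplied by the inductive hypothesis and the maximal ideal of ${\goth B}_s \simeq \k[\Omega]_0$. The main delicate step is the identification $\k[\Omega]_0 \simeq {\goth B}_s$: one must carefully account for the generators $T_i$ with $i \notin I_{\gamma_m}$ via the relations $u_j T_i = u_i T_j$ for $j \in I_{\gamma_m}$ (where $u_j \neq 0$ by Lemma~\ref{l2ns3}(iii)), and then use the dimension argument to rule out additional relations.
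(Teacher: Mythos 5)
Your proposal follows the paper's proof essentially step for step: induction on the codimension $m=n-d$ of $V_{0}$ in ${\goth a}$ (equivalently on $n$, since $d$ is fixed), the base case handled via the explicit description in Lemma~\ref{lns1}, Lemma~\ref{l2ns3}(i) used to identify $\goth A_{*}$ with $\k[\Omega']_{0}$ and Lemma~\ref{l2ns3}(ii) to supply the quadratic relations, Proposition~\ref{pns2} applied to the regular sequence $\{u_{i,\gamma_m}\colon i\in I_{\gamma_m}\}$ for Cohen-Macaulayness, and finally a dimension comparison between integral domains to upgrade the surjection to an isomorphism and to deduce part (ii) from Proposition~\ref{pns2}(ii). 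Your base-case argument (smoothness from Lemma~\ref{lns1}, then linear independence of the $u_{i,\gamma}$ modulo $\goth m^{2}$ because they involve disjoint variables) is a cosmetic variant of the paper's explicit $u_{i,\gamma}=z_{i,i}\gamma(t_{i})$.

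Two remarks on details. First, your intermediate assertion that the $x_{\gamma_m}$-component of the commutator ``confirms that these are the only relations'' is an overclaim that your argument does not actually use: neither you nor the paper attempts to classify all relations; the isomorphism is forced by comparing Krull dimensions of integral domains, not by exhibiting a complete presentation. Second, the ``delicate step'' you flag at the end -- accounting for the generators $a_{i,m}$ with $i\notin I_{\gamma_m}$, i.e.\ the surjectivity of $({\goth B}/P)_{0}\to\k[\Omega]_{0}$ -- is precisely the step the paper also treats most tersely (it states ``$\k[\Omega]_{0}$ is a quotient of the localization at $0$ of ${\goth B}/P$'' and moves on). Your proposed fix via the relations $u_{j}T_{i}=u_{i}T_{j}$ does not close it as written, since the $u_{j}$ are merely nonzero in $\goth A_{*}$, not invertible, so one cannot literally solve for $T_{i}$. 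That said, this is a point where you are no less rigorous than the source.
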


\begin{proof}
Prove the proposition by induction on $m$. By Lemma~\ref{lns1}, for $m=0$, $\k[\Omega ]$ 
is a polynomial algebra of dimension $d$, generated by 
$\poi z{1,1,0}{,\ldots,}{d,d,0}{}{}{}$. Moreover, for $i=1,\ldots,d$ and $\gamma $ in 
${\goth t}^{*}$, $u_{i,\gamma }=z_{i,i}\gamma (t_{i})$, whence the proposition for $m=0$.
Suppose $m>0$ and the proposition true for $m-1$ and use the notations of 
Lemma~\ref{l2ns3}.

According to Lemma~\ref{l2ns3}(i) and the induction hypothesis, the localization 
${\goth A}_{*}$ of ${\goth A}$ at $0$ is Cohen-Macaulay and for $\gamma $ in 
${\goth t}^{*}$, $u_{i,\gamma },i\in I_{\gamma }$ is a regular sequence in 
${\goth A}_{*}$ of elements of its maximal ideal. Denote by ${\goth B}$ the polynomial 
algebra ${\goth A}_{*}[T_{i},i\in I_{\gamma _{m}}]$ and by $P$ the ideal of ${\goth B}$ 
generated by the sequence 
$u_{i,\gamma _{m}}T_{j}-u_{j,\gamma _{m}}T_{i},(i,j)\in I_{\gamma _{m}}^{2}$. 
According to Condition (3) of Section~\ref{sa}, 
$s := \vert I_{\gamma _{m}} \vert \geq 2$. By Lemma~\ref{l2ns3}(ii), 
$\k[\Omega ]_{0}$ is a quotient of the localization at $0$ of ${\goth B}/P$ and by 
Lemma~\ref{lns2}, $P$ is a prime ideal of ${\goth B}$. By Proposition~\ref{pns2}(i), 
${\goth B}/P$ is an integral domain and a Cohen-Macaulay algebra of dimension $n$ since 
$\k[\Omega ']$ has dimension $n-1$. Hence $\k[\Omega ]_{0}$ is the localization of 
${\goth B}/P$ at $0$ since $\k[\Omega ]_{0}$ is an integral domain of dimension $n$. As a
result, $\k[\Omega ]_{0}$ is Cohen-Macaulay and by Proposition~\ref{pns2}(ii), for 
$\gamma $ in ${\goth t}^{*}$, the sequence $u_{i,\gamma },i\in I_{\gamma }$ is regular in
$\k[\Omega ]_{0}$.
\end{proof}

\begin{theo}\label{tns3}
The variety $X_{R}$ is normal and Cohen-Macaulay.
\end{theo}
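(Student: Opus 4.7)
The plan is to invoke Serre's normality criterion $R_1 + S_2$, using the fact that a Cohen--Macaulay scheme automatically satisfies $S_2$. So it suffices to prove that $X_R$ is (a) Cohen--Macaulay and (b) regular in codimension $1$. If $\dim {\goth a} = d$, then by Lemma~\ref{lns1}, $X_R$ is already smooth, hence both normal and Cohen--Macaulay, and there is nothing more to prove. So I may assume $\dim {\goth a} > d$ throughout, which is precisely the setting of Subsection~\ref{sav3}.

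For (a), the key input is Proposition~\ref{pns3}(i), which asserts that the local ring of $X_R$ at any $R$-fixed point $V_0$ is Cohen--Macaulay (this is the content of the identification of $\k[\Omega]_0$ with the local ring of $X_R$ at $V_0$ via the chart $\kappa$). The Cohen--Macaulay locus of any Noetherian scheme is open, and it is clearly $R$-invariant since $R$ acts on $X_R$ by automorphisms. To conclude that it equals all of $X_R$, I argue as follows: for any $V \in X_R$, the closure $\overline{R.V}$ is a projective $R$-invariant subvariety of the projective variety $X_R$, and since $R$ is connected and solvable, the Borel fixed point theorem produces a fixed point $V_0 \in \overline{R.V}$. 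By openness of the Cohen--Macaulay locus together with its $R$-invariance, every point of $\overline{R.V}$ -- and in particular $V$ itself -- is Cohen--Macaulay. Hence $X_R$ is Cohen--Macaulay.

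For (b), Corollary~\ref{csav3}(ii) gives directly that $X'_R$ is a smooth big open subset of $X_R$, meaning that $X_R \setminus X'_R$ has codimension at least $2$. Since the singular locus of $X_R$ is contained in $X_R \setminus X'_R$, condition $R_1$ follows. Combining (a) and (b) via Serre's criterion yields normality, completing the proof.

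The main obstacle is (a), but essentially all of the work has already been absorbed into Proposition~\ref{pns3}, whose proof relied on the delicate commutative-algebra computations of Subsection~\ref{ns2} (Lemmas~\ref{lns2}, \ref{l2ns2}, \ref{l3ns2} and Proposition~\ref{pns2}) applied inductively along the chain ${\goth a}_i$ of subalgebras. The remaining step in assembling the theorem is the globalisation argument via Borel's fixed point theorem and openness of Cohen--Macaulayness, which is essentially formal once the local statement is in hand.
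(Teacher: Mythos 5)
Your proposal is correct and follows the same route as the paper's proof: Serre's $R_1 + S_2$ criterion with $R_1$ from Corollary~\ref{csav3}(ii), and Cohen--Macaulayness obtained by combining Proposition~\ref{pns3}(i) at the $R$-fixed points with openness of the Cohen--Macaulay locus (Matsumura, Theorem 24.5) and Borel's fixed point theorem applied to $\overline{R.V}$. The only difference is cosmetic: you dispatch the case $\dim\mathfrak{a}=d$ separately via Lemma~\ref{lns1}, which the paper leaves implicit, and you make explicit the step from ``fixed points are CM'' to ``every point is CM'' using $R$-invariance of the open CM locus, which the paper states more tersely.
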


\begin{proof}
By Corollary~\ref{csav3}, $X_{R}$ is smooth in codimension $1$. So, by Serre's normality
criterion~\cite[\S 1,no 10, Th\'eor\`eme 4]{Bou1}, it suffices to prove that $X_{R}$ is 
Cohen-Macaulay. According to~\cite[Ch. 8, Theorem 24.5]{Mat}, the set of points $x$ of 
$X_{R}$ such that $\an {X_{R}}x$ is Cohen-Macaulay, is open. For $x$ in $X_{R}$, the 
closure in $X_{R}$ of $R.x$ contains a fixed point. So it suffices to prove that for $x$ 
a fixed point under the action of $R$ in $X_{R}$, $\an {X_{R}}x$ is Cohen-Macaulay. Let 
$V_{0}$ and $\Omega $ be as in Lemma~\ref{lns3}. Then $\Omega $ is an 
affine open neighborhood of $V_{0}$ in $X_{R}$. By Proposition~\ref{pns3}(i), 
$\an {\Omega }0$ is Cohen-Macaulay, whence the theorem since $\kappa $ is an isomorphism 
from $\Omega $ onto an open neighborhood of $V_{0}$ in $X_{R}$ and $\kappa (0)=V_{0}$.
\end{proof}

\subsection{Nipotent cone and regular sequence in ${\cal O}_{{\cal E}}$}\label{ns4}
Let $\poi {\beta }1{,\ldots,}{d}{}{}{}$ be a basis of ${\goth t}^{*}$. For 
$i=1,\ldots,d$, denote again by $\beta _{i}$ the element of ${\goth r}^{*}$ extending
$\beta _{i}$ and equal to $0$ on ${\goth a}$. For $\Lambda $ a complete subset of 
${\cal R}$, denote by ${\goth t}_{\Lambda }^{\#}$ a complement to
${\goth t}_{\Lambda }$ in ${\goth t}$ and set 
$$ R'_{\Lambda } := R_{{\goth t}_{\Lambda }^{\#},{\goth a}_{\Lambda }}
\quad  \text{and} \quad {\cal E}_{\Lambda } := 
{\cal E}_{{\goth t}_{\Lambda }^{\#},{\goth a}_{\Lambda }}.$$ 
For $Y$ closed subset of $X_{R'_{\Lambda }}$, denote by ${\cal E}_{\Lambda ,Y}$ the 
restriction of ${\cal E}_{\Lambda }$ to $Y$. Let ${\cal N}'_{\Lambda }$ be the image 
of the map
$$ \xymatrix{ {\cal E}_{\Lambda , X_{R'_{\Lambda },\n}} \ar[rr] && {\cal E}}, \qquad 
(V,x) \longmapsto (V\oplus {\goth t}_{\Lambda },x) $$
and ${\cal N}_{\Lambda }$ the closure in ${\cal E}$ of $R.{\cal N}'_{\Lambda }$.

\begin{lemma}\label{lns4}
For $i=1,\ldots,d$, let $\tilde{\beta }_{i}$ be the function on ${\cal E}$ defined by
$\tilde{\beta }_{i}(V,x)=\beta _{i}(x)$. Denote by ${\cal N}$ the nullvariety of 
$\poi {\tilde{\beta }}1{,\ldots,}{d}{}{}{}$ in ${\cal E}$.

{\rm (i)} For all complete subset $\Lambda $ of ${\cal R}$, ${\cal N}_{\Lambda }$ is
a subvariety of ${\cal N}$ of dimension at most $n$.

{\rm (ii)} The varieyt ${\cal N}$ is the union of 
${\cal N}_{\Lambda }, \Lambda \in {\cal P}_{c}({\cal R})$.

{\rm (iii)} The variety ${\cal N}$ is equidimensional of dimension $n$. 
\end{lemma}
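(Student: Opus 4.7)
The plan is to handle (i), (ii), (iii) in order, with (ii) providing the main structural content and the others amounting to dimension bookkeeping built on it.

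For (i), the inclusion ${\cal N}'_{\Lambda} \subset {\cal N}$ is immediate: an element $(V'\oplus{\goth t}_{\Lambda},x)$ of ${\cal N}'_{\Lambda}$ has $x \in V' \subset {\goth a}_{\Lambda} \subset {\goth a}$, and the $\beta_i$ vanish on ${\goth a}$ by construction. The functions $\tilde{\beta}_i$ are moreover $R$-invariant on ${\cal E}$: $A.x-x \in {\goth a}$ for all $x \in {\goth r}$ (since ${\goth a}$ is an ideal), while ${\bf T}$ acts trivially on ${\goth t}$ and scales each ${\goth a}^{\alpha}$. Hence ${\cal N}$ is $R$-stable and ${\cal N}_{\Lambda} = \overline{R.{\cal N}'_{\Lambda}} \subset {\cal N}$. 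For the dimension bound, Lemma~\ref{lsav1}(i) identifies ${\goth a}_{\Lambda}$ as an object of ${\cal C}_{{\goth t}_{\Lambda}^{\#}}$, so Proposition~\ref{psav3} yields $\dim X_{R'_{\Lambda},{\n}} \leq \dim {\goth a}_{\Lambda} - \dim {\goth t}_{\Lambda}^{\#}$; since the fibers of ${\cal E}_{\Lambda}$ have dimension $\dim {\goth t}_{\Lambda}^{\#}$, this gives $\dim {\cal N}'_{\Lambda} \leq \dim {\goth a}_{\Lambda}$. As ${\cal N}'_{\Lambda}$ is $R_{\Lambda}$-invariant (${\goth t}_{\Lambda}$ being central in ${\goth r}_{\Lambda}$, with $R_{\Lambda}$ surjecting onto $R'_{\Lambda}$), the natural map $R\times_{R_{\Lambda}}{\cal N}'_{\Lambda}\to {\cal E}$ has image $R.{\cal N}'_{\Lambda}$ and source of dimension at most $(d+n)-(d+\dim {\goth a}_{\Lambda}) + \dim {\goth a}_{\Lambda} = n$, so $\dim {\cal N}_{\Lambda} \leq n$.

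For (ii), take $(V,x) \in {\cal N}$; since the $\beta_i$ form a basis of ${\goth t}^*$ extended by zero on ${\goth a}$, we have $x \in V\cap {\goth a}$. By Corollary~\ref{c2sa6}(i), $V$ is a commutative algebraic subalgebra whose maximal torus is conjugate under $R$ to ${\goth t}_{\Lambda}$ for some subset $\Lambda \subset {\cal R}$, which we may assume complete (replacing $\Lambda$ by its completion does not change ${\goth t}_{\Lambda}$). Choose $g \in R$ with $g(V) = {\goth t}_{\Lambda}\oplus V'$, where $V' = g(V)\cap{\goth a}$ is the nilpotent part; commutativity of $g(V)$ forces $[{\goth t}_{\Lambda},V']=0$, whence $V' \subset {\goth a}^{{\goth t}_{\Lambda}} = {\goth a}_{\Lambda}$ by completeness of $\Lambda$. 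For $s \in {\goth t}_{\Lambda}$ with $\alpha(s) \neq 0$ for all $\alpha \in {\cal R}\setminus\Lambda$, Lemma~\ref{l2sa2}(i) gives ${\goth r}^{s}={\goth r}_{\Lambda}$ and $g(V) \subset {\goth r}^{s}$; Property $({\bf P})$ (Proposition~\ref{psa6}) then places $g(V) \in \overline{R^{s}.{\goth t}} = \overline{R_{\Lambda}.{\goth t}} = X_{R_{\Lambda}}$. By Lemma~\ref{lsav1}(ii), $V' \in X_{R'_{\Lambda}}$, hence $V' \in X_{R'_{\Lambda},{\n}}$. Since ${\goth a}$ is $R$-stable, $g(x) \in g(V) \cap {\goth a} = V'$, yielding $(g(V),g(x)) \in {\cal N}'_{\Lambda}$ and $(V,x) \in R.{\cal N}'_{\Lambda} \subset {\cal N}_{\Lambda}$.

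For (iii), (i) and (ii) together give ${\cal N} = \bigcup_{\Lambda \in {\cal P}_c({\cal R})}{\cal N}_{\Lambda}$ with each ${\cal N}_{\Lambda}$ of dimension at most $n$; in particular every irreducible component of ${\cal N}$ has dimension at most $n$. For the opposite inequality, ${\cal E}$ is irreducible of dimension $n+d$ as a vector bundle of rank $d$ over the irreducible variety $X_R$. The regular map $\phi:{\cal E}\to \k^d$ sending $(V,x)$ to $(\beta_1(x),\ldots,\beta_d(x))$ restricts on the subvariety $\{({\goth t},t):t\in{\goth t}\}\cong{\goth t}$ of ${\cal E}$ to the isomorphism determined by the basis $\beta_1,\ldots,\beta_d$, hence is surjective. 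By the fiber-dimension theorem for dominant morphisms of irreducible varieties, every irreducible component of the non-empty fiber ${\cal N}=\phi^{-1}(0)$ has dimension at least $\dim {\cal E}-d = n$; combining the two bounds yields equidimensionality of dimension $n$. The main obstacle is (ii), where Property $({\bf P})$ is invoked in a crucial way to upgrade the weaker containment $g(V) \subset {\goth r}_{\Lambda}$ into actual membership $g(V) \in X_{R_{\Lambda}}$, which is precisely what makes the stratification ${\cal N} = \bigcup_{\Lambda}{\cal N}_{\Lambda}$ close up.
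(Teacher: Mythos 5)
Your proof is correct, and parts (i) and (iii) follow the paper's own route (the dimension bound for ${\cal N}'_{\Lambda}$ via Proposition~\ref{psav3} and Lemma~\ref{lsav1}, the $R$-invariance bookkeeping, and the lower bound $\geq n$ from Krull's principal ideal theorem — your surjective map $\phi:{\cal E}\to\k^d$ is just a dressed-up version of the paper's ``nullvariety of $d$ functions on the irreducible variety ${\cal E}$'').

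Part (ii) is where you genuinely diverge, and your route is noticeably more direct. The paper fixes an irreducible component $T$ of ${\cal N}$, shows that $\varpi_1(T)$ is a closed $R$-invariant subset of $X_R$ by a cone argument on the fibers, and then feeds $\varpi_1(T)$ into Proposition~\ref{psav1} to extract a complete $\Lambda$ and a closed $R_{\Lambda}$-invariant $Z_{\Lambda}\subset X_{R_{\Lambda}}$ with $\varpi_1(T)=\overline{R.Z_{\Lambda}}$, concluding $T\subset{\cal N}_{\Lambda}$. You instead argue pointwise: for any $(V,x)\in{\cal N}$, Corollary~\ref{c2sa6}(i) produces the torus ${\goth t}_{\Lambda}$ (up to conjugation), Property~$({\bf P})$ upgrades the containment $g(V)\subset{\goth r}^{s}$ to membership $g(V)\in X_{R_{\Lambda}}$, and then $g(x)\in g(V)\cap{\goth a}=V'$ lands $(g(V),g(x))$ directly in ${\cal N}'_{\Lambda}$. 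This actually yields the stronger statement ${\cal N}=\bigcup_{\Lambda}R.{\cal N}'_{\Lambda}$, before even taking closures. Both arguments ultimately rest on Property~$({\bf P})$ — the paper through Proposition~\ref{psav1}, you through Corollary~\ref{c2sa6} and Proposition~\ref{psa6} directly — but yours avoids the detour through irreducible components and their projections, and makes transparent exactly where the structure theory enters, which you correctly flag at the end. The one place you should be slightly more careful is the equality $\overline{R^{s}.{\goth t}}=X_{R_{\Lambda}}$: this uses that $R_{\Lambda}$ is the identity component of $R^{s}$ and that centralizers of semisimple elements in a connected solvable group are connected; the paper makes the analogous step implicitly in the proof of Proposition~\ref{psav1}(iii), so this is a shared, not a new, gap.
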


\begin{proof}
(i) Since ${\goth a}$ is the nullvariety of $\poi {\beta }1{,\ldots,}{d}{}{}{}$ in 
${\goth r}$, ${\cal N}$ is the intersection of ${\cal E}$ and 
$X_{R}\times {\goth a}$. By definition ${\cal N}'_{\Lambda }$ is contained in 
$X_{R}\times {\goth a}$. Hence ${\cal N}_{\Lambda }$ is contained in ${\cal N}$. 
By Proposition~\ref{psav3}, 
$$ \dim {\cal N}'_{\Lambda } = \dim {\goth t}_{\Lambda }^{\#} + \dim X_{R'_{\Lambda },\n}
\leq \dim {\goth a}_{\Lambda }.$$
Since the image of $X_{R'_{\Lambda },\n}$ by the map 
$V\mapsto V\oplus {\goth t}_{\Lambda }$ is invariant by $R_{\Lambda }$,
$$ \dim {\cal N}_{\Lambda } \leq \dim {\cal N}'_{\Lambda }+
\dim {\goth a}-\dim {\goth a}_{\Lambda } \leq \dim {\goth a} .$$

(ii) Let $\varpi _{1}$ be the bundle projection of the vector bundle ${\cal E}$ over 
$X_{R}$ and $\tau _{1}$ the restriction to ${\cal E}$ of the projection
$\xymatrix{X_{R}\times {\goth r} \ar[r] & {\goth r}}$. Let $T$ be an irreducible 
component of ${\cal N}$. For all $V$ in $\varpi _{1}(T)$,
$\tau _{1}(\varpi _{1}^{-1}(V)\cap T)$ is a closed cone of ${\goth a}$. Hence 
$\varpi _{1}(T)\times \{0\}$ is the intersection of $T$ and $X_{R}\times \{0\}$ so that
$\varpi _{1}(T)$ is a closed subset of $X_{R}$. Since ${\cal N}$ is the intersection of 
${\cal E}$ and $X_{R}\times {\goth a}$, ${\cal N}$ and its irreducible components
are invariant under $R$. As a result, $\varpi _{1}(T)$ is invariant under $R$ and by
Proposition~\ref{psav1}, for some complete subset $\Lambda $ of ${\cal R}$ and for some 
closed subset of $Z_{\Lambda }$ of $X_{R_{\Lambda }}$, 
$\varpi _{1}(T)=\overline{R.Z_{\Lambda }}$. Moreover, by Lemma~\ref{lsav1}, for some
closed subset $Z'_{\Lambda }$ of $X_{R'_{\Lambda },\n}$, $Z_{\Lambda }$ is the image of 
$Z'_{\Lambda }$ by the map $V\mapsto V\oplus {\goth t}_{\Lambda }$. As a result, 
$$ {\cal E}_{\Lambda ,Z'_{\Lambda }} \subset {\cal E}_{\Lambda ,X_{R'_{\Lambda },\n}} 
\quad  \text{and} \quad
\varpi _{1}^{-1}(Z_{\Lambda })\cap X_{R}\times {\goth a} \subset {\cal N}'_{\Lambda } .$$
Then $T$ is contained in ${\cal N}_{\Lambda }$, whence the assertion by (i).

(iii) By (i) and (ii), since ${\cal R}$ is finite, the irreducible components of 
${\cal N}$ have dimension at most $n$. As the nullvariety of $d$ functions 
on the irreducible variety ${\cal E}_{X_{R}}$, the irreducible components of ${\cal N}$
have dimension at least $n$, whence the assertion.
\end{proof}

For $x$ in ${\cal E}$, denote by $I_{x}$ the subset of elements $i$ of 
$\{1,\ldots,d\}$ such that $\tilde{\beta }_{i}(x)=0$.

\begin{coro}\label{cns4}
For all $x$ in ${\cal E}$, the sequence $\tilde{\beta }_{i},i\in I_{x}$ is regular 
in $\an {{\cal E}}x$.
\end{coro}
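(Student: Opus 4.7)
The plan is to recognize the $\tilde{\beta}_{i}$ as the components of a flat morphism $\phi : {\cal E} \to {\goth t}$, and then invoke the standard fact that flat pullback preserves regular sequences. Let $\tau _{1} : {\cal E} \to {\goth r}$ denote the second projection $(V, y) \mapsto y$, and set $\phi := \pi \rond \tau _{1}$; by construction $\tilde{\beta}_{i} = \beta _{i} \rond \phi$, and the fiber $\phi ^{-1}(0)$ coincides with ${\cal N}$, which has pure dimension $n$ by Lemma~\ref{lns4}(iii).

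The central step is to show every fiber of $\phi$ has dimension $n$. The scaling action $\lambda \cdot (V, y) := (V, \lambda y)$ of $\k^{*}$ on ${\cal E}$ is well-defined (each $V$ being a linear subspace) and makes $\phi$ equivariant for the scalar action of $\k^{*}$ on ${\goth t}$. By upper semicontinuity of fiber dimension on the source, the set $Z := \{x \in {\cal E} \ \vert\ \dim _{x} \phi^{-1}(\phi(x)) \geq n+1\}$ is closed in ${\cal E}$ and $\k^{*}$-stable. Were $Z$ non-empty, for any $x \in Z$ the orbit closure $\overline{\k^{*} \cdot x}$ would lie in $Z$ and would contain the zero-section point $(\pi _{1}(x), 0)$, with $\pi _{1} : {\cal E} \to X_{R}$ the bundle projection, forcing $\dim _{(\pi _{1}(x), 0)} {\cal N} \geq n+1$ in contradiction with Lemma~\ref{lns4}(iii); hence $Z = \emptyset$, and Krull's lower bound $\dim _{x} \phi ^{-1}(\phi(x)) \geq \dim {\cal E}-\dim {\goth t} = n$ then yields equality everywhere.

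Since ${\cal E}$ is Cohen-Macaulay, as a rank-$d$ vector bundle over the Cohen-Macaulay variety $X_{R}$ of Theorem~\ref{tns3}, and ${\goth t}$ is regular, the equidimensionality of the fibers forces $\phi$ to be flat at every point by the miracle-flatness criterion. For $x \in {\cal E}$ with $s := \phi(x)$, the forms $\beta _{i}$, $i \in I_{x}$, vanish at $s$ and, being a subset of the basis $\poi {\beta }1{,\ldots,}{d}{}{}{}$ of ${\goth t}^{*}$, extend to a regular system of parameters of $\an {{\goth t}}s$; they therefore form a regular sequence in $\an {{\goth t}}s$, and flat pullback along $\phi$ yields the asserted regularity of $\tilde{\beta}_{i}$, $i \in I_{x}$, in $\an {{\cal E}}x$. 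The main obstacle I foresee is the uniform fiber-dimension assertion, since Lemma~\ref{lns4} alone handles only $s = 0$; once the $\k^{*}$-degeneration propagates the conclusion from the zero section to all of ${\cal E}$, the rest of the proof is formal.
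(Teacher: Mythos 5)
Your proof is correct, and it reaches the conclusion by a route that differs from the paper's in its final step, though both start from the same two geometric inputs: Cohen--Macaulayness of ${\cal E}$ (Theorem~\ref{tns3} together with Lemma~\ref{lsi}(iii)) and pure-dimensionality of ${\cal N}=\phi^{-1}(0)$ (Lemma~\ref{lns4}(iii)). The paper stays inside ${\cal E}$: it asserts that the nullvariety of $\tilde{\beta}_i,\,i\in I$, is equidimensional of codimension $\vert I\vert$ for every $I\subset\{1,\ldots,d\}$ --- a statement Lemma~\ref{lns4} proves only for $I=\{1,\ldots,d\}$, the $\k^{*}$-degeneration to the zero section being left implicit --- and then applies the Cohen--Macaulay criterion of \cite[Ch.~6, Theorem~17.4]{Mat}, namely that in a Cohen--Macaulay local ring a set of elements generating an ideal whose height equals its cardinality is a regular sequence. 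You instead package the $\tilde{\beta}_i$ into a single morphism $\phi=\pi\rond\tau_{1}\colon{\cal E}\to{\goth t}$, make the $\k^{*}$-degeneration explicit to show every fiber of $\phi$ has pure dimension $n$, invoke miracle flatness over the regular base ${\goth t}$, and pull back the regular sequence $\beta_i,\,i\in I_x$, from $\an {{\goth t}}{\phi(x)}$. The two arguments are of essentially the same depth (miracle flatness is derived in Matsumura from the same Cohen--Macaulay machinery), but yours makes the hidden degeneration step transparent and yields, as a byproduct, the flatness of $\phi$ itself, which could be of independent use.
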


\begin{proof}
According to Lemma~\ref{lns4}, for all subset $I$ of $\{1,\ldots,d\}$, the nullvariety
of $\tilde{\beta }_{i},i\in I$ in ${\cal E}$ is equidimensional of dimension 
$n+d-\vert I \vert$. By Theorem~\ref{tns3} and Lemma~\ref{lsi}(iii),
${\cal E}$ is Cohen-Macaulay as a vector bundle over a Cohen-Macaulay variety, whence 
the corollary by \cite[Ch. 6, Theorem 17.4]{Mat}.
\end{proof}

\section{Rational singularities for solvable Lie algebras} \label{rss}
Let ${\goth t}$ be a vector space of positive dimension $d$. Denote by 
${\cal C}_{{\goth t},*}$ the full subcategory of ${\cal C}_{{\goth t}}$ whose objects 
${\goth a}$ satisfy the following condition: 
\begin{itemize}
\item [{\rm (4)}] there exist regular maps $\poi {\varepsilon }1{,\ldots,}{d}{}{}{}$ from 
${\goth r}_{{\goth t},{\goth a}}$ to ${\goth r}_{{\goth t},{\goth a}}$ such that
$\poi x{}{,\ldots,}{}{\varepsilon }{1}{d}$ is a basis of 
${\goth r}_{{\goth t},{\goth a}}^{x}$ for all $x$ in 
${{\goth r}_{{\goth t},{\goth a}}}_{\r}$. 
\end{itemize}
According to~\cite[Theorem 9]{Ko}, ${\goth u}$ is in ${\cal C}_{{\goth h},*}$.

\begin{lemma}\label{lrss}
Let ${\goth a}$ be in ${\cal C}_{{\goth t},*}$ and ${\goth a}'$
an ideal of ${\goth t}+{\goth a}$, contained in ${\goth a}$ and containing a fixed point 
under the action of $R_{{\goth t},{\goth a}}$ in $X_{R_{{\goth t},{\goth a}}}$. Then 
${\goth a}'$ is in ${\cal C}_{{\goth t},*}$.
\end{lemma}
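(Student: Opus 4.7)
The plan is first to verify that ${\goth a}'\in {\cal C}_{{\goth t}}$, then to build regular maps $\varepsilon _{1}',\ldots,\varepsilon _{d}'$ on ${\goth r}':={\goth t}+{\goth a}'$ satisfying Condition~(4). By hypothesis, ${\goth a}'$ contains a fixed point $V_{0}$ under $R:=R_{{\goth t},{\goth a}}$ acting on $X_{R}$. Corollary~\ref{c2sa6}(ii) tells us the elements of ${\cal R}_{V_{0}}$ are linearly independent; since $\dim V_{0}=d$, the set ${\cal R}_{V_{0}}$ is in fact a basis of ${\goth t}^{*}$ consisting of weights of ${\goth t}$ that occur in ${\goth a}'$. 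Consequently $\varphi _{{\goth a}'}$ has trivial kernel and ${\goth a}'$ lies in ${\cal C}_{{\goth t}}$.

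Set $R':=R_{{\goth t},{\goth a}'}$ and let $\varepsilon _{1},\ldots,\varepsilon _{d}$ be the maps provided for ${\goth a}$ by Condition~(4). The regular function $x\mapsto [x,\varepsilon _{i}(x)]$ vanishes on the dense open subset ${\goth r}_{\r}$, hence on all of ${\goth r}$. For $x=g(t)$ with $g\in R'$ and $t\in {\goth t}_{\r}$, Lemma~\ref{lsav2}(i) gives $x\in {\goth r}_{\r}$ with ${\goth r}^{x}=g({\goth t})\subseteq {\goth r}'$, the last inclusion coming from invariance of ${\goth r}'$ under $R'$. Thus $\varepsilon _{i}(x)\in {\goth r}^{x}\subseteq {\goth r}'$ on the dense open subset $R'.{\goth t}_{\r}$ of ${\goth r}'$, and the closedness of ${\goth r}'$ in ${\goth r}$ forces $\varepsilon _{i}({\goth r}')\subseteq {\goth r}'$. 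Define $\varepsilon _{i}':=\varepsilon _{i}|_{{\goth r}'}$, so that $\varepsilon _{i}'$ is regular on ${\goth r}'$ and $\varepsilon _{i}'(x)\in ({\goth r}')^{x}$ for every $x\in {\goth r}'$.

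The main obstacle is to verify that $(\varepsilon _{1}'(x),\ldots ,\varepsilon _{d}'(x))$ is a basis of the $d$-dimensional space $({\goth r}')^{x}$ for every $x\in ({\goth r}')_{\r}$; equivalently, that these vectors are linearly independent. This holds on $R'.{\goth t}_{\r}$ and, more generally, at every point of $({\goth r}')_{\r}\cap {\goth r}_{\r}$, where $({\goth r}')^{x}={\goth r}^{x}$ is spanned by a basis. To handle points of $({\goth r}')_{\r}\setminus {\goth r}_{\r}$, I would reduce to the codimension-one case by Lie's theorem: a flag ${\goth a}'={\goth a}_{0}\subsetneq \cdots \subsetneq {\goth a}_{m}={\goth a}$ of ${\goth r}$-ideals of successive codimension one, each containing $V_{0}$, gives a chain in which $V_{0}$ remains a fixed point of $R_{{\goth t},{\goth a}_{i}}$ on $X_{R_{{\goth t},{\goth a}_{i}}}$ by Lemma~\ref{lsa4}(ii) (since $V_{0}$ is a commutative ideal with ${\cal R}_{V_{0}}$ a basis of ${\goth t}^{*}$), so each ${\goth a}_{i}\in {\cal C}_{{\goth t},*}$ by induction on $m$. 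In the codimension-one step the locus in ${\goth r}'$ where the $\varepsilon _{i}'$ become linearly dependent is the zero-set of a non-identically-vanishing regular function, hence of codimension at least one; the hypothesis that $V_{0}\subseteq {\goth a}'$ is a fixed point of $R$ with weights a basis of ${\goth t}^{*}$ is what should exclude this locus from $({\goth r}')_{\r}$, for instance by modifying the $\varepsilon _{i}'$ through regular corrections built from a weight basis of the abelian ideal $V_{0}$ to restore independence on the full regular locus. This is where the hypothesis on $V_{0}$ plays its essential role and requires the most care.
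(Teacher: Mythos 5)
Your attempt follows the same restriction strategy as the paper's (terse) proof: observe that ${\goth a}'\in{\cal C}_{\goth t}$, that $A'.{\goth t}_\r$ is dense in ${\goth r}'$, that on it ${\goth r}^x={\goth r}'^x$, and therefore that every regular $\varepsilon\colon{\goth r}\to{\goth r}$ with $[x,\varepsilon(x)]\equiv 0$ maps ${\goth r}'$ into ${\goth r}'$. These steps are correct and match the paper.

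Where you stall is exactly the point the paper compresses into four words. After establishing ${\goth t}_\r\subseteq{\goth r}'_\r$, the paper asserts ``Then ${\goth r}'_\r$ is contained in ${\goth r}_\r$'', and it is this inclusion that makes the whole argument close: on $({\goth r}')_\r\subseteq{\goth r}_\r$ one has $\dim{\goth r}^x=\dim({\goth r}')^x=d$, so $({\goth r}')^x={\goth r}^x$ and the restricted $\varepsilon_i$ remain a frame. You correctly identify that the problem lives on $({\goth r}')_\r\setminus{\goth r}_\r$, but you never isolate this inclusion as the one claim that needs to be proved (or contested). Instead you replace it with a speculative codimension-one induction and a proposal to ``modify the $\varepsilon_i'$ through regular corrections built from a weight basis of $V_0$ to restore independence''. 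That last sentence is not a proof step; nothing is specified about what the corrections are, why they still commute with $x$, or why they repair independence precisely on $({\goth r}')_\r\setminus{\goth r}_\r$.

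So the gap in your proposal is concrete: the load-bearing assertion ${\goth r}'_\r\subseteq{\goth r}_\r$ is never addressed. You should be aware that this inclusion is not formal: for $x\in{\goth r}'$ one only has $\dim{\goth r}^x\leq\dim({\goth r}')^x+\mathrm{codim}_{\goth r}\,{\goth r}'$ in general, so minimality of $\dim({\goth r}')^x$ does not by itself give minimality of $\dim{\goth r}^x$; it is exactly here that the hypothesis on $V_0$ (and on ${\goth a}$ being in ${\cal C}_{{\goth t},*}$) must enter, and it is worth checking carefully whether the paper's assertion is in fact a consequence of those hypotheses or whether the restriction strategy needs to be supplemented (as you suspected). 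Either way, the part of your write-up that is genuinely argued matches the paper, and the part where the paper's proof actually does the work is left as an acknowledged hole.
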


\begin{proof}
Set ${\goth r}:={\goth t}+{\goth a}$ and ${\goth r}':={\goth t}+{\goth a}'$. According
to Corollary~\ref{c2sa6}(ii), ${\goth a}'$ is in ${\cal C}_{{\goth t}}$ since it is in 
${\cal C}'_{{\goth t}}$. Set ${\goth t}_{\r}:={\goth r}_{\r}\cap {\goth t}$. As 
${\cal R}_{{\goth t},{\goth a}'}$ is contained in ${\cal R}_{{\goth t},{\goth a}}$, 
${\goth t}_{\r}$ is contained in ${\goth r}'_{\r}$ by Lemma~\ref{lsav2}(i). Then 
${\goth r}'_{\r}$ is contained in ${\goth r}_{\r}$ and for all $x$ in 
$A_{{\goth t},{\goth a'}}.{\goth t}_{\r}$, ${\goth r}^{x}={{\goth r}'}^{x}$ since 
$A_{{\goth t},{\goth a'}}.{\goth t}_{\r}$ is a dense open subset of ${\goth r}'$ by 
Lemma~\ref{lsav2}(i). So, for all regular map $\varepsilon $ from ${\goth r}$ to 
${\goth r}$ such that $[x,\varepsilon (x)]=0$ for all $x$ in ${\goth r}$, 
$\varepsilon (x)$ is in ${\goth r}'$ for all $x$ in ${\goth r}'$, whence the lemma.
\end{proof}

Let ${\goth a}$ be in ${\cal C}_{{\goth t},*}$. Set:
$$ {\cal R} := {\cal R}_{{\goth t},{\goth a}}, \qquad 
{\goth r} := {\goth r}_{{\goth t},{\goth a}} \qquad \pi := \pi _{{\goth t},{\goth a}},
\qquad R := R_{{\goth t},{\goth a}}, \qquad A := A_{{\goth t},{\goth a}}, \qquad 
{\cal E} := {\cal E}_{{\goth t},{\goth a}}, \qquad n := \dim {\goth a}.$$
The goal of the section is to prove that $X_{R}$ is Gorenstein with rational 
singularities. 

For $k$ positive integer, set:
$$ {\cal E}^{(k)} := \{(u,\poi x1{,\ldots,}{k}{}{}{}) \in X_{R}\times {\goth r}^{k} 
\ \vert \ \poi {u\ni x}1{,\ldots,}{k}{}{}{}\}$$
and denote by ${\goth X}_{R,k}$ the image of ${\cal E}^{(k)}$ by the projection
$$ (u,\poi x1{,\ldots,}{k}{}{}{}) \longmapsto (\poi x1{,\ldots,}{k}{}{}{}) .$$
Since $X_{R}$ is a projective variety, ${\goth X}_{R,k}$ is a closed subset of 
${\goth r}^{k}$, invariant under the diagonal action of $R$ in ${\goth r}^{k}$.

\subsection{Differential forms on some smooth open subsets of ${\goth X}_{R,k}$} 
\label{rss1}
For $j=1,\ldots,k$, let $V_{j}^{(k)}$ be the subset of elements of ${\goth X}_{R,k}$ whose
$j$-th component is in ${\goth r}_{\r}$.

\begin{lemma}\label{lrss1}
For $j=1,\ldots,k$, $V_{j}^{(k)}$ is a smooth open subset of ${\goth X}_{R,k}$. Moreover,
$\Omega _{V_{j}^{(k)}}$ has a global section without zero.
\end{lemma}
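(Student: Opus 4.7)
The plan is to construct an explicit isomorphism $V_{j}^{(k)}\cong{\goth r}_{\r}\times \k^{d(k-1)}$, from which all three assertions follow immediately. Openness of $V_{j}^{(k)}$ in ${\goth X}_{R,k}$ is clear from the definition, since it is cut out by the open condition $x_{j}\in{\goth r}_{\r}$.

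To set up the isomorphism, I would first observe that on $V_{j}^{(k)}$ the element $u\in X_{R}$ above a given point is forced. Indeed, by Corollary~\ref{c2sa6}(i) every $u\in X_{R}$ is a commutative subalgebra of ${\goth r}$ of dimension $d$; if $(u,\poi x1{,\ldots,}k{}{}{})\in{\cal E}^{(k)}$ with $x_{j}\in{\goth r}_{\r}$, then $u$ centralizes $x_{j}$, so $u\subset{\goth r}^{x_{j}}$, and by equality of dimensions $u={\goth r}^{x_{j}}$. Combined with Lemma~\ref{lsav2}(ii), this shows that the projection ${\cal E}^{(k)}\to{\goth X}_{R,k}$ is an isomorphism above $V_{j}^{(k)}$, with inverse $(\poi x1{,\ldots,}k{}{}{})\mapsto({\goth r}^{x_{j}},\poi x1{,\ldots,}k{}{}{})$.

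Next, using the regular basis $\poi{\varepsilon}1{,\ldots,}d{}{}{}$ supplied by Condition~(4), I would define
$$\Phi:{\goth r}_{\r}\times \k^{d(k-1)}\longrightarrow V_{j}^{(k)},\qquad (y,(z_{i,l})_{l\neq j,\,1\leq i\leq d})\longmapsto (\poi x1{,\ldots,}k{}{}{}),$$
where $x_{j}=y$ and $x_{l}=\sum_{i=1}^{d}z_{i,l}\,\varepsilon_{i}(y)$ for $l\neq j$. This is a regular bijection, since $\varepsilon_{1}(y),\ldots,\varepsilon_{d}(y)$ is a basis of ${\goth r}^{y}$ for each $y\in{\goth r}_{\r}$. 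The main (mild) obstacle is verifying the regularity of $\Phi^{-1}$; this I would handle by covering ${\goth r}_{\r}$ with open subsets on which a fixed $d\times d$ minor of the matrix $[\varepsilon_{1}(y),\ldots,\varepsilon_{d}(y)]$ is invertible, and using Cramer's rule to recover the $z_{i,l}$ as regular functions of $(\poi x1{,\ldots,}k{}{}{})$. Granting this, $\Phi$ is an isomorphism and $V_{j}^{(k)}$ is smooth.

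Finally, since ${\goth r}_{\r}$ is a big open subset of the affine space ${\goth r}$ by Lemma~\ref{lsav2}(iii), the translation-invariant top form on ${\goth r}$ restricts to a nowhere-vanishing global section $\omega$ of $\Omega_{{\goth r}_{\r}}$. Wedging $\omega$ with the standard top form on $\k^{d(k-1)}$ gives a nowhere-vanishing global section of $\Omega_{{\goth r}_{\r}\times \k^{d(k-1)}}$, and transporting it through $\Phi$ produces the desired section of $\Omega_{V_{j}^{(k)}}$.
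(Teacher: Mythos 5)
Your proof is correct and follows essentially the same path as the paper's: both construct the explicit isomorphism from ${\goth r}_{\r}\times \k^{d(k-1)}$ (the paper writes ${\goth r}_{\r}\times {\mathrm {M}}_{k-1,d}(\k)$) to $V_{j}^{(k)}$ using the sections $\varepsilon_{1},\ldots,\varepsilon_{d}$ of Condition~(4), check regularity of the inverse by trivializing over an open cover, and push forward the translation-invariant top form on the affine space. Your preliminary observation that $u={\goth r}^{x_{j}}$ is forced (so the bundle projection of ${\cal E}^{(k)}$ is an isomorphism above $V_{j}^{(k)}$) is implicit in the paper's claim that $\theta$ is bijective, and your Cramer's-rule argument for regularity of $\Phi^{-1}$ is the same device as the paper's coordinate functions $\varphi_{1},\ldots,\varphi_{d}$.
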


\begin{proof}
Denoting by $\sigma _{j}$ the automorphism of ${\goth r}_{k}$ which permutes the first
and the $j$-th component, ${\goth X}_{R,k}$ is invariant under $\sigma _{j}$ and 
$\sigma _{j}(V_{1}^{(k)}) = V_{j}^{(k)}$ so that we can suppose $j=1$. Moreover, for 
$k=1$, ${\goth X}_{R,k}={\goth r}$ so that we can suppose $k\geq 2$. By definition, 
$V_{1}^{(k)}$ is the intersection of ${\goth r}_{\r}\times {\goth r}^{k-1}$ and 
${\goth X}_{R,k}$. Hence $V_{1}^{(k)}$ is an open susbet of ${\goth X}_{R,k}$ since 
${\goth r}_{\r}$ is an open subset of ${\goth r}$.

Let $\poi {\varepsilon }1{,\ldots,}{d}{}{}{}$ satisfying Condition (4) with respect to
${\goth r}$. Let $\theta $ be the map
$$ \xymatrix{ {\goth r}_{\r}\times {\mathrm {M}}_{k-1,d}(\k) \ar[rr]^{\theta } && 
{\goth r}^{k}}, \qquad (x,a_{i,j},2\leq i\leq k,1\leq j\leq d) \longmapsto 
(x,\sum_{j=1}^{d} a_{i,j}\varepsilon _{j}(x)) .$$
Since for all $(x,\poi x2{,\ldots,}{k}{}{}{})$ in $V_{1}^{(k)}$, 
$\poi x2{,\ldots,}{k}{}{}{}$ are in ${\goth r}^{x}$, $\theta $ is a bijective map
onto $V_{1}^{(k)}$. The open subset ${\goth r}_{\r}$ has a cover by open subsets 
$V$ such that for some $\poi e1{,\ldots,}{n}{}{}{}$ in ${\goth r}$, 
$\poi x{}{,\ldots,}{}{\varepsilon }{1}{d},\poi e1{,\ldots,}{n}{}{}{}$ is a basis of 
${\goth r}$ for all $x$ in $V$. Then there exist regular functions 
$\poi {\varphi }1{,\ldots,}{d}{}{}{}$ on $V\times {\goth r}$ such that 
$$ v-\sum_{j=1}^{\rg} \varphi _{j}(x,v)\varepsilon _{j}(x) \in 
{\mathrm {span}}(\{\poi e1{,\ldots,}{n}{}{}{}\})$$
for all $(x,v)$ in $V\times {\goth r}$, so that the restriction of $\theta $ to 
$V\times {\mathrm {M}_{k-1,d}}(\k)$ is an isomorphism onto 
${\goth X}_{R,k}\cap V\times {\goth r}^{k-1}$ whose inverse is 
$$(\poi x1{,\ldots,}{k}{}{}{}) \longmapsto 
(x_{1},((\poi {x_{1},x_{i}}{}{,\ldots,}{}{\varphi }{1}{d}),i=2,\ldots,k))$$
As a result, $\theta $ is an isomorphism and $V_{1}^{(k)}$ is a smooth variety. 
Since ${\goth r}_{\r}$ is a smooth open subset of the vector space ${\goth r}$, there 
exists a regular differential form $\omega $ of top degree on 
${\goth r}_{\r}\times {\mathrm {M}_{k-1,\rg}}(\k)$, without zero. Then 
$\theta _{*}(\omega )$ is a regular differential form of top degree on 
$V_{1}^{(k)}$, without zero.
\end{proof}

For $k\geq 2$ set:
$$V^{(k)} := V_{1}^{(k)} \cup V_{2}^{(k)} \quad  \text{and} \quad 
V_{1,2}^{(k)} := V_{1}^{(k)}\cap V_{2}^{(k)} .$$
For $2\leq k'\leq k$, the projection
$$ \xymatrix{ {\goth r}^{k} \ar[rr] && {\goth r}^{k'}}, \qquad
(\poi x1{,\ldots,}{k}{}{}{}) \longmapsto (\poi x1{,\ldots,}{k'}{}{}{})$$
induces the projection 
$$ \xymatrix{ {\goth X}_{R,k} \ar[rr] && {\goth X}_{R,k'}}, \qquad
\xymatrix{ V_{j}^{(k)} \ar[rr] && V_{j}^{(k')}} $$
for $j=1,\ldots,k'$. 

\begin{lemma}\label{l2rss1}
Suppose $k\geq 2$. Let $\omega $ be a regular differential form of top degree on 
$V_{1}^{(k)}$, without zero. Denote by $\omega '$ its restriction to $V_{1,2}^{(k)}$.

{\rm (i)} For $\varphi $ in $\k[V_{1}^{(k)}]$, if $\varphi $ has no zero then $\varphi $
is in $\k^{*}$.

{\rm (ii)} For some invertible element $\psi $ of $\k[V_{1,2}^{(2)}]$, 
$\omega ' = \psi {\sigma _{2}}_{*}(\omega ')$.

{\rm (iii)} The function $\psi (\psi \rond \sigma _{2})$ on $V_{1,2}^{(k)}$ is equal 
to $1$. 
\end{lemma}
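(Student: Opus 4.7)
\medskip

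\noindent\textbf{Proof plan.}

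For part (i), the plan is to exploit the isomorphism $\theta : {\goth r}_{\r}\times {\mathrm{M}}_{k-1,d}(\k) \to V_1^{(k)}$ constructed in the proof of Lemma~\ref{lrss1}. By Lemma~\ref{lsav2}(iii), ${\goth r}_{\r}$ is a big open subset of the affine (and in particular normal) variety ${\goth r}$, so every regular function on ${\goth r}_{\r}$ extends uniquely to ${\goth r}$, giving $\k[{\goth r}_{\r}] = \k[{\goth r}]$. Consequently $\k[V_1^{(k)}] \cong \k[{\goth r}]\otimes_{\k} \k[{\mathrm{M}}_{k-1,d}(\k)]$ is a polynomial algebra over $\k$. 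A polynomial function on an affine space over an algebraically closed field has no zero if and only if it is a nonzero constant, by Hilbert's Nullstellensatz, which gives the conclusion.

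For part (ii), I first observe that $V_{1,2}^{(k)}$ is a dense open subset of $V_1^{(k)}$ (and of $V_2^{(k)}$), hence smooth and irreducible, so its sheaf of top regular differential forms is an invertible sheaf. The restriction $\omega'$ is a nowhere vanishing global section since so is $\omega$ on $V_1^{(k)}$. Moreover $\sigma_2$ restricts to an involutive automorphism of $V_{1,2}^{(k)}$, so ${\sigma_2}_*(\omega') = (\sigma_2)^*(\omega')$ is again a nowhere vanishing global section of the same invertible sheaf, being the image by an isomorphism of the nowhere vanishing form ${\sigma_2}_*(\omega)$ defined on $V_2^{(k)}$. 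The ratio $\psi := \omega'/{\sigma_2}_*(\omega')$ is therefore a regular function on $V_{1,2}^{(k)}$ without zero, hence invertible in $\k[V_{1,2}^{(k)}]$; invertibility then yields $\omega' = \psi\,{\sigma_2}_*(\omega')$. (The fact that $\psi$ can be seen to depend only on the first two components, and so descends to $\k[V_{1,2}^{(2)}]$, comes from the explicit form of the $\theta$-parametrization, in which swapping the roles of $x_1$ and $x_2$ only affects the factors involving these two variables; this yields the version stated.)

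For part (iii), the argument is purely formal: since $\sigma_2^2 = \mathrm{id}$, pushforward equals pullback, and applying $\sigma_2^*$ to the identity $\omega' = \psi\,{\sigma_2}_*(\omega')$ gives
$$ {\sigma_2}_*(\omega') = (\psi\rond \sigma_2)\,\omega' .$$
Substituting back into the original equation yields $\omega' = \psi(\psi\rond \sigma_2)\,\omega'$, and since $\omega'$ is a nowhere vanishing section of an invertible sheaf we conclude $\psi(\psi\rond \sigma_2)=1$.

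The main obstacle is not any single step but the careful bookkeeping of pushforward versus pullback under $\sigma_2$ together with the justification that $\psi$ really factors through the projection onto the first two coordinates; the other steps (Nullstellensatz for (i), and the formal $\sigma_2$-involution calculation for (iii)) are routine once the geometric setup of Lemma~\ref{lrss1} is in hand.
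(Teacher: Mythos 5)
Parts (i) and (iii) match the paper's argument: for (i) you both pull back through the chart $\theta:{\goth r}_{\r}\times{\mathrm M}_{k-1,d}(\k)\to V_1^{(k)}$ of Lemma~\ref{lrss1} and use $\k[{\goth r}_{\r}]=\k[{\goth r}]$ (Lemma~\ref{lsav2}(iii)) to reduce to the fact that units in a polynomial ring over $\k$ are constants; for (iii) you both apply $\sigma_2^*$ to $\omega'=\psi\,{\sigma_2}_*(\omega')$ and cancel the nowhere-vanishing $\omega'$.

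The gap is in (ii), where you wave at precisely the nontrivial assertion. Your argument is fine up to "hence invertible in $\k[V_{1,2}^{(k)}]$", which produces an invertible $\psi$ with $\omega'=\psi\,{\sigma_2}_*(\omega')$. But the lemma asserts $\psi\in\k[V_{1,2}^{(2)}]$, i.e.\ that $\psi$ factors through the projection forgetting $x_3,\dots,x_k$, and your justification --- that "swapping the roles of $x_1$ and $x_2$ only affects the factors involving these two variables" --- is a heuristic, not an argument. The form $\omega$ is an \emph{arbitrary} nowhere-vanishing top form on $V_1^{(k)}$, so the ratio $\psi$ has no a priori reason to be independent of $x_3,\dots,x_k$ merely because $\sigma_2$ fixes those components; moreover the two charts $\theta$ for $V_1^{(k)}$ and $V_2^{(k)}$ are centred at different regular elements, so the coordinate change under $\sigma_2$ is not a simple transposition of factors. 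The paper's actual reason is again of units-in-polynomial-rings type: in the $\theta$-chart, the preimage $O_2$ of $V_{1,2}^{(k)}$ is cut out by a condition involving only $x$ and the first row of the matrix, so the projection $V_{1,2}^{(k)}\to V_{1,2}^{(2)}$ exhibits $\k[V_{1,2}^{(k)}]$ as a polynomial ring over $\k[V_{1,2}^{(2)}]$; an invertible element of a polynomial ring over an integral domain lies in the base, hence $\psi\in\k[V_{1,2}^{(2)}]$. You flagged this step as "the main obstacle" in your closing paragraph, correctly --- but it is exactly the step you leave unproved, and it is needed later (in the proof of Proposition~\ref{prss1}) to reduce to the case $k=2$.
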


\begin{proof}
The existence of $\omega $ results from Lemma~\ref{lrss1}.

(i) According to Lemma~\ref{lrss1}, there is an isomorphism $\theta $ from 
${\goth r}_{\r}\times {\mathrm {M}_{k-1,d}}(\k)$ onto $V_{1}^{(k)}$. Since 
$\varphi $ is invertible, $\varphi \rond \theta $ is an invertible element of 
$\k[{\goth r}_{\r}]$. According to Lemma~\ref{lsav2}(iii), 
$\k[{\goth r}_{\r}]=\k[{\goth r}]$. Hence $\varphi $ is in $\k^{*}$.

(ii) The open subset $V_{1,2}^{(k)}$ is invariant under $\sigma _{2}$ so that 
$\omega '$ and ${\sigma _{2}}_{*}(\omega ')$ are regular differential forms of top
degree on $V_{1,2}^{(k)}$, without zero. Then for some invertible element $\psi $ 
of $\k[V_{1,2}^{(k)}]$, $\omega ' = \psi {\sigma _{2}}_{*}(\omega ')$. Let 
$O_{2}$ be the set of elements $(x,a_{i,j},1\leq i \leq k-1,1\leq j\leq d)$ of 
${\goth r}_{\r}\times {\mathrm {M}_{k-1,d}}(\k)$ such that 
$$ a_{1,1}\varepsilon _{1}(x) + \cdots + a_{1,\rg} \varepsilon _{\rg}(x) \in 
{\goth r}_{\r}.$$
Then $O_{2}$ is the inverse image of $V_{1,2}^{(k)}$ by $\theta $. As a result,
$\k[V_{1,2}^{(k)}]$ is a polynomial algebra over $\k[V_{1,2}^{(2)}]$ since for 
$k=2$, $O_{2}$ is the inverse image by $\theta $ of $V_{1,2}^{(2)}$. Hence 
$\psi $ is in $\k[V_{1,2}^{(2)}]$ since $\psi $ is invertible.

(iii) Since the restriction of $\sigma _{2}$ to $V_{1,2}^{(k)}$ is an involution,
$$ {\sigma _{2}}_{*}(\omega ') = (\psi \rond \sigma _{2}) \omega ' = 
(\psi \rond \sigma _{2})\psi {\sigma _{2}}_{*}(\omega '),$$
whence $(\psi \rond \sigma _{2})\psi = 1$.
\end{proof}

\begin{coro}\label{crss1}
The function $\psi $ is invariant under the action of $R$ in $V_{1,2}^{(k)}$ and for
some sequence $m_{\alpha },\alpha \in {\cal R}$ in ${\Bbb Z}$, 
$$ \psi (\poi x1{,\ldots,}{k}{}{}{}) = \pm \prod_{\alpha \in {\cal R}}
(\alpha (x_{1})\alpha (x_{2})^{-1})^{m_{\alpha }},$$
for all $(\poi x1{,\ldots,}{k}{}{}{})$ in ${\goth t}_{\r}^{2}\times {\goth t}^{k-2}$.
\end{coro}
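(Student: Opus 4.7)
The plan is to establish the two assertions of the corollary separately, using Lemma~\ref{l2rss1} as the main input.

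For the $R$-invariance of $\psi$, I would exploit the rigidity coming from Lemma~\ref{l2rss1}(i): every invertible regular function on $V_{1}^{(k)}$ is a nonzero scalar, hence any two regular top-degree forms without zero on $V_{1}^{(k)}$ differ by a constant. Since ${\goth r}_{\r}$ and ${\goth X}_{R,k}$ are $R$-invariant, $R$ acts by automorphisms on $V_{1}^{(k)}$, so for each $g \in R$ the form $g_{*}\omega$ is again regular, top-degree, and nowhere-vanishing; consequently $g_{*}\omega = \chi(g)\omega$ for some $\chi(g) \in \k^{*}$, and restriction gives $g_{*}\omega' = \chi(g)\omega'$. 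Because the diagonal $R$-action commutes with $\sigma_{2}$, applying $g_{*}$ to the defining identity $\omega' = \psi \cdot {\sigma_{2}}_{*}\omega'$ produces $\chi(g)\omega' = g_{*}(\psi)\cdot \chi(g){\sigma_{2}}_{*}\omega'$; the scalars $\chi(g)$ cancel and yield $g_{*}(\psi)=\psi$, i.e.\ $\psi$ is $R$-invariant on $V_{1,2}^{(k)}$.

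For the explicit formula, I first note that ${\goth t}_{\r}^{2}\times {\goth t}^{k-2} \subset V_{1,2}^{(k)}$: the inclusion ${\goth t}_{\r}\subset {\goth r}_{\r}$ holds by Lemma~\ref{lsav2}(i), and each such tuple lies in the element ${\goth t}\in X_{R}$. Since $\psi \in \k[V_{1,2}^{(2)}]^{*}$ by Lemma~\ref{l2rss1}(ii), $\psi$ depends only on $(x_{1},x_{2})$, so it suffices to determine $\psi\vert_{{\goth t}_{\r}^{2}}$. Now ${\goth t}_{\r}^{2}$ is the complement in ${\goth t}\times {\goth t}\cong \k^{2d}$ of the reduced divisor cut out by $\prod_{\alpha\in {\cal R}}\alpha(t_{1})\alpha(t_{2})$, whose irreducible components are the $2\vert {\cal R}\vert$ pairwise distinct linear hyperplanes $\{\alpha(t_{i})=0\}$ (distinctness within each factor is ensured by Condition~(3) of Section~\ref{sa}, and the two groups of hyperplanes involve disjoint sets of coordinates). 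By unique factorization in $\k[{\goth t}\times {\goth t}]$, every unit of $\k[{\goth t}_{\r}^{2}]$ has the form $c\prod_{\alpha}\alpha(t_{1})^{p_{\alpha}}\alpha(t_{2})^{q_{\alpha}}$ with $c\in \k^{*}$ and integer exponents $p_{\alpha},q_{\alpha}$.

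Finally I would invoke Lemma~\ref{l2rss1}(iii), $(\psi \rond \sigma_{2})\psi = 1$. Evaluating on ${\goth t}_{\r}^{2}$ gives $c^{2}\prod_{\alpha}\alpha(t_{1})^{p_{\alpha}+q_{\alpha}}\alpha(t_{2})^{p_{\alpha}+q_{\alpha}}=1$, which forces $c^{2}=1$ and $q_{\alpha}=-p_{\alpha}$ for each $\alpha\in {\cal R}$. Setting $m_{\alpha}:=p_{\alpha}$ yields the desired identity on ${\goth t}_{\r}^{2}$, and the independence of $\psi$ from $x_{3},\ldots,x_{k}$ extends it to ${\goth t}_{\r}^{2}\times {\goth t}^{k-2}$. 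The only non-formal ingredient is the unit description of $\k[{\goth t}_{\r}^{2}]$, which is where Condition~(3) enters; I do not expect any serious obstacle beyond checking carefully that the $R$-action and $\sigma_{2}$ commute as claimed.
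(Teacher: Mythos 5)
Your proof is correct and follows essentially the same path as the paper's: you identify $p_g = g_*\omega/\omega$ as a unit of $\k[V_1^{(k)}]$, invoke Lemma~\ref{l2rss1}(i) to conclude $p_g\in\k^*$, and cancel these scalars in the defining identity $\omega'=\psi\,{\sigma_2}_*\omega'$ using the $R$-equivariance of $\sigma_2$; and for the formula you restrict to ${\goth t}_\r^2\subset V_{1,2}^{(2)}$, describe the units of the coordinate ring of the hyperplane-arrangement complement, and use $(\psi\circ\sigma_2)\psi=1$ from Lemma~\ref{l2rss1}(iii) to force $c^2=1$ and $q_\alpha=-p_\alpha$. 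The one place you go beyond the paper, and usefully so, is in spelling out why a unit of $\k[{\goth t}_\r^2]$ must be a monomial $c\prod_\alpha \alpha(t_1)^{p_\alpha}\alpha(t_2)^{q_\alpha}$ — via unique factorization in $\k[{\goth t}\times{\goth t}]$, the pairwise non-proportionality of the $\alpha$'s from Condition~(3), and the disjointness of the two coordinate blocks — whereas the paper merely asserts this form after citing that $\psi$ is invertible. The claim that $\psi$ depends only on $(x_1,x_2)$ (via $\psi\in\k[V_{1,2}^{(2)}]$ from Lemma~\ref{l2rss1}(ii)) and that ${\goth t}_\r^2$ is the trace of ${\goth t}^2$ inside $V_{1,2}^{(2)}$ are both correctly used and match the paper's logic.
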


\begin{proof}
First of all, since $V_{1}^{(k)}$ and $V_{2}^{(k)}$ are invariant under the action of
$R$ in ${\goth X}_{R,k}$, so is $V_{1,2}^{(k)}$. Let $g$ be in $R$. As $\omega $ has 
no zero, $g.\omega =p_{g}\omega $ for some invertible element $p_{g}$ of 
$\k[V_{1}^{(k)}]$. By Lemma~\ref{l2rss1}(i), $p_{g}$ is in $\k^{*}$. Since 
$\sigma _{2}$ is a $R$-equivariant isomorphism from $V_{1}^{(k)}$ onto $V_{2}^{(k)}$, 
$$ g.{\sigma _{2}}_{*}(\omega )=p_{g}{\sigma _{2}}_{*}(\omega ) \quad  \text{and} \quad
p_{g} \omega ' = g.\omega ' = (g.\psi ) g.{\sigma _{2}}_{*}(\omega ') = 
p_{g} (g.\psi ) {\sigma _{2}}_{*}(\omega '),$$ 
whence $g.\psi =\psi $.

The open subset ${\goth t}_{\r}^{2}$ of ${\goth t}^{2}$ is the complement to the 
nullvariety of the function
$$ (x,y) \longmapsto \prod_{\alpha \in {\cal R}} \alpha (x)\alpha (y).$$
Then, by Lemma~\ref{l2rss1}(ii), for some $a$ in $\k^{*}$ and for some 
sequences $m_{\alpha },\alpha \in {\cal R}$ and 
$n_{\alpha },\alpha \in {\cal R}$ in ${\Bbb Z}$,
$$ \psi (\poi x1{,\ldots,}{k}{}{}{}) = a \prod_{\alpha \in {\cal R}}
\alpha (x_{1})^{m_{\alpha }}\alpha (x_{2})^{n_{\alpha }},$$
for all $(\poi x1{,\ldots,}{k}{}{}{})$ in ${\goth t}_{\r}^{2}\times {\goth t}^{k-2}$.
By Lemma~\ref{l2rss1}(iii),
$$ a^{2} \prod_{\alpha \in {\cal R}} \alpha (x)^{m_{\alpha }+n_{\alpha }}
\alpha (y)^{m_{\alpha }+n_{\alpha }} = 1,$$ 
for all $(x,y)$ in ${\goth t}_{\r}^{2}$. Hence $a^{2}=1$ and $m_{\alpha }+n_{\alpha }=0$ 
for all $\alpha $ in ${\cal R}$.
\end{proof}

According to Lemma~\ref{l2sav2}(i), for $\alpha $ in ${\cal R}$, $\theta _{\alpha }$ is 
a bijective regular map from ${\Bbb P}^{1}(\k)$ onto the closed subset $Z_{\alpha }$ of 
$X_{R}$ such that $\theta _{\alpha }(\infty )=V_{\alpha }$. Recall that $x_{\alpha }$ is
a generator of ${\goth a}^{\alpha }$ and $h_{\alpha }$ is an element of ${\goth t}$ such 
that $\alpha (h_{\alpha })=1$. Denote by ${\goth t}'_{\alpha }$ the subset of elements
$x$ of ${\goth t}_{\alpha }$ such that $\gamma (x) \neq 0$ for all $\gamma $ in 
${\cal R}\setminus \{\alpha \}$. According to Condition (3) of Section~\ref{sa}, 
${\goth t}'_{\alpha }$ is a dense open subset of ${\goth t}_{\alpha }$. Let 
$x_{-\alpha }$ be in ${\goth r}^{*}$ orthogonal to ${\goth t}+{\goth a}^{\gamma }$ for 
all $\gamma $ in ${\cal R}\setminus \{\alpha \}$ and such that 
$x_{-\alpha }(x_{\alpha })=1$.

\begin{lemma}\label{l3rss1}
Suppose $k\geq 2$. Let $\alpha $ be in ${\cal R}$, $x_{0}$ and $y_{0}$ in 
${\goth t}'_{\alpha }$. Set:
$$ E := \k x_{0} \oplus \k h_{\alpha } \oplus {\goth a}^{\alpha }, \quad
E_{*} := x_{0} \oplus \k h_{\alpha } \oplus {\goth a}^{\alpha } , \quad
E_{*,1} := x_{0} \oplus \k h_{\alpha } \oplus ({\goth a}^{\alpha }\setminus \{0\}),
\quad 
E_{*,2} = y_{0} \oplus \k h_{\alpha } \oplus ({\goth a}^{\alpha }\setminus \{0\}).$$

{\rm (i)} For $x$ in $E_{*}$, ${\goth r}^{x}$ is contained in ${\goth t}_{\alpha }+E$.

{\rm (ii)} For $V$ subspace of dimension $d$ of ${\goth t}_{\alpha }+E$, $V$ is in 
$X_{R}$ if and only if it is in $Z_{\alpha }$.

{\rm (iii)} The intersection of $E_{*,1}\times E_{*,2}$ and ${\goth X}_{R,2}$ is the 
nullvariety of the function 
$$ (x,y) \longmapsto x_{-\alpha }(y) \alpha (x) - x_{-\alpha }(x) \alpha (y) $$
on $E_{*,1}\times E_{*,2}$.
\end{lemma}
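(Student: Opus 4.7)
I would prove the three statements by direct root-space computation, exploiting the explicit parametrizations of $E_*$, $E_{*,1}$, $E_{*,2}$ and the unipotent conjugation coming from $\exp(\k\,\mathrm{ad}\,x_\alpha)$.

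For (i), write $x=x_0+ch_\alpha+zx_\alpha\in E_*$ and treat two cases. If $c=0$, then since $x_0\in\mathfrak{t}'_\alpha$ gives $\alpha(x_0)=0$, the elements $x_0$ and $zx_\alpha$ commute, so this is the Jordan decomposition; one computes $\mathfrak{r}^x=\mathfrak{r}^{x_0}\cap\mathfrak{r}^{x_\alpha}$ and checks using Condition $(3)$ (which forbids $2\alpha\in\mathcal{R}$) and $\gamma(x_0)\neq 0$ for $\gamma\neq\alpha$ that this intersection equals $\mathfrak{t}_\alpha+\mathfrak{a}^\alpha$, which lies in $\mathfrak{t}_\alpha+E$. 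If $c\neq 0$, one checks $\exp(-\tfrac{z}{c}\mathrm{ad}\,x_\alpha)(x_0+ch_\alpha)=x$ (using $\alpha(x_0)=0$, $\alpha(h_\alpha)=1$ and $[x_\alpha,x_\alpha]=0$), so $x$ is conjugate under $A^\alpha:=\exp(\k\,\mathrm{ad}\,x_\alpha)$ to its semisimple part $\tilde x_0:=x_0+ch_\alpha\in\mathfrak{t}$. Then $\mathfrak{r}^x=\mathrm{Ad}(g)(\mathfrak{r}^{\tilde x_0})$ with $g=\exp(-\tfrac{z}{c}\mathrm{ad}\,x_\alpha)$, and one checks that $\mathrm{Ad}(g)$ maps $\mathfrak{t}+\mathfrak{a}^\alpha$ into itself, so that the containment $\mathfrak{r}^x\subset\mathfrak{t}_\alpha+E$ follows from the corresponding containment for $\tilde x_0$.

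For (ii), note that $\mathfrak{t}_\alpha+E=\mathfrak{t}+\mathfrak{a}^\alpha$ has dimension $d+1$, so any $V\subset\mathfrak{t}_\alpha+E$ of dimension $d$ is a hyperplane. If $V=\mathfrak{t}$, then $V=\theta_\alpha(0)\in Z_\alpha$. Otherwise $V\cap\mathfrak{a}^\alpha=\mathfrak{a}^\alpha$ and $V\cap\mathfrak{t}$ has codimension~$1$ in $\mathfrak{t}$. The hypothesis that $V$ is a commutative subalgebra forces $[V\cap\mathfrak{t},\mathfrak{a}^\alpha]=0$, hence $V\cap\mathfrak{t}\subset\mathfrak{t}_\alpha$, and equality holds by dimension. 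Write $V=\mathfrak{t}_\alpha\oplus\k(ch_\alpha+zx_\alpha)$ with $(c,z)\neq(0,0)$; this is precisely $\theta_\alpha(-z/c)$ when $c\neq 0$ and $V_\alpha=\theta_\alpha(\infty)$ when $c=0$, so $V\in Z_\alpha$. The converse inclusion $Z_\alpha\subset X_R\cap\mathrm{Gr}_d(\mathfrak{t}_\alpha+E)$ is immediate from Lemma~\ref{l2sav2}.

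For (iii), the key is a direct bracket computation. With $x=x_0+c_1h_\alpha+z_1x_\alpha$ and $y=y_0+c_2h_\alpha+z_2x_\alpha$, using $\alpha(x_0)=\alpha(y_0)=0$ and $[x_\alpha,x_\alpha]=0$, one gets $[x,y]=(c_1z_2-c_2z_1)x_\alpha$. The functionals $\alpha$ and $x_{-\alpha}$ read off exactly the coefficients $c_i$ and $z_i$, so $x_{-\alpha}(y)\alpha(x)-x_{-\alpha}(x)\alpha(y)=c_1z_2-c_2z_1$ equals the coefficient of $x_\alpha$ in $[x,y]$. Necessity of the vanishing for $(x,y)\in\mathfrak{X}_{R,2}$ is immediate, since elements of $X_R$ are commutative by Corollary~\ref{c2sa6}. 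For sufficiency, when the function vanishes one has $[x,y]=0$, and I would show that $\mathfrak{r}^x$ (or, when $x\notin\mathfrak{r}_{\mathrm r}$, a suitable limit of such centralizers along a curve in $E_{*,1}$) lies in $\mathfrak{t}_\alpha+E$ by (i) and hence in $Z_\alpha\subset X_R$ by (ii); since $y$ commutes with $x$, it belongs to this element of $X_R$, giving $(x,y)\in\mathfrak{X}_{R,2}$.

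The main obstacle is the delicate case of (i) when $c\neq 0$ and $\tilde x_0=x_0+ch_\alpha$ fails to be regular in $\mathfrak{t}$: here $\mathfrak{r}^{\tilde x_0}$ may strictly contain $\mathfrak{t}$, and one must control the $\mathbb{Z}\alpha$-chain contributions produced by iterated $\mathrm{ad}(x_\alpha)$ to confirm that $\mathrm{Ad}(g)\mathfrak{r}^{\tilde x_0}\subset\mathfrak{t}+\mathfrak{a}^\alpha$. Condition~$(3)$ of Section~\ref{sa} (which truncates $\alpha$-strings) together with the defining property $\gamma(x_0)\neq 0$ for $\gamma\in\mathcal{R}\setminus\{\alpha\}$ are the tools I expect to use to close this case cleanly.
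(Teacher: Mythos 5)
Your direct arguments for (ii) and for the necessity direction of (iii) are correct and in fact streamline the paper's proof. For (ii), you observe that commutativity alone of a $d$-dimensional subspace $V$ of $\mathfrak{t}+\mathfrak{a}^{\alpha}$ already forces $V=\theta_{\alpha}(z)$ for some $z\in\mathbb{P}^{1}(\k)$ (writing $V=\{t+\varphi(t)x_{\alpha}\}$ and deducing $\varphi\wedge\alpha=0$, or $V=V_{\alpha}$), with no appeal to (i); the paper's proof of (ii) cites (i) at the step ``$V=\theta_{\alpha}(z)$ if $V\in A.\mathfrak{t}$,'' which your argument shows to be unnecessary. For necessity in (iii), the single bracket $[x,y]=\bigl(\alpha(x)x_{-\alpha}(y)-\alpha(y)x_{-\alpha}(x)\bigr)x_{\alpha}$, combined with commutativity of any $V\in X_{R}$ containing $x$ and $y$, gives the vanishing directly, whereas the paper first uses (i) and (ii) to identify $V$ as some $\theta_{\alpha}(z)$ and reads the vanishing off the parametrization.

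The genuine gap is in (i), precisely the case you flag, and Condition~(3) of Section~\ref{sa} does not close it. Condition~(3) only rules out $\k\alpha\cap(\mathcal{R}\setminus\{\alpha\})\neq\emptyset$, so it controls $\alpha$-strings through $\alpha$ itself; it puts no constraint on whether $\gamma+\alpha\in\mathcal{R}$ for $\gamma\neq\alpha$, and these are exactly the weights into which $\mathrm{ad}\,x_{\alpha}$ pushes the extra weight spaces $\mathfrak{a}^{\gamma}\subset\mathfrak{r}^{\tilde{x}_{0}}$ when $\tilde{x}_{0}=x_{0}+ch_{\alpha}$ fails to be regular. In fact assertion (i) is false as stated. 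Take $\mathfrak{g}=\mathfrak{sl}_{3}$, $\alpha=\alpha_{1}$, $x_{0}=\mathrm{diag}(1,1,-2)\in\mathfrak{t}'_{\alpha_{1}}$, $h_{\alpha_{1}}=\mathrm{diag}(1/2,-1/2,0)$ and $c=6$: then $\tilde{x}_{0}=\mathrm{diag}(4,-2,-2)$ kills $\alpha_{2}$, so $\mathfrak{r}^{\tilde{x}_{0}}=\mathfrak{t}\oplus\mathfrak{a}^{\alpha_{2}}$; for $x=\tilde{x}_{0}+zx_{\alpha_{1}}\in E_{*}$ with $z\neq 0$ one checks $[x,\,6x_{\alpha_{2}}-zx_{\alpha_{1}+\alpha_{2}}]=0$, yet $6x_{\alpha_{2}}-zx_{\alpha_{1}+\alpha_{2}}\notin\mathfrak{t}+\mathfrak{a}^{\alpha_{1}}=\mathfrak{t}_{\alpha}+E$. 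The paper's own proof of (i) carries the same error in the clause ``Suppose that $x$ is not regular semisimple. Then $x$ is in $x_{0}+\mathfrak{a}^{\alpha}$'': the $x$ above is semisimple but not regular and has $h_{\alpha}$-component $c=6\neq 0$.

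Consequently your sufficiency argument for (iii) must not pass through (i). The correct short argument is the one the paper actually records in its converse step (which does not invoke (i)): if $\alpha(x)\neq 0$, set $V:=\theta_{\alpha}\bigl(-x_{-\alpha}(x)/\alpha(x)\bigr)\in Z_{\alpha}\subset X_{R}$ and verify $x,y\in V$ using $x_{-\alpha}(y)\alpha(x)=x_{-\alpha}(x)\alpha(y)$; if $\alpha(x)=0$ then, since $x_{-\alpha}(x)\neq 0$, the hypothesis forces $\alpha(y)=0$ and $x,y\in V_{\alpha}=\theta_{\alpha}(\infty)$. With that replacement your proofs of (ii) and (iii) are complete and do not rely on (i); those two items are the only parts of this lemma used in the proof of Proposition~\ref{prss1}, so nothing downstream is affected.
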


\begin{proof}
(i) If $x$ is regular semisimple, its component on $h_{\alpha }$ is different 
from $0$ so that ${\goth r}^{x}=\theta _{\alpha }(z)$ for some $z$ in $\k$. Suppose that 
$x$ is not regular semisimple. Then $x$ is in $x_{0}+{\goth a}^{\alpha }$. Hence
${\goth r}^{x}$ is contained in ${\goth t}_{\alpha }+E$ since so is ${\goth r}^{x_{0}}$.

(ii) All element of $Z_{\alpha }$ is contained in ${\goth t}_{\alpha }+E$. Let $V$ be an
element of $X_{R}$, contained in ${\goth t}_{\alpha }+E$. According 
to Corollary~\ref{c2sa6}(i), $V$ is an algebraic commutative 
subalgebra of dimension $d$ of ${\goth r}$. By (i), $V=\theta _{\alpha }(z)$ for some $z$
in $\k$ if $V$ is in $A.{\goth t}$. Otherwise, $x_{\alpha }$ is in $V$. Then 
$V=\theta _{\alpha }(\infty )$ since $\theta _{\alpha }(\infty )$ is the centralizer 
of $x_{\alpha }$ in ${\goth t}_{\alpha }+E$.

(iii) Let $(x,y)$ be in $E_{*,1}\times E_{*,2}\cap {\goth X}_{R,2}$. By definition, 
for some $V$ in $X_{R}$, $x$ and $y$ are in $V$. By (i) and (ii), 
$V=\theta_{\alpha }(z)$ for some $z$ in ${\Bbb P}^{1}(\k)$. For $z$ in $\k$, 
$$ x = x_{0}+s(h_{\alpha }-z x_{\alpha }) \quad  \text{and} \quad 
y = y_{0} + s'(h_{\alpha } - z x_{\alpha })$$
for some $s$, $s'$ in $\k$, whence the equality of the assertion. For 
$z=\infty $, $$ x = x_{0}+sx_{\alpha } \quad  \text{and} \quad 
y = y_{0} + s'x_{\alpha } $$
for some $s$, $s'$ in $\k$ so that $\alpha (x)=\alpha (y) = 0$. Conversely,
let $(x,y)$ be in $E_{*,1}\times E_{*,2}$ such that
$$ x_{-\alpha }(y) \alpha (x) - x_{-\alpha }(x) \alpha (y) = 0.$$
If $\alpha (x)=0$ then $\alpha (y) = 0$ and $x$ and $y$ are in 
$V_{\alpha }=\theta _{\alpha }(\infty )$. If $\alpha (x)\neq 0$, then 
$\alpha (y) \neq 0$ and
$$x \in \theta _{\alpha } (-\frac{x_{-\alpha }(x)}{\alpha (x)}) 
\quad  \text{and} \quad
y \in \theta _{\alpha } (-\frac{x_{-\alpha }(x)}{\alpha (x)}) ,$$
whence the assertion.
\end{proof}

Set $V^{(1)}:={\goth r}_{\r}$. 

\begin{prop}\label{prss1}
For $k$ positive integer, there exists on $V^{(k)}$ a regular differential form of top 
degree without zero.
\end{prop}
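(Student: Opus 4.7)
The plan is to argue by induction on $k$. The base case $k=1$ is immediate: $V^{(1)}={\goth r}_{\r}$ is a big open subset of the affine space ${\goth r}$ by Lemma~\ref{lsav2}(iii), so the constant top form on ${\goth r}$ restricts to a regular nowhere-vanishing form on $V^{(1)}$.

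For $k\geq 2$, I would exploit the Mayer--Vietoris decomposition $V^{(k)}=V_{1}^{(k)}\cup V_{2}^{(k)}$. By Lemma~\ref{lrss1}, fix a regular nowhere-vanishing top form $\omega_{1}$ on $V_{1}^{(k)}$ and set $\omega_{2}:=\sigma_{2*}\omega_{1}$, which is such a form on $V_{2}^{(k)}$. Lemma~\ref{l2rss1}(ii) yields $\psi\in\k[V_{1,2}^{(2)}]^{*}$ with $\omega_{1}=\psi\,\omega_{2}$ on $V_{1,2}^{(k)}$, and Corollary~\ref{crss1} describes the restriction of $\psi$ to ${\goth t}_{\r}^{2}\times{\goth t}^{k-2}$ as the monomial $\pm\prod_{\alpha\in{\cal R}}(\alpha(x_{1})/\alpha(x_{2}))^{m_{\alpha}}$, together with the crucial fact that $\psi$ is $R$-invariant. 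To assemble the desired global form on $V^{(k)}$ it is enough to produce regular invertible functions $f_{j}\in\k[V_{j}^{(k)}]^{*}$ with $\psi=f_{2}/f_{1}$ on the overlap: then $f_{1}\omega_{1}$ and $f_{2}\omega_{2}$ agree there and glue to a regular nowhere-vanishing top form on $V^{(k)}$.

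The crux of the argument, and the main technical obstacle, is constructing the $f_{j}$ and verifying this coboundary decomposition of $\psi$. I would build them one root at a time, using the ${\Bbb P}^{1}$-parametrization $\theta_{\alpha}$ of Lemma~\ref{l2sav2} together with the local hypersurface description of ${\goth X}_{R,2}$ near $X_{R,\alpha}$ afforded by Lemma~\ref{l3rss1}(iii), namely $x_{-\alpha}(y)\alpha(x)-x_{-\alpha}(x)\alpha(y)=0$. This equation rewrites the a priori rational ratio $\alpha(x_{2})/\alpha(x_{1})$ as a quotient of two regular functions each invertible on the appropriate side of the cover, and the $R$-invariance of $\psi$ from Corollary~\ref{crss1} propagates the identity from the torus to all of $V_{1,2}^{(k)}$. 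Condition~(4) of membership in ${\cal C}_{{\goth t},*}$ supplies the regular trivialization $\varepsilon_{1},\ldots,\varepsilon_{d}$ of the centralizer bundle, which is used precisely to keep the $f_{j}$ regular and invertible at the loci where $\alpha(x_{j})=0$ — the very places where the naive torus expression is singular and where the argument would otherwise break. Reducing first to $k=2$ (where $\psi$ degenerates to a sign, by Lemma~\ref{l2rss1}(iii) together with the explicit model of Lemma~\ref{l3rss1}) and then inducting up via the forgetful projection $V^{(k)}\to V^{(k-1)}$, which is a rank-$d$ bundle trivialized by the $\varepsilon_{j}$, completes the proof.
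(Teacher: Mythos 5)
Your framework is right — the Mayer--Vietoris cover $V^{(k)}=V_{1}^{(k)}\cup V_{2}^{(k)}$, the transition function $\psi$ from Lemma~\ref{l2rss1}, its $R$-invariance and monomial form on the torus from Corollary~\ref{crss1}, and the local slice from Lemma~\ref{l3rss1}(iii) are exactly the ingredients the paper uses. The reduction to $k=2$ via $\psi\in\k[V_{1,2}^{(2)}]$ is also correct.

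But the mechanism you propose for the crucial step is circular. You want to produce invertible $f_{j}\in\k[V_{j}^{(k)}]^{*}$ with $\psi=f_{2}/f_{1}$. By Lemma~\ref{l2rss1}(i) (applied to $V_{1}^{(k)}$, and to $V_{2}^{(k)}$ via $\sigma_{2}$), every unit of $\k[V_{j}^{(k)}]$ is a scalar. So the existence of such $f_{j}$ is \emph{equivalent} to $\psi$ being a nonzero constant, i.e.\ to $m_{\alpha}=0$ for all $\alpha$ — which is precisely what needs to be proved, not a device for proving it. The ``build one root at a time'' idea using $x_{-\alpha}$ and the hypersurface relation $x_{-\alpha}(y)\alpha(x)=x_{-\alpha}(x)\alpha(y)$ only produces \emph{locally} invertible functions on the slice $E_{*,1}\times E_{*,2}$, where $x_{-\alpha}\neq0$ by construction; $x_{-\alpha}$ is not a unit on $V_{j}^{(k)}$, so these factors cannot be assembled into a global unit. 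Likewise, the assertion that ``for $k=2$, $\psi$ degenerates to a sign by Lemma~\ref{l2rss1}(iii) together with Lemma~\ref{l3rss1}'' is exactly the missing content: $\psi(\psi\circ\sigma_{2})=1$ and the local hypersurface equation do not by themselves force $\psi$ to be constant.

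What the paper actually does at this point, and what your outline skips, is the following. Fix $\alpha$ with $m_{\alpha}\neq0$ (for contradiction), restrict $\psi$ to the slice $E_{*,1}\times E_{*,2}\cap V_{1,2}^{(2)}$ to get $\psi(x,y)=\pm\,x_{-\alpha}(x)^{m}x_{-\alpha}(y)^{-m}$, rewrite this by the hypersurface equation as $\pm\,\alpha(x)^{m}\alpha(y)^{-m}$, and equate with the monomial $\pm\prod_{\gamma}\gamma(x)^{m_{\gamma}}\gamma(y)^{-m_{\gamma}}$ from Corollary~\ref{crss1} on the $1$-parameter families $x\in x_{0}+\k^{*}h_{\alpha}$, $y\in y_{0}+\k^{*}h_{\alpha}$. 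Comparing lowest-degree terms in $(\alpha(x),\alpha(y))$ and then matching degrees (equations (\ref{eqrss1})--(\ref{eq3rss1}) in the text) forces $m=\pm m_{\alpha}$ and then $m_{\alpha}=0$, a contradiction. This degree/asymptotic analysis is the irreplaceable core of the argument, and your proposal does not carry it out or replace it by anything equivalent.
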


\begin{proof}
For $k=1$, it is true since ${\goth r}_{\r}$ is an open subset of the vector sapce 
${\goth r}$. So we can suppose $k\geq 2$. According to Corollary~\ref{crss1}, it suffices
to prove $m_{\alpha }=0$ for all $\alpha $ in ${\cal R}$. Indeed, if so, by 
Corollary~\ref{crss1}, $\psi =\pm 1$ on the open subset 
$R.({\goth t}_{\r}^{2}\times {\goth t}^{k-2})$ of $V^{(k)}$ so that $\psi =\pm 1$ on 
$V_{1,2}^{(k)}$. Then, by Lemma~\ref{l2rss1}(ii), $\omega $ and 
$\pm {\sigma _{2}}_{*}(\omega )$ have the same restriction to $V_{1,2}^{(k)}$ so that
there exists a regular differential form of top degree $\tilde{\omega }$ on $V^{(k)}$
whose restrictions to $V_{1}^{(k)}$ and $V_{2}^{(k)}$ are $\omega $ and 
$\pm {\sigma _{2}}_{*}(\omega )$ respectively. Moreover, $\tilde{\omega }$ has no zero
since so has $\omega $.

Since $\psi $ is in $\k[V_{1,2}^{(2)}]$ by Lemma~\ref{l2rss1}(ii), we can suppose $k=2$.
Let $\alpha $ be in ${\cal R}$, $E$, $E_{*}$,$E_{*,1}$, $E_{*,2}$ as in 
Lemma~\ref{l2rss1}. Suppose $m_{\alpha }\neq 0$. A contradiction is expected. The 
restriction of $\psi $ to $E_{*,1}\times E_{*,2}\cap V_{1,2}^{(2)}$ is given by 
$$ \psi (x,y) = a x_{-\alpha }(x)^{m}x_{-\alpha }(y)^{n} ,$$  
with $a$ in $\k^{*}$ and $(m,n)$ in ${\Bbb Z}^{2}$ since $\psi $ is an invertible element
of $\k[V_{1,2}^{(2)}]$. According to Lemma~\ref{l2rss1}(iii), $n=-m$ and $a=\pm 1$.
Interchanging the role of $x$ and $y$, we can suppose $m$ in ${\Bbb N}$. For $(x,y)$ in 
$E_{*,1}\times E_{*,2}\cap V_{1,2}^{(2)}$ such that $\alpha (x)\neq 0$, 
$\alpha (y)\neq 0$ and 
$$ \psi (x,y) = \pm x_{-\alpha }(x)^{m} 
(\frac{x_{-\alpha }(x) \alpha (y)}{\alpha (x)})^{-m} = 
\pm \alpha (x)^{m} \alpha (y)^{-m} .$$ 
As a result, by Corollary~\ref{crss1}, for $x$ in $x_{0}+\k^{*} h_{\alpha }$ and 
$y$ in $y_{0}+\k^{*}h_{\alpha }$, 
\begin{eqnarray}\label{eqrss1}
\pm \alpha (x)^{m}\alpha (y)^{-m} = \pm \prod_{\gamma \in {\cal R}}
\gamma (x)^{m_{\gamma }}\gamma (y)^{-m_{\gamma }} .
\end{eqnarray}
For $\gamma $ in ${\cal R}$, 
$$ \gamma (x) = \gamma (x_{0}) + \alpha (x)\gamma (h_{\alpha }) 
\quad  \text{and} \quad
\gamma (y) = \gamma (y_{0}) + \alpha (y)\gamma (h_{\alpha }) .$$
Since $m$ is in ${\Bbb N}$,  
\begin{eqnarray}\label{eq2rss1}
\pm \alpha (x)^{m} \prod_{\mycom{\gamma \in {\cal R}}{m_{\gamma }>0}}
(\gamma (y_{0}) + \alpha (y)\gamma (h_{\alpha }))^{m_{\gamma }}
\prod_{\mycom{\gamma \in {\cal R}}{m_{\gamma }< 0}}
(\gamma (x_{0}) + \alpha (x)\gamma (h_{\alpha }))^{-m_{\gamma }} =  \\
\nonumber \pm \alpha (y)^{m} \prod_{\mycom{\gamma \in {\cal R}}{m_{\gamma }>0}}
(\gamma (x_{0}) + \alpha (x)\gamma (h_{\alpha }))^{m_{\gamma }}
\prod_{\mycom{\gamma \in {\cal R}}{m_{\gamma } < 0}}
(\gamma (y_{0}) + \alpha (y)\gamma (h_{\alpha }))^{-m_{\gamma }} .
\end{eqnarray}
For $m_{\alpha }$ positive, the terms of lowest degree in $(\alpha (x),\alpha (y))$ of 
left and right sides are
$$ \pm \alpha (x)^{m} \alpha (y)^{m_{\alpha }}
\prod_{\mycom{\gamma \in {\cal R}\setminus \{\alpha \}}{m_{\gamma }>0}} 
\gamma (y_{0})^{m_{\gamma }} 
\prod_{\mycom{\gamma \in {\cal R}\setminus \{\alpha \}}{m_{\gamma }<0}} 
\gamma (x_{0})^{-m_{\gamma }} 
\quad  \text{and} \quad 
\pm \alpha (y)^{m} \alpha (x)^{m_{\alpha }} 
\prod_{\mycom{\gamma \in {\cal R}\setminus \{\alpha \}}{m_{\gamma }>0}} 
\gamma (x_{0})^{m_{\gamma }} 
\prod_{\mycom{\gamma \in {\cal R}\setminus \{\alpha \}}{m_{\gamma }<0}} 
\gamma (y_{0})^{-m_{\gamma }} $$
respectively and for $m_{\alpha }$ negative, the terms of lowest degree in 
$(\alpha (x),\alpha (y))$ of left and right sides are
$$ \pm \alpha (x)^{m+m_{\alpha }} 
\prod_{\mycom{\gamma \in {\cal R}\setminus \{\alpha \}}{m_{\gamma }>0}} 
\gamma (y_{0})^{m_{\gamma }} 
\prod_{\mycom{\gamma \in {\cal R}\setminus \{\alpha \}}{m_{\gamma }<0}} 
\gamma (x_{0})^{-m_{\gamma }} 
\quad  \text{and} \quad 
\pm \alpha (y)^{m+m_{\alpha }} 
\prod_{\mycom{\gamma \in {\cal R}\setminus \{\alpha \}}{m_{\gamma }>0}} 
\gamma (x_{0})^{m_{\gamma }} 
\prod_{\mycom{\gamma \in {\cal R}\setminus \{\alpha \}}{m_{\gamma }<0}} 
\gamma (y_{0})^{-m_{\gamma }} $$
respectively. From the equality of these terms, we deduce $m = \pm m_{\alpha }$ and
$$ \prod_{\mycom{\gamma \in {\cal R}\setminus \{\alpha \}}{m_{\gamma }>0}} 
\gamma (y_{0})^{m_{\gamma }} 
\prod_{\mycom{\gamma \in {\cal R}\setminus \{\alpha \}}{m_{\gamma }<0}} 
\gamma (x_{0})^{-m_{\gamma }} =
\pm \prod_{\mycom{\gamma \in {\cal R}\setminus \{\alpha \}}{m_{\gamma }>0}} 
\gamma (x_{0})^{m_{\gamma }} 
\prod_{\mycom{\gamma \in {\cal R}\setminus \{\alpha \}}{m_{\gamma }<0}} 
\gamma (y_{0})^{-m_{\gamma }} .$$
Since the last equality does not depend on the choice of $x_{0}$ and $y_{0}$ in 
${\goth t}'_{\alpha }$, this equality remains true for all
$(x_{0},y_{0})$ in ${\goth t}_{\alpha }\times {\goth t}_{\alpha }$. As a result,
as the degrees in $\alpha (x)$ of the left and right sides of Equality (\ref{eq2rss1})
are the same,
\begin{eqnarray}\label{eq3rss1}
m - \sum_{\mycom{\gamma \in {\cal R}}{m_{\gamma }< 0\;  \text{and} \; 
\gamma (h_{\alpha })\neq 0}} m_{\gamma } = 
\sum_{\mycom{\gamma \in {\cal R}}{m_{\gamma } > 0 \;  \text{and} \; 
\gamma (h_{\alpha })\neq 0}} m_{\gamma } .
\end{eqnarray}

Suppose $m=m_{\alpha }$. By Equality (\ref{eqrss1}), 
$$ \prod_{\gamma \in {\cal R}\setminus \{\alpha \}} 
\gamma (x)^{m_{\gamma }}\gamma (y)^{-m_{\gamma }} = \pm 1.$$
Since this equality does not depend on the choice of $x_{0}$ and $y_{0}$ in 
${\goth t}'_{\alpha }$, it holds for all $(x,y)$ in 
${\goth t}_{\r}\times {\goth t}_{\r}$. Hence $m_{\gamma }=0$ for all $\gamma $ in 
${\cal R}\setminus \{\alpha \}$ and $m=0$ by Equality (\ref{eq3rss1}). It is 
impossible since $m_{\alpha }\neq 0$. Hence $m=-m_{\alpha }$. Then, by 
Equality (\ref{eqrss1})
$$ \prod_{\gamma \in {\cal R}\setminus \{\alpha \}} 
\gamma (x)^{m_{\gamma }}\gamma (y)^{-m_{\gamma }} = 
\pm \alpha (x)^{2m}\alpha (y)^{-2m} .$$
Since this equality does not depend on the choice of $x_{0}$ and $y_{0}$ in 
${\goth t}'_{\alpha }$, it holds for all $(x,y)$ in 
${\goth t}_{\r}\times {\goth t}_{\r}$. Then $m=0$, whence the contradiction.
\end{proof}

\subsection{Rational singularities and Gorensteinness of $X_{R}$} \label{rss2}
For $Y$ subvariety of $\ec {Gr}r{}{}d$, denote by ${\cal E}_{Y}$ the restriction to $Y$ 
of the tautological vector bundle of rank $d$ over $\ec {Gr}r{}{}d$. In particular, for 
$Y$ contained in $X_{R}$, ${\cal E}_{Y}$ is a subvariety of ${\cal E}$. For $k$ positive 
integer, denote by $\tau _{k}$ and $\varpi _{k}$ the restrictions to ${\cal E}^{(k)}$ of 
the canonical projections
$$ \xymatrix{X_{R}\times {\goth r}^{k}\ar[rr]^{\tau _{k}} && {\goth r}^{k}} 
\quad  \text{and} \quad
\xymatrix{X_{R}\times {\goth r}^{k}\ar[rr]^{\varpi _{k}} && X_{R}}  . $$

\begin{lemma}\label{lrss2}
{\rm (i)} The morphism $\tau _{k}$ is a projective and birational morphism onto
${\goth X}_{R,k}$.

{\rm (ii)} The sets $V^{(k)}$ and $\tau _{k}^{-1}(V^{(k)})$ are smooth open subsets
of ${\goth X}_{R,k}$ and ${\cal E}^{(k)}$. Moreover, for $k\geq 2$, they are big open subsets
of ${\goth X}_{R,k}$ and ${\cal E}^{(k)}$.

{\rm (iii)} The restriction of $\tau _{k}$ to $\tau _{k}^{-1}(V^{(k)})$ is an isomorphism 
onto $V^{(k)}$.
\end{lemma}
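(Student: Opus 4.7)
The plan is to prove the three parts in the order (i), (iii), (ii), with the core computation being the identification of the fibers of $\tau_k$ over points of $V_j^{(k)}$.

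For (i), I observe that $\tau_k$ is the restriction to ${\cal E}^{(k)}$ of the second projection $X_{R}\times {\goth r}^{k}\to {\goth r}^{k}$, which is a closed map since $X_{R}$ is projective; hence $\tau_{k}$ is projective, and by definition its image equals ${\goth X}_{R,k}$. For birationality, the key observation is that if $x_{1}\in {\goth r}_{\r}$, then any $V\in X_{R}$ containing $x_{1}$ must equal ${\goth r}^{x_{1}}$: indeed, by Corollary~\ref{c2sa6}(i), $V$ is a commutative subalgebra of ${\goth r}$, so $V\subset {\goth r}^{x_{1}}$, and equality follows from $\dim V = d = \dim {\goth r}^{x_{1}}$. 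Therefore the fiber of $\tau_{k}$ over any point of $V_{1}^{(k)}$ is the single point $({\goth r}^{x_{1}},x_{1},\ldots ,x_{k})$. Moreover $V_{1}^{(k)}$ is a dense open subset of ${\goth X}_{R,k}$: it is open as the intersection of ${\goth X}_{R,k}$ with ${\goth r}_{\r}\times {\goth r}^{k-1}$, it is non-empty (e.g.\ it contains $(h,0,\ldots ,0)$ for $h\in {\goth t}_{\r}$), and ${\goth X}_{R,k}$ is irreducible as the image of the irreducible vector bundle ${\cal E}^{(k)}$ over $X_{R}$. Hence $\tau_{k}$ is birational.

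For (iii), I construct an explicit regular inverse. By Lemma~\ref{lsav2},(i)-(ii) the assignment $\theta \colon x\mapsto {\goth r}^{x}$ is a regular map from ${\goth r}_{\r}$ to $\ec{Gr}r{}{}d$ with image in $X_{R}$. Define $\sigma_{1}\colon V_{1}^{(k)}\to {\cal E}^{(k)}$ by $(x_{1},\ldots ,x_{k})\mapsto (\theta (x_{1}),x_{1},\ldots ,x_{k})$ and $\sigma_{2}$ analogously on $V_{2}^{(k)}$. The argument used in (i) shows $\sigma_{j}\rond \tau_{k}$ is the identity on $\tau_{k}^{-1}(V_{j}^{(k)})$ and $\tau_{k}\rond \sigma_{j}$ is the identity on $V_{j}^{(k)}$. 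On the overlap $V_{1}^{(k)}\cap V_{2}^{(k)}$, any $V\in X_{R}$ containing both $x_{1}$ and $x_{2}$ satisfies $V=\theta (x_{1})=\theta (x_{2})$, so $\sigma_{1}$ and $\sigma_{2}$ agree and glue to a regular inverse of the restriction of $\tau_{k}$ to $\tau_{k}^{-1}(V^{(k)})$.

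For (ii), smoothness is immediate from Lemma~\ref{lrss1}: each $V_{j}^{(k)}$ is smooth, so the union $V^{(k)}$ is smooth since smoothness is local, and $\tau_{k}^{-1}(V^{(k)})$ is smooth by the isomorphism of (iii). For the big open claim when $k\geq 2$, I work in ${\cal E}^{(k)}$ and transfer back via (iii). The complement of $\tau_{k}^{-1}(V^{(k)})$ consists of triples $(V,x_{1},\ldots ,x_{k})\in {\cal E}^{(k)}$ with $x_{1},x_{2}\in V\setminus V_{\r}$, where $V_{\r}:=V\cap {\goth r}_{\r}$. Set $X_{R}^{\circ}:=\{V\in X_{R}\; \vert\; V\cap {\goth r}_{\r}\neq \emptyset \}$; this is open in $X_{R}$ and contains $R.{\goth t}$ (since $g.{\goth t}_{\r}\subset g({\goth t})\cap {\goth r}_{\r}$ for all $g\in R$), hence is dense in $X_{R}$. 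For $V\in X_{R}^{\circ}$, $V_{\r}$ is a non-empty open subset of $V$, hence dense, so $V\setminus V_{\r}$ has codimension at least $1$ in $V$ and the set $(V\setminus V_{\r})^{2}\times V^{k-2}$ has codimension at least $2$ in the fiber $V^{k}$ of $\varpi_{k}$; this gives codimension at least $2$ in $\varpi_{k}^{-1}(X_{R}^{\circ})$. Over $X_{R}\setminus X_{R}^{\circ}$, I use Corollary~\ref{csav3}(i): this set is contained in $\bigcup_{\alpha \in {\cal R}} X_{R,\alpha }$, and by Lemma~\ref{l2sav2}(iii) each $X_{R,\alpha }$ meets $X'_{R}$; since every $V\in X'_{R}$ lies in $X_{R}^{\circ}$ (it equals ${\goth r}^{x}$ for some $x\in {\goth r}_{\r}$, so $x\in V\cap {\goth r}_{\r}$), $X_{R}\setminus X_{R}^{\circ}$ meets each $X_{R,\alpha }$ in a proper closed subset and has codimension at least $2$ in $X_{R}$. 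Therefore $\varpi_{k}^{-1}(X_{R}\setminus X_{R}^{\circ})$ has codimension at least $2$ in ${\cal E}^{(k)}$, and combining both estimates yields the codimension bound for the complement of $\tau_{k}^{-1}(V^{(k)})$. The main obstacle is precisely this codimension estimate, which forces one to combine the fiberwise estimate over $X_{R}^{\circ}$ with the base estimate for $X_{R}\setminus X_{R}^{\circ}$ obtained from the stratification in Corollary~\ref{csav3}.
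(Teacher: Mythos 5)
Your proof is correct, and while the first part of the argument matches the paper's, you diverge from the paper's route on the two nontrivial steps, namely (iii) and the big-open part of (ii). For (iii), the paper observes that $\tau_k$ restricted to $\tau_k^{-1}(V^{(k)})$ is a bijective birational morphism onto the smooth variety $V^{(k)}$ and invokes Zariski's Main Theorem; you instead write down an explicit inverse by gluing the regular sections $\sigma_1$ and $\sigma_2$ coming from the map $\theta\colon x\mapsto {\goth r}^{x}$, checking agreement on the overlap via the uniqueness of the centralizer of a regular element. For the big-open claim in (ii), the paper argues by contradiction: it supposes a codimension-one irreducible component $\Sigma$ of the complement, shows $\Sigma$ is invariant under scalar rescaling so that $\varpi_k(\Sigma)$ is closed, and rules out both the case $\dim\varpi_k(\Sigma)=\dim X_R-1$ and the case $\varpi_k(\Sigma)=X_R$ using Corollary~\ref{csav3}. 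You instead give a direct stratification of the complement of $\tau_k^{-1}(V^{(k)})$ into the part over $X_R^{\circ}=\{V\mid V\cap{\goth r}_{\r}\neq\emptyset\}$, where you get codimension $\geq 2$ in the fibers since $x_1$ and $x_2$ are simultaneously constrained to the proper closed subset $V\setminus V_\r$, and the part over $X_R\setminus X_R^{\circ}$, where you get codimension $\geq 2$ in the base. Your base-codimension argument, which invokes Corollary~\ref{csav3}(i) together with Lemma~\ref{l2sav2}(iii), could be shortened: since $X'_R\subset X_R^{\circ}$, the inclusion $X_R\setminus X_R^{\circ}\subset X_R\setminus X'_R$ already gives codimension $\geq 2$ by Corollary~\ref{csav3}(ii), which asserts directly that $X'_R$ is a big open subset. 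Both approaches are equally sound; your direct decomposition is perhaps more transparent about where the two ``degrees of freedom'' come from, while the paper's contradiction argument keeps the case analysis confined to a single hypothetical component.
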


\begin{proof}
Since $X_{R}$ is a projective variety, $\tau _{k}$ is projective and its image 
is ${\goth X}_{R,k}$ by definition. For $(\poi x1{,\ldots,}{k}{}{}{})$ in $V^{(k)}$ and 
$(u,\poi x1{,\ldots,}{k}{}{}{})$ in $\tau _{k}^{-1}((\poi x1{,\ldots,}{k}{}{}{}))$, 
$u={\goth r}^{x_{1}}$ if $x_{1}$ is in ${\goth r}_{\r}$ and $u={\goth r}^{x_{2}}$ if 
$x_{2}$ is in ${\goth r}_{\r}$. As a result, the restriction of $\tau _{k}$ to 
$\tau _{k}^{-1}(V^{(k)})$ is a bijective morphism onto $V^{(k)}$. Hence $\tau _{k}$
is a birational morphism and by Zariski's Main Theorem~\cite[\S 9]{Mu}, this restriction
is an isomorphism since $V^{(k)}$ is a smooth variety by Lemma~\ref{lrss1}. So it
remains to prove that for $k\geq 2$, $\tau _{k}^{-1}(V^{(k)})$ is a big open subset of 
${\cal E}^{(k)}$

Suppose that ${\cal E}^{(k)}\setminus \tau _{k}^{-1}(V^{(k)})$ has an irreducible 
component $\Sigma $ of dimension $\dim {\cal E}^{(k)}-1$. A contradiction is expected. 
Since ${\cal E}^{(k)}$ and $\tau _{k}^{-1}(V_{k})$ are invariant under the automorphisms 
of $X_{R}\times {\goth r}^{k}$,
$$ (u,\poi x1{,\ldots,}{k}{}{}{}) \longmapsto (u,\poi {tx}1{,\ldots,}{k}{}{}{}), 
\qquad (t\in \k^{*}),$$ 
so is $\Sigma $. Then $\Sigma \cap X_{R}\times \{0\}=\varpi _{k}(\Sigma )\times \{0\}$ so 
that $\varpi _{k}(\Sigma )$ is a closed subset of $X_{R}$. Since 
$\dim \Sigma = \dim {\cal E}^{(k)}-1$, 
$\dim \varpi _{k}(\Sigma )\geq \dim X_{R}-1$. Suppose $\dim \Sigma =\dim X_{R}-1$. Then 
for all $u$ in $\varpi _{k}(\Sigma )$, $\{u\}\times u^{k}$ is in $\Sigma $. It is 
impossible since for all $u$ in a dense open subset of $\varpi _{k}(\Sigma )$, 
$u={\goth r}^{x}$ for some $x$ in ${\goth r}_{\r}$ by Corollary~\ref{csav3}. Hence 
$\varpi _{k}(\Sigma )=X_{R}$. Then for all $u$ in a dense open subset of $X'_{R}$, 
$\{u\}\times u^{k}\cap \Sigma $ has codimension $1$ in $\{u\}\times u^{k}$. Since
the image of $\{u\}\times u^{k}\cap \Sigma $ by the projection
$$ (u,\poi x1{,\ldots,}{k}{}{}{}) \longmapsto x_{1}$$
is not dense in $u$, for all $x_{1}$ in a dense open subset of its image, 
$\{u\}\times \{x_{1}\}\times u^{k-1}$ is contained in $\Sigma $, whence the contradiction 
since $u\cap {\goth r}_{\r}$ is not empty.
\end{proof}

By definition, ${\cal E}^{(k)}$ is the inverse image of $X_{R}$ by the bundle projection 
of the vector bundle 
$$ \{u,\poi x1{,\ldots,}{k}{}{}{}) \in \ec {Gr}r{}{}d\times {\goth r}^{k} \; \vert \;
\poi {u\ni x}1{,\ldots,}{k}{}{}{}\} $$
over $\ec {Gr}r{}{}d$ so that ${\cal E}^{(k)}$ is vector bundle of rank $kd$ over $X_{R}$.
In particular, ${\cal E}^{(1)}={\cal E}$. According to~\cite{Hi}, there exists a 
desingulartization $\Gamma $ of $X_{R}$ with morphism $\rho $ such that the restriction 
of $\rho $ to $\rho ^{-1}({X_{R}}_{\loc})$ is an isomorphism onto ${X_{R}}_{\loc}$. 
Let $\widetilde{{\cal E}^{(1)}}$ be the following fiber product
$$ \xymatrix{ \widetilde{{\cal E}^{(1)}} \ar[rr]^{\overline{\rho }} \ar[d] && 
{\cal E}^{(1)} \ar[d]^{\varpi _{1}}\\ \Gamma \ar[rr]_{\rho } && X_{R}}$$
with $\overline{\rho }$ the restriction map. Then $\widetilde{{\cal E}^{(1)}}$ is
a vector bundle of rank $d$ over $\Gamma $. In particular, it is a smooth variety
since $\Gamma $ is smooth. 

Let $O$ be a trivialization open subset of the vector bundle ${\cal E}^{(1)}$ and 
let $\Phi _{1}$ be a local trivialization over $O$ of 
${\cal E}^{(1)}$, whence the following commutative diagram
$$ \xymatrix{ \varpi _{1}^{-1}(O) \ar[rr]^{\Phi _{1}}  \ar[rrd]_{\varpi _{1}} && 
O \times \k^{d} \ar[d]^{\pr {1}} \\ && O} .$$  
Then $O$ is a trivialization open subset of the vector bundle ${\cal E}^{(k)}$. The 
variety ${\cal E}^{(1)}$ is a closed subbundle of ${\cal E}^{(k)}$ over 
$X_{R}$ and for some local trivialization $\Phi $ over $O$ of 
${\cal E}^{(k)}$, we have the following commutative diagram
$$ \xymatrix{ \varpi _{k} ^{-1}(O) \ar[rr]^{\Phi }  \ar[rrd]_{\varpi _{k}} && 
O \times \k^{kd} \ar[d]^{\pr {1}} \\ && O} ,$$   
$\Phi _{1}$ is the restriction of $\Phi $ to $\varpi _{1}^{-1}(O)$ and 
$\Phi (\varpi _{1}^{-1}(O)) = O \times \k^{d}\times \{0\}$.  

\begin{lemma}\label{l2rss2}
Suppose $k\geq 2$. Denote by $\mu $ a generator of $\Omega _{\k^{kd}}$ and by 
$\tilde{\rho }$ the restriction of $\rho \mul {\mathrm {id}}_{\k^{kd}}$ to 
$\rho ^{-1}(O)\times \k^{kd}$.

{\rm (i)} The sheaf $\Omega _{{{\cal E}^{(k)}}_{\loc}}$ has a global section 
$\omega $ without zero.

{\rm (ii)} The sheaf $\Omega _{O_{{\mathrm {\loc}}}}$ has a global section $\omega _{O}$
without zero.

{\rm (iii)} For some $p$ in $\k[O\times \k^{kd}]\setminus \{0\}$, 
$\tilde{\rho }^{*}(p(\omega _{O}\wedge \mu ))$ has a regular extension to 
$\rho ^{-1}(O)\times \k^{kd}$.
\end{lemma}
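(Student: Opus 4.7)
The plan is to build (i), (ii), (iii) in order from Proposition~\ref{prss1}, exploiting the product structure on $\varpi_{k}^{-1}(O)$ and the geometry of the birational map $\tau_{k}$.

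For (i): By Proposition~\ref{prss1} there is a regular top degree form $\alpha$ on $V^{(k)}$ without zero. Lemma~\ref{lrss2}(iii) gives an isomorphism $\tau_{k}^{-1}(V^{(k)}) \to V^{(k)}$, so pullback of $\alpha$ through its inverse yields a nowhere vanishing top form on $\tau_{k}^{-1}(V^{(k)})$, which is smooth and therefore contained in ${\cal E}^{(k)}_{\loc}$. By Lemma~\ref{lrss2}(ii) the complement of $\tau_{k}^{-1}(V^{(k)})$ in ${\cal E}^{(k)}$ has codimension at least $2$; the same is then true inside the open subvariety ${\cal E}^{(k)}_{\loc}$. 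Algebraic Hartogs, applied to the line bundle $\Omega_{{\cal E}^{(k)}_{\loc}}$ on the smooth variety ${\cal E}^{(k)}_{\loc}$, produces a unique regular extension $\omega$. It has no zero: the zero scheme of a section of a line bundle on a smooth variety is empty or a divisor, and such a divisor would necessarily meet the big open $\tau_{k}^{-1}(V^{(k)})$, on which the section is nowhere zero.

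For (ii): The isomorphism $\Phi$ identifies $\varpi_{k}^{-1}(O)_{\loc}$ with $O_{\loc}\times \k^{kd}$, so the restriction of $\omega$ transports to a nowhere vanishing top form $\tilde\omega$ on $O_{\loc}\times \k^{kd}$. Let $\xi_{1},\ldots,\xi_{kd}$ be the canonical coordinate vector fields on $\k^{kd}$ pulled back to the product along $\pr {2}$, and set
\[
\omega_{O} := i_{0}^{*}\bigl(\iota_{\xi_{1}}\cdots \iota_{\xi_{kd}}\tilde\omega\bigr), \qquad i_{0}: O_{\loc}\to O_{\loc}\times \k^{kd},\ x\mapsto (x,0).
\]
Locally $\tilde\omega = h\,\pr {1}^{*}(\beta)\wedge \pr {2}^{*}(\mu)$ for some trivialization $\beta$ of $\Omega_{O_{\loc}}$ and a nowhere zero regular function $h$, so the contraction equals $\pm h\,\pr {1}^{*}(\beta)$ and its restriction at $y=0$ equals $\pm h(\cdot,0)\beta$, which is nowhere zero. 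This provides the required nowhere vanishing global section of $\Omega_{O_{\loc}}$.

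For (iii): One may assume $O$ is affine. The form $\tilde\rho^{*}(\omega_{O}\wedge \mu)$ is a rational top form on the smooth variety $\rho^{-1}(O)\times \k^{kd}$ of dimension $n+kd$, regular on $\rho^{-1}(O_{\loc})\times \k^{kd}$ because $\tilde\rho$ restricts to an isomorphism there. Its pole divisor is a finite sum $\sum n_{i}D_{i}$ of prime divisors contained in $\rho^{-1}(O\setminus O_{\loc})\times \k^{kd}$. Each $D_{i}$ is an irreducible divisor inside a product with the irreducible factor $\k^{kd}$, hence of the form $D_{i}'\times \k^{kd}$ for some prime divisor $D_{i}'\subseteq \rho^{-1}(O\setminus O_{\loc})$ in $\rho^{-1}(O)$. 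By properness of $\rho$, the image $\rho(D_{i}')$ is closed in $O$ and lies in the proper closed subset $O\setminus O_{\loc}$, which is a proper subset because $O\cap X'_{R}\neq\emptyset$ by Corollary~\ref{csav3}. Since $O$ is affine, choose $q\in \k[O]\setminus \{0\}$ vanishing on $O\setminus O_{\loc}$; then $q\rond \rho$ vanishes on each $D_{i}'$ to positive order, so the pullback of $q$ along $\pr {1}: O\times \k^{kd}\to O$ gives a function whose pullback via $\tilde\rho$ vanishes to positive order on every $D_{i}$. Setting $p := (\pr {1}^{*}q)^{M}$ for $M$ larger than $\max_{i} n_{i}$ eliminates every pole of $\tilde\rho^{*}(p(\omega_{O}\wedge \mu))$, giving the required regular extension.

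The main obstacle lies in (iii): one must locate the pole divisor of $\tilde\rho^{*}(\omega_{O}\wedge \mu)$ and show it can be killed by a pulled-back function on $O\times \k^{kd}$. The key observation is that $\tilde\rho$ has the product form $\rho\times \mathrm{id}_{\k^{kd}}$, so every prime pole component factors as $D'\times \k^{kd}$, which both forces the divisor to sit over $O\setminus O_{\loc}$ and allows the killing function to be chosen in $\k[O]$ and pulled back along the proper birational map $\rho$.
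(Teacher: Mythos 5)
Your argument is correct in substance, and parts (i) and (ii) follow the same underlying line as the paper's proof (the paper packages the algebraic-Hartogs step you reinvent in (i) as Lemma~\ref{lars}, and in (ii) it obtains $\omega_{O}$ by the local gluing $\Phi_{*}(\omega)=\nu\wedge\mu$ rather than your contraction-with-$\xi_{i}$ and restriction to the zero section; both computations produce the same nowhere-vanishing global section). The genuine divergence is in (iii). The paper builds a \emph{specific} $p$: it takes a generator $\omega_{1}$ of $\Omega_{\goth r}$, writes ${\Phi_{1}}_{*}(\tau_{1}^{*}(\omega_{1}))=p\,\omega_{O}\wedge\mu_{1}$ over $O_{\loc}\times\k^{d}$, extends $p$ by normality of $X_{R}$ (Theorem~\ref{tns3}), and then observes that $\tilde{\rho}^{*}(p\,\omega_{O}\wedge\mu_{1})$ extends because it \emph{equals} $\overline{\Phi_{1}}_{*}((\tau_{1}\rond\overline{\rho})^{*}(\omega_{1}))$, a manifestly regular form; the $k\geq 2$ case is then handled by padding with $\mu'$. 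Your argument instead kills poles abstractly: you bound the pole divisor of $\tilde{\rho}^{*}(\omega_{O}\wedge\mu)$ inside $\rho^{-1}(O\setminus O_{\loc})\times\k^{kd}$ and take a high power of a function $q\in\k[O]$ cutting out $O\setminus O_{\loc}$. This buys you independence from the map $\tau_{1}$ and from the $k=1$ reduction, at the cost of using properness of $\rho$ and affineness of $O$ (the latter is harmless here since the corollary you feed into, Corollary~\ref{cars}, already assumes an affine target). One local imprecision worth flagging: a prime divisor inside a product $Y\times\k^{kd}$ is \emph{not} automatically of the form $D'\times\k^{kd}$; the conclusion you want holds here only because each $D_{i}$ is a codimension-one irreducible subset contained in $\rho^{-1}(O\setminus O_{\loc})\times\k^{kd}$, forcing $D_{i}$ to coincide with a codimension-one component $E_{j}\times\k^{kd}$ of that locus. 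Spelling out that dimension-count would close the gap.
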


\begin{proof}
(i) According to Proposition~\ref{prss1} and Lemma~\ref{lrss2}(iii), 
$\Omega _{\tau _{k}^{-1}(V^{(k)})}$ has a global section without zero. 
By Lemma~\ref{lrss2}(ii), $\tau _{k}^{-1}(V^{(k)})$ is a smooth big open subset
of ${\cal E}^{(k)}$. So, by Lemma~\ref{lars}, 
$\Omega _{{{\cal E}^{(k)}}_{\loc}}$ has a global section without zero.

(ii) Since $\mu $ is a generator of $\Omega _{\k^{kd}}$, there exists a unique $\nu $ in 
$\tk {\k}{\k[\k^{kd}]}\Gamma (O_{\loc},\Omega _{O_{\loc}})$ such that 
$$ \Phi _{*}(\omega \left \vert \right._{\varpi _{k}^{-1}(O_{\loc})}) = 
\nu \wedge \mu .$$
Moreover, $\nu $ has no zero since so has $\omega $. Let $V$ be an affine open subset of 
$O_{\loc}$ such that the restriction of $\Omega _{O_{\loc}}$ to $V$ is locally free, 
generated by the local section $\omega _{V}$. Then for some $p_{V}$ in 
$\k[V\times \k^{kd}]$, 
\begin{eqnarray}\label{eqrss2}
\Phi _{*}(\omega \left \vert \right._{\varpi _{k}^{-1}(V)}) = p_{V}\omega _{V}\wedge \mu .
\end{eqnarray}
Then $p_{V}$ has no zero since so has $\nu \wedge \mu $. As a result, $p_{V}$ is in 
$\k[V]$ and $p_{V}\omega _{V}$ is a local section of $\Omega _{O_{\loc}}$ without zero.
By the unicity of the decomposition (\ref{eqrss2}), for two different affine open subsets
$V$ and $V'$ as above, the differential forms $p_{V}\omega _{V}$ and $p_{V'}\omega _{V'}$
have the same restriction to $V\cap V'$. As a result, since such affine open subsets
cover $O_{\loc}$, for some global section $\omega _{O}$ of $\Omega _{O_{\loc}}$,
$$ \Phi _{*}(\omega \left \vert \right._{\varpi _{k}^{-1}(O_{\loc})}) = 
\omega _{O} \wedge \mu .$$
Moreover, $\omega _{O}$ is unique and has no zero.

(iii) Let $\omega _{1}$ be a generator of $\Omega _{{\goth r}}$ and let $\mu _{1}$ be
a generator of $\Omega _{\k^{d}}$. By (i), $\omega _{O}\wedge \mu _{1}$ is a global 
section of $\Omega _{O_{\loc}\times \k^{d}}$, without zero. So for some regular 
function $p$ on $O_{\loc}\times \k^{d}$, 
\begin{eqnarray}\label{eq2rs}
{\Phi _{1}}_{*}((\tau _{1})^{*}(\omega _{1}) 
\left \vert \right. _{\varpi _{1}^{-1}(O_{\loc})}) = p \omega _{O}\wedge \mu _{1}
. \end{eqnarray}
According to Theorem~\ref{tns3}, $X_{R}$ is normal. Then so is $O$ and $p$ has a regular 
extension to $O\times \k^{d}$. Denote again by $p$ this extension. According to Equality 
(\ref{eq2rs}), the differential form 
$\tilde{\rho }^{*}(p\omega _{O}\wedge \mu _{1})$ on $\rho ^{-1}(O_{\loc})\times \k^{d}$ 
has a regular extension to $\rho ^{-1}(O)\times \k^{d}$. In fact, denoting by 
$\overline{\Phi _{1}}$ the local trivialization over $\rho ^{-1}(O)$ of 
$\widetilde{{\cal E}^{(1)}}$ such that the following diagram 
$$ \xymatrix{ (\varpi _{1}\rond \overline{\rho }^{-1})(O) 
\ar[rr]^{\overline{\Phi _{1}}} \ar[d]_{\overline{\rho }} && 
\rho ^{-1}(O)\times \k^{d} \ar[d]^{\tilde{\rho }} \\ 
\varpi _{1}^{-1}(O) \ar[rr]_{\Phi _{1}} && O\times \k^{d} }$$ 
is commutative, it is the restriction to $\rho ^{-1}(O_{\loc})\times \k^{d}$ of 
$$\overline{\Phi _{1}}_{*}((\tau_{1}\rond \overline{\rho })^{*}(\omega _{1}) 
\left \vert \right. _{(\varpi _{1}\rond \overline{\rho }^{-1})^{-1}(O)} ).$$
For some generator $\mu '$ of $\Omega _{\k^{(k-1)d}}$, $\mu = \mu _{1}\wedge \mu '$
and $\k[O\times \k^{d}]$ is naturally embedded in $\k[O\times \k^{kd}]$.
As a result, $\tilde{\rho }^{*}(p\omega _{O}\wedge \mu )$
has a regular extension to $\rho ^{-1}(O)\times \k^{kd}$.
\end{proof}

\begin{prop}\label{prss2}
The variety $X_{R}$ is Gorenstein with rational singularities.
\end{prop}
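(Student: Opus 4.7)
The plan is to deduce both properties from Proposition~\ref{p2int} (whose content is established via Lemma~\ref{l2rss2}) combined with Theorem~\ref{tns3}, by working on the total space ${\cal E}^{(k)}$ for $k \geq 2$ and then descending along the vector bundle projection ${\cal E}^{(k)} \to X_R$. The argument splits naturally into establishing Gorensteinness and then rational singularities.

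For Gorensteinness, I would first observe that ${\cal E}^{(k)}$, being a vector bundle of rank $kd$ over the Cohen-Macaulay variety $X_R$, is itself Cohen-Macaulay. Lemma~\ref{l2rss2}(i) supplies a nowhere-vanishing regular differential form of top degree on the smooth locus $({\cal E}^{(k)})_{\loc}$; since this smooth locus is a big open subset of ${\cal E}^{(k)}$ (by Lemma~\ref{lrss2}(ii) for $k\geq 2$), the dualizing sheaf $\omega_{{\cal E}^{(k)}}$ is invertible — this is the Cohen-Macaulay criterion invoked via the appendix lemmas. Hence ${\cal E}^{(k)}$ is Gorenstein. Because ${\cal E}^{(k)} \to X_R$ is a smooth surjection with connected Gorenstein fibers, Gorensteinness descends to $X_R$ by standard faithful flatness.

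For rational singularities, I would invoke Hironaka's theorem to fix a desingularization $\rho: \Gamma \to X_R$ that is an isomorphism over ${X_R}_{\loc}$; the fiber product $\widetilde{{\cal E}^{(k)}} = \Gamma \times_{X_R} {\cal E}^{(k)}$ is smooth (the projection to $\Gamma$ is again a vector bundle), so $\overline{\rho}: \widetilde{{\cal E}^{(k)}} \to {\cal E}^{(k)}$ is a resolution. The Kempf criterion for rational singularities of a Cohen-Macaulay Gorenstein variety reduces the problem to checking that a generator of $\omega_{{\cal E}^{(k)}}$ on the smooth locus pulls back regularly under $\overline{\rho}$. In a local trivialization chart $O \times \k^{kd}$, normality of $X_R$ (Theorem~\ref{tns3}) lets us extend the factor $p$ from $O_{\loc} \times \k^{kd}$ to $O \times \k^{kd}$, and Lemma~\ref{l2rss2}(iii) then gives that $\tilde{\rho}^*(p(\omega_O \wedge \mu))$ is regular on $\rho^{-1}(O) \times \k^{kd}$. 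These local regular forms will glue — by the uniqueness coming from birationality of $\overline{\rho}$ on each chart — to a global regular form on $\widetilde{{\cal E}^{(k)}}$ covering the generator of $\omega_{{\cal E}^{(k)}}$, so ${\cal E}^{(k)}$ has rational singularities. Rational singularities then descend through the smooth morphism ${\cal E}^{(k)} \to X_R$ to yield the conclusion for $X_R$.

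The most delicate step, I expect, is identifying the form supplied by Lemma~\ref{l2rss2}(iii) with (a multiple of) the generator $\omega_O \wedge \mu$ of $\omega_{{\cal E}^{(k)}}$ in a way consistent enough to glue over an open cover of $X_R$: the factor $p$ arises as the ratio of $\tau_k^*$ of a volume form on ${\goth r}^k$ against the dualizing generator and a priori varies chart by chart, so one has to check that this does not obstruct the gluing of $\tilde{\rho}^*(p\,\omega_O \wedge \mu)$ into a genuine regular section on $\widetilde{{\cal E}^{(k)}}$. A secondary technical point is the descent of rational singularities along the smooth projection ${\cal E}^{(k)} \to X_R$, which requires transferring the vanishing $R^i \overline{\rho}_* \mathcal{O}_{\widetilde{{\cal E}^{(k)}}} = 0$ to $R^i \rho_* \mathcal{O}_\Gamma = 0$ via flat base change.
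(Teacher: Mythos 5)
Your proposal captures the broad architecture — use Theorem~\ref{tns3} for normality and Cohen--Macaulayness, Lemma~\ref{l2rss2} for the differential-form data, and then feed these into the appendix machinery — but there are two genuine gaps, both of which the paper's actual proof sidesteps by working chart-by-chart rather than globally.

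First, you attempt to glue the local forms $\tilde{\rho}^{*}(p(\omega_{O}\wedge \mu))$ from Lemma~\ref{l2rss2}(iii) into a global regular top-degree form on $\widetilde{{\cal E}^{(k)}}$, and you rightly flag this as the delicate step — but you do not resolve it. The obstruction is real: both the factor $p$ and the form $\omega_{O}$ are manufactured from the local trivialization $\Phi$ over $O$, so on overlaps $O\cap O'$ the expressions for two charts differ by a transition function that is not obviously a unit; the appeal to "uniqueness from birationality of $\overline{\rho}$" shows uniqueness of the \emph{extension} on each chart, not agreement of the two chart-extensions on the overlap. The paper avoids this entirely by noticing that Gorensteinness and rational singularities are local: it applies Corollary~\ref{cars} directly to each affine $O\times \k^{kd}$, obtains both properties there, descends to $O$ via Lemma~\ref{lsi}(i),(ii), and then invokes the open cover of $X_{R}$. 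No gluing of forms is ever needed.

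Second, the Kempf-type criterion you invoke for rational singularities on a Gorenstein variety asks that $\overline{\rho}^{*}$ of a \emph{generator} of the dualizing sheaf extend regularly. But Lemma~\ref{l2rss2}(iii) only gives that $\overline{\rho}^{*}$ of $p\cdot(\omega_{O}\wedge\mu)$ extends — and $p\in\k[O\times\k^{kd}]\setminus\{0\}$ is allowed to vanish on a divisor; nothing in the construction makes it a unit. You have not explained how to remove the factor $p$. In fact this is exactly what Proposition~\ref{pars} and Corollary~\ref{cars} are crafted to handle: the hypothesis there permits an arbitrary nonzero $p$, and the proof works with ${\rm R}\tau_{*}(\Omega_{Z})\approx \frac{1}{p}\mathcal{O}_{X}$ as a flat fractional ideal, tensoring it against the dualizing complex and using Cohen--Macaulayness and Grauert--Riemenschneider to conclude. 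A related order-of-argument issue: your Gorensteinness step treats the freeness of $\iota_{*}(\Omega_{X_{\loc}})$ (from Lemma~\ref{l2ars} and the nowhere-vanishing form) as immediately yielding an invertible dualizing sheaf, but the appendix secures the identification of $\iota_{*}(\Omega_{X_{\loc}})$ with the canonical module only \emph{after} rational singularities are in place, via Lemma~\ref{l3ars} and the $\tau_{*}(\Omega_{Z})=\iota_{*}(\Omega_{X_{\loc}})$ equality of~\cite[p.50]{KK}; the paper therefore proves rational singularities and Gorensteinness together, not sequentially as you propose.
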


\begin{proof}
According to Theorem~\ref{tns3}, $X_{R}$ is normal and Cohen-Macaulay. Then
by Lemma~\ref{l2rss2},(ii) and (iii) and Corollary~\ref{cars}, 
$O\times \k^{kd}$ is Gorenstein with rational singularities. Then so is $O$ by 
Lemma~\ref{lsi},(i) and (ii). Since there is a cover of $X_{R}$ by open subsets as $O$, 
$X_{R}$ is Gorenstein with rational singularities.
\end{proof}

As already mentioned, ${\goth u}$ is in ${\cal C}_{{\goth h},*}$, whence 
Theorem~\ref{tint} by Proposition~\ref{prss2}.

\appendix

\section{Rational Singularities} \label{ars}
Let $X$ be an affine irreducible normal variety. 

\begin{lemma}\label{lars}
Let $Y$ be a smooth big open subset of $X$. 

{\rm (i)} All regular differential form of top degree on $Y$ has a unique regular 
extension to $X_{\loc}$.

{\rm (ii)} Suppose that $\omega $ is a regular differential form of top degree on $Y$, 
without zero. Then the regular extension of $\omega $ to $X_{\loc}$ has no zero.
\end{lemma}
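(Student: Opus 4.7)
The plan is to exploit that $X_{\loc}$ is smooth and $Y$ is a big open subset of $X_{\loc}$, combined with the reflexive-sheaf/Hartogs extension principle on normal varieties.

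First, observe that $Y$ is contained in $X_{\loc}$ since $Y$ is smooth. Moreover, since $X \setminus Y$ has codimension at least $2$ in $X$ by definition of a big open subset, the complement $X_{\loc} \setminus Y$ has codimension at least $2$ in $X_{\loc}$. On the smooth irreducible variety $X_{\loc}$, the sheaf $\Omega_{X_{\loc}}$ of regular differential forms of top degree is an invertible (hence locally free, hence reflexive) sheaf.

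For (i), I would apply the standard Hartogs-type extension statement for reflexive (in particular locally free) sheaves on a normal noetherian scheme: if $\mathcal{F}$ is reflexive on a normal variety $Z$ and $U \subset Z$ is a big open subset, then the restriction map $\Gamma(Z,\mathcal{F}) \to \Gamma(U,\mathcal{F})$ is an isomorphism. Apply this with $Z = X_{\loc}$, $U = Y$, and $\mathcal{F} = \Omega_{X_{\loc}}$. This yields a unique regular extension of any $\omega \in \Gamma(Y, \Omega_Y)$ to an element of $\Gamma(X_{\loc}, \Omega_{X_{\loc}})$. Uniqueness follows alternatively just from irreducibility of $X_{\loc}$ and the density of $Y$, via the identity principle for sections of a torsion-free sheaf.

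For (ii), let $\tilde\omega \in \Gamma(X_{\loc}, \Omega_{X_{\loc}})$ be the extension supplied by (i). Because $\Omega_{X_{\loc}}$ is an invertible sheaf on a smooth variety, the zero locus $Z(\tilde\omega) \subset X_{\loc}$ is either empty or a Cartier divisor, hence of pure codimension $1$ in $X_{\loc}$. If $Z(\tilde\omega)$ were nonempty, it would be a closed subset of codimension $1$ that is entirely contained in $X_{\loc} \setminus Y$ (since by hypothesis $\omega = \tilde\omega|_Y$ has no zero on $Y$). But $X_{\loc} \setminus Y$ has codimension at least $2$, giving a contradiction. Hence $Z(\tilde\omega) = \emptyset$, i.e.\ $\tilde\omega$ has no zero.

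I do not anticipate a real obstacle here: both parts reduce to well-known facts about reflexive/invertible sheaves on normal resp.\ smooth varieties once one checks the codimension bookkeeping. The only point worth being careful about is to verify that $Y \subset X_{\loc}$ and that $X_{\loc} \setminus Y$ still has codimension $\geq 2$ inside $X_{\loc}$ — this is immediate because $X_{\loc}$ is an open subset of $X$ and codimension is preserved under restriction to open subsets.
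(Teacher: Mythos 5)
Your proof is correct and follows essentially the same route as the paper's: both reduce (i) to extending sections of the invertible sheaf $\Omega_{X_{\loc}}$ across the codimension-$\geq 2$ set $X_{\loc}\setminus Y$ using normality, and both prove (ii) by noting that a nonempty zero locus of a section of a line bundle on a smooth variety has pure codimension $1$ yet would be forced to lie in $X_{\loc}\setminus Y$. The only difference is presentational: you invoke the Hartogs-type extension theorem for reflexive sheaves on normal varieties as a black box, whereas the paper re-derives it in place by choosing local trivializations $\omega_i$ of $\Omega_{X_{\loc}}$, extending the coefficient functions $a_i$ across codimension $2$ using normality of each $O_i$, and gluing via torsion-freeness; your version is cleaner but relies on quoting a standard lemma the paper chose to prove inline.
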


\begin{proof}
(i) Since $\Omega _{X_{\loc}}$ is a locally free module of rank one, there is an affine 
open cover $\poi O1{,\ldots,}{k}{}{}{}$ of $X_{\loc}$ such that the restriction of 
$\Omega _{X_{\loc}}$ to $O_{i}$ is a free $\an {O_{i}}{}$-module generated by some 
section $\omega _{i}$. For $i=1,\ldots,k$, set $O'_{i} := O_{i}\cap Y$. Let $\omega $ be 
a regular differential form of top degree on $Y$. For $i=1,\ldots,k$, for some regular 
function $a_{i}$ on $O'_{i}$, $a_{i}\omega _{i}$ is the restriction of $\omega $ to 
$O'_{i}$. As $Y$ is a big open subset of $X$, $O'_{i}$ is a big open subset of $O_{i}$. 
Hence $a_{i}$ has a regular extension to $O_{i}$ since $O_{i}$ is normal. Denoting again 
by $a_{i}$ this extension, for $1\leq i,j\leq k$, $a_{i}\omega _{i}$ and 
$a_{j}\omega _{j}$ have the same restriction to $O'_{i}\cap O'_{j}$ and $O_{i}\cap O_{j}$ since $\Omega _{X_{\loc}}$ is torsion free as a locally free module. Let $\omega '$ be 
the global section of $\Omega _{X_{\loc}}$ extending the $a_{i}\omega _{i}$'s. Then 
$\omega '$ is a regular extension of $\omega $ to $X_{\loc}$ and this extension is unique
since $Y$ is dense in $X_{\loc}$ and $\Omega _{X_{\loc}}$ is torsion free.

(ii)  Suppose that $\omega $ has no zero. Let $\Sigma $ be the nullvariety of $\omega '$ 
in $X_{\loc}$. If it is not empty, $\Sigma $ has codimension $1$ in $X_{\loc}$. As $Y$ is
a big open subset of $X$, $\Sigma \cap X_{\loc}$ is not empty if so is $\Sigma $. As a 
result, $\Sigma $ is empty.
\end{proof}

Denote by $\iota $ the canonical injection from $X_{\loc}$ into $X$.

\begin{lemma}\label{l2ars}
Suppose that $\Omega _{X_{\loc}}$ has a global section $\omega $ without zero. Then 
the $\an X{}$-module $\iota _{*}(\Omega _{X_{\loc}})$ is free of rank $1$. 
More precisely, the morphism $\theta $:
$$ \xymatrix{ \an X{} \ar[rr]^{\theta } && \iota _{*}(\Omega _{X_{\loc}})}, \qquad
\psi \longmapsto \psi \omega $$
is an isomorphism.
\end{lemma}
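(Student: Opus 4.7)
The approach is direct: establish injectivity of $\theta$ from the non-vanishing of $\omega$, and then build the inverse map locally on a trivializing cover of $X_{\loc}$ and extend it to $X$ using normality.

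First I would record the background fact that, since $X$ is normal, its singular locus has codimension $\geq 2$, so $X_{\loc}$ is a big open subset of $X$ (and for any open $U \subseteq X$, $U \cap X_{\loc}$ is a big open subset of $U$). This is what will allow the final extension step to succeed.

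For injectivity of $\theta$ on sections over an open $U \subseteq X$: if $\psi \in \an X{}(U)$ satisfies $\psi \omega \left\vert\right._{U \cap X_{\loc}} = 0$, then at every point $x \in U \cap X_{\loc}$ the stalk $\omega_x$ is a generator of the free rank one $\an {X_{\loc}}x$-module $\Omega _{X_{\loc},x}$ (because $\omega $ has no zero), so $\psi_x \omega_x = 0$ forces $\psi_x = 0$. Hence $\psi$ vanishes on the dense subset $U \cap X_{\loc}$ of $U$, and by reducedness $\psi = 0$.

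For surjectivity, let $\eta $ be a section of $\iota _{*}(\Omega _{X_{\loc}})$ over $U$, i.e.\ a regular differential form of top degree on $U \cap X_{\loc}$. Cover $U \cap X_{\loc}$ by affine open sets $O_{i}$ on which $\Omega _{X_{\loc}}$ is free, generated by some local section $\omega _{i}$. Write $\omega \left\vert\right._{O_{i}} = a_{i}\omega _{i}$ and $\eta \left\vert\right._{O_{i}} = b_{i}\omega _{i}$ with $a_{i},b_{i} \in \an{O_{i}}{}$; the non-vanishing of $\omega $ makes $a_{i}$ a unit on $O_{i}$, so $\psi _{i} := b_{i}/a_{i}$ is regular on $O_{i}$ and satisfies $\eta = \psi _{i}\omega $ on $O_{i}$. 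On overlaps $O_{i}\cap O_{j}$, uniqueness in the torsion-free module $\Omega _{X_{\loc}}$ yields $\psi _{i}=\psi _{j}$, so the $\psi _{i}$'s glue to a regular function $\psi _{0}$ on $U \cap X_{\loc}$ with $\eta = \psi _{0}\,\omega \left\vert\right._{U \cap X_{\loc}}$.

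Finally I would extend $\psi _{0}$ across $U \setminus X_{\loc}$. Since $U$ is open in the normal variety $X$, it is itself normal; and $U \cap X_{\loc}$ is a big open subset of $U$ by the first step. By the standard extension property (regular functions on the complement of a codimension-$\geq 2$ closed subset of a normal variety extend uniquely), $\psi _{0}$ extends uniquely to $\psi \in \an X{}(U)$, and then $\theta (\psi ) = \eta $. These constructions are clearly compatible with restriction, so they define a sheaf-theoretic inverse to $\theta $. There is no real obstacle; the only point requiring care is the final extension, which is exactly where normality of $X$ enters essentially.
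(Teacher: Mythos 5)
Your proposal is correct and follows essentially the same route as the paper: use that $\omega$ is nowhere-vanishing to divide a given section of $\iota_{*}(\Omega_{X_{\loc}})$ by $\omega$ to get a regular function on $U\cap X_{\loc}$, then extend it across the codimension-$\geq 2$ locus using normality of $X$. You merely spell out two details the paper leaves implicit, namely the injectivity check and the local trivialization argument producing the multiplier function on $U\cap X_{\loc}$.
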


\begin{proof}
For $\varphi $ a local section of $\iota _{*}(\Omega _{X_{\loc}})$ above the open subset 
$U$ of $X$, for some regular function $\psi $ on $U\cap X_{\loc}$, 
$\psi \omega $ is the restriction of $\varphi $ to $U\cap X_{\loc}$. Since $X$ is normal,
so is $U$ and $U\cap X_{\loc}$ is a big open subset of $U$. Hence $\psi $ has a regular 
extension to $U$. As a result, there exists a well defined morphism from 
$\iota _{*}(\Omega _{X_{\loc}})$ to $\an X{}$ whose inverse is $\theta $.
\end{proof}

According to \cite{Hi}, $X$ has a desingularization $Z$ with morphism $\tau $ such that 
the restriction of $\tau $ to $\tau ^{-1}(X_{\loc})$ is an isomorphism onto $X_{\loc}$. 
For $U$ open subset of $X$, denote by $\tau _{U}$ the restriction of $\tau $ to 
$\tau ^{-1}(U)$.

\begin{prop}\label{pars}
Suppose that $X$ is Cohen-Macaulay and that there exists a morphism 
$\mu : \xymatrix{ \an Z{} \ar[r] & \Omega _{Z}}$
such that for some $p$ in $\k[X]$, $\tau _{*}\mu $ is an isomorphism onto 
$p\tau _{*}(\Omega _{Z})$. Then $X$ has rational singularities. 
\end{prop}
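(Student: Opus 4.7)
The strategy is to use the hypothesis to identify $\tau_*\Omega_Z$ with the dualizing sheaf $\omega_X$ of $X$, and then to invoke the Kempf criterion for rational singularities on a Cohen--Macaulay variety. First, I would reformulate the data: the morphism $\mu$ is equivalent to multiplication by a global section $\tilde\omega\in\Gamma(Z,\Omega_Z)$, and the assumption that $\tau_*\mu\colon\an X{}\to\tau_*\Omega_Z$ is an isomorphism onto $p\tau_*\Omega_Z$ means in particular that $\tilde\omega\in p\tau_*\Omega_Z$, so $\tilde\omega=p\omega$ for some section $\omega\in\tau_*\Omega_Z$. Since $X$ is normal, $\tau_*\Omega_Z$ is torsion free and multiplication by $p$ on it is injective; cancelling $p$ in the equality $p\tau_*\Omega_Z=p\,\an X{}\cdot\omega$ yields $\tau_*\Omega_Z=\an X{}\cdot\omega$, and $\omega$ is then a free generator, since $f\omega=0$ implies $f\tilde\omega=0$ and hence $f=0$ by injectivity of $\tau_*\mu$. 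Thus $\tau_*\Omega_Z$ is a free $\an X{}$-module of rank one.

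Second, I would descend to the smooth locus. Because $\tau$ is an isomorphism above $X_{\loc}$, the restriction $\omega\vert_{X_{\loc}}$ is a nowhere vanishing global section of $\Omega_{X_{\loc}}$; hence Lemma~\ref{l2ars} applies and gives that $\iota_*\Omega_{X_{\loc}}$ is free of rank one with generator $\omega\vert_{X_{\loc}}$. The natural restriction map $\tau_*\Omega_Z\to\iota_*\Omega_{X_{\loc}}$, which sends $\omega$ to $\omega\vert_{X_{\loc}}$, is then an isomorphism of free modules. Since $X$ is normal and Cohen--Macaulay, $\omega_X$ satisfies Serre's condition $S_2$ and agrees with $\Omega_{X_{\loc}}$ on the big open subset $X_{\loc}$, so $\omega_X=\iota_*\Omega_{X_{\loc}}$; combining these identifications, we obtain $\tau_*\Omega_Z\simeq\omega_X$.

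Finally, combining this identification with Grauert--Riemenschneider vanishing $R^i\tau_*\Omega_Z=0$ for $i>0$, I would invoke Kempf's criterion, according to which a Cohen--Macaulay variety $X$ has rational singularities if and only if the trace map $\tau_*\Omega_Z\to\omega_X$ is an isomorphism. The main delicate point, and the one I expect to be the main obstacle, is verifying that the natural restriction map constructed above coincides with the canonical trace map $\tau_*\Omega_Z\to\omega_X$; this should follow from the fact that both sheaves are $S_2$ on the normal variety $X$ and the two maps agree with the identity over the big open smooth locus $X_{\loc}$, hence must coincide everywhere.
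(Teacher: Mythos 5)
Your proof is correct, but it takes a genuinely different route from the paper's. You reduce the statement to Kempf's criterion: after carefully showing that $\tau_*\Omega_Z$ is free of rank one generated by $\omega$ (cancelling $p$ in $p\tau_*\Omega_Z=p\,\an X{}\omega$, which is legitimate since $\tau_*\Omega_Z$ is torsion-free on normal $X$), you identify $\tau_*\Omega_Z$ with $\iota_*\Omega_{X_{\loc}}=\omega_X$ via the restriction map and invoke the characterization that a normal Cohen--Macaulay $X$ has rational singularities iff the trace map $\tau_*\omega_Z\to\omega_X$ is an isomorphism. The paper instead proves the needed consequence directly: it applies Grothendieck duality to the proper morphism $\tau$, identifies the left side with $R\tau_*\an Z{}$, uses Grauert--Riemenschneider and the hypothesis to write $R\tau_*\Omega_Z\simeq\frac{1}{p}\an X{}$, and then uses Cohen--Macaulayness of $X$ to see that the dualizing complex (and its tensor with the flat module $p\an X{}$) is concentrated in a single degree, forcing $R^i\tau_*\an Z{}=0$ for $i>0$. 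So the paper essentially re-runs the duality argument underlying Kempf's criterion in this particular situation, while you black-box that argument into the criterion itself; your route is shorter given the criterion, the paper's is more self-contained. As for your worry about the trace map versus the restriction map: this is not a real obstacle, since on a normal variety both $\tau_*\omega_Z$ and $\omega_X$ embed as $S_2$-subsheaves of the constant sheaf of top-degree rational differential forms, and both maps are simply that inclusion; your argument (agreement over the big smooth locus) handles this cleanly.
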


The following proof is the weak variation of the proof of~\cite[Lemma 2.3]{Hin}.

\begin{proof}
Since $Z$ and $X$ are varieties over $\k$, we have the commutative diagram
$$\xymatrix{ Z \ar[rr]^{\tau } \ar[rd]_{p} &&
X \ar[ld]^{q} \\ & {\mathrm {Spec}}(\k) & } .$$
According to ~\cite[V. \S 10.2]{Ha0}, $p^{!}(\k)$ and $q^{!}(\k)$ are dualizing complexes
over $Z$ and $X$ respectively. Furthermore, by ~\cite[VII, 3.4]{Ha0} or 
\cite[4.3,(ii)]{Hin}, $p^{!}(\k)[-\dim Z]$ equals $\Omega _{Z}$. Set 
$\mathpzc{D} := q^{!}(\k)[-\dim Z]$ so that 
$\tau ^{!}(\mathpzc{D})=\Omega _{Z}$ by ~\cite[VII, 3.4]{Ha0} or \cite[4.3,(iv)]{Hin}.
In particular, $\mathpzc{D}$ is dualizing over $X$.
 
Since $\tau $ is a projective morphism, we have the isomorphism 
\begin{eqnarray}\label{eqars}
{\mathrm {R}}\tau _{*}({\mathrm {R}}\hhom_{Z}(\Omega _{Z},\Omega _{Z}))
\longrightarrow {\mathrm {R}}\hhom_{X}
({\mathrm {R}}(\tau )_{*}(\Omega _{Z}),\mathpzc{D})
\end{eqnarray}
by ~\cite[VII, 3.4]{Ha0} or \cite[4.3,(iii)]{Hin}. 
Since ${\mathrm {H}}^{i}({\mathrm {R}}\hhom_{Z}(\Omega _{Z},\Omega _{Z}))=\an Z{}$ for
$i=0$ and $0$ for $i>0$, the left hand side of (\ref{eqars}) can be identified with
${\mathrm {R}\tau _{*}}(\an Z{})$.

According to Grauert-Riemenschneider Theorem \cite{GR}, 
${\mathrm {R}}\tau _{*}(\Omega _{Z})$ has only cohomology in degree $0$. Since
$\tau $ is projective and birational and $Z$ is normal, $\tau _{*}(\an Z{})=\an X{}$.
So by assumption of the proposition,
$$ {\mathrm {R}}\tau _{*}(\Omega _{Z}) \approx \frac{1}{p}\an X{}{},$$
whence
$${\mathrm {R}}\hhom_{X}({\mathrm {R}}(\tau )_{*}(\Omega _{Z}),\mathpzc{D}) 
\approx \tk{\an X{}}{p\an X{}{}}\mathpzc{D}$$
and (\ref{eqars}) can be rewritten as
\begin{eqnarray}\label{eq2ars}
{\mathrm {R}}\tau _{*}(\an Z{}) \approx \tk{\an X{}}{p\an X{}{}}\mathpzc{D} .
\end{eqnarray}
Since $X$ is Cohen-Macaulay, $\mathpzc{D}$ has cohomology in only one degree. So, 
by flatness of the $\an X{}$-module $p\an X{}$, $\tk{\an X{}}{p\an X{}{}}\mathpzc{D}$
has cohomology in only one degree. As a result, by (\ref{eq2ars}), 
${\mathrm {R}}^{i}\tau _{*}(\an Z{})=0$ for $i>0$, that is $X$ has rational singularities.
\end{proof}

Denote by ${\cal M}$ the cohomology in degree $0$ of $\mathpzc{D}$.

\begin{lemma}\label{l3ars}
Suppose that $X$ has rational singularities. Then the $\an X{}$-modules 
$\tau _{*}(\Omega _{Z})$ and ${\cal M}$ are isomorphic. In particular, 
$\tau _{*}(\Omega _{Z})$ has finite injective dimension.
\end{lemma}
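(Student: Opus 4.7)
The strategy is to re-use the isomorphism~\eqref{eqars} from the proof of Proposition~\ref{pars} under the stronger hypothesis that $X$ has rational singularities, and then invoke biduality for the dualizing complex $\mathpzc{D}$ to transfer the identification to $\mathpzc{D}$ itself. The key observation is that when $X$ already has rational singularities, no factor of $p$ appears: ${\mathrm{R}}\tau_{*}(\an Z{})$ collapses cleanly to $\an X{}$, and this makes $\tau_{*}(\Omega_{Z})$ the dual of $\an X{}$ with respect to $\mathpzc{D}$.

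First, I would recall that by Grauert--Riemenschneider~\cite{GR}, ${\mathrm{R}}\tau_{*}(\Omega_{Z})$ is concentrated in degree $0$, so it equals $\tau_{*}(\Omega_{Z})$; and that $X$ being normal, $\tau$ projective birational, gives $\tau_{*}(\an Z{})=\an X{}$. The rational singularities hypothesis then says ${\mathrm{R}}^{i}\tau_{*}(\an Z{})=0$ for $i>0$, hence ${\mathrm{R}}\tau_{*}(\an Z{})\approx \an X{}$ placed in degree $0$. Plugging this into the isomorphism
\[
{\mathrm{R}}\tau_{*}({\mathrm{R}}\hhom_{Z}(\Omega_{Z},\Omega_{Z}))
\longrightarrow
{\mathrm{R}}\hhom_{X}({\mathrm{R}}\tau_{*}(\Omega_{Z}),\mathpzc{D})
\]
used in the proof of Proposition~\ref{pars}, and identifying the left hand side with ${\mathrm{R}}\tau_{*}(\an Z{})\approx \an X{}$ as there, I obtain
\[
\an X{} \approx {\mathrm{R}}\hhom_{X}(\tau_{*}(\Omega_{Z}),\mathpzc{D})
\]
in the derived category of $\an X{}$-modules.

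Next, I would apply ${\mathrm{R}}\hhom_{X}(-,\mathpzc{D})$ to both sides. Since $\mathpzc{D}$ is a dualizing complex on $X$ and $\tau_{*}(\Omega_{Z})$ is a coherent $\an X{}$-module (so in particular lies in the bounded derived category with coherent cohomology), biduality~\cite[V.\S2]{Ha0} gives
\[
{\mathrm{R}}\hhom_{X}({\mathrm{R}}\hhom_{X}(\tau_{*}(\Omega_{Z}),\mathpzc{D}),\mathpzc{D})
\approx \tau_{*}(\Omega_{Z}),
\]
while on the other side ${\mathrm{R}}\hhom_{X}(\an X{},\mathpzc{D})=\mathpzc{D}$. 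Combining these, $\mathpzc{D}\approx \tau_{*}(\Omega_{Z})$ in the derived category. As the right hand side is concentrated in degree $0$, so is $\mathpzc{D}$; this forces $\mathpzc{D}={\cal M}$ placed in degree $0$, and we read off the isomorphism ${\cal M}\approx \tau_{*}(\Omega_{Z})$ of $\an X{}$-modules.

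Finally, the ``in particular'' statement is immediate: a dualizing complex on a Noetherian scheme has finite injective dimension by definition, so ${\cal M}$, being $\mathpzc{D}$ concentrated in a single degree, has finite injective dimension as an $\an X{}$-module, and the same therefore holds for the isomorphic module $\tau_{*}(\Omega_{Z})$. The only delicate point I expect is checking that biduality applies to $\tau_{*}(\Omega_{Z})$ with the ungraded conventions of~\cite{Ha0}, but this is standard since $\tau$ is projective so $\tau_{*}(\Omega_{Z})$ is coherent on the Noetherian scheme $X$.
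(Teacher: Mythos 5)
Your proposal is correct and follows essentially the same route as the paper: rewrite~\eqref{eqars} using ${\mathrm{R}}\tau_{*}(\an Z{})\approx\an X{}$ and Grauert--Riemenschneider, then apply biduality for the dualizing complex $\mathpzc{D}$ to obtain $\tau_{*}(\Omega_{Z})\approx\mathpzc{D}$ and read off the degree-$0$ isomorphism with ${\cal M}$. The one small stylistic difference is that you \emph{derive} the fact that $\mathpzc{D}$ is concentrated in degree~$0$ from this derived-category identification (since the left side is a sheaf), whereas the paper simply asserts it up front (implicitly via rational singularities $\Rightarrow$ Cohen--Macaulay); your version is a touch more self-contained on this point.
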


\begin{proof}
Since $X$ has rational singularities, ${\mathrm {R}\tau _{*}}(\an Z{})=\an X{}$ and 
$\mathpzc{D}$ has only cohomology in degree $0$. Moreover, by Grauert-Riemenschneider 
Theorem \cite{GR}, 
${\mathrm {R}}\tau _{*}(\Omega _{Z})$ has only cohomology in degree $0$, whence 
$R\tau _{*}(\Omega _{Z}) = \tau _{*}(\Omega _{Z})$. Then, by (\ref{eqars}), we have
the isomorphism 
$$ \xymatrix{ \an X{} \ar[rr] &&  \hhom_{X} ((\tau )_{*}(\Omega _{Z}),{\cal M})} .$$
As $\mathpzc{D}$ is dualizing, we have the isomorphism 
$$ \xymatrix{ R\tau _{*}(\Omega _{Z}) \ar[rr] && 
{\mathrm {R}}\hhom_{X}({\mathrm {R}}\hhom_{X}(R\tau _{*}(\Omega _{Z}),\mathpzc{D}),
\mathpzc{D})}$$
whence the isomorphism $\xymatrix{ \tau _{*}(\Omega _{Z}) \ar[r] & {\cal M}}$ by 
(\ref{eqars}). As a result, $\tau _{*}(\Omega _{Z})$ has finite injective dimension since
so has ${\cal M}$. 
\end{proof}

\begin{coro}\label{cars}
Let $Y$ be a smooth big open subset of $X$. Suppose that the following conditions are 
verified:
\begin{itemize}
\item [{\rm (1)}] $X$ is Cohen-Macaulay,
\item [{\rm (2)}] $\Omega _{Y}$ has a global section $\omega $ without zero,
\item [{\rm (3)}] for some global section $\omega _{Z}$ of $\Omega _{Z}$ and for some $p$
in $\k[X]\setminus \{0\}$, the restriction of $\omega _{Z}$ to $\tau ^{-1}(Y)$ is equal 
to $p\tau _{Y}^{*}(\omega )$.
\end{itemize}
Then $X$ is Gorenstein with rational singularities. Moreover, its canonical module is free
of rank $1$.
\end{coro}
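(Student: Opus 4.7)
The plan is to combine Lemmas~\ref{lars} and \ref{l2ars} to produce a free rank-one candidate for the canonical module, then apply Proposition~\ref{pars} to upgrade the Cohen--Macaulay hypothesis to rational singularities.

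First, I would invoke Condition~(2) and Lemma~\ref{lars} to extend $\omega$ uniquely to a nowhere-vanishing regular section $\omega'$ of $\Omega_{X_\loc}$, and then Lemma~\ref{l2ars} to identify $\iota_*\Omega_{X_\loc}$ with the free $\an X{}$-module $\an X{}\omega'$ of rank one. Since $X$ is normal and Cohen--Macaulay, $\iota_*\Omega_{X_\loc}$ is the dualizing module of $X$, so the Gorenstein property and the freeness of the canonical module will follow automatically once rational singularities are established.

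To set up Proposition~\ref{pars}, I define $\mu\colon \an Z{} \to \Omega_Z$ by $\psi \mapsto \psi\omega_Z$. The injectivity of $\tau_*\mu$ is immediate from $\omega_Z\neq 0$ and the torsion-freeness of $\tau_*\Omega_Z$ (since $\Omega_Z$ is a line bundle on the smooth irreducible $Z$). One half of the required image identification $\tau_*\mu(\an X{})=p\tau_*\Omega_Z$ is direct: given $\eta\in\tau_*\Omega_Z$, its restriction via the isomorphism $\tau^{-1}(Y)\simeq Y$ is of the form $f\omega$ for some $f\in\an Y{}$, which extends uniquely to an element of $\an X{}$ because $Y$ is a big open subset of the normal variety $X$; then $p\eta$ and $f\omega_Z$ agree on the dense open $\tau^{-1}(Y)$, hence on all of $Z$ by torsion-freeness of $\Omega_Z$, giving $p\tau_*\Omega_Z\subset \an X{}\omega_Z$.

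The main obstacle is the reverse inclusion $\an X{}\omega_Z\subset p\tau_*\Omega_Z$, equivalently the regularity of the rational section $\omega_Z/p$ of $\Omega_Z$ on all of $Z$. By Condition~(3), $\omega_Z/p$ equals $\tau_Y^*(\omega)$ on $\tau^{-1}(Y)$, so it is regular and nowhere-vanishing there, and by density and torsion-freeness it agrees on $\tau^{-1}(X_\loc)$ with the pullback of $\omega'$; the remaining task is to control the rational section across the exceptional divisors of $\tau$. I would reformulate this via the embedding $\tau_*\Omega_Z\hookrightarrow \iota_*\Omega_{X_\loc}$ induced by restriction to the smooth locus (injective by density), identifying $\tau_*\Omega_Z$ with an ideal $I\subset\an X{}$ containing $p$ under the correspondence $\omega_Z\leftrightarrow p\omega'$, and use the $S_2$ property of the reflexive sheaf $\iota_*\Omega_{X_\loc}$ on the normal variety $X$ together with the already-established inclusion $p\tau_*\Omega_Z\subset \an X{}\omega_Z$ to force $I=\an X{}$. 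Once both inclusions are verified, Proposition~\ref{pars} yields rational singularities of $X$, and Lemma~\ref{l3ars} identifies the canonical module with $\tau_*\Omega_Z=\iota_*\Omega_{X_\loc}=\an X{}\omega'$, completing the proof.
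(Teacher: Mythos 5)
Your proposal follows the paper's route almost step for step: extend $\omega$ to a nowhere\--vanishing section of $\Omega _{X_{\loc}}$ by Lemma~\ref{lars}, set $\mu (\psi )=\psi \omega _{Z}$, show that every local section $\nu $ of $\tau _{*}(\Omega _{Z})$ satisfies $p\nu =\varphi \omega _{Z}$ with $\varphi $ regular (restrict to the locus where $\tau $ is an isomorphism, write the restriction as $\varphi \omega $, extend $\varphi $ by normality, conclude by torsion\--freeness of $\Omega _{Z}$), then invoke Proposition~\ref{pars}, and finally identify the canonical module via Lemmas~\ref{l2ars} and~\ref{l3ars}. All of that matches the paper.

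The divergence is your treatment of the inclusion $\an X{}\omega _{Z}\subset p\tau _{*}(\Omega _{Z})$. You are right that this is what makes $\tau _{*}\mu $ land in, hence be an isomorphism onto, $p\tau _{*}(\Omega _{Z})$ — the paper's proof establishes only the opposite inclusion $p\tau _{*}(\Omega _{Z})\subset \an X{}\omega _{Z}$ and then asserts the isomorphism — but your proposed argument for closing it does not work as stated. Under the restriction map $\tau _{*}(\Omega _{Z})\hookrightarrow \iota _{*}(\Omega _{X_{\loc}})\cong \an X{}\omega '$, the sheaf $\tau _{*}(\Omega _{Z})$ is identified with $I\omega '$ for an ideal $I$ with $p\in I$ and $I\vert _{X_{\loc}}=\an {X_{\loc}}{}$, and the desired inclusion is exactly $I=\an X{}$. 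The ``already\--established inclusion'' translates to $pI\subset p\an X{}$, which is vacuous, and the $S_{2}$ property of the \emph{ambient} sheaf $\iota _{*}(\Omega _{X_{\loc}})$ says nothing about its submodule $I\omega '$: an ideal of a normal Cohen--Macaulay variety whose zero set has codimension at least $2$ need not be the unit ideal (the ideal of a codimension\--two subvariety is the standard example, and it is not reflexive). What you would actually need is the $S_{2}$ property of $\tau _{*}(\Omega _{Z})$ \emph{itself}, so that $I$, being equal to $\an X{}$ in codimension one, equals $\an X{}$; that is a nontrivial input, normally extracted from Grauert--Riemenschneider vanishing and duality, and at that point one is essentially running Kempf's criterion for rational singularities rather than Proposition~\ref{pars}. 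Concretely, the issue is whether $\tau ^{*}(\omega )$ acquires poles along the exceptional divisors of $\tau $: Conditions (1)--(3) only guarantee that $\omega _{Z}=p\tau ^{*}(\omega )$ is regular, i.e.\ that the pole order of $\tau ^{*}(\omega )$ along an exceptional divisor $E$ is at most the vanishing order of $p\rond \tau $ along $E$, not that it is zero. So either supply a genuine argument for $I=\an X{}$, or follow the paper and content yourself with the inclusion it actually proves.
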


\begin{proof}
According to Lemma~\ref{lars}(ii), $\omega $ has a unique regular extension to 
$X_{\loc}$ and this extension has no zero. Denote again by $\omega $ this extension. 
Since $Z$ is irreducible, $\tau ^{-1}(Y)$ is dense in $\tau ^{-1}(X_{\loc})$ so that the 
restriction of $\omega _{Z}$ to $\tau ^{-1}(X_{\loc})$ is equal to 
$p\tau _{X_{\loc}}^{*}(\omega )$ since $\Omega _{Z}$ has no torsion. Denote by $\mu $ the
morphism 
$$ \xymatrix{ \an Z{} \ar[rr]^{\mu } && \Omega _{Z}}, \qquad
\varphi \longmapsto \varphi \omega _{Z} .$$ 
Let $U$ be an open subset of $X$ and $\nu $ a local section of $\tau _{*}(\Omega _{Z})$
above $U$. Since $\omega $ has no zero and $\tau _{U_{\loc}}$ is an isomorphism onto 
$U_{\loc}$, 
$$ \nu \left \vert \right. _{\tau ^{-1}(U_{\loc})} = \tau _{U_{\loc}}^{*}
(\varphi \omega  \left \vert \right. _{U_{\loc}}) $$
for some $\varphi $ in $\k[U_{\loc}]$, whence
$$ p \nu \left \vert \right. _{\tau ^{-1}(U_{\loc})} = \varphi \rond \tau _{U_{\loc}}
(\omega _{Z} \left \vert \right. _{\tau ^{-1}(U_{\loc})}) $$
by Condition (3). Since $X$ is normal, so is $U$ and $U_{\loc}$ is a big open subset of 
$U$. Hence $\varphi $ has a regular extension to $U$. Denoting again by $\varphi $ this 
extension,
$$p\nu = \varphi \rond \tau _{U}(\omega _{Z} \left \vert \right. _{\tau ^{-1}(U)})$$
since $Z$ is irreducible and $\Omega _{Z}$ has no torsion. As a result the morphism 
$$ \tau _{*}\mu  : \xymatrix{ \tau _{*}(\an Z{}) \ar[rr] && p\tau _{*}(\Omega _{Z})}$$
is an isomorphism since it is obviously injective. So, by Proposition~\ref{pars}, 
$X$ has rational singularities. In particular, by~\cite[p.50]{KK}, 
$\tau _{*}(\Omega _{X})=\iota _{*}(\Omega _{X})$. Then, by Lemma~\ref{l2ars}, 
the canonical module of $X$ is free of rank $1$ and by Lemma~\ref{l3ars}, $X$ is 
Gorenstein.
\end{proof}

\section{About singularities} \label{si}
In this section we recall a well known result. Let $X$ be a variety and $Y$ a vector
bundle over $X$. Denote by $\tau $ the bundle projection.

\begin{lemma}\label{lsi}
{\rm (i)} If $Y$ is Gorenstein, then $X$ is Gorenstein.

{\rm (ii)} The variety $X$ has rational singularities if and only if so has $Y$.

{\rm (iii)} If $X$ is Cohen-Macaualay, then so is $Y$.
\end{lemma}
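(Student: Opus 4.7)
The plan is to reduce all three statements to their local (affine, trivialized) versions, in which $Y$ looks like $X\times \mathbb{A}^{r}$ for $r$ the rank of the vector bundle, and then invoke standard facts about behavior of these singularity classes under polynomial extensions. Since the assertions (Cohen--Macaulay, Gorenstein, rational singularities) are local on both $X$ and $Y$, one may pick an affine open cover $\{U_{i}\}$ of $X$ trivializing $\tau$, so that $\tau^{-1}(U_{i})\cong U_{i}\times \mathbb{A}^{r}$ and $\mathcal{O}_{Y}(\tau^{-1}(U_{i}))\cong \mathcal{O}_{X}(U_{i})[T_{1},\ldots,T_{r}]$. All three properties can then be checked on local rings at closed points.

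For (iii), if $X$ is Cohen--Macaulay then each $\mathcal{O}_{X,x}$ is Cohen--Macaulay; since a polynomial algebra in finitely many variables over a Cohen--Macaulay ring is Cohen--Macaulay (\cite[Ch.~6, Theorem 17.7]{Mat}), and localization preserves this property, the local rings of $Y$ are Cohen--Macaulay. For (i), I would use that a local ring $R$ is Gorenstein iff the polynomial extension $R[T_{1},\ldots,T_{r}]$, localized at any prime containing $\mathfrak{m}R[T]$, is Gorenstein; this is a consequence of the characterization of Gorensteinness by finite injective dimension and of the faithful flatness of $R\rightarrow R[T_{1},\ldots,T_{r}]_{\mathfrak{p}}$ with Gorenstein (even regular) fiber. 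Hence Gorensteinness of the local rings of $Y$ descends to Gorensteinness of the local rings of $X$.

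For (ii), the argument goes through resolutions. Pick a desingularization $\rho \colon Z\rightarrow X$; then $\rho \times \mathrm{id}\colon Z\times \mathbb{A}^{r}\rightarrow X\times \mathbb{A}^{r}$ is a desingularization of $X\times \mathbb{A}^{r}$, and locally on $X$ this computes resolutions of $Y$. By flat base change along the smooth projection $X\times \mathbb{A}^{r}\rightarrow X$, one has
$$R(\rho \times \mathrm{id})_{*}\mathcal{O}_{Z\times \mathbb{A}^{r}}\;\cong \; R\rho _{*}\mathcal{O}_{Z}\boxtimes \mathcal{O}_{\mathbb{A}^{r}},$$
so $R^{i}(\rho \times \mathrm{id})_{*}\mathcal{O}_{Z\times \mathbb{A}^{r}}=0$ for $i>0$ iff $R^{i}\rho _{*}\mathcal{O}_{Z}=0$ for $i>0$, and the natural map $\mathcal{O}_{X\times \mathbb{A}^{r}}\rightarrow (\rho \times \mathrm{id})_{*}\mathcal{O}_{Z\times \mathbb{A}^{r}}$ is an isomorphism iff $\mathcal{O}_{X}\rightarrow \rho _{*}\mathcal{O}_{Z}$ is. Combined with normality, which also passes in both directions between $X$ and $X\times \mathbb{A}^{r}$, this gives the equivalence of rational singularities on $X$ and on $Y$.

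The only genuinely delicate point is verifying, in (ii), that the conclusion transfers from the local model $U\times \mathbb{A}^{r}$ to the twisted global situation $Y$, but since rational singularities is a local property of the structure sheaf and does not depend on the trivialization, this is automatic once the local equivalence is established. The other two parts are routine consequences of standard commutative algebra over polynomial rings.
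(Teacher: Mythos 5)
Your proposal is correct and takes essentially the same route as the paper: reduce all three claims to local trivializations $U\times\mathbb{A}^{r}$ and invoke standard transfer of the relevant property across a polynomial extension. The only differences are cosmetic: for (i) the paper passes through completions (treating $\widehat{\mathcal{O}_{Y,y}}$ as a formal power-series ring over $\widehat{\mathcal{O}_{X,x}}$ and invoking Bruns--Herzog) whereas you descend Gorensteinness directly through the flat local homomorphism $\mathcal{O}_{X,x}\to\mathcal{O}_{Y,y}$ with regular closed fiber, and for (ii) the paper simply cites \cite[p.~50]{KK} where you reprove the local equivalence via flat base change along the resolution.
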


\begin{proof}
Let $y$ be in $Y$, $x:= \tau (y)$. Denote by $\widehat{\an Xx{}}$ and 
$\widehat{\an Yy{}}$ the completions of the local rings $\an Xx$ and $\an Yy$ 
respectively.

(i) Since $Y$ is a vector bundle over $X$, $\widehat{\an Yy}$ is a ring of formal series
over $\widehat{\an Xx}$. By~\cite[Proposition 3.1.19,(c)]{Br}, $\widehat{\an Yy{}}$ is
Gorenstein. So, by~\cite[Proposition 3.1.19,(b)]{Br}, $\widehat{\an Xx}$ is Gorenstein. 
Then by \cite[Proposition 3.1.19,(c)]{Br}, $\an Xx$ is Gorenstein, whence the assertion.

(ii) Since $Y$ is a vector bundle over $X$, then there exists a cover of $X$ by open 
subsets $O$, such that $\tau ^{-1}(O)$ is isomorphic to $O\times \k^{m}$ with 
$m=\dim Y - \dim X$. According to~\cite[p.50]{KK}, $O\times \k^{m}$ has rational 
singularities if and only if so has $O$, whence the assertion since a variety has 
rational singularities if and only it has a cover by open subsets having rational
singularities.

(iii) According to~\cite[Ch. 6, Theorem 17.7]{Mat}, a polynomial algebra over a 
Cohen-Macaulay algebra is Cohen-Macaulay. Hence for $O$ open subset of $X$ as in (ii), 
$\tau ^{-1}(O)$ is Cohen-Macaulay, whence the assertion since there is a cover of $Y$
by open subsets as $\tau ^{-1}(O)$.
\end{proof}

\end{document}